\newcommand{\defend}{\hfill\mbox{$\lozenge$}}
\newcommand{\defendhere}{%
  \nobreak\hfill\nobreak\mbox{$\lozenge$}%
  \gdef\defend{}
  \AtEndEnvironment{definition}{\gdef\defend{\hfill\mbox{$\lozenge$}}}
}
\theoremstyle{definition}
\newtheorem{definition}{Definition}
\newtheorem{lemma}[definition]{Lemma}
\newtheorem{proposition}[definition]{Proposition}
\newtheorem{theorem}[definition]{Theorem}
\newtheorem{corollary}[definition]{Corollary}
\newtheorem{example}[definition]{Example}
\newtheorem{construction}[definition]{Construction}
\newtheorem{notation}[definition]{Notation}
\newtheorem{remark}[definition]{Remark}
\renewcommand{\thmcontinues}[1]{continued}
\let\tilde\widetilde
\newcommand{\A}{{\mathbb{A}}}
\newcommand{\PP}{\mathbb{P}}         
\newcommand{\QQ} {{\mathbb Q}}		
\newcommand{\RR} {{\mathbb R}}
\def\E{\mathrm{E}}
\def\n{\mathrm{n}}
\def\L{\mathrm{L}}
\def\V{\mathrm{V}}
\def\H{\mathrm{H}}
\def\g{\mathrm{g}}
\newcommand{\cS}{{\mathsf{S}}}
\def\cM{\mathcal{M}}
\newcommand{\Mbar}{\overline{\cM}\vphantom{\cM}}
\newcommand{\Mtilde}{\widetilde{\cM}\vphantom{\cM}}
\def\trop{\mathsf{trop}}
\def\log{\mathsf{log}}
\newcommand{\Spec}{\operatorname{Spec}}
\newcommand{\R}{\mathsf{R}}
\newcommand{\afanmap}{\mathfrak{t}}
\newcommand{\deco}{\gamma}
\newcommand{\isom}{\stackrel{\sim}{\longrightarrow}}
\renewcommand{\log}{{\mathsf {log}}}
\newcommand{\op}{{\mathsf {op}}}
\newcommand{\gp}{{\mathsf {gp}}}
\newcommand{\gl}{{\mathsf{gl}}}
\newcommand{\sPP}{{\mathsf{sPP}}}
\newcommand{\ghost}{\overline{{\mathsf {M}}}}
\newcommand{\pPP}{\mathsf{PP}}
\newcommand{\B}{\mathbb{B}}
\newcommand{\G}{\mathbb{G}}
\newcommand{\N}{\mathbb{N}}
\renewcommand{\P}{\mathbb{P}}
\newcommand{\Q}{\mathbb{Q}}
\newcommand{\Z}{\mathbb{Z}}
\newcommand{\Acal}{\mathcal{A}}
\newcommand{\Bcal}{\mathcal{B}}
\newcommand{\Mcal}{\mathcal{M}}
\newcommand{\Ocal}{\mathcal{O}}
\newcommand{\str}{\text{str}}
\newcommand{\LogSch}{\operatorname{LogSch}}
\newcommand{\colim}{\operatornamewithlimits{colim}}
\newcommand{\ul}[1]{{\underline{#1}}}
\DeclareMathOperator{\CH}{\mathsf{CH}}
\DeclareMathOperator{\LogCH}{\mathsf{logCH}}
\DeclareMathOperator{\LogS}{\mathsf{logS}}
\DeclareMathOperator{\Hom}{\mathsf{Hom}}
\newcommand{\logS}{\LogS}
\newcommand{\logCH}{\LogCH}
\newcommand{\Aut}{\mathsf{Aut}}
\newcommand*{\doublerightarrow}[2]{\mathrel{
  \settowidth{\@tempdima}{$\scriptstyle#1$}
  \settowidth{\@tempdimb}{$\scriptstyle#2$}
  \ifdim\@tempdimb>\@tempdima \@tempdima=\@tempdimb\fi
  \mathop{\vcenter{
    \offinterlineskip\ialign{\hbox to\dimexpr\@tempdima+1em{##}\cr
    \rightarrowfill\cr\noalign{\kern.5ex}
    \rightarrowfill\cr}}}\limits^{\!#1}_{\!#2}}}
\newcommand{\ol}[1]{\overline{#1}}
\newcommand{\id}{\mathsf{id}}
\newcommand{\Star}{\mathsf{Star}}
\title{Logarithmic tautological rings of the moduli spaces of curves}
\author{Rahul Pandharipande, Dhruv Ranganathan, Johannes Schmitt and Pim Spelier}
\date{April 2025}
\begin{document}
\maketitle
\begin{abstract}
We define the logarithmic tautological rings of the moduli
spaces of Deligne--Mumford stable curves (together with a set
of additive generators lifting the decorated strata classes of the
standard tautological rings). While these algebras are infinite dimensional, a connection to polyhedral combinatorics 
via a new theory of homological piecewise polynomials
allows
an effective study.
A complete calculation is given in
genus 0 via the algebra
of piecewise polynomials on the cone stack of the associated Artin fan (lifting Keel's presentation of the Chow ring of 
$\overline{\mathcal{M}}_{0,n}$). 
Counterexamples to the simplest
generalizations in genus 1 are presented.
We show, however, that the
structure of the log tautological
rings is determined
by the complete knowledge of all
relations in the standard tautological
rings of the moduli spaces of curves. In particular, Pixton's conjecture
concerning relations in the standard tautological rings lifts
to a complete conjecture for relations in the log tautological rings of
the moduli spaces of curves.
Several open questions are discussed. 

We develop the entire theory of logarithmic tautological classes in the context of arbitrary smooth normal crossings pairs $(X,D)$
with explicit formulas for intersection products. As a special case, we give an explicit set of
additive generators of the full logarithmic Chow ring of $(X,D)$ in terms of Chow classes on the strata of $X$ and piecewise polynomials on the cone stack.
\end{abstract}

\tableofcontents


\section{Introduction}

\subsection{Overview}
The Chow ring of the moduli space $\Mbar_{g,n}$ of Deligne--Mumford stable curves{\footnote{We 
view $\Mbar_{g,n}$ as a nonsingular Deligne--Mumford stack and always
consider the Chow theory with ${\mathbb{Q}}$-coefficients. All of our Chow and tautological rings are algebras
over $\mathbb{Q}$.}} contains a distinguished subring of {\em tautological classes} 
\[\mathsf{R}^\star(\Mbar_{g,n})\subset \mathsf{CH}^\star(\Mbar_{g,n})\, .\] 
The structure of $\mathsf{R}^\star(\Mbar_{g,n})$ is related to  
the geometry of stable maps, Abel-Jacobi theory, the classification of CohFTs, and many
other directions, see \cite{FP-TNT,Pan15} for a survey.

Since the boundary $\Delta \subset \Mbar_{g,n}$, defined by the locus of nodal curves, 
is a divisor with normal crossings, we may view the pair $(\Mbar_{g,n}, \Delta)$
as a log scheme. The {\em logarithmic Chow ring} of  $(\Mbar_{g,n}, \Delta)$ is
defined{\footnote{Formally, we should write 
$\mathsf{logCH}^\star(\Mbar_{g,n},\Delta)$ to specify the log
structure. Since we will not consider any other log structure on
$\Mbar_{g,n}$, the boundary $\Delta$ will be omitted from the
notation.}} by
\begin{equation} \label{deflogch}
\mathsf{logCH}^\star(\Mbar_{g,n}) = \varinjlim _{\widetilde{\mathcal M}_{g,n}\to \Mbar_{g,n}} \mathsf{CH}^\star(\widetilde{\mathcal M}_{g,n})\, ,
\end{equation}
where the direct limit is taken over all logarithmic modifications{\footnote{For singular log modifications $\widetilde{\mathcal{M}}_{g,n}$, the notation $\mathsf{CH}^\star$ denotes operational Chow.
In some contexts, the flexibility
of considering singular log modifications can be useful.}}
with respect to the log structure
$(\Mbar_{g,n}, \Delta)$, and the transition maps are given by  pullback.
The simplest log modification is a blowup along a nonsingular
stratum of the log structure. Since the compositions of such 
log modifications are cofinal in the system of all log modifications, we can 
restrict to compositions of blowups of nonsingular strata in the direct 
limit in definition \eqref{deflogch}. 

There is a canonical injection via
pullback, 
$$\alpha:\mathsf{CH}^\star(\Mbar_{g,n}) \hookrightarrow \mathsf{logCH}^\star(\Mbar_{g,n})\, ,$$
so every Chow class is also a log Chow class.
There is also a canonical surjection via pushforward
$$\beta: \mathsf{logCH}^\star(\Mbar_{g,n}) \twoheadrightarrow \mathsf{CH}^\star(\Mbar_{g,n}) \, $$
satisfying $\beta \circ \alpha = \text{Id}$.

Our goal here is to define and to begin the study of the 
tautological subring of the logarithmic Chow ring of the moduli space of curves,
$$\mathsf{logR}^\star(\Mbar_{g,n}) \subset
\mathsf{logCH}^\star(\Mbar_{g,n}) \, .$$
The first motivation for the study of $\mathsf{logCH}^\star(\Mbar_{g,n})$
comes from logarithmic Gromov-Witten theory: log Chow groups are essential for
the log product \cite{Herr, R19b} and degeneration formulas \cite{ACGS15,R19} and the logarithmic double ramification
cycle \cite{HMPPS,HS22,MR21}. At a more fundamental level, the motivation is that 
the log  structure on $\Mbar_{g,n}$
is an intrinsic aspect
of the geometry of stable curves, and the corresponding log Chow theory
can not be avoided. The tautological subring $\mathsf{logR}^\star(\Mbar_{g,n})$
represents the most tractable log Chow classes.

\subsection{The strata algebra of \texorpdfstring{$\Mbar_{g,n}$}{Mbargn}}
We review here the construction of the strata algebra
$\cS^\star(\Mbar_{g,n})$
following \cite{GP03}.
The strata algebra provides a basic framework for the
study of tautological classes on the moduli spaces of curves.
\label{straaaa}

\subsubsection{Stable graphs}
The strata of
the logarithmic boundary of the moduli space $\Mbar_{g,n}$   correspond
to {\em stable graphs}. A stable graph
$\Gamma$ 
consists of the data
$$\Gamma=(\V, \H,\L, \ \mathrm{g}:\V \rightarrow \Z_{\geq 0},
\ v:\H\rightarrow \V, 
\ \iota : \H\rightarrow \H)$$
satisfying the following properties:
\begin{enumerate}
\item[(i)] $\V$ is a vertex set with a genus function 
$\g:\V\to \Z_{\geq 0}$,
\item[(ii)] $\H$ is a half-edge set equipped with a 
vertex assignment $$v:\H \to \V$$ and an 
involution $\iota:\H\to \H$,
\item[(iii)] $\E$, the edge set, is defined by the
2-cycles of $\iota$ in $\H$ (self-edges at vertices
are permitted),
\item[(iv)] $\L$, the set of legs, is defined by the fixed points of $\iota$ and is endowed with a bijective correspondence with the set of markings $$\L \leftrightarrow \{1,\ldots, n\}\, ,$$
\item[(v)] the pair $(\V,\E)$ defines a {\em connected} graph,
\item[(vi)] for each vertex $v$, the stability condition holds:
$$2\g(v)-2+ \n(v) >0,$$
where $\n(v)$ is the valence of $\Gamma$ at $v$ including 
both edges and legs.
\end{enumerate}
An automorphism of $\Gamma$ consists of automorphisms
of the sets $\V$ and $\H$ which leave invariant the
structures $\mathrm{g}$, $\iota$, and $v$ (and hence respect $\E$ and $\L$).
Let $\text{Aut}(\Gamma)$ denote the automorphism group of $\Gamma$.

The genus of a stable graph $\Gamma$ is defined by
$$\g(\Gamma)= \sum_{v\in V} \g(v) + h^1(\Gamma).$$
A boundary stratum of the moduli space $\Mbar_{g,n}$ 
 naturally determines
a stable graph of genus $g$ with $n$ legs by considering the dual graph of a generic pointed curve parameterized by the stratum.

To each stable graph $\Gamma$, we associate the moduli space
\begin{equation*}
\Mbar_\Gamma =\prod_{v\in \V} \Mbar_{\g(v),\n(v)}.
\end{equation*}
There is a
canonical
morphism 
\begin{equation*}
\iota_{\Gamma}: \Mbar_{\Gamma} \rightarrow \Mbar_{g,n}
\end{equation*}
 with image{\footnote{
The degree of $\iota_\Gamma$ is $|\text{Aut}(\Gamma)|$.}}
equal to the closure of the boundary stratum
associated to the graph $\Gamma$.  To construct $\iota_\Gamma$, 
a family of stable pointed curves over $\Mbar_\Gamma$ is required.  Such a family
is easily defined 
by attaching the pullbacks of the universal families over each of the 
$\Mbar_{\g(v),\n(v)}$  along the sections corresponding to half-edges.
Let 
$$[{\Gamma}] \in \mathsf{CH}^\star(
{\overline{\mathcal{M}}_{g,n}})$$
denote the pushforward under $\iota_\Gamma$ of the fundamental
class of $\Mbar_{\Gamma}$.

\subsubsection{Strata algebras} \label{straa}
The strata algebra $\cS_{g,n}^\star$ is defined as the $\Q$-vector space with basis given by the {\em decorated strata classes}
$[\Gamma, \gamma]$ where
\begin{enumerate}
\item[(i)] $\Gamma$ is a 
stable graph corresponding to a stratum of the moduli space, 
$$\iota_\Gamma:\overline{\mathcal{M}}_\Gamma \rightarrow \overline{\mathcal{M}}_{g,n}\, ,$$
\item[(ii)]
$\gamma$ is a product of $\kappa$ and $\psi$
classes on $\overline{\mathcal{M}}_\Gamma$. 
\end{enumerate}
In (ii), the $\kappa$ classes
are associated to the vertices, and the $\psi$ classes are
associated to the half-edges. The only condition imposed is
that the degrees of the $\kappa$ and $\psi$ classes 
associated to a vertex $v\in \V(\Gamma)$ together do {\em not} exceed
the dimension 
$3\g(v)-3+\n(v)$ of the moduli space at $v$.

The strata algebra $\cS_{g,n}^\star$ is of finite dimension as a $\mathbb{Q}$-vector space, graded by
the natural codimension of classes, 
and carries a product for which
the natural pushforward map
\begin{align}\label{v123}
\cS_{g,n}^\star \rightarrow \mathsf{CH}^\star(\overline{\mathcal{M}}_{g,n})\, ,\ \ \ 
[\Gamma,\gamma]  \mapsto (\iota_\Gamma)_\star \gamma
\end{align}
is a homomorphism of graded $\mathbb{Q}$-algebras,
 see \cite[Section 0.3]{PPZ}.
The image of \eqref{v123} is defined to be the subalgebra of {\em tautological classes}
$$\mathsf{R}^\star(\overline{\mathcal{M}}_{g,n})\subset \mathsf{CH}^\star(\overline{\mathcal{M}}_{g,n})\ .$$
The kernel of the quotient map,
$$ \cS_{g,n}^\star
\stackrel{q}{\longrightarrow} \mathsf{R}^\star(\overline{\mathcal{M}}_{g,n}) \longrightarrow 0\ ,$$
is the ideal of {\em tautological relations}.

\subsection{The logarithmic strata algebra of \texorpdfstring{$\Mbar_{g,n}$}{Mbargn}}

We present here a new perspective on the subring of 
tautological classes of the logarithmic Chow ring of the moduli space of curves which is parallel to the
above constructions in Section \ref{straaaa} for the usual Chow ring. While the full foundational development
is given in Sections \ref{sec:hompp} and \ref{sec:logtautgeneral}, the parallel structure of the logarithmic
construction can be seen
without the complete definitions.

Let $\Sigma_{{g,n}}$ be the moduli space
of tropical curves as defined in \cite{CCUW}. The construction of 
$\Sigma_{g,n}$ with the structure of
a cone stack is reviewed in Section \ref{sec:Mgntrop_definitions}.
Associated to a stable graph $\Gamma$,
there is a {\em cone stack with boundary} 
$(\Star_{\Gamma}(\Sigma_{g,n}), \Delta_\Gamma)$ 
associated to the space $\Mbar_\Gamma$ endowed with the strict log structure for the morphism
$$\iota_\Gamma:\overline{\mathcal{M}}_\Gamma \rightarrow \overline{\mathcal{M}}_{g,n}\, .$$
On the level of cone stacks, we define
\begin{equation}
    \Star_{\Gamma}(\Sigma_{g,n}) = \left( \prod_{v \in V(\Gamma)} \Sigma_{g(v), n(v)} \right) \times (\RR_{\geq 0})^{E(\Gamma)}\,,
\end{equation}
parameterizing a tuple of tropical curves $\Gamma_v$ (for each vertex $v \in V(\Gamma)$) and edge lengths $\ell_e \geq 0$ (for each edge $e \in E(\Gamma)$. The natural tropical gluing map 
$$ \Star_{\Gamma}(\Sigma_{g,n}) \to \Sigma_{g,n}$$ 
connects the various graphs $\Gamma_v$ with edges of lengths $\ell_e$. Via the gluing map, the cone stack $\Star_{\Gamma}(\Sigma_{g,n})$ defines a finite cover of the {\em star}\footnote{The
star of $\sigma_\Gamma$ is the set of cones of $\Sigma_{g,n}$ containing $\sigma_\Gamma$ as a face.}
in $\Sigma_{g,n}$ of
the cone $\sigma_\Gamma \in \Sigma_{g,n}$ associated to $\Gamma$. 
The boundary $\Delta_\Gamma$ of $\Star_{\Gamma}(\Sigma_{g,n})$ then consists of all cones $((\sigma_{\Gamma_v})_{v \in V(\Gamma)}, \tau)$  of $\Star_{\Gamma}(\Sigma_{g,n})$ such that $\tau \precneq (\RR_{\geq 0})^{E(\Gamma)}$ is a \emph{proper} face of $(\RR_{\geq 0})^{E(\Gamma)}$.

There is a canonical
$\mathbb{Q}$-vector space $\pPP_\star(\mathsf{Star}_\Gamma(\Sigma_{g,n}), \Delta_\Gamma)$ of {\em homological piecewise polynomials} on $\Star_{\Gamma}(\Sigma_{g,n})$ defined in Section \ref{sec:hompp}. The {\em homological} condition here requires the piecewise polynomials to
vanish on the boundary $\Delta_\Gamma$ of the cone stack, in particular making $\pPP_\star(\mathsf{Star}_\Gamma(\Sigma_{g,n}), \Delta_\Gamma)$ a module over the $\QQ$-algebra $\sPP^\star(\mathsf{Star}_\Gamma(\Sigma_{g,n}))$ of all strict piecewise polynomials.
The strata classes we will consider in the
logarithmic context carry the additional decoration
of a homological piecewise polynomial.

Let $\Gamma$ be a stable graph of genus $g$ with $n$ markings.
A {\em decorated log strata class}
$[\Gamma, f, \gamma]$ is defined by the following conditions:
\begin{enumerate}
\item[(i)] $\Gamma$ is a stable graph,
\item[(ii)] $f$ is a homological piecewise
polynomial on
($\Star_{\Gamma}(\Sigma_{g,n}), \Delta_\Gamma)$,
\item[(iii)] $\gamma = \prod_{v \in V(\Gamma)} \gamma_v$ is a product of decorated strata classes on the vertices of $\Gamma$.\footnote{For the discussion below and in particular equation \eqref{eqn:logSstar_introduction}, it is convenient to generalize the decorations $\gamma$ from products of $\kappa$ and $\psi$-classes to arbitrary decorated strata classes. As discussed in Remark \ref{Rmk:strata_log_lift} (c) below, we obtain the same space of log tautological classes by restricting to decorations $\gamma$ which are products of $\kappa$ and $\psi$-classes as in Section \ref{straa}.}
\end{enumerate}
We define the \emph{logarithmic strata algebra} $\mathsf{log}\cS_{g,n}^\star$ as the (in general infinite-dimensional) $\mathbb{Q}$-vector space
\begin{equation} \label{eqn:logSstar_introduction}
\mathsf{log}\cS_{g,n}^\star = \bigoplus_{\Gamma} \left( \pPP_\star(\mathsf{Star}_\Gamma(\Sigma_{g,n}), \Delta_\Gamma) \otimes_{\sPP^\star(\mathsf{Star}_\Gamma(\Sigma_{g,n}))} \bigotimes_{v \in V(\Gamma)} \cS_{g(v),n(v)}^\star \right)\,.
\end{equation}
We pause for a moment to explain the pieces of this formula. The sum is over all isomorphism classes $\Gamma$ of stable graphs. For each summand, the latter factors in the tensor product are vertex terms, each corresponding to the ordinary strata algebras of the moduli space ``at that vertex''. Since the stratum associated to a graph is a quotient of a product over vertices, we take the tensor product over these vertices. The initial term is the group of {\it homological piecewise polynomials} noted above, and defined in the main text. To make sense of the tensor product, we use the existence of a natural map
\[
\sPP^\star(\mathsf{Star}_\Gamma(\Sigma_{g,n})) \to \bigotimes_{v \in V(\Gamma)} \cS_{g(v),n(v)}^\star
\]
as is explained in Remark \ref{Rmk:strata_log_lift} below.

The algebra $\mathsf{log}\cS_{g,n}^\star$ is graded by
codimension
and carries a product for which
the natural pushforward{\footnote{Pushforward  in the logarithmic context is a delicate operation.
The boundary vanishing of homological piecewise polynomials allows an extension by 0, see Section \ref{Sect:push_forward_PP}.}} map
\begin{equation}\label{v123log}
\mathsf{log}\cS_{g,n}^\star \rightarrow \mathsf{logCH}^\star(\overline{\mathcal{M}}_{g,n})\, , \ \ \ \sum_\Gamma f_\Gamma \otimes \gamma_\Gamma \mapsto \sum_\Gamma\  [\Gamma, f_\Gamma, \gamma_\Gamma]
\end{equation}
is a homomorphism of graded $\mathbb{Q}$-algebras,
 see Theorem \ref{thm:logSproduct}. 
The image of \eqref{v123log} is defined to be the subalgebra of {\em logarithmic tautological classes}
$$\mathsf{logR}^\star(\overline{\mathcal{M}}_{g,n})\subset \mathsf{logCH}^\star(\overline{\mathcal{M}}_{g,n})\ .$$
The kernel of the quotient map,
$$ \mathsf{log}\cS_{g,n}^\star
\stackrel{q}{\longrightarrow} \mathsf{logR}^\star(\overline{\mathcal{M}}_{g,n}) \longrightarrow 0\ ,$$
is the ideal of {\em logarithmic tautological relations}. 

As in the case of the standard strata algebra
$\mathsf{S}^\star_{g,n}$, the logarithmic
strata algebra $\mathsf{logS}^\star_{g,n}$ 
is a natural setting for both theoretical results and calculations: the
$\mathbb{Q}$-vector space structure and the product
are completely explicit (and can be implemented on
computational interfaces). The complexity of the
tautological cycle theory in both the standard
and log cases lies
in the kernel of $q$.

There are two simple sources of logarithmic tautological classes.
First, every tautological class on $\Mbar_{g,n}$ lifts to
a logarithmic class via a commutative diagram of $\mathbb{Q}$-algebras (as explained
in Remark \ref{Rmk:strata_log_lift}):
\[
\begin{tikzcd}
\mathsf{S}^\star_{g,n}  \arrow[r,"q"] \arrow[d] & \mathsf{R}^\star(\Mbar_{g,n}) \arrow[d]\\
\mathsf{logS}^\star_{g,n} \arrow[r, "q"] & \mathsf{logR}^\star(\Mbar_{g,n})\,.
\end{tikzcd}
\]
Second, there is a canonical map from the $\mathbb{Q}$-algebra
of piecewise polynomials of $\Sigma_{g,n}$,
$$\Phi_{g,n}^\mathrm{log}:\mathsf{PP}^\star(\Sigma_{g,n})\rightarrow
\mathsf{logR}^\star(\Mbar_{g,n})\, ,$$
as discussed in Section \ref{sec:Mgntrop_definitions}.
Since both $\mathsf{R}^\star(\Mbar_{g,n})$
and $\mathsf{PP}^\star(\Sigma_{g,n})$ are algebras over 
the $\mathbb{Q}$-algebra 
of strict piecewise polynomials $\mathsf{sPP}^\star(\Sigma_{g,n})$, we
obtain a canonical homomorphism, 
$$\mu_{g,n}^{\mathsf{R}}:\mathsf{R}^\star(\Mbar_{g,n}) \otimes_{\mathsf{sPP}^\star(\Sigma_{g,n})}
\mathsf{PP}^\star(\Sigma_{g,n})
\rightarrow
\mathsf{logR}^\star(\Mbar_{g,n})\, .$$

\subsection{Results in genus 0,1, and higher genus}

In addition to the foundational development of the theory of decorated log strata classes, 
we present several results about the structure of
$\mathsf{logR}^\star(\Mbar_{g,n})$:

\vspace{10pt}
\noindent $\bullet$ A complete calculation of
$\mathsf{logR}^\star(\Mbar_{0,n})$ is given in Section \ref{sec:genus0} by the following result.

\begin{theorem} \label{Thm:logCHMbar0n-F}
    The map $\mu^{\mathsf{R}}_{0,n}$ is an isomorphism,
$$\mu_{0,n}^{\mathsf{R}}:\mathsf{R}^\star(\Mbar_{0,n}) \otimes_{\mathsf{sPP}^\star(\Sigma_{0,n})}
\mathsf{PP}^\star(\Sigma_{0,n})\, 
\stackrel{\sim}{\rightarrow}\
\mathsf{logR}^\star(\Mbar_{0,n})\, .$$
\end{theorem}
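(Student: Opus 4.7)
The plan is to exploit Keel's classical presentation of $\mathsf{CH}^\star(\Mbar_{0,n})$ together with its behaviour under iterated blow-ups of boundary strata. In genus $0$, Keel proves $\mathsf{R}^\star(\Mbar_{0,n}) = \mathsf{CH}^\star(\Mbar_{0,n})$ and that the ring is generated by boundary divisors. Equivalently, the natural map
$$ q_{\sPP} \colon \mathsf{sPP}^\star(\Sigma_{0,n}) \twoheadrightarrow \mathsf{R}^\star(\Mbar_{0,n}) $$
is surjective, with kernel $K_{0,n}$ equal to the ideal generated by Keel's quadratic relations between boundary divisors. This is the central input for both directions.

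For surjectivity of $\mu^{\mathsf{R}}_{0,n}$, I would show the stronger statement that $\Phi^{\mathsf{log}}_{0,n} \colon \mathsf{PP}^\star(\Sigma_{0,n}) \to \mathsf{logR}^\star(\Mbar_{0,n})$ is already surjective. A general generator of $\mathsf{logR}^\star(\Mbar_{0,n})$ is a decorated log strata class $[\Gamma,f,\gamma]$ where $\Gamma$ is a genus $0$ stable tree and $\gamma = \prod_v \gamma_v$ with each $\gamma_v \in \mathsf{R}^\star(\Mbar_{0,n(v)})$. By Keel at each vertex, $\gamma_v$ lifts to a strict piecewise polynomial on $\Sigma_{0,n(v)}$, hence $\gamma$ lifts to a strict piecewise polynomial $\widetilde\gamma$ on $\Star_\Gamma(\Sigma_{0,n})$ via the product projections. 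Then $[\Gamma,f,\gamma] = [\Gamma, f\cdot \widetilde\gamma, 1]$, and the extension-by-zero formalism of Section \ref{Sect:push_forward_PP} rewrites the latter as $\Phi^{\mathsf{log}}_{0,n}$ applied to the pushforward of $f\cdot\widetilde\gamma$ from $\Star_\Gamma(\Sigma_{0,n})$ to $\Sigma_{0,n}$.

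For injectivity, I would combine two identifications of both sides as explicit quotients of $\mathsf{PP}^\star(\Sigma_{0,n})$. On the source, surjectivity of $q_{\sPP}$ gives
$$ \mathsf{R}^\star(\Mbar_{0,n}) \otimes_{\sPP^\star(\Sigma_{0,n})} \mathsf{PP}^\star(\Sigma_{0,n}) \;=\; \mathsf{PP}^\star(\Sigma_{0,n}) \big/ K_{0,n}\cdot \mathsf{PP}^\star(\Sigma_{0,n}). $$
On the target, every smooth subdivision $\widetilde\Sigma \to \Sigma_{0,n}$ corresponds to a log modification $\widetilde{\cM}_{0,n}\to \Mbar_{0,n}$ obtained by iterated blow-up of smooth boundary strata; an induction on the blow-up sequence (using that blowing up a smooth stratum of a Keel-type space produces another Keel-type space and adds only Keel-type relations at the new vertex) shows that $\mathsf{sPP}^\star(\widetilde\Sigma) \twoheadrightarrow \mathsf{CH}^\star(\widetilde{\cM}_{0,n})$ is surjective with kernel $K_{\widetilde\Sigma}$ generated by Keel relations at every cone of $\widetilde\Sigma$. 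Passing to the colimit over smooth refinements and using that limits commute with taking quotients by compatible ideals yields
$$ \mathsf{logCH}^\star(\Mbar_{0,n}) \;=\; \mathsf{PP}^\star(\Sigma_{0,n}) \big/ \varinjlim_{\widetilde\Sigma} K_{\widetilde\Sigma}. $$
What remains is to identify $\varinjlim K_{\widetilde\Sigma}$ with the ideal $K_{0,n}\cdot \mathsf{PP}^\star(\Sigma_{0,n})$: one containment is clear because Keel's relations on $\Mbar_{0,n}$ pull back to relations on every modification, and the reverse containment follows from the observation that a Keel relation at any cone $\sigma \in \widetilde\Sigma$ is the pullback of a Keel relation at the vertex of a suitably larger genus $0$ moduli space, and so already lies in the sub-$\mathsf{PP}^\star(\Sigma_{0,n})$-module generated by $K_{0,n}$.

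The main obstacle is the second identification, i.e.\ controlling the kernel of the colimit presentation. The inductive step -- showing that each blow-up of a smooth stratum of a Keel-type space introduces only Keel-type linear equivalences among the new and old boundary divisors -- is the combinatorial and geometric heart of the argument and must be carried out carefully so that the ideals $K_{\widetilde\Sigma}$ are compatible under further subdivision. Once this is established, the isomorphism $\mu^{\mathsf{R}}_{0,n}$ falls out from the matching presentations.
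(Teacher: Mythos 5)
Your reduction of the theorem to ``matching presentations'' is essentially the paper's: granting that $\logCH^\star(\Mbar_{0,n}) = \pPP^\star(\Sigma_{0,n})/(\mathsf{WDVV})$ (Theorem \ref{Thm:logCHMbar0n}), the isomorphism $\mu^{\R}_{0,n}$ follows by tensoring Keel's presentation $\R^\star(\Mbar_{0,n}) = \sPP^\star(\Sigma_{0,n})/(\mathsf{WDVV})$ with $\pPP^\star(\Sigma_{0,n})$ over $\sPP^\star(\Sigma_{0,n})$, and your surjectivity argument (lifting vertex decorations to strict piecewise polynomials via Keel and pushing forward) is the same as the paper's Proposition \ref{alleq}. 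The genuine gap is in your injectivity step, which is where all the content of Theorem \ref{Thm:logCHMbar0n} lives: you must show that for every smooth subdivision $\widetilde\Sigma \to \Sigma_{0,n}$ the kernel $K_{\widetilde\Sigma}$ of $\sPP^\star(\widetilde\Sigma) \twoheadrightarrow \CH^\star(\widetilde{\cM}_{0,n})$ is contained in the $\pPP^\star(\Sigma_{0,n})$-module generated by the WDVV ideal $K_{0,n}$. You defer this to an induction on ``Keel-type spaces,'' but the one concrete mechanism you offer for the reverse containment --- that a Keel relation at a cone $\sigma \in \widetilde\Sigma$ is pulled back from a Keel relation ``at the vertex of a suitably larger genus $0$ moduli space'' --- does not hold as stated: the strata of an iterated boundary blowup are projective bundles over (blowups of) products of smaller $\Mbar_{0,k}$'s, not moduli spaces of curves, and the cones of $\widetilde\Sigma$ are not cones of any $\Sigma_{0,m}$. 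Tracking the new linear equivalences among exceptional and strict-transform divisors through the blowup exact sequence and the projective bundle formula, and verifying that they all lie in $K_{0,n}\cdot\pPP^\star(\Sigma_{0,n})$, is precisely the hard step you have not carried out.

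The paper sidesteps this head-on kernel computation by a different device: it embeds $\Mbar_{0,n}$ into a smooth quasi-projective toric variety $X_{0,n}$ (Kapranov's Chow quotient, refined via Gibney--Maclagan, Hacking--Keel--Tevelev, and Tevelev) so that the toric stratification pulls back to the boundary stratification, normal bundles of corresponding strata agree, and pullback on Chow rings is an isomorphism (de Concini--Procesi, Feichtner--Yuzvinsky). Keel's blowup formula then shows inductively that the entire directed systems of Chow rings of blowups on the two sides are isomorphic, and on the toric side the limit is piecewise polynomials modulo the ideal generated by characters of the dense torus --- an ideal generated in degree one by globally defined linear functions, hence manifestly stable under subdivision. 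That stability is exactly what replaces the inductive kernel control your outline requires. Without the toric embedding (or some equivalent global mechanism forcing all relations on all blowups to come from a fixed, subdivision-independent ideal), your induction remains a plan rather than a proof.
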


In other words, 
$\mathsf{logR}^\star(\Mbar_{0,n})$ is canonically
isomorphic to the algebra 
of piecewise polynomials
on the Artin fan 
$\Sigma_{\Mbar_{0,n}}$
of $\Mbar_{0,n}$ modulo{\footnote{Keel's disjoint boundary divisor equations already hold in piecewise polynomials.}} the WDVV relations. The result can be viewed as a logarithmic lift of Keel's calculation \cite{Kee92} of $\mathsf{CH}^\star(\Mbar_{0,n})$, since
$$\mathsf{logR}^\star(\Mbar_{0,n})=\mathsf{logCH}^\star(\Mbar_{0,n})\, .$$
Our method of proof uses Kapranov's approach to $\Mbar_{0,n}$ via the Chow quotient
of the Grassmannian \cite{Kap93} and related results of Gibney--Maclagan \cite{GM07}, Hacking--Keel--Tevelev \cite{HKT}, and Tevelev~\cite{Tev07}.

\vspace{10pt}
\noindent $\bullet$ 
The genus 1 case is studied in Section \ref{sec:mapsspaces}.
We
prove $\mu_{1,n}^{\mathsf{R}}$ is always surjective in Proposition \ref{Pro:logR_pp_genus_1},
but $\mu_{1,n}^{\mathsf{R}}$ is {\em not} in general an isomorphism. Nontrivial elements of the 
kernel of $\mu_{1,n}^{\mathsf{R}}$
are found in Proposition \ref{Prop:mu_1_n_not_injective} for $n\geq 3$. How to write  a simple and explicit set of generators of the kernel in genus 1 is
an open question.

\vspace{10pt}
\noindent $\bullet$ For $g\geq 7$, the map 
$\mu_{g,n}^{\mathsf{R}}$ is not surjective by Proposition \ref{Pro:mugn_not_surjective}, so
a different approach must be taken to control $\mathsf{logR}^\star(\Mbar_{g,n})$
in high genus.

\vspace{10pt}
\noindent $\bullet$ We prove in Theorem \ref{Thm:logR_det_by_Pixton} of Section
\ref{sec:highergenus}
that 
the
structure of $\mathsf{logR}^\star(\Mbar_{g,n})$
is determined
by the complete knowledge of {all}
relations in the standard tautological
rings of (products of) moduli spaces of curves.
In particular, Pixton's conjecture \cite{Pixton_relations}
concerning relations in the standard tautological rings yields
a {\em complete conjecture for all relations in the log tautological rings of
the moduli spaces of curves}.

\vspace{10pt}

A fundamental open question (related to the missing
presentation of $\mathsf{logR}^\star(\Mbar_{1,n})$)
is whether there exists a non-trivial logarithmic lift of the
formula of Pixton's relations. Pixton's DR cycle
relations have non-trivial logarithmic lifts
(specified by a choice of stability condition on
line bundles on curves) obtained from the
study of the logarithmic DR cycle \cite{HMPPS}.

\subsection{Log tautological rings of arbitrary normal crossing pairs}

The constructions for $\Mbar_{g,n}$ are valid in a more general setting. For any nonsingular DM stack $X$ with a normal crossings divisor $D$, there is a natural log structure for the pair $(X,D)$ and a natural notion of strata. 
In Section~\ref{sec:logtautgeneral}, we define log decorated strata classes, the log strata algebra, and the log tautological ring for $(X,D)$.

Our construction takes as input a \emph{tautological system}
on $X$,  a set of subrings \[ \big\{ \, \mathsf{R}^\star(P) \subset \CH^\star(P)\, \big\}_P\, , \] for every stratum closure $P$ in $X$, which is required to be closed under pullback and pushforward along strata inclusions.\footnote{If $D$ is not simple normal crossings, $P$ is not a
stratum closure, but a certain monodromy torsor over the normalization of a stratum closures. See Section \ref{sec:logtautgeneral} for details on these monodromy torsors and  Definition \ref{def:tautsystems} for the formal definition of a tautological system.} For example, the log tautological ring of $\Mbar_{g,n}$ discussed above is 
obtained from the tautological system on
$(\Mbar_{g,n},\Delta)$ defined 
by taking $\mathsf{R}^\star(\Mbar_{\Gamma})$ to be the tautological ring of $\Mbar_{\Gamma}$ in the classical sense.

Another important special case of a tautological system is the \emph{Chow system}, where $\mathsf{R}^\star(P) = \CH^\star(P)$ for all $P$. We can then write a  presentation of $\logCH^\star(X,D)$.

\begin{theorem} \label{Thm:logTautX_intro}
The log tautological ring $\mathsf{logR}^\star(X,D)$ induced by the Chow system is equal to the full log Chow ring $\logCH^\star(X,D)$. Hence the map $\logS^\star(X,D) \to \logCH^\star(X)$ is a surjective ring morphism, and the log decorated strata are additive generators for $\logCH^\star(X,D)$.
\end{theorem}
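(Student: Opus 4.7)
The plan is to show that, for the Chow system, the natural map $\logS^\star(X,D) \to \logCH^\star(X,D)$ is surjective; the remaining statements of the theorem then follow formally from the definition $\mathsf{logR}^\star = \mathrm{im}\bigl(\logS^\star \to \logCH^\star\bigr)$. By definition of $\logCH^\star(X,D)$ as a direct limit, and by the cofinality of iterated strata blow-ups in the system of log modifications recalled in the introduction, every class $\alpha \in \logCH^\star(X,D)$ is represented by a Chow class $\widetilde\alpha \in \CH^\star(\widetilde X)$ on some $\widetilde X \to X$ obtained by a finite sequence of smooth blow-ups of strata. I would induct on the length of this sequence. The base case $\widetilde X = X$ is immediate: an arbitrary $\gamma \in \CH^\star(X)$ is realised by the decorated strata class $[\Gamma_{\mathrm{triv}},1,\gamma]$ attached to the open stratum, which lies in $\logS^\star(X,D)$ once arbitrary Chow decorations are permitted.

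For the inductive step, let $\pi : \widetilde X' \to \widetilde X$ be the blow-up along a smooth stratum $\widetilde Z$ of codimension $c$, with exceptional divisor $j : E \hookrightarrow \widetilde X'$ and natural $\mathbb{P}^{c-1}$-bundle projection $\pi_E : E \to \widetilde Z$. The classical blow-up formula
\[
\CH^\star(\widetilde X') \;=\; \pi^\star\CH^\star(\widetilde X) \;\oplus\; \bigoplus_{i=0}^{c-2} j_\star\bigl(\xi^i \cdot \pi_E^\star \CH^\star(\widetilde Z)\bigr), \qquad \xi := c_1(\Ocal_E(1)) = -j^\star[E],
\]
decomposes each class on $\widetilde X'$ into two kinds of summands. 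Classes of the first type $\pi^\star\beta$ represent the same log Chow class as $\beta \in \CH^\star(\widetilde X)$, so the induction hypothesis applies directly. For classes $j_\star(\xi^i \cdot \pi_E^\star\gamma)$ with $\gamma \in \CH^\star(\widetilde Z)$, I would apply the induction hypothesis to $\widetilde Z$, viewed as an iterated strata blow-up of the stratum closure $P \subset X$ containing its image, to decompose $\gamma$; the remaining factor $\xi^i$ is then rewritten via $\xi|_E = -j^\star[E]$ as a polynomial in Chern classes of boundary divisors of $\widetilde X'$ and absorbed into a piecewise polynomial decoration.

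To assemble these pieces into a decorated log strata class $[\Gamma,f,\gamma']$, the stratum graph $\Gamma$ is supplied by the image of $\widetilde Z$ in $X$, and the Chow decoration $\gamma'$ is the output of the inductive decomposition of $\gamma$. The exceptional divisors encountered in the blow-up tower $\widetilde X' \to \widetilde X \to \cdots \to X$ correspond precisely to the new rays in the subdivision $\Sigma_{\widetilde X'}$ of the cone stack $\Sigma_X$ of the Artin fan; polynomials in their Chern classes correspond to piecewise polynomials on $\Sigma_X$ via the canonical map from the introduction. The boundary-vanishing condition placing the resulting $f$ in $\pPP_\star(\mathsf{Star}_\Gamma(\Sigma_X), \Delta_\Gamma)$ is forced because the exceptional classes produced by genuine strata blow-ups are supported in the interior of the star of $\Gamma$, away from its proper boundary faces.

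The main obstacle is verifying compatibility with the tensor product structure defining $\logS^\star(X,D)$ over $\sPP^\star(\mathsf{Star}_\Gamma(\Sigma_X))$: one must check that the module-theoretic identifications implicit in this tensor product exactly match the projection-formula relations among the classes $j_\star(\xi^i \cdot \pi_E^\star\gamma)$ and their further pushforwards. This is essentially the content of a presentation of $\CH^\star(\widetilde X')$ purely in terms of Chow classes on strata of $X$ and homological piecewise polynomials on $\Sigma_X$, which is the substance of the theory developed in Section \ref{sec:hompp}. With that presentation in hand, Theorem \ref{Thm:logTautX_intro} follows by unpacking the definitions of $\logS^\star(X,D)$ and $\logCH^\star(X,D)$.
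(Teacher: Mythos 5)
Your proposal follows essentially the same route as the paper: induct on an iterated blowup of smooth strata closures, with the inductive step supplied by the blowup exact sequence together with the projective bundle formula for the exceptional divisor, and with the exceptional classes absorbed into homological piecewise polynomials supported in the interior of the relevant star. This is precisely the content of Theorem \ref{Thm:R_blow_up_determined}, the proposition that $\mathsf{CH}_{\widehat X}=\pi^\star\mathsf{CH}_X$ for iterated strata blowups, and the cofinality of such blowups (yielding Corollary \ref{Cor:logCH_generators}).

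One point in your write-up must be made precise for the induction to close. As literally stated, your induction hypothesis concerns classes on the total space $\widetilde X$, but the inductive step requires the analogous statement for the blowup center $\widetilde Z$, which is not a blowup of $X$ but of (a monodromy torsor over the normalization of) a stratum closure of $X$; moreover the strata of $\widetilde X$ are in general only $\Aut(\sigma)$-quotients of disjoint unions of such torsors, and need not be literal stratum closures (Lemma \ref{Lem:Sigma_sigma_construction}, Proposition \ref{Pro:P_sigmahat_sigmaprime_cover}). The paper therefore inducts on the stronger statement that the entire induced tautological \emph{system} on $\widehat X$ --- the collection of rings attached to all monodromy torsors $P_{\widehat\sigma}$ simultaneously --- coincides with the Chow system (Theorems \ref{Thm:taut_system_induction} and \ref{Thm:R_blow_up_determined}, split into the projective-bundle case for strata inside the exceptional divisor and the strict-transform case otherwise). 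Your phrase ``viewed as an iterated strata blow-up of the stratum closure $P$'' indicates you intend this, but the strengthened hypothesis, and the identification of strata of blowups as projective bundles over, or blowups of, the monodromy torsors of $X$ (including the $\Aut(\sigma)$-quotients when strata self-intersect), is where the substantive work lies; without it the induction as written does not literally close.
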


Technical innovations required here  include the notions of \emph{idealised Artin fans} (Definition \ref{def:idealisedArtinfan}) and \emph{cone stacks with boundary} (Definition \ref{def:conestack}). Just as
Artin fans and cone stacks capture the behaviour of log smooth stacks, idealised Artin fans and cone stacks with boundary capture the behaviour of strata of log smooth stacks, and more generally, of logarithmic stacks that are smooth over substacks of Artin fans. 
In Section~\ref{sec:hompp}, we work out their theory, as well as the theory of \emph{homological} piecewise polynomials $\sPP_\star(\Sigma, \Delta)$ on cone stacks $(\Sigma, \Delta)$ with boundaries. Elements of $\sPP_\star(\Sigma, \Delta)$ probe the intersection theory of idealised log smooth stacks, in a similar manner to how piecewise polynomials probe the intersection theory of log smooth stacks. The \emph{homological} condition is the requirement that piecewise polynomials on $\Sigma$ vanish on the boundary $\Delta$. 
Among the new ideas are introduced in Section~\ref{sec:hompp} are the following:
\begin{itemize}
    \item Given a log stack $X$, we define the notion of a \emph{choice of an Artin fan} $X \to \mathcal{A}$ (Definition \ref{Def:an_Artin_fan}), generalizing the construction of the (canonical) Artin fan $\mathcal{A}_X^\textup{can}$ of \cite{ACMUW}. The new notion has better functoriality properties than $\mathcal{A}_X^\textup{can}$, which allow us 
    much more flexibility
(see also \cite[Remark 5]{HMPPS} for a related discussion).
    \item There is a natural correspondence between cone stacks $(\Sigma, \Delta)$ with boundary and pairs $(\mathcal{A}, \mathcal{B})$ of an Artin fan $\mathcal{A}$ and a closed reduced substack $\mathcal{B} \subseteq \mathcal{A}$. Under the correspondence, we prove an identification (in  Theorem \ref{Thm:sPP_lowerstar_isom})
    of the Chow group of $\mathcal{B}$ with the homological piecewise polynomials on $(\Sigma, \Delta)$, 
    \begin{equation} \label{eqn:CH_spp_correspondence_intro}
    \CH_\star(\mathcal{B}) \cong \sPP_\star(\Sigma, \Delta)\, ,
    \end{equation}
        generalizing the case $\mathcal{A} = \mathcal{B}$ from \cite[Theorem 14]{MPS23} .
    \item We give a combinatorial formula for proper pushforwards of homological piecewise polynomials and show that, under the identification \eqref{eqn:CH_spp_correspondence_intro}, we obtain the usual proper pushforwards of Chow groups (Proposition \ref{prop:pushforwardppstack}).
\end{itemize}
All of these tools are used extensively in the general construction of log tautological classes on $(X,D)$ and the proof of Theorem \ref{Thm:logTautX_intro}
in Section \ref{sec:logtautgeneral}.



\subsection*{Acknowledgments}
We thank Renzo Cavalieri, Tom Graber,
Leo Herr, Diane Maclagan, Sam Molcho, Aaron Pixton, and Jonathan Wise for many conversations related to the logarithmic Chow 
ring. 
We are grateful to Younghan Bae for detailed answers to technical questions on higher Chow groups and to David Holmes for extensive discussions about homological piecewise polynomials and cone stacks with boundaries (and for proposing their study for idealised log smooth schemes).

R.P. was supported by
SNF-200020-182181,
SNF-200020-219369,
ERC-2017-AdG-786580-MACI, and SwissMAP. D.R was supported by EPSRC New Investigator Award EP/V051830/1 and  EPSRC Frontier Research Grant EP/Y037162/1.
 J.S. was supported by SNF Early Postdoc
Mobility grant 184245 held at the Max Planck Institute for Mathematics in Bonn,
SNF-184613 at the University of Z\"urich, SNF-200020-219369,
and SwissMAP. 
P.S. was supported by NWO grant VI.Vidi.193.006 and ERC Consolidator Grant FourSurf 101087365.
The research presented here was completed in part 
at the SwissMAP center at Les Diablerets during
the {\em Helvetic Algebraic Geometry Seminar} in June 2023.

This project has
received funding from the European Research Council (ERC) under the European Union Horizon 2020 research and innovation program (grant agreement No. 786580).

\section{Logarithmic tautological rings of moduli spaces of curves}
While the general treatment of log tautological rings of normal crossing pairs $(X,D)$ is covered in Section \ref{sec:logtautgeneral} below, we begin here by introducing the fundamental case of the moduli space of curves
$(\Mbar_{g,n},\Delta)$.

\subsection{Definitions and comparisons} \label{sec:Mgntrop_definitions}

\subsubsection{Cone stacks} \label{Sect:Cone_stacks_Mbar}
The logarithmic Chow ring of $\Mbar_{g,n}$ is defined as the colimit of Chow rings of iterated boundary blowups. The combinatorics of the boundary stratification is captured by the cone stack $\Sigma_{g,n}$ of tropical curves, constructed in~\cite{CCUW}. 

\begin{definition}    \label{Def:Mgntrop}
The {\emph {cone stack}} $\Sigma_{g,n}$ is the collection of cones
\[
\sigma_\Gamma = \{\ell : E(\Gamma) \to \mathbb{R}_{\geq 0}\} \cong (\mathbb{R}_{\geq 0})^{E(\Gamma)} \, \text{ for $\Gamma$ a stable graph of $\Mbar_{g,n}$}\,,
\]
together with face inclusion morphisms
\[
\iota_{\varphi} : \sigma_\Gamma \to \sigma_{\Gamma'} \, \text{ for $\varphi: \Gamma' \to \Gamma$ a morphism of stable graphs}\,.
\]
Viewing $\Gamma$ as an edge-contraction of $\Gamma'$ via $\varphi$, the map $\iota_\varphi$ includes $\sigma_\Gamma$ as the face of $\sigma_{\Gamma'}$ where the lengths of all contracted edges are set to zero.
\end{definition}

An \emph{Artin fan} $\mathcal{A}_{\Sigma_{g,n}}$
associated to the cone stack $\Sigma_{g,n}$ is constructed in \cite{CCUW}.
The Artin fan is a smooth algebraic stack which has a locally closed stratification by 1-point subsets,
$$\mathcal{S}_\Gamma \cong B(\mathbb{G}_m^{E(\Gamma)} \rtimes \Aut(\Gamma)) \subseteq \mathcal{A}_{\Sigma_{g,n}}\, ,$$
for each isomorphism class $\Gamma$ of stable graphs. The union of the $\mathcal{S}_\Gamma$ for $\Gamma$ non-trivial forms a normal crossing divisor. The stack $\mathcal{A}_{\Sigma_{g,n}}$ receives a strict, smooth, and surjective map, 
\begin{equation} \label{eqn:q_map_intro}
\afanmap: \Mbar_{g,n} \to \mathcal{A}_{\Sigma_{g,n}}\, ,
\end{equation}
which satisfies the following property: the preimage of $\mathcal{S}_\Gamma$ is precisely the locally closed stratum of curves of dual graph $\Gamma$ in $\Mbar_{g,n}$ (see  \cite[Theorem 4]{CCUW}).

Given a subdivision $\widehat \Sigma \to \Sigma_{g,n}$ of cone stacks (specified by a collection of fans with supports $\sigma_\Gamma$ compatible under the face inclusion morphisms $\iota_\varphi$), we obtain a log blowup $\widehat{\mathcal{A}} \to \mathcal{A}_{\Sigma_{g,n}}$ via the equivalence of categories between cone stacks and Artin fans (\cite[Theorem 3]{CCUW}). Taking a fiber product 
\[
\begin{tikzcd}
\widehat{\mathcal{M}}\arrow[r] \arrow[d] & \Mbar_{g,n}\arrow[d, "\afanmap"]\\
\widehat{\mathcal{A}} \arrow[r] & \mathcal{A}_{\Sigma_{g,n}}
\end{tikzcd}
\]
with respect to the map $\afanmap$ above, we obtain a log blowup $\widehat{\mathcal{M}} \to \Mbar_{g,n}$. Conversely,  every log blowup of $\Mbar_{g,n}$ is obtained from such a fiber product 
associated to a subdivision of $\Sigma_{g,n}$.
We refer the reader to \cite[Section 3]{ACMW} and \cite{CCUW} for further details on Artin fans and the associated cone complexes.

A central tool for constructing cycle classes on $\Mbar_{g,n}$ (and on the log blowups $\widehat{\mathcal M}$) is a complete description of the intersection theory of Artin fans $\mathcal{A}_\Sigma$  in terms of the 
$\mathbb{Q}$-algebras
$\sPP^\star(\Sigma)$ of \emph{strict piecewise polynomials} on the associated cone stack $\Sigma$. An element of $\sPP^\star(\Sigma)$ is a collection of $\mathbb{R}$-valued polynomial functions on the cones $\sigma \in \Sigma$ (with $\mathbb{Q}$-coefficients) compatible under all face inclusion morphisms in $\Sigma$. An isomorphism
\begin{equation} \label{eqn:Phi_intro}
\Phi : \sPP^\star(\Sigma) \to \CH^\star(\mathcal{A}_\Sigma)
\end{equation}
was constructed in \cite[Theorem 14]{MPS23}. 

To extend $\Phi$, we define the ring $\pPP^\star(\Sigma)$ of \emph{piecewise polynomials} as the set of functions on the cones $\sigma \in \Sigma$ which become strict piecewise polynomial on \emph{some} subdivision $\widehat \Sigma \to \Sigma$. In other words, it is the direct limit of strict piecewise polynomial rings taken over all subdivisions, with transition maps given by pulling back along a subdivision. The collection of the maps $\Phi$ for such subdivisions $\widehat \Sigma$ then induces an isomorphism
\begin{equation} \label{eqn:Phi_log_intro}
\Phi^\mathrm{log} : \pPP^\star(\Sigma) \to \logCH^\star(\mathcal{A}_\Sigma)\,.
\end{equation}
Since a class in logarithmic Chow is an ordinary Chow class on some blowup, and blowups are proper, we have natural projection morphisms induced by proper pushforward:
\[
 \logCH^\star(\mathcal{A}_\Sigma)\to  \mathsf{CH}^\star(\mathcal{A}_\Sigma).
\]
This pushforward splits the natural injection $\mathsf{CH}^\star(\mathcal{A}_\Sigma)\hookrightarrow  \logCH^\star(\mathcal{A}_\Sigma)$. There are corresponding operations on piecewise polynomials.

Returning to $\Mbar_{g,n}$ and the cone stack $\Sigma=\Sigma_{g,n}$, we can compose the two maps \eqref{eqn:Phi_intro} and \eqref{eqn:Phi_log_intro} with pullback by the map $\afanmap$  of \eqref{eqn:q_map_intro} to obtain $\mathbb{Q}$-algebra homomorphisms
\begin{equation} \label{eqn:Phi_log_intro_eqn}
    \sPP^\star(\Sigma_{g,n}) \xrightarrow{\Phi_{g,n}} \CH^\star(\Mbar_{g,n}) \ \text{ and } \ \pPP^\star(\Sigma_{g,n}) \xrightarrow{\Phi^\mathrm{log}_{g,n}} \logCH^\star(\Mbar_{g,n})\,.
\end{equation}
The map $\Phi_{g,n}$ factors through  $\R^\star(\Mbar_{g,n}) \subseteq \CH^\star(\Mbar_{g,n})$, with image spanned by \emph{normally decorated strata classes} (fundamental classes of strata closures decorated by Chern classes of their normal bundles, see \cite[Theorem 13]{MPS23}).

\subsubsection{Definitions}
Using the map $\Phi^\mathrm{log}_{g,n}$, we construct the first (and smallest) type of logarithmic tautological rings of $\Mbar_{g,n}$.
\begin{definition} \label{ooo} 
The {\em piecewise polynomial tautological ring} $\mathsf{logR}^\star_{\mathrm{pp}}(\Mbar_{g,n})$ is the image
\[
\mathsf{logR}^\star_{\mathrm{pp}}(\Mbar_{g,n}) = \Phi_{g,n}(\pPP^\star(\Sigma_{g,n})) \subset
\mathsf{logCH}^\star(\Mbar_{g,n})
\]
of the piecewise polynomials on the cone stack $\Sigma_{g,n}$.
\end{definition}

The top Chern class $\lambda_g$ of the Hodge bundle over $\Mbar_{g,n}$
is an example of an
interesting class contained in the piecewise polynomial tautological ring, see \cite[Theorem 6]{MPS23}. However, in general, even basic tautological classes like $\kappa_1 \in \R^\star(\Mbar_{g,n})$ are not contained in $\mathsf{logR}^\star_{\mathrm{pp}}(\Mbar_{g,n})$. Indeed, the restriction of the latter ring to $\mathcal{M}_{g,n}$ is just the span of the fundamental class $[\mathcal{M}_{g,n}]$.




To include the $\kappa$ classes, observe that both $\mathsf{R}^\star(\Mbar_{g,n})$
and $\mathsf{PP}^\star(\Sigma_{g,n})$ are algebras over 
the $\mathbb{Q}$-algebra 
of strict piecewise polynomials $\mathsf{sPP}^\star(\Sigma_{g,n})$. We therefore
obtain a canonical homomorphism, 
$$\mu_{g,n}^{\mathsf{R}}:\mathsf{R}^\star(\Mbar_{g,n}) \otimes_{\mathsf{sPP}^\star(\Sigma_{g,n})}
\mathsf{PP}^\star(\Sigma_{g,n})
\rightarrow
\logCH^\star(\Mbar_{g,n})\, .$$

\begin{definition} \label{Def:small_taut_ring} 
The {\em small tautological ring} 
$\mathsf{logR}^\star_{\mathrm{sm}}(\Mbar_{g,n})$
is the $\mathbb{Q}$-subalgebra
\[
\mathsf{logR}^\star_{\mathrm{sm}}(\Mbar_{g,n}) = \mu_{g,n}^{\mathsf{R}}(\mathsf{R}^\star(\Mbar_{g,n}) \otimes_{\mathsf{sPP}^\star(\Sigma_{g,n})}
\mathsf{PP}^\star(\Sigma_{g,n})) \subset
\mathsf{logCH}^\star(\Mbar_{g,n})
\]
generated by tautological classes $\R^\star(\Mbar_{g,n}) \subseteq \CH^\star(\Mbar_{g,n}) \subseteq \logCH^\star(\Mbar_{g,n})$ and classes coming from piecewise polynomials on $\Sigma_{g,n}$.
\end{definition}
As mentioned above, the existence of $\kappa$- and $\psi$-classes which are non-trivial on the interior 
${\mathcal M}_{g,n} \subset \Mbar_{g,n}$ implies that 
the inclusion $\mathsf{logR}^\star_{\mathrm{pp}}(\Mbar_{g,n}) \subset
\mathsf{logR}^\star_{\mathrm{sm}}(\Mbar_{g,n})$ is, in general,
strict. 
Notable examples of classes contained in the small tautological ring are the (logarithmic) double ramification cycles
$$\mathsf{DR}_{g,A}
\in \mathsf{CH}^g(\Mbar_{g,n})\ \ \ \text{and} \ \ \ 
\mathsf{logDR}_{g,A} \in \mathsf{logCH}^g(\Mbar_{g,n})\,,$$
as proven in \cite[Theorem 19]{MPS23}, \cite{MR21} and \cite[Theorem 4.22]{HS22}.
More generally, Molcho \cite{Mol22} shows that Abel--Jacobi pullbacks of Brill--Noether classes, of which the double ramification cycle is just one example, lie in the small logarithmic tautological ring. 

While the ring $\mathsf{logR}^\star_\mathrm{sm}(\Mbar_{g,n})$ allows us to combine normally decorated strata classes in log blowups of $\Mbar_{g,n}$ with tautological classes, a weakness of the definition is that the tautological class must always be defined on all of $\Mbar_{g,n}$. For example, we could blow up  a boundary stratum and then decorate the exceptional divisor with a product of $\kappa$- and $\psi$-classes only defined on the stratum itself. Developing a formalism for such logarithmic classes is a central motivation of the present paper.

To formulate the corresponding construction, we first prove\footnote{See Corollary \ref{Cor:Psigma_cartesian_diagram}, which is formulated in the case of arbitrary pairs $(X,D)$ of a smooth DM-stack and normal crossings divisor $D$.} that given a stable graph $\Gamma$ on $\Mbar_{g,n}$ there exists an Artin stack $\mathcal{P}_\Gamma$ and a map $j_\Gamma : \mathcal{P}_\Gamma \to \mathcal{A}_{\Sigma_{g,n}}$ such that we have a fiber diagram
\begin{equation} \label{5ttgd}
\begin{tikzcd}
\Mbar_\Gamma \arrow[r, "\iota_\Gamma"] \arrow[d, "\afanmap_\Gamma"] & \Mbar_{g,n} \arrow[d,"\afanmap"]\\
\mathcal{P}_\Gamma \arrow[r, "j_\Gamma"] & \, \mathcal{A}_{\Sigma_{g,n}}
\end{tikzcd}
\end{equation}
with both vertical maps being smooth surjections.
The morphism $j_\Gamma$ is a finite cover of the closure $\overline{\mathcal{S}}_\Gamma \subseteq \mathcal{A}_{\Sigma_{g,n}}$ of the stratum ${\mathcal{S}}_\Gamma$ associated to $\Gamma$. 
Via the diagram \eqref{5ttgd}, the morphism $j_\Gamma$ can be seen as a smooth local model of the gluing morphism $\iota_\Gamma$ inside the Artin fan $\mathcal{A}_{\Sigma_{g,n}}$.
The natural cone stack associated to the strict
log structures on $\Mbar_\Gamma$ and $\mathcal{P}_\Gamma$
induced by the horizontal maps of \eqref{5ttgd} 
is the product
\begin{equation} \label{eqn:Sigma_Gamma}
\Star_{\Gamma}(\Sigma_{g,n}) :=\Sigma_\Gamma := \left(\prod_{v \in V(\Gamma)} \Sigma_{g(v),n(v)}\right) \times \mathbb{R}_{\geq 0}^{E(\Gamma)}\,.
\end{equation}
In general, the stacks $\Mbar_\Gamma$ and $\mathcal{P}_\Gamma$ are not log smooth with the above induced log structure, since these log structures can be generically nontrivial.
However, they are \emph{idealized log smooth} (see Definition \ref{Def:idealized_log_smooth}), which roughly means that they are cut out inside a log smooth space by an ideal that is monomial with respect to the log structure.\footnote{Just as the basic model for a log smooth space is a toric variety, the basic model for an idealised log smooth space is a torus invariant subscheme inside a toric variety.} 
On the combinatorial side, the existence of this monomial ideal is reflected by the fact that $\Sigma_\Gamma$ carries the structure of a \emph{cone stack with boundary} (see Definition \ref{def:conestack}). 
The boundary $\Delta_\Gamma$ of $\Sigma_\Gamma$ consists of the collection of cones in $\Sigma_\Gamma$ of the form
\[
\prod_{v \in V(\Gamma)} \sigma_{v} \times \tau
\]
for $\tau \precneq \mathbb{R}_{\geq 0}^{E(\Gamma)}$ a \emph{proper} face of $\mathbb{R}_{\geq 0}^{E(\Gamma)}$.
The significance of the boundary is explained in Theorem \ref{Thm:sPP_lowerstar_isom} where we show that 
parallel to equation \eqref{eqn:Phi_log_intro_eqn} we have isomorphisms
\begin{equation}
\sPP_\star(\Sigma_{\Gamma}, \Delta_\Gamma) \xrightarrow{\Psi_{\Gamma}} \CH_\star(\mathcal{P}_\Gamma) \ \text{ and }\ \pPP_\star(\Sigma_{\Gamma}, \Delta_\Gamma) \xrightarrow{\Psi^\mathrm{log}_{\Gamma}} \logCH_\star(\mathcal{P}_\Gamma)\,,
\end{equation}
where $\mathsf{(s)PP}_\star(\Sigma_{\Gamma}, \Delta_\Gamma)$ denotes the set of (strict) piecewise polynomials on $\Sigma_{\Gamma}$ vanishing on all cones of $\Delta_\Gamma$. Such functions are called \emph{homological piecewise polynomials} below, and correspondingly $\logCH_\star(\mathcal{P}_\Gamma)$ denotes the homological log Chow group of $\mathcal{P}_\Gamma$ defined in \cite{Barrott2019Logarithmic-Cho}.\footnote{We warn the reader that $\logCH_\star(X)$ of a log scheme $X$ that is not log smooth is not necessarily as well behaved as the log Chow ring of a log smooth log scheme. For example, $\logCH_\star(X)$ is not necessarily supported in degrees $0$ through $\dim X$ even if $\dim X = 0$. See Example~\ref{ex:ptwithnr} and Proposition~\ref{Pro:LogCH_pt_Nr} for examples on logarithmic Chow groups and homological piecewise polynomials.} Since such homological log Chow classes admit flat pullbacks and proper pushforwards along log maps, we can use them to define our third ring of log tautological classes.

Given $f \in \pPP_\star(\Sigma_{\Gamma}, \Delta_\Gamma)$ and $\gamma \in \R^\star(\Mbar_\Gamma)$, we define a \emph{log decorated stratum class} as the cycle
\begin{equation}
[\Gamma, f, \gamma] = (\iota_\Gamma)_\star\left(\gamma \cdot \afanmap_\Gamma^\star \Psi^\mathrm{log}_{\Gamma}(f)  \right) \in \logCH^\star(\Mbar_{g,n})\,.
\end{equation}
Before moving on with the general theory, let us list some examples and properties of these log decorated stratum classes:
\begin{enumerate}
    \item[a)] Let $\Gamma$ be a stable graph with precisely two vertices $v_1, v_2$ connected by a pair of edges (associated to a stratum $\mathcal{M}^\Gamma \subseteq \Mbar_{g,n}$ of codimension $2$). On $\Sigma_\Gamma = \Sigma_{g(v_1), n(v_1)} \times \Sigma_{g(v_2), n(v_2)} \times \mathbb{R}_{\geq 2}^2$ consider the piecewise polynomial function $f=\min(x,y)$, where $x,y$ are the coordinates on the last factor $\mathbb{R}_{\geq 2}^2$. Then the class $\mathfrak{t}_\Gamma^\star \Psi_\Gamma^\textup{log}(f) \in \logCH^\star(\Mbar_\Gamma)$ is given by the fundamental class $[\widehat{\mathcal{M}}_\Gamma]$ of a log blowup $p: \widehat{\mathcal{M}}_\Gamma \to \Mbar_\Gamma$, where $p$ is a $\PP^1$-bundle. If we set the decoration $\gamma=1$, then $$[\Gamma, \min(x,y), 1] = (\iota_\Gamma)_\star [\widehat{\mathcal{M}}_\Gamma]$$
    maps to a multiple of the exceptional divisor of the blowup of $\overline{\mathcal{M}}^\Gamma$ inside $\Mbar_{g,n}$. For an arbitrary decoration $\gamma$, we would just replace the fundamental class $[\widehat{\mathcal{M}}_\Gamma]$ by $(p^\star \gamma) \cap [\widehat{\mathcal{M}}_\Gamma]$ in this formula. We see that the intuition of the formalism is to allow us to combine log Chow classes from piecewise polynomials (like $[\widehat{\mathcal{M}}_\Gamma]$) with decorations $\gamma$ that are only defined on the domain $\Mbar_\Gamma$ of the gluing map $\iota_\Gamma$. For more details and an application of this example, see the proof of Proposition \ref{Pro:mugn_not_surjective}.
    \item[b)] In general, when the decoration $\gamma=1$ is trivial, the class $[\Gamma, f, 1]$ can be calculated as
    \[
    [\Gamma, f, 1] = \Phi_{g,n}^\textup{log}( g ) \in \logCH^\star(\Mbar_{g,n}) \text{ for an explicit } g = (\iota_\Gamma^\textup{trop})_\star f \in \pPP^\star(\Mbar_{g,n}),
    \]
    see Proposition \ref{Prop:sPP_pushforward} (where the notation $(\iota_\Gamma^\textup{trop})_\star$ is explained for the tropical pushforward associated to $\iota_\Gamma$ is explained). 
    This tropical pushforward $g$ of $f$ can be calculated using a formula adapted from \cite{Brion} (see Proposition \ref{prop:pushforwardppstack}). In the simplest situation when the map $\Sigma_\Gamma \to \Sigma_{g,n}$ is just an inclusion of a cone sub-complex (which happens, for example, when $g=0$), the function $g$ is just the extension of $f$ by zero on all cones not in the image of $\Sigma_\Gamma$. The condition that $f$ vanishes on the boundary of $\Sigma_\Gamma$ is exactly what makes this a well-defined piecewise polynomial.
    \item[c)] As with the traditional tautological subrings \cite{GP03} of decorated strata classes on $\Mbar_{g,n}$, there is a product formula, expressing intersections of cycles $[\Gamma, f, \gamma]$ as linear combinations of further log decorated strata classes (\ref{Prop:iota_pushforward_intersection}). 
\end{enumerate}
\begin{definition}
The \emph{log strata algebra} $\mathsf{logS}_{g,n}^\star$
is the $\mathbb{Q}$-vector space
\[
\mathsf{logS}_{g,n}^\star = \bigoplus_{\Gamma} \pPP_\star(\Sigma_{\Gamma}, \Delta_\Gamma) \otimes_{\QQ} \bigotimes_{v \in V(\Gamma)} \mathsf{S}^\star_{g(v),n(v)}
\]
with a product defined by
the product formula \eqref{Prop:iota_pushforward_intersection}.
\end{definition}

We obtain a well-defined  homomorphism of $\mathbb{Q}$-algebras
\[
q : \mathsf{logS}_{g,n}^\star \to \logCH^\star(\Mbar_{g,n})\,.
\]
which is used to define our final (and largest) log tautological ring.
\begin{definition} \label{Def:large_taut_ring}
The {\em large tautological ring} 
$\mathsf{logR}^\star(\Mbar_{g,n})$
is the image
\[
\mathsf{logR}^\star(\Mbar_{g,n}) = q(\mathsf{logS}_{g,n}^\star) \subset
\mathsf{logCH}^\star(\Mbar_{g,n})
\]
of the log decorated strata classes $[\Gamma, f, \gamma]$.
\end{definition}

\subsubsection{Comparisons} \label{Sect:Comparisons}
We first show that the small tautological ring is contained in the large tautological ring.
Let $\Gamma=\Gamma_0$ be the stable graph with a single vertex 
(and no edges) associated to the main stratum of $\Mbar_{g,n}$. Then, for $$f \in \sPP_\star(\Sigma_{\Gamma_0}, \Delta_{\Gamma_0}) = \sPP^\star(\Sigma_{g,n})$$ and $\gamma \in \R^\star(\Mbar_{\Gamma_0})=\R^\star(\Mbar_{g,n})$,  we have
\[
[\Gamma, f, \gamma] = \gamma \cdot \Phi(f)\,.
\]
Since the classes $\gamma \cdot \Phi(f)$ generate the small log tautological ring $\mathsf{logR}^\star_{\mathrm{sm}}(\Mbar_{g,n})$, we obtain inclusions
$$\mathsf{logR}^\star_{\mathrm{pp}}(\Mbar_{g,n}) \subset
\mathsf{logR}^\star_{\mathrm{sm}}(\Mbar_{g,n}) \subset
\mathsf{logR}^\star(\Mbar_{g,n})\, .$$
By the following result, the second inclusion is also, in general, strict.

\begin{proposition} \label{Pro:mugn_not_surjective}
For $g\geq 7$ and $n\geq 0$, 
$\mathsf{logR}^\star_{\mathrm{sm}}(\Mbar_{g,n}) \subsetneq
\mathsf{logR}^\star(\Mbar_{g,n})$.
\end{proposition}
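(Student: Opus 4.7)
The plan is to exhibit, for each $g \geq 7$, an explicit log decorated strata class in $\mathsf{logR}^\star(\Mbar_{g,n})$ that is not in $\mathsf{logR}^\star_{\mathrm{sm}}(\Mbar_{g,n})$. Following Example (a) of Section \ref{Sect:Comparisons}, I take $\Gamma$ to be a stable graph of genus $g$ with two vertices $v_1, v_2$ connected by two parallel edges, choosing the genera $g(v_i)$ and the leg distribution so that each factor $\Mbar_{g(v_i), n(v_i)}$ of $\Mbar_\Gamma$ carries a tautological class that is not obtainable as the restriction of a tautological class from $\Mbar_{g,n}$. The bound $g \geq 7$ is used to allow a splitting $g = g(v_1) + g(v_2) + 1$ (the $+1$ coming from the loop of two parallel edges) in which both vertex genera can be taken large enough, e.g.\ both $\geq 3$, to supply such a class.

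On the tropical side, I take $f = \min(x, y) \in \pPP_\star(\Sigma_\Gamma, \Delta_\Gamma)$, where $x, y$ are the two edge length coordinates on $\RR_{\geq 0}^{E(\Gamma)} = \RR_{\geq 0}^2$; this $f$ vanishes on the two proper faces of $\RR_{\geq 0}^2$ and is therefore a legitimate homological piecewise polynomial. By Example (a), the corresponding log decorated strata class is
\[
[\Gamma, \min(x,y), \gamma] \;=\; (\iota_\Gamma)_\star\bigl((p^\star \gamma) \cap [\widehat{\Mbar}_\Gamma]\bigr) \;\in\; \mathsf{logR}^\star(\Mbar_{g,n})\,,
\]
for a suitably chosen tautological decoration $\gamma = \gamma_1 \boxtimes \gamma_2 \in \R^\star(\Mbar_\Gamma)$, where $p : \widehat{\Mbar}_\Gamma \to \Mbar_\Gamma$ is the $\PP^1$-bundle arising from the blowup of $\Mbar_\Gamma$ inside $\Mbar_{g,n}$.

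The main step is to show this class does not lie in $\mathsf{logR}^\star_{\mathrm{sm}}(\Mbar_{g,n})$. If it did, I could write it as $\sum_i \alpha_i \cdot \Phi^{\log}_{g,n}(f_i)$ with $\alpha_i \in \R^\star(\Mbar_{g,n})$ and $f_i \in \pPP^\star(\Sigma_{g,n})$. I would then pass to a common log blowup $\widehat{\Mbar} \to \Mbar_{g,n}$ dominating $\widehat{\Mbar}_\Gamma$ on which every $f_i$ becomes strict piecewise polynomial, and restrict both sides to the preimage of $\Mbar_\Gamma$ (which contains $\widehat{\Mbar}_\Gamma$ as a divisor). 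Using the description of $\Phi^{\log}$ through Chern classes of boundary divisors together with the projection formula for $\iota_\Gamma$ and for $p$, the right hand side restricts to a class lying in the subring of $\R^\star(\widehat{\Mbar}_\Gamma)$ generated by $\iota_\Gamma^\star \R^\star(\Mbar_{g,n})$ and Chern classes of boundary divisors, whereas the left hand side restricts to $p^\star \gamma$ multiplied by a universal class in boundary divisors. Isolating the coefficient of the top boundary monomial forces $\gamma$ itself to lie in the subring of $\R^\star(\Mbar_\Gamma)$ generated by $\iota_\Gamma^\star \R^\star(\Mbar_{g,n})$ and boundary decorations, contradicting the choice of $\gamma$.

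The hard part will be producing the decoration $\gamma$: a class in $\R^\star(\Mbar_\Gamma) = \R^\star(\Mbar_{g(v_1), n(v_1)}) \otimes \R^\star(\Mbar_{g(v_2), n(v_2)})$ that cannot be realized, even after adding boundary corrections on $\Mbar_\Gamma$, as the image of $\iota_\Gamma^\star$. For this I intend to invoke known structural constraints on gluing pullbacks from the Faber--Pandharipande--Pixton theory of tautological relations, in particular socle and dimension bounds which obstruct surjectivity of $\iota_\Gamma^\star$ in high codimension once both $g(v_i) \geq 3$. The numerical bound $g \geq 7$ should then emerge as the minimal $g$ admitting a splitting $g = g(v_1) + g(v_2) + 1$ in which both vertex genera lie in this non-surjectivity regime; should the structural argument prove subtle, a fallback is to certify the non-membership in a fixed codimension by an explicit computation in the strata algebras of the factors.
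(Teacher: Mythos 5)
Your overall strategy is the same as the paper's: take a codimension $2$ stratum, decorate it with $f=\min(\ell_1,\ell_2)$, restrict the resulting class to the exceptional $\PP^1$-bundle $E$ over the open stratum, push forward along $E\to\cM_\Gamma$, and derive a contradiction from the fact that the decoration $\gamma$ is not in the subring generated by $\iota_\Gamma^\star\R^\star(\Mbar_{g,n})$ and the normal bundle classes. (The paper uses a chain of three vertices of genera $g-4$, $3$, $1$ rather than your two-vertex graph with a pair of parallel edges; your choice is workable provided you take $g(v_1)\neq g(v_2)$ to keep $\Aut(\Gamma)$ small, and note that the symmetric decoration and the symmetric function $\min(x,y)$ descend correctly.)

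The genuine gap is that the decisive step --- exhibiting a decoration $\gamma$ and proving it is not in that subring --- is not carried out, and the route you sketch for it points in the wrong direction. You propose to obstruct surjectivity of $\iota_\Gamma^\star$ via ``socle and dimension bounds \ldots in high codimension.'' No such machinery is needed, and it is not clear it would deliver what you want: socle statements concern the structure of $\R^\star(\cM_g)$ itself, not the image of a gluing pullback into a product. The actual obstruction lives already in codimension $1$ and is completely elementary: by Arbarello--Cornalba, $\R^1(\cM_{h,m})$ for $h\geq 3$ is spanned by $\kappa_1$ and the $\psi$-classes, so the quotient $Q$ of $\R^1(\cM_\Gamma)$ by all $\psi$-classes has one basis vector $[\kappa_1^{(v)}]$ for each vertex of genus $\geq 3$ (two such vertices, using $g\geq 7$). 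The normal bundle classes coming from piecewise polynomials are sums of $\psi$-classes and die in $Q$, while $\iota_\Gamma^\star\kappa_1$ maps to the \emph{symmetric} sum $\sum_v[\kappa_1^{(v)}]$. Hence the asymmetric class $\gamma=\kappa_1\otimes 1$ (which is exactly the paper's choice, up to the different graph) is not in the image, and the excess intersection computation $\pi_\star\iota_E^\star\widehat\gamma=-\kappa_1\otimes 1$ closes the argument. Until you either run this degree-$1$ computation or substantiate the claimed high-codimension obstruction, the proof is incomplete at its central point.
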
 

\begin{proof}
Consider the codimension 2 stratum associated to the graph $\Gamma$ given by
\[
\begin{tikzpicture}[baseline=0pt, vertex/.style={circle,draw,minimum size=1.2cm,thick}]
\node[vertex] (v1) at (0,0) {$g-4$};
\node[vertex] (v2) at (3,0) {$3$};
\node[vertex] (v3) at (6,0) {$1$};

\draw[thick] (v1) -- (v2);
\draw[thick] (v2) -- (v3);

\draw (v3) -- ++(0:1);
\draw (v3) -- ++(30:1);
\draw (v3) -- ++(-30:1);
\end{tikzpicture}
\]
and the class $\gamma = \kappa_1 \otimes 1 \otimes 1 \in \mathsf{R}^\star(\Mbar_\Gamma)$, with $\kappa_1$ on the vertex of genus $g-4$. 
Let $f$ be the homogeneous piecewise polynomial on the star of $\Mbar_{\Gamma}$ given by $\min(\ell_1,\ell_2)$ on $\sigma_{\Gamma} = \RR_{\geq 0}^2$. The minimum function is not linear on this quadrant, as it ``bends'' along the diagonal. {To extend $f$ to a piecewise polynomial on the star, define $f$ on every cone containing the cone $\sigma_{\Gamma}$ by projecting to $\sigma_{\Gamma}$. The extended $f$ automatically vanishes on the boundary of the star since
$f$ vanishes on the boundary of $\sigma_\Gamma$.}.

Let $\widetilde{\mathcal M}_{g,n}\to \Mbar_{g,n}$ be a log blowup associated to a subdivision of $\Sigma_{g,n}$ whose induced subdivision of $\Star_{\sigma_\gamma} \Sigma_{g,n}$ makes $f$ a strict homogeneous piecewise polynomial. On the complement of the codimension $3$ boundary, we are simply blowing up the stratum $\Mcal_{\Gamma} \subseteq \Mbar_{g,n}$. 
We then define
$$\widehat{\gamma} = [\Gamma,f,\gamma] \in \mathsf{CH}^\star(\widetilde{\mathcal M}_{g,n})\,.$$
By definition,  $\widehat{\gamma} \in \mathsf{logR}^\star(\Mbar_{g,n})$. We claim that 
$\widehat{\gamma}$
is \emph{not} contained in $\mathsf{logR}^\star_{\mathrm{sm}}(\Mbar_{g,n})$.

To prove the latter claim, we take the fiber square
\[
\begin{tikzcd}
\widetilde{\mathcal M}_{g,n} \times_{\Mbar_{g,n}} \Mbar_\Gamma \arrow[r,"\pi_1"] \arrow[d, "\pi_2", swap] & \widetilde{\mathcal M}_{g,n} \arrow[d]\\
\Mbar_\Gamma \arrow[r, "\iota_\Gamma"] & \Mbar_{g,n}
\end{tikzcd}\,.
\]
We claim 
\begin{equation}  \label{eqn:Psi_log_f_counterexample}
\Psi^\textup{log}(f) = [\widetilde{\mathcal M}_{g,n} \times_{\Mbar_{g,n}} \Mbar_\Gamma] \in \logCH_\star(\Mbar_{\Gamma})\,
\end{equation}
and that correspondingly
\begin{equation} \label{eqn:widehat_gamma_counterexample}
\widehat{\gamma} = (\pi_1)_\star \pi_2^\star \gamma \in \CH^\star(\widetilde{\mathcal M}_{g,n}) \subseteq \logCH^\star(\Mbar_{g,n}).
\end{equation}
The proof of equation \eqref{eqn:Psi_log_f_counterexample} uses some more machinery, which we have not yet introduced, and is explained in Example \ref{ex:ptwithnr} later.

Restricting to the complement $\mathcal{U}$ of the codimension $3$ boundary, the diagram takes the form
\[
\begin{tikzcd}
E \arrow[r, hook,"\iota_{E}"] \arrow[d, "\pi", swap] & \widehat{\mathcal U} \arrow[d]\\
\cM_{\Gamma} = \cM_{g-4,1} \times \cM_{3,2} \times \cM_{1,n+1} \arrow[r, hook, "\iota_\Gamma"] & \mathcal{U}
\end{tikzcd}
\]
where map $\pi: E \to \cM_{\Gamma}$ is a $\PP^1$-bundle (giving the exceptional divisor of the blowup of $\iota_\Gamma(\cM_\Gamma)$) and the normal bundle of the embedding $\iota_E$ in $\widehat{\mathcal U}$ is $\mathcal{O}_E(-1)$.  The bottom left of the square comes from the standard description of strata associated to dual graphs as products of moduli spaces of curves. The normal bundle computation follows from the explicit description of the blowup. 

By using pull/push compatibility for the cartesian square and the excess intersection formula, we can compute:
\begin{align} \label{eqn:gammahat_actual}
\pi_\star \iota_E^\star \widehat \gamma|_{\widehat{\mathcal U}} &\overset{\eqref{eqn:widehat_gamma_counterexample}}{=} \pi_\star \iota_E^\star \iota_{E\star} \pi^\star (\kappa_1 \otimes 1 \otimes 1) = \pi_\star \left( c_1(\mathcal{O}_E(-1)) \cdot \pi^\star (\kappa_1 \otimes 1 \otimes 1) \right)= - \kappa_1 \otimes 1 \otimes 1 \in \mathsf{R}^1(\cM_\Gamma)\,.
\end{align}
Next, we assume $\widehat \gamma \in \mathsf{logR}^\star_{\mathrm{sm}}(\Mbar_{g,n})$. In other words
\[
\widehat \gamma \in \mathsf{Im}\left(\mathsf{sPP}(\widetilde{\mathcal M}_{g,n}) \otimes \R^\star(\Mbar_{g,n}) \right)\,,
\]
where $\mathsf{sPP}(\widetilde{\mathcal M}_{g,n})$ are the strict piecewise polynomial functions on the cone complex of $\widetilde{\mathcal M}_{g,n}$.\footnote{Here we tacitly use that for any representation of $\widehat \gamma$ using piecewise polynomials on \emph{some} subdivision of the cone complex of $\Mbar_{g,n}$, we can always go to a common refinement and push forward to the complex of $\widetilde{\mathcal M}_{g,n}$ to obtain a strict piecewise polynomial there. The fact that $\widehat \gamma$ was constructed from a representative on $\widetilde{\mathcal M}_{g,n}$ implies that this does not change the class $\widehat \gamma$.} This would imply
\begin{equation} \label{eqn:gammahatsPP}
\pi_\star \iota_E^\star \widehat \gamma|_{\widehat{\mathcal U}} \in \mathsf{Im}\left(\left(\pi_\star \iota_E^\star \mathsf{sPP}(\widetilde{\mathcal M}_{g,n}) \right) \otimes \iota_\Gamma^\star \mathsf{R}^\star(\Mbar_{g,n})\right)\,.
\end{equation}
The classes from strict piecewise polynomials on $\widetilde{\mathcal M}_{g,n}$ are given by fundamental cycles of strata of this space decorated by Chern classes of summands of their normal bundles (see \cite[Theorem 15]{MPS23}). Since the stack $\mathcal{U}$ removes the higher codimension strata of the moduli of curves, the image of the piecewise polynomials simply give the Chern classes of the normal bundle of $\cM_{\Gamma}$:
\[
\pi_\star \iota_E^\star \mathsf{sPP}(\widetilde{\mathcal M}_{g,n}) = \langle (-\psi_1 \otimes 1 - 1 \otimes \psi_1) \otimes 1, 1 \otimes(-\psi_2 \otimes 1 - 1 \otimes \psi_1) \rangle\,.
\]
By the excess intersection formula, this is in fact \emph{contained} in the pullback $\iota_\Gamma^\star \mathsf{R}^\star(\Mbar_{g,n})$, and the additional classes in this pullback are $\iota_\Gamma^\star \kappa_1$ and the $\iota_\Gamma^\star \psi_i$, for $i=1, \ldots, n$. Consider the quotient map $R^1(\cM_\Gamma) \to Q$ by the span of all $\psi$-classes on any of the factors. This tautological group is equal to the full cohomology, and its dimension can be calculated by examining the formulas in~\cite{AC98}. The upshot is that this quotient space $Q$ is $2$-dimensional with basis $[\kappa_1 \otimes 1 \otimes 1]$, $[1 \otimes \kappa_1 \otimes 1]$. By the discussion above, the image of the right-hand side of \eqref{eqn:gammahatsPP} in $Q$ agrees with the image of $\iota_\Gamma^\star \mathsf{R}^\star(\Mbar_{g,n})$ and is spanned by
\[
[\iota_\Gamma^\star \kappa_1] = [\kappa_1 \otimes 1 \otimes 1 + 1 \otimes \kappa_1 \otimes 1]
\]
which does not contain the class \eqref{eqn:gammahat_actual}. This gives the desired contradiction.
\end{proof}

We will take  $\mathsf{logR}^\star(\Mbar_{g,n}) \subset
\mathsf{logCH}^\star(\Mbar_{g,n})$ to be the fundamental definition
of the {\em logarithmic tautological ring} of
the moduli space of Deligne--Mumford
stable curves. In certain
situations, the study of the smaller tautological rings can also
be natural, but our goal is to control all of 
$\mathsf{logR}^\star(\Mbar_{g,n})$.

\subsubsection{Log Gromov-Witten theory}
In the case of standard Gromov-Witten theory, a speculation of \cite{LP09} is that the pushforwards to the moduli of curves of the virtual fundamental classes of
the spaces of genus $g$ stable maps 
to a nonsingular projective
variety $X$ lie in the
tautological ring $\mathsf{RH}^\star(\Mbar_g)$ in cohomology. We can ask
a parallel question here.

\vspace{8pt}
\noindent{\bf Question A.} {\em 
Do the pushforwards to the moduli of curves of the log virtual classes of
the spaces of genus $g$ log stable maps 
to $(X,\Delta)$
lie in $\mathsf{logRH}^\star(\Mbar_g)$?}\smallskip
\vspace{4pt}

For an explanation of the log virtual class and its pushforward to the logarithmic Chow group of $\Mbar_{g,n}$ we refer the reader to \cite[Section 3.2]{RUK22} and the forthcoming survey paper \cite{HMPW_survey}.
The log tautological ring in cohomology can be simply defined as the image
under the cycle map of
$$\mathsf{logR}^\star(\Mbar_g) \rightarrow 
\mathsf{logH}^\star(\Mbar_g)\, .$$
When the target is a toric variety $(Y,\Delta_Y)$ with log structure given by the
full toric boundary, the answer to Question A is positive by the results of \cite{RUK22}. More generally, it should be possible to adapt the methods of~\cite{MR21,R19b,RUK22} and combine them with Janda's results~\cite{Jan17} to give a positive answer to Question A for products of nonsingular curves with logarithmic structure,
but some steps along the path remain to be proven.

\subsection{A presentation of the logarithmic Chow ring in genus 0}
\label{sec:genus0}
\subsubsection{Chow ring of \texorpdfstring{$\Mbar_{0,n}$}{Mbar0n}}
We start with
the standard presentation of the Chow ring
$\mathsf{CH}^\star(\Mbar_{0,n})$ of the space of stable $n$-pointed rational curves. The irreducible components
$\mathsf{D}_{A} \subset \Delta$ of the boundary $\Delta \subset \Mbar_{0,n}$ are
indexed by subsets $A \subset \{ 1,\ldots, n\}$ satisfying
the properties
$$1\in A\, \ \ \ \text{and} \ \ \  2\leq |A|\leq n-2\, .$$
The divisor $\mathsf{D}_{A}$ parameterizes
nodal curves with the markings partitioned by the node
into the
sets $A$ and $A^c$. The Chow classes{\footnote{We use the
same symbol for the divisor and the associated divisor class.}} of the divisors $\mathsf{D}_A$
satisfy two basic sets of relations.

\begin{enumerate}
\item[(i)] $\mathsf{Disjoint\, relations}$: $\mathsf{D}_A\cdot \mathsf{D}_B=0$ when the divisors are set-theoretically disjoint.

\item[(ii)] $\mathsf{WDVV\, relations}$ obtained
from all pullbacks via the forgetting maps
of the boundary relations on $\Mbar_{0,4}$.
\end{enumerate}

\noindent The following fundamental result is due to Keel.
\vspace{8pt}

\begin{theorem}[\cite{Kee92}]
The Chow ring of $\Mbar_{0,n}$ is generated by 
the divisors classes $\{ \mathsf{D}_A\}$, and the ideal of
relations is generated by the ${\mathsf {Disjoint}}$ and ${\mathsf {WDVV}}$ relations:
$$\mathsf{CH}^\star(\Mbar_{0,n}) =
\frac{\mathbb{Q}[\{ \mathsf{D}_A\}]}{(\mathsf{Disjoint}\, ,\, \mathsf{WDVV})}\, .$$
\end{theorem}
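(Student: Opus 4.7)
The plan is to follow Keel's original strategy by induction on $n$, exploiting the forgetful morphism and the blowup formula for Chow rings. The base case $n=4$ is immediate: $\overline{\mathcal{M}}_{0,4} \cong \mathbb{P}^1$ has Chow ring $\mathbb{Q}[t]/(t^2)$, the three boundary points give the three divisors $\mathsf{D}_A$ for $A = \{1,2\},\{1,3\},\{1,4\}$, and the WDVV relations pulled back from $\overline{\mathcal{M}}_{0,4}$ itself are precisely the relations setting any two of them equal, so the presentation is correct.

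For the inductive step, I would use the forgetful morphism $\pi_{n+1} : \overline{\mathcal{M}}_{0,n+1} \to \overline{\mathcal{M}}_{0,n}$, which is the universal curve. The key geometric input is Keel's construction of $\overline{\mathcal{M}}_{0,n+1}$ as an iterated blowup of a simpler space along smooth centers which are themselves products of lower-dimensional moduli spaces. Concretely, one starts with the universal family of the $(n-1)$-marked stable curves (obtained inductively) and exhibits $\pi_{n+1}$ as a sequence of blowups of the (transverse) sections and their intersections inside $\overline{\mathcal{M}}_{0,n} \times_{\overline{\mathcal{M}}_{0,n-1}} \overline{\mathcal{M}}_{0,n}$. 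Each blowup center is a smooth stratum corresponding to a stable graph, and the exceptional divisor that appears is precisely one of the new boundary divisors $\mathsf{D}_A$ on $\overline{\mathcal{M}}_{0,n+1}$.

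With this presentation in hand, I would apply the blowup formula:
\[
\mathsf{CH}^\star(\mathrm{Bl}_Y X) \,=\, \frac{\mathsf{CH}^\star(X)[E]}{\bigl( E \cdot \ker(\mathsf{CH}^\star(X) \to \mathsf{CH}^\star(Y)),\, P_{N_{Y/X}}(-E)\bigr)}
\]
iteratively. At each stage, the new generator $E$ is identified with the new boundary divisor class, the ideal $\ker(\mathsf{CH}^\star(X) \to \mathsf{CH}^\star(Y))$ translates precisely into Disjoint relations (divisors disjoint from the center vanish upon restriction to $Y$), and the Chern polynomial $P_{N_{Y/X}}(-E)$ yields the WDVV-type relation (after rewriting using the boundary divisors, since the normal bundles of strata in $\overline{\mathcal{M}}_{0,n}$ decompose into line bundles of the form $\psi_i + \psi_j$ which can themselves be expressed in terms of boundary classes via WDVV). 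One proves simultaneously that (a) every Chow class is a polynomial in boundary divisors (surjectivity), and (b) every relation between these is a consequence of Disjoint and WDVV (completeness).

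The main obstacle I expect is step (b), completeness of the relations. The blowup formula yields a presentation, but reconciling the relations it produces with \emph{only} Disjoint and WDVV requires a careful bookkeeping argument. Specifically, one must show that the Chern-class relations from each blowup, together with the inductively assumed relations on $\overline{\mathcal{M}}_{0,n}$, can be rewritten purely in terms of pullbacks of the basic boundary relation on $\overline{\mathcal{M}}_{0,4}$. The cleanest way to package this is to verify that the natural map from the abstract $\mathbb{Q}$-algebra $\mathbb{Q}[\{\mathsf{D}_A\}]/(\mathsf{Disjoint}, \mathsf{WDVV})$ to $\mathsf{CH}^\star(\overline{\mathcal{M}}_{0,n+1})$ is compatible with pushforward and pullback along $\pi_{n+1}$ and with restriction to the exceptional divisors, and then conclude by a diagram chase using the inductive hypothesis. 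Alternatively, one may substitute Kapranov's Chow-quotient description of $\overline{\mathcal{M}}_{0,n}$ as in \cite{Kap93,GM07,HKT,Tev07} to obtain the same presentation more directly, which is the route the present paper will use when lifting the result to the logarithmic setting.
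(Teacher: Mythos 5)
This statement is quoted from Keel's paper and is not proved in the present text at all --- it is imported wholesale as \cite{Kee92} --- so there is no internal proof to measure your proposal against. What you have written is a reasonable reconstruction of Keel's own argument: induction on $n$ starting from $\Mbar_{0,4}\cong\P^1$, realization of the universal curve $\Mbar_{0,n+1}\to\Mbar_{0,n}$ as an iterated blowup along smooth boundary strata, and repeated application of the blowup presentation of Chow rings, with the new exceptional divisors supplying the new boundary classes, the kernel of restriction to the centers supplying the $\mathsf{Disjoint}$ relations, and the Chern polynomial of the normal bundle supplying the $\mathsf{WDVV}$-type relations. That is indeed the correct shape of Keel's proof, and your identification of step (b) --- showing that the relations produced by the blowup formula are \emph{consequences} of $\mathsf{Disjoint}$ and $\mathsf{WDVV}$ alone, rather than something larger --- as the genuine difficulty is accurate; as written your argument is a strategy outline rather than a proof, since that bookkeeping is exactly where all the work lives. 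Two small factual corrections: Keel's tower of blowups starts from $\Mbar_{0,n}\times_{\Mbar_{0,3}}\Mbar_{0,4}\cong\Mbar_{0,n}\times\P^1$ (not from $\Mbar_{0,n}\times_{\Mbar_{0,n-1}}\Mbar_{0,n}$ as you state), and the line-bundle summands of the normal bundles of boundary strata are of the form $-\psi_i-\psi_j$ rather than $\psi_i+\psi_j$. Your closing remark is also correct: when the present paper needs an actual argument (for the logarithmic lift in Theorem \ref{Thm:logCHMbar0n}), it does not redo Keel's induction but instead routes through Kapranov's Chow-quotient embedding $\Mbar_{0,n}\hookrightarrow X_{0,n}$ and the de Concini--Procesi/Feichtner--Yuzvinsky identification of the Chow ring, taking Keel's presentation as the known input; that toric route has the advantage of making the piecewise-polynomial (Stanley--Reisner) structure visible, which is what the logarithmic statement requires.
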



\vspace{0pt}

 Since $\mathsf{CH}^\star(\Mbar_{0,n})$
is generated by the classes of the boundary divisors, 
$\mathsf{R}^\star(\Mbar_{0,n})= \mathsf{CH}^\star(\Mbar_{0,n})$. 
A parallel log result holds:
the three  types of logarithmic tautological
rings in genus 0 are all equal to the full logarithmic Chow ring.

\begin{proposition}We have 
$\mathsf{logR}^\star_{\mathrm{pp}}(\Mbar_{0,n})  =
\mathsf{logR}^\star_{\mathrm{sm}}(\Mbar_{0,n}) =
\mathsf{logR}^\star(\Mbar_{0,n}) =
\mathsf{logCH}^\star(\Mbar_{0,n})$. \label{alleq}
\end{proposition}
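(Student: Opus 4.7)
The plan is to prove the single inclusion
\[
\mathsf{logCH}^\star(\Mbar_{0,n}) \subseteq \mathsf{logR}^\star_{\mathrm{pp}}(\Mbar_{0,n})\,,
\]
since combined with the chain $\mathsf{logR}^\star_{\mathrm{pp}} \subseteq \mathsf{logR}^\star_{\mathrm{sm}} \subseteq \mathsf{logR}^\star \subseteq \mathsf{logCH}^\star$ already established in Section~\ref{Sect:Comparisons}, it forces all four rings to be equal.

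By the definition of $\mathsf{logCH}^\star(\Mbar_{0,n})$ as a colimit, every class comes from an ordinary Chow class on some log blowup $\pi : \widetilde{\mathcal{M}}_{0,n} \to \Mbar_{0,n}$ corresponding to a subdivision $\widehat{\Sigma} \to \Sigma_{0,n}$. It therefore suffices to show that, for each such $\widetilde{\mathcal{M}}_{0,n}$, the Chow ring $\mathsf{CH}^\star(\widetilde{\mathcal{M}}_{0,n})$ is generated as a $\mathbb{Q}$-algebra by the classes of the boundary divisors, since each such boundary divisor is pulled back from a ray of $\widehat{\Sigma}$ via the isomorphism $\Phi$ of \eqref{eqn:Phi_intro}, hence lies in $\Phi^{\mathrm{log}}_{0,n}(\mathsf{PP}^\star(\Sigma_{0,n})) = \mathsf{logR}^\star_{\mathrm{pp}}(\Mbar_{0,n})$.

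For $\widetilde{\mathcal{M}}_{0,n} = \Mbar_{0,n}$ itself, boundary generation is Keel's theorem recalled above. To extend this to arbitrary log blowups, I would invoke Kapranov's realization of $\Mbar_{0,n}$ as the Chow quotient of the Grassmannian $\mathrm{Gr}(2,n)$ \cite{Kap93}, which exhibits $\mathcal{M}_{0,n}$ as a very affine variety in a torus $T$ whose tropicalization, by Gibney--Maclagan \cite{GM07}, recovers the support of $\Sigma_{0,n}$. By Tevelev's theorem \cite{Tev07}, $\Mbar_{0,n}$ is a sch\"on tropical compactification, so for every smooth toric variety $Y_{\widehat{\Sigma}}$ associated to a refinement $\widehat{\Sigma}$ of the tropical fan, the closure $\widetilde{\mathcal{M}}_{0,n} \subseteq Y_{\widehat\Sigma}$ is again sch\"on, and the restriction map $\mathsf{CH}^\star(Y_{\widehat{\Sigma}}) \twoheadrightarrow \mathsf{CH}^\star(\widetilde{\mathcal{M}}_{0,n})$ is surjective by Hacking--Keel--Tevelev \cite{HKT}. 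Since $\mathsf{CH}^\star(Y_{\widehat{\Sigma}})$ is generated by toric boundary divisors, the target is generated by boundary divisors of $\widetilde{\mathcal{M}}_{0,n}$, which are exactly strata-boundary divisors pulled back from $\mathcal{A}_{\widehat\Sigma}$.

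The remaining point is that such toric refinements are cofinal among all log blowups of $\Mbar_{0,n}$: any subdivision $\widehat{\Sigma} \to \Sigma_{0,n}$ can be refined to one arising from a refinement of the Bergman fan of $\mathcal{M}_{0,n}$ in $T$, and passing to a further refinement only enlarges the set of boundary divisors on the resulting blowup, preserving generation. The main obstacle is this cofinality/compatibility step, since one must ensure that Tevelev's sch\"onness and the Hacking--Keel--Tevelev surjectivity persist after \emph{arbitrary} refinements of the tropical fan; this follows from the fact that sch\"onness is preserved under toric blowups of the ambient toric variety. Once this is in place, the generation-by-boundary claim for every log blowup follows, completing the proof.
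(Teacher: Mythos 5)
Your overall architecture is sound: reducing everything to the single inclusion $\mathsf{logCH}^\star(\Mbar_{0,n}) \subseteq \mathsf{logR}^\star_{\mathrm{pp}}(\Mbar_{0,n})$, and reducing that to the claim that $\CH^\star(\widetilde{\mathcal M}_{0,n})$ is generated by boundary divisors for every smooth log blowup, is a legitimate strategy, and it is essentially the route the paper takes for the stronger Theorem \ref{Thm:logCHMbar0n}. However, there is a genuine gap at your central step: the surjectivity of the restriction $\CH^\star(Y_{\widehat\Sigma}) \to \CH^\star(\widetilde{\mathcal M}_{0,n})$ is \emph{not} a consequence of sch\"onness and is not what Hacking--Keel--Tevelev prove (their result, quoted in the paper as Proposition \ref{prop:smooth_to_artin_fan}, is smoothness of the map to the quotient stack $[X_{0,n}/\mathsf{T}]$). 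Sch\"onness alone can never give Chow surjectivity: a generic genus-one curve in a torus is sch\"on, but its tropical compactification is a complete elliptic curve, whose $\CH^1$ contains the continuous group $\mathrm{Pic}^0$ and so cannot be hit from the finitely generated Chow ring of an ambient toric variety. The surjectivity you need is special to $\Mbar_{0,n}$ and its blowups, and establishing it is the real content of the argument: for the base case one needs de Concini--Procesi and Feichtner--Yuzvinsky (the pullback $\CH^\star(X_{0,n})\to\CH^\star(\Mbar_{0,n})$ is an isomorphism, compatibly for all strata), and for refinements one needs Keel's blowup formula together with an induction over strata, using that strata of $\widetilde{\mathcal M}_{0,n}$ are blowups or projective bundles over products of smaller $\Mbar_{0,k}$'s with matching normal bundles. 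You flag the cofinality of toric refinements as ``the main obstacle,'' but that part is comparatively easy; the step you attribute to HKT is the one that actually carries the weight.

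For contrast, the paper's own proof of this proposition avoids the toric embedding entirely: it invokes the general structure result (Corollary \ref{Cor:logCH_generators}) that $\logCH^\star(X,D)$ is additively generated by decorated log strata $[\Gamma,f,\alpha]$, notes that in genus $0$ every decoration $\alpha \in \CH^\star(\Mbar_\Gamma)$ equals $\Phi(g)$ for a strict piecewise polynomial $g$ (Keel's theorem applied to each factor of $\Mbar_\Gamma$), and then collapses $[\Gamma,f,\Phi(g)]=(\iota_\Gamma)_\star\Psi(f\cdot g)=\Phi\bigl((\iota_\Gamma)^{\textup{trop}}_\star (f\cdot g)\bigr)$ into a single piecewise polynomial class. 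That argument is shorter, gives the intermediate equalities with $\mathsf{logR}^\star_{\mathrm{sm}}$ and $\mathsf{logR}^\star$ directly, and defers the toric machinery to where it is genuinely needed, namely the presentation in Theorem \ref{Thm:logCHMbar0n}.
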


\begin{proof}
For any stable graph $\Gamma$ associated to a stratum $\Mbar_\Gamma$ of $\Mbar_{0,n}$ we have that $\R^\star(\Mbar_\Gamma) = \CH^\star(\Mbar_\Gamma)$.
Then the equality $\mathsf{logR}^\star(\Mbar_{0,n}) = \mathsf{logCH}^\star(\Mbar_{0,n})$ follows from Corollary \ref{Cor:logCH_generators}. We conclude by showing that any generator $[\Gamma, f, \alpha]$ of $\mathsf{logR}^\star(\Mbar_{0,n})$ lies in $\mathsf{logR}^\star_{\mathrm{pp}}(\Mbar_{0,n})$, where $f \in \pPP_\star(\Mbar_\Gamma^\mathsf{str})$ and $\alpha \in \CH^\star(\Mbar_\Gamma)$. For this just observe that there exists $g \in \sPP^\star(\Mbar_\Gamma^\mathsf{str})$ such that $\alpha = \Phi(g)$ (where $\Phi$ is the pullback map $\sPP^\star(X) \to \CH^\star(X)$, again since the strata of $\Mbar_\Gamma$ generate its Chow ring. But then
\[
[\Gamma, f, \alpha] = [\Gamma, f, \Phi(g)] = [\Gamma, f\cdot g, 1] = (\iota_\Gamma)_\star \Psi(f\cdot g) = \Phi( (\iota_\Gamma)^\textup{trop}_\star f \cdot g) \in \mathsf{logR}^\star_{\mathrm{pp}}(\Mbar_{0,n})\,. \qedhere
\]
\end{proof}



\subsubsection{Calculation of \texorpdfstring{$\mathsf{logCH}^\star(\Mbar_{0,n})$}{logCH*(Mbar0n)}}

Each divisor class $\mathsf{D}_A\in \mathsf{CH}^1(\Mbar_{0.n})$
corresponds canonically to a piecewise polynomial function on the Artin fan of 
$(\Mbar_{0,n},\Delta)$. Therefore, the $\mathsf{WDVV}$ relations can be canonically
lifted from ${\mathbb{Q}[\{ \mathsf{D}_A\}]}$ to the algebra of piecewise polynomials 
$\mathsf{PP}^\star(\Mbar_{0,n}, \Delta)$.
Our first result is the following.

\begin{theorem} \label{Thm:logCHMbar0n}
The logarithmic Chow ring of $\Mbar_{0,n}$ is given by
\[
\mathsf{logCH}^\star(\Mbar_{0,n}) = \mathsf{PP}^\star(\Mbar_{0,n}, \Delta)/\mathsf{WDVV}.
\]
In particular, all log Chow classes  are tautological.
\end{theorem}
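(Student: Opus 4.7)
The plan is to decompose the proof into well-definedness, surjectivity, and injectivity of the induced ring map
\[
\bar\Phi^\mathrm{log}: \pPP^\star(\Sigma_{0,n})/\mathsf{WDVV} \to \logCH^\star(\Mbar_{0,n}).
\]

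For well-definedness, I would use that the homomorphism $\Phi^\mathrm{log}_{0,n}: \pPP^\star(\Sigma_{0,n}) \to \logCH^\star(\Mbar_{0,n})$ from Section \ref{sec:Mgntrop_definitions} sends each piecewise-linear function $\mathsf{D}_A$ to the corresponding boundary divisor class. The disjoint-divisor relations hold automatically in $\pPP^\star(\Sigma_{0,n})$, since piecewise polynomials supported on disjoint cones multiply to zero. The $\mathsf{WDVV}$ relations, written as identities of piecewise polynomials in the $\mathsf{D}_A$, pull back to Keel's relations in $\CH^\star(\Mbar_{0,n}) \subset \logCH^\star(\Mbar_{0,n})$ and so vanish there. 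Surjectivity then follows immediately from Proposition \ref{alleq}: since $\logCH^\star(\Mbar_{0,n}) = \mathsf{logR}^\star_{\mathrm{pp}}(\Mbar_{0,n})$, every log Chow class lies in the image of $\Phi^\mathrm{log}_{0,n}$.

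The essential content is injectivity. My plan is to exploit the direct limit descriptions $\pPP^\star(\Sigma_{0,n}) = \varinjlim_{\widehat\Sigma} \sPP^\star(\widehat\Sigma)$ and $\logCH^\star(\Mbar_{0,n}) = \varinjlim_{\widehat\cM} \CH^\star(\widehat\cM)$ to reduce to a per-blowup statement: for every subdivision $\widehat\Sigma \to \Sigma_{0,n}$ with associated log blowup $\widehat\cM \to \Mbar_{0,n}$, the kernel of $\sPP^\star(\widehat\Sigma) \to \CH^\star(\widehat\cM)$ is generated by the (pulled back) WDVV relations. To prove the per-blowup statement, I would invoke Kapranov's \cite{Kap93} realization of $\Mbar_{0,n}$ as the closure of a generic torus orbit in the Grassmannian $G(2,n)$, together with the extensions of Hacking--Keel--Tevelev \cite{HKT} and Gibney--Maclagan \cite{GM07}, which identify each log blowup $\widehat\cM$ as a schön tropical compactification of $\cM_{0,n}$ inside a smooth toric variety $X_{\widehat\Sigma}$ with fan $\widehat\Sigma$. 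Tevelev's theorem \cite{Tev07} then expresses $\CH^\star(\widehat\cM)$ as a quotient of $\CH^\star(X_{\widehat\Sigma}) = \sPP^\star(\widehat\Sigma)$ by the restriction kernel.

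The main obstacle is to identify this restriction kernel with the WDVV ideal, uniformly across all subdivisions. Two inputs are required: the schönness of each tropical compactification, which ensures that $\CH^\star(\widehat\cM)$ is controlled purely by the fan combinatorics via pullback from the ambient toric variety; and a compatibility under refinement, namely that passing from $\widehat\Sigma$ to a refinement $\widehat\Sigma'$ corresponds to a toric blowup $X_{\widehat\Sigma'} \to X_{\widehat\Sigma}$ under which the WDVV ideal pulls back to the WDVV ideal at the finer level. Once both are in place, the direct limit of the per-blowup presentations yields the claimed description of $\logCH^\star(\Mbar_{0,n})$, and the final assertion that all log Chow classes are tautological is a consequence of the presentation together with Proposition \ref{alleq}.
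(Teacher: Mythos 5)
Your overall route coincides with the paper's: both reduce the computation to the toric side via Kapranov's Chow-quotient embedding $j:\Mbar_{0,n}\hookrightarrow X_{0,n}$ and the results of Gibney--Maclagan, Hacking--Keel--Tevelev and Tevelev, and both ultimately come down to showing that for every log blowup $\widehat{\mathcal M}\to\Mbar_{0,n}$ the kernel of $\sPP^\star(\widehat\Sigma)\to\CH^\star(\widehat{\mathcal M})$ is the WDVV ideal. The well-definedness and surjectivity parts are fine (Proposition \ref{alleq} is proved independently of this theorem via Corollary \ref{Cor:logCH_generators}, so there is no circularity).

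The gap sits exactly at the point you flag as ``the main obstacle,'' and the two inputs you name do not close it as stated. Sch\"onness does not by itself give that $\CH^\star(\widehat{\mathcal M})$ is the quotient of the ambient toric Chow ring by the restriction kernel --- for a general sch\"on tropical compactification the restriction map on Chow need not even be surjective, let alone have a combinatorially describable kernel. What the paper actually uses for the base case is the de Concini--Procesi / Feichtner--Yuzvinsky theorem that $j^\star:\CH^\star(X_{0,n})\to\CH^\star(\Mbar_{0,n})$ is an isomorphism (because $\Mbar_{0,n}$ is the wonderful compactification of the braid arrangement and its presentation matches the toric one), together with the same statement for every stratum. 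The propagation to iterated blowups is then not a formal ``compatibility under refinement'': it is an induction using Keel's blowup formula, which requires the two geometric inputs of Proposition \ref{prop: toric-embedding} --- that strata of $\Mbar_{0,n}$ are exactly the preimages of toric orbit closures and that their normal bundles are pulled back from the toric side --- so that every stratum of $\mathsf{Bl}_W\Mbar_{0,n}$ is again either a blowup of a smaller $\Mbar_{0,k}$ (handled by induction) or a projective bundle over one, with Chow ring still matched by pullback. You would also need to run this over the cofinal system of iterated blowups of smooth strata rather than over arbitrary subdivisions. Finally, a small but real slip: $\CH^\star(X_{\widehat\Sigma})$ is not $\sPP^\star(\widehat\Sigma)$ but its quotient by the linear ideal generated by characters of the dense torus; it is precisely this linear ideal that becomes the WDVV ideal under the Stanley--Reisner identification, which is how the WDVV relations enter the final presentation.
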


The calculation shows that the logarithmic Chow ring of
$\Mbar_{0,n}$ is not only tractable, but has a structure which is as
simple as possible. Assuming this result, we can prove Theorem \ref{Thm:logCHMbar0n-F} from the introduction.

\begin{proof}[Proof of Theorem \ref{Thm:logCHMbar0n-F}]
Our first observation is that, in the notation of Theorem $\mathsf{CH}^\star(\Mbar_{0,n})$ above, the quotient ring
\[
\sPP^\star(\Sigma_{0,n}) = \frac{\mathbb{Q}[\{ \mathsf{D}_A\}]}{(\mathsf{Disjoint})}
\]
gives the Stanley-Reisner presentation \cite[Definition 1.6]{MR2110098} of the strict piecewise polynomials on $\Sigma_{0,n}$ (where the $\mathsf{Disjoint}$ relations are exactly the generators of the face ideal). In particular, Keel's theorem immediately implies
\[
\R^\star(\Mbar_{0,n}) = \CH^\star(\Mbar_{0,n}) = \frac{\sPP^\star(\Sigma_{0,n})}{(\mathsf{WDVV})}\,.
\]
From this presentation, we find the desired isomorphism
\[
\mathsf{R}^\star(\Mbar_{0,n}) \otimes_{\mathsf{sPP}^\star(\Sigma_{0,n})}
\mathsf{PP}^\star(\Sigma_{0,n}) \cong \frac{\sPP^\star(\Sigma_{0,n})}{(\mathsf{WDVV})} \otimes_{\mathsf{sPP}^\star(\Sigma_{0,n})}
\mathsf{PP}^\star(\Sigma_{0,n}) \cong \frac{\pPP^\star(\Sigma_{0,n})}{(\mathsf{WDVV})}\,,
\]
which is equal to $\mathsf{logCH}^\star(\Mbar_{0,n})$ by Theorem \ref{Thm:logCHMbar0n}, finishing the proof.
\end{proof}

\subsubsection{Toric geometry}
Our strategy to prove
Theorem \ref{Thm:logCHMbar0n}
is to move 
the problem from the Chow rings of blowups of $\Mbar_{0,n}$ to the Chow ring of a certain toric variety. We can then take advantage of the fact that the limit Chow rings are known in the toric case. 
The following Proposition
will play a central role.


\begin{proposition}\label{prop: toric-embedding}
There exists a nonsingular quasi-projective toric variety $X_{0,n}$ with dense torus $\mathsf{T}\subset X_{0,n}$
and an embedding $$j: \Mbar_{0,n}\hookrightarrow X_{0,n}$$ which
satisfies the following properties:
\begin{enumerate}[(i)]
\item 
The stack quotient
$[X_{0,n}/\mathsf{T}]$ is canonically identified with the Artin fan of $\Mbar_{0,n}$,
and 
the composition
\[
\Mbar_{0,n}\hookrightarrow X_{0,n}\to [X_{0,n}/\mathsf{T}]
\]
coincides with natural map  
$$\Mbar_{0,n}\rightarrow \mathsf{A}(\Mbar_{0,n},\Delta)$$ to the Artin fan. 
 In particular, the stratification of $X_{0,n}$ by torus orbits pulls back to the stratification of $\Mbar_{0,n}$ by topological type. 
\item If $V$ is a torus orbit closure in $X_{0,n}$ and $W$ is the corresponding stratum of $\Mbar_{0,n}$ obtained by intersection with $V$, then there is an identification of vector bundles bundles on $W$
\[
N_{W/\Mbar_{0,n}} = j|_W^\star N_{V/X_{0,n}},
\]
between the normal bundle of $W$ in $\Mbar_{0,n}$ and the pullback of the normal bundle of $V$ in the ambient toric variety $X_{0,n}$. 
\end{enumerate}
\end{proposition}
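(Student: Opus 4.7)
The plan is to construct $X_{0,n}$ as a \emph{tropical (schön) compactification} of the very affine moduli space $\mathcal{M}_{0,n}$, leveraging Kapranov's description of $\Mbar_{0,n}$ as the Chow quotient of the Grassmannian and Tevelev's theory of compactifications inside toric varieties. First I would embed $\mathcal{M}_{0,n}$ as a closed subvariety of an algebraic torus $\mathsf{T}$: composing Kapranov's open embedding $\mathcal{M}_{0,n} \hookrightarrow \mathrm{Gr}(2,n)/T^{n-1}$ with the Plücker embedding and quotienting by the scaling torus realizes $\mathcal{M}_{0,n}$ as a closed subvariety of $\mathsf{T}$. The tropicalization of this very affine variety is identified with the space of phylogenetic trees (Speyer--Sturmfels), which as a cone complex coincides with $\Sigma_{0,n}$ via the bijection between phylogenetic trees and stable genus $0$ graphs.

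Next, I would define $X_{0,n}$ to be the smooth quasi-projective toric variety with fan given by $\Sigma_{0,n}$, sitting inside the torus $\mathsf{T}$. This fan is already unimodular, and quasi-projectivity is provided by the explicit construction of polarizations in Gibney--Maclagan \cite{GM07}. By the main theorem of Tevelev~\cite{Tev07} (and in the precise form of Hacking--Keel--Tevelev~\cite{HKT}), the closure of $\mathcal{M}_{0,n}$ inside $X_{0,n}$ coincides with $\Mbar_{0,n}$, and the resulting embedding $j: \Mbar_{0,n} \hookrightarrow X_{0,n}$ is schön: $\Mbar_{0,n}$ meets each torus orbit closure transversely.

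From this setup the three claimed properties follow: for (i), the general correspondence between cone stacks and Artin fans (see Section \ref{Sect:Cone_stacks_Mbar}) identifies $[X_{0,n}/\mathsf{T}]$ with the Artin fan $\mathcal{A}_{\Sigma_{0,n}}$, and the compatibility with $\afanmap$ is forced by the fact that a point of $\Mbar_{0,n}$ lands in the torus orbit indexed by its tropical dual graph; the stratification pullback statement is then automatic. Property (ii) on strata is built into Tevelev's sch\"on condition, since it says precisely that each torus orbit $V \subset X_{0,n}$ meets $\Mbar_{0,n}$ in the corresponding boundary stratum $W$ and in the expected (reduced, pure) dimension. Property (iii) is a standard consequence of schönness: if $Y \hookrightarrow X$ is a transverse intersection with a smooth subvariety $V \subset X$, then $N_{Y \cap V/Y} = (Y \cap V \hookrightarrow V)^\star N_{V/X}$; applying this to each torus orbit closure yields the desired identification of normal bundles.

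The main obstacle will be to carefully match the two combinatorial descriptions of the fan, namely to verify that the tropical fan obtained from Kapranov's embedding is isomorphic, as a cone \emph{complex} (not merely as a set), to $\Sigma_{0,n}$ from Definition~\ref{Def:Mgntrop}, together with confirming that the resulting automorphisms on top-dimensional cones are trivial so that $X_{0,n}$ is genuinely a toric variety rather than a toric Artin stack. Both are known in the literature but require unwinding the identifications between Kapranov's Chow quotient picture, the tropical Grassmannian, and the moduli of tropical curves; the genus $0$ case is special because stable trees with $n$ markings have trivial automorphism groups (after fixing the labels), which is exactly what fails in positive genus.
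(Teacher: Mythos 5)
Your proposal is correct and follows essentially the same route as the paper: both rest on Kapranov's Chow quotient description, Gibney--Maclagan's identification of the tropicalization of $\mathcal{M}_{0,n}$ with a union of cones, and Hacking--Keel--Tevelev's result that $\Mbar_{0,n}\to[X_{0,n}/\mathsf{T}]$ is smooth (your ``sch\"onness''), with the normal bundle identification obtained from transversality, which the paper phrases equivalently as flat base change along the smooth maps to the Artin fan. The only cosmetic differences are that the paper realizes $X_{0,n}$ as a torus-invariant open inside the Chow quotient of $\mathbb{P}^{\binom{n}{2}-1}$ rather than building it directly from the fan $\Sigma_{0,n}$, and that your labels (i)--(iii) should be (i)--(ii).
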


Proposition \ref{prop: toric-embedding} is well-known to experts, but since the proof is spread out over many papers in the literature, we recall the appropriate results and explain how to deduce the claims. We start with a theorem of Kapranov \cite[Section 4.1]{Kap93}. 

\begin{theorem}
The moduli space $\Mbar_{0,n}$ is the Chow quotient of the Grassmannian $G(2,n)$ by the action of the $(n-1)$-dimensional dilating torus $\mathsf{H}$.
\end{theorem}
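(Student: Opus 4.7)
My plan is to construct mutually inverse morphisms between $\Mbar_{0,n}$ and the Chow quotient $G(2,n) /\!\!/_{\mathrm{Ch}} \mathsf{H}$, matching up boundary stratifications on both sides. The guiding geometric observation is that a generic 2-plane $V \subset \mathbb{C}^n$ determines $n$ distinct points in $\mathbb{P}(V) \cong \mathbb{P}^1$ by intersecting $V$ with the coordinate hyperplanes $H_i = \{x_i = 0\}$, and the action of $\mathsf{H} = \mathbb{G}_m^n / \mathbb{G}_m^{\mathrm{diag}}$ moves $V$ but preserves the resulting $n$-pointed $\mathbb{P}^1$ up to isomorphism. This immediately suggests an $\mathsf{H}$-invariant rational map $G(2,n) \dashrightarrow M_{0,n}$ whose generic fibres are $\mathsf{H}$-orbits of full dimension $n-1$.

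First I would work on the open locus $U \subset G(2,n)$ of 2-planes meeting every coordinate hyperplane transversely and in mutually distinct lines. On $U$ the construction above gives a well-defined morphism to $M_{0,n}$ which factors through the geometric quotient $U/\mathsf{H}$ and induces an isomorphism there, so the open parts of the two sides are canonically identified. Next I would analyse generic orbit closures: for sufficiently general $V \in U$, show that $\overline{\mathsf{H} \cdot V}$ is an $(n-1)$-dimensional normal toric subvariety of $G(2,n)$ whose moment polytope under the Plücker embedding is the hypersimplex $\Delta(2,n)$. In particular all generic orbit closures share a common homology class, which makes the Chow quotient well defined and realises $M_{0,n} = U/\mathsf{H}$ as its dense stratum.

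For the boundary comparison I would take a one-parameter family of generic orbit closures and describe the limit Chow cycle combinatorially. Each limit decomposes as a sum of torus-invariant subvarieties of $G(2,n)$ indexed by the maximal cells of a matroid subdivision of the hypersimplex $\Delta(2,n)$. A case analysis then matches these matroid subdivisions with dual graphs of stable $n$-pointed rational curves: each maximal cell corresponds to a vertex of the dual graph, shared facets between cells produce the nodes, and the incidence pattern with the ``coordinate facets'' of $\Delta(2,n)$ records how the $n$ marked points are distributed among the components.

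The hard part will be upgrading this set-theoretic correspondence to an \emph{isomorphism of algebraic stacks} with the correct stratification. The cleanest route is to verify the universal property of $\Mbar_{0,n}$: from the universal Chow cycle over the Chow quotient, construct a flat family of Deligne--Mumford stable $n$-pointed rational curves, and invoke the moduli-theoretic characterization. An alternative that avoids building the universal family by hand is to argue inductively: compare Kapranov's description of $\Mbar_{0,n}$ as an iterated blowup of $\mathbb{P}^{n-3}$ along transverse linear subspaces with a parallel tower of toric blowups on a suitable model of the Chow quotient, so that each blowup step corresponds to a single ``edge-splitting'' refinement of matroid subdivisions of $\Delta(2,n)$. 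Either route requires substantial combinatorial bookkeeping with matroid polytopes, which is where the real content of the theorem lies.
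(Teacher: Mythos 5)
The paper does not actually prove this statement: it is quoted verbatim as a theorem of Kapranov, with a pointer to \cite[Section 4.1]{Kap93}, and the surrounding text only explains how to \emph{use} it (passing to the Chow quotient of the Pl\"ucker $\mathbb{P}^{\binom{n}{2}-1}$, then invoking \cite{GM07}, \cite{HKT}, \cite{Tev07} for the properties of the resulting toric embedding). So there is no in-paper argument to compare against; what you have written is an outline of Kapranov's original proof. Your outline is faithful to that strategy: the Gelfand--MacPherson correspondence identifying $U/\mathsf{H}$ with $M_{0,n}$ on the open locus, the identification of generic orbit closures as toric varieties with weight polytope the hypersimplex $\Delta(2,n)$, the decomposition of limit Chow cycles along matroid subdivisions of $\Delta(2,n)$, and the dictionary between such subdivisions and trees with $n$ labelled leaves are all the correct ingredients, and the dimension count ($\dim G(2,n)-\dim M_{0,n}=n-1=\dim\mathsf{H}$) checks out.

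That said, as submitted this is a plan rather than a proof, and the two places you flag as ``the hard part'' are exactly where all of the content sits. First, the bijection between maximal cells of matroid subdivisions of $\Delta(2,n)$ and vertices of stable trees needs an actual argument (including that the limit cycles occur with multiplicity one, which you do not mention); this is where Kapranov works. Second, and more seriously, identifying the Chow quotient with $\Mbar_{0,n}$ as a \emph{variety} --- not just matching strata set-theoretically --- requires either constructing the universal stable curve over the Chow quotient from the universal Chow cycle (Kapranov's ``visible contours'' construction) or carrying out the inductive blowup comparison in full; you name both routes but execute neither. Since the statement is being used in the paper as a black box from the literature, the appropriate resolution is to cite \cite{Kap93} rather than to reprove it, but if the goal is a self-contained proof, the proposal needs those two steps filled in before it counts as one.
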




The Grassmannian $G(2,n)$  embeds in $\mathbb P^{\binom{n}{2}-1}$
via the Pl\"ucker map, and the dilating torus $\mathsf{H}$ is a subtorus of the dense torus of the Pl\"ucker projective space $\mathbb P^{\binom{n}{2}-1}$.
We therefore have the following result.

\begin{corollary}
    Let $X'_{0,n}$ be the Chow quotient of $\mathbb P^{\binom{n}{2}-1}$ by the torus $H$. There is a natural embedding:
    \begin{equation}\label{vv44}
    \Mbar_{0,n}\hookrightarrow X'_{0,n}\, .
    \end{equation}
\end{corollary}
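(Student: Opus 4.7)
The plan is to deduce the embedding \eqref{vv44} directly from Kapranov's theorem together with functoriality of the Chow quotient construction with respect to torus-equivariant closed embeddings. First I would recall that the Pl\"ucker embedding
\[
G(2,n) \hookrightarrow \mathbb{P}^{\binom{n}{2}-1}
\]
is $\mathsf{H}$-equivariant by construction: the dilating torus $\mathsf{H}$ of \cite{Kap93} is defined as a subtorus of the dense torus $\mathsf{T}_{\mathbb{P}}$ acting on Pl\"ucker projective space, and the Grassmannian $G(2,n)$ is a closed $\mathsf{T}_{\mathbb{P}}$-invariant (indeed $\mathsf{H}$-invariant) subvariety. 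Thus both $G(2,n)$ and $\mathbb{P}^{\binom{n}{2}-1}$ carry compatible $\mathsf{H}$-actions, and the inclusion is $\mathsf{H}$-equivariant.

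Next, I would invoke the general functoriality property of the Chow quotient: given an $\mathsf{H}$-equivariant closed embedding $Y \hookrightarrow Z$ of projective $\mathsf{H}$-varieties, the induced map on Chow quotients $Y/\!/_{\mathrm{Ch}}\mathsf{H} \to Z/\!/_{\mathrm{Ch}}\mathsf{H}$ is also a closed embedding. This is essentially the observation that a generic $\mathsf{H}$-orbit in $Y$ sits inside a generic $\mathsf{H}$-orbit in $Z$, and taking limit cycles in the Chow variety of $Z$ of those coming from $Y$ identifies the Chow quotient of $Y$ with a closed subscheme of the Chow quotient of $Z$ (this is implicit in \cite{Kap93} and treated in detail in the toric variation of GIT literature, e.g.\ Kapranov--Sturmfels--Zelevinsky). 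Applying this to $Y = G(2,n)$ and $Z = \mathbb{P}^{\binom{n}{2}-1}$, and combining with Kapranov's identification $\Mbar_{0,n} = G(2,n)/\!/_{\mathrm{Ch}}\mathsf{H}$, produces the desired closed embedding
\[
\Mbar_{0,n} \hookrightarrow X'_{0,n} = \mathbb{P}^{\binom{n}{2}-1}/\!/_{\mathrm{Ch}}\mathsf{H}.
\]

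The main obstacle, and the step that requires care, is the functoriality of the Chow quotient along equivariant closed embeddings; this is the only nontrivial input beyond Kapranov's theorem. Once that is in place, the statement is immediate. I would cite the relevant result from \cite{Kap93} (and the Kapranov--Sturmfels--Zelevinsky framework) rather than reproving it, since the subsequent Proposition \ref{prop: toric-embedding} will refine this embedding into the actual toric variety $X_{0,n}$ obtained from a smooth projective resolution (via the Losev--Manin or Hacking--Keel--Tevelev models) of $X'_{0,n}$.
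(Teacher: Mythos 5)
Your proposal follows essentially the same route as the paper, which derives the corollary directly from the $\mathsf{H}$-equivariance of the Pl\"ucker embedding and (implicitly) the functoriality of the Chow quotient along equivariant closed embeddings, citing Kapranov and the Kapranov--Sturmfels--Zelevinsky framework. You make explicit the functoriality step that the paper leaves unstated, which is the right thing to isolate as the only nontrivial input.
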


The Chow quotient $X'_{0,n}$ is a toric variety, since it is the Chow quotient of a toric variety by a subtorus of its dense torus -- see~\cite{KSZ91} for additional details on toric quotients.
The properties of the embedding \eqref{vv44} have been well-studied, by Gibney--Maclagan \cite{GM07}, Hacking--Keel--Tevelev \cite{HKT}, and Tevelev~\cite{Tev07}. We collect the results that we need here. 

\begin{proposition}
    Let $\Sigma'_{0,n}$ be the fan of the toric variety $X'_{0,n}$, and let 
    $\mathsf{T}$ be its dense torus. The tropicalization of $\cM_{0,n}$ in its embedding into $\mathsf{T}$ is a union of cones in $\Sigma'_{0,n}$. 
\end{proposition}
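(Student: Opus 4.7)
The plan is to deduce the statement from Kapranov's Chow quotient description together with the general theory of tropical compactifications (Tevelev) and the explicit combinatorics worked out by Gibney--Maclagan and Hacking--Keel--Tevelev. The core point is that whenever a variety embedded in a torus admits a tropical compactification by a toric variety, the support of the ambient fan must contain the tropicalization, and for the Chow quotient compactification the ambient fan in fact refines the tropicalization.

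First I would recall the Kapranov--Sturmfels--Zelevinsky principle for Chow quotients of toric varieties by subtori: the fan of the Chow quotient $X'_{0,n} = \mathbb{P}^{\binom{n}{2}-1}/\!\!/_{\mathrm{Chow}} \mathsf{H}$ is the \emph{secondary fan} of the quotient, and its support equals $|\mathsf{trop}(G(2,n)/\mathsf{H})|$ with respect to the Plücker embedding. Via Kapranov's identification $\Mbar_{0,n} = G(2,n)/\!\!/_{\mathrm{Chow}} \mathsf{H}$, this furnishes the embedding $\Mbar_{0,n}\hookrightarrow X'_{0,n}$ together with a candidate fan structure on $\Sigma'_{0,n}$ realizing $\mathsf{trop}(\cM_{0,n})$.

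Second, I would check that $\Mbar_{0,n}\hookrightarrow X'_{0,n}$ is a tropical compactification in the sense of Tevelev, i.e.\ that $\Mbar_{0,n}$ is proper, meets each torus orbit of $X'_{0,n}$ properly, and that the multiplication map $\mathsf{T} \times \Mbar_{0,n} \to X'_{0,n}$ is faithfully flat. Properness is immediate; the orbit-transversality statement (and the identification of the stratification of $X'_{0,n}$ with the boundary stratification of $\Mbar_{0,n}$) is exactly the content of the results of Hacking--Keel--Tevelev and Tevelev's work on tropical compactifications. Once this input is available, Tevelev's theorem yields that $|\mathsf{trop}(\cM_{0,n})| \subseteq |\Sigma'_{0,n}|$, and that the cone structure of $\Sigma'_{0,n}$ is compatible with the polyhedral structure of the tropicalization; in particular the tropicalization is a union of cones of $\Sigma'_{0,n}$.

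The main obstacle is not really a proof step but a bookkeeping one: reconciling three different polyhedral objects, namely (a) the secondary fan of the Chow quotient as in Kapranov--Sturmfels--Zelevinsky, (b) the tropicalization of $\cM_{0,n}$ in its Plücker-Grassmann embedding modulo $\mathsf{H}$ (the space of phylogenetic trees of Speyer--Sturmfels), and (c) the intrinsic cone stack $\Sigma_{0,n}$ of tropical rational curves. I would cite \cite{Kap93,KSZ91,Tev07,GM07,HKT} for the fact that these three coincide (as supports, with compatible fan refinements), which immediately yields the proposition. No further verification beyond identifying the references in the statement is required.
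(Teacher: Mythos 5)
Your route is essentially the paper's: the proof in the text consists precisely of citing \cite[Theorem~5.7]{GM07} together with the geometric interpretation of tropicalization from \cite[Section~2]{HKT}, which is the same reduction you make (placed inside Tevelev's tropical-compactification framework \cite{Tev07}, which is also the paper's surrounding context). One inaccuracy worth fixing: your claim that the support of the secondary fan of $\mathbb{P}^{\binom{n}{2}-1}/\!\!/_{\mathrm{Chow}}\mathsf{H}$ equals $|\mathsf{trop}(G(2,n)/\mathsf{H})|$ is false --- the Chow quotient of projective space is a \emph{complete} toric variety, so $\Sigma'_{0,n}$ has full support, whereas $\mathsf{trop}(\cM_{0,n})$ (the space of phylogenetic trees) is a proper subfan of it; luckily this claim is not needed, since the actual content of \cite{GM07} is exactly that the tropicalization is a union of cones of $\Sigma'_{0,n}$, which is the statement to be proved.
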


\begin{proof}
    See~\cite[Theorem~5.7]{GM07} of Gibney and Maclagan and use the geometric interpretation of tropicalization~\cite[Section~2]{HKT}. 
\end{proof}

Let $\Sigma_{0,n}$ be the subfan given by the union of cones in $\Sigma'_{0,n}$ which meet the tropicalization of $\cM_{0,n}$, and let $X_{0,n}$ be the associated non-compact torus invariant open in $X'_{0,n}$. Equivalently, $X_{0,n}\subset X'_{0,n}$ is the complement of the closed strata that are disjoint from $\Mbar_{0,n}$. 

\begin{proposition} \label{prop:smooth_to_artin_fan}
    The morphism
    \[
    \Mbar_{0,n}\to [X_{0,n}/T]
    \]
    is smooth. 
\end{proposition}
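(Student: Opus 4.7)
There are two natural routes, both using the identification of $[X_{0,n}/\mathsf{T}]$ with $\mathsf{A}(\Mbar_{0,n},\Delta)$ already recorded in Proposition~\ref{prop: toric-embedding}(i). The conceptual route is to note that $(\Mbar_{0,n}, \Delta)$ is log smooth (smooth stack with simple normal crossings boundary), from which smoothness of the canonical morphism to its Artin fan is standard. The more hands-on route, matching the toric setup of this section, is to apply the schön compactification framework of Tevelev~\cite{Tev07} and Hacking--Keel--Tevelev~\cite{HKT}, which says that $j: \Mbar_{0,n} \hookrightarrow X_{0,n}$ is a schön tropical compactification of $\cM_{0,n}$.

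Concretely, I would fix a closed point $x \in \Mbar_{0,n}$ lying in the boundary stratum $W$ associated to a stable graph $\Gamma$, which sits inside a torus orbit closure $V \subseteq X_{0,n}$ of codimension $|E(\Gamma)|$. Choose a smooth toric chart
\[
X_{0,n} \supseteq U \cong U_V \times \A^{|E(\Gamma)|}
\]
through $V$, on which the quotient map $U \to [X_{0,n}/\mathsf{T}]$ factors as projection onto $\A^{|E(\Gamma)|}$ followed by the smooth stack quotient $\A^{|E(\Gamma)|} \to [\A^{|E(\Gamma)|}/\G_m^{|E(\Gamma)|}]$. Schönness supplies an étale neighborhood of $x$ in $\Mbar_{0,n}$ of the form $W' \times \A^{|E(\Gamma)|}$, with $W' \to W$ étale, such that the induced map to the Artin fan agrees with the same projection followed by the same smooth quotient. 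Both factors are smooth, hence so is the composition at $x$.

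The main obstacle is justifying the étale-local product description in the previous paragraph. This is the essence of schönness, equivalent by Tevelev's criterion~\cite[Theorem~1.2]{Tev07} to smoothness of the multiplication map $\mathsf{T} \times \Mbar_{0,n} \to X_{0,n}$. For the embedding coming from Kapranov's Chow quotient, schönness is exactly what is verified in~\cite{HKT}; Proposition~\ref{prop: toric-embedding}(ii), matching the normal bundles of $W$ in $\Mbar_{0,n}$ with those of $V$ in $X_{0,n}$, is its infinitesimal shadow. Alternatively, one can check directly that the strict log structure pulled back from $[X_{0,n}/\mathsf{T}]$ coincides with the divisorial one on $(\Mbar_{0,n}, \Delta)$, whereby the map in question becomes the canonical map from a log smooth stack to its Artin fan and is therefore smooth.
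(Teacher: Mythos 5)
Your proof is correct and rests on the same external input as the paper's: the paper's entire proof is the citation to Hacking--Keel--Tevelev \cite[Theorem~1.11]{HKT}, i.e.\ sch\"onness of the tropical compactification, and your ``hands-on route'' simply unpacks why sch\"onness (equivalently, smoothness of the multiplication map $\mathsf{T}\times\Mbar_{0,n}\to X_{0,n}$, which is exactly the base change of $\Mbar_{0,n}\to[X_{0,n}/\mathsf{T}]$ along the smooth cover $X_{0,n}\to[X_{0,n}/\mathsf{T}]$) yields the claim. One caution: you should not lean on Proposition~\ref{prop: toric-embedding}(i) or (ii) here, since in the paper that proposition is proved \emph{after} and \emph{using} the present one, so both your ``conceptual route'' and the remark identifying part~(ii) as the ``infinitesimal shadow'' are circular in the paper's logical order; the self-contained sch\"onness argument is the one to keep.
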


\begin{proof}
    See~\cite[Theorem~1.11]{HKT} of Hacking, Keel, and Tevelev.
\end{proof}

\begin{proof}[Proof of Proposition~\ref{prop: toric-embedding}.]
  We have constructed the toric variety $X_{0,n}$ above, as a torus invariant open inside the Chow quotient of $\PP^{\binom{n}{2}-1}$. It follows from Kapranov's description that the toric stratification of $X_{0,n}$ pulls back to the usual stratification of $\Mbar_{0,n}$.  The statement about Artin fans follows.

  The statement about normal bundles follows by using these results together with the following pair of Cartesian diagrams:
\[
\begin{tikzcd}
W\arrow{d}\arrow{r} & \Mbar_{0,n}\arrow{d}\\
V\arrow{d}\arrow{r} & X_{0,n} \arrow{d}\\
{[V/T]} \arrow{r}&\mathsf{A}(X_{0,n}).
\end{tikzcd}
\]
The bottom left of the diagram is the closed stratum in the Artin fan $\mathsf A(V)$ given by the done dual to $V$ in $X_{0,n}$. Since the composite maps from the top row to the bottom are both smooth by Proposition \ref{prop:smooth_to_artin_fan}, and therefore flat, the statement about normal bundles now follows from flat base change for the normal bundle.
\end{proof}

We next turn to the Chow description. The following result is due to de Concini--Procesi~\cite{dCP95}, and is proved in the more general context of wonderful compactifications of hyperplane arrangement complements. See also~\cite{FY04}. 

\begin{proposition}
Let $j: \Mbar_{0,n}\to X_{0,n}$ be the inclusion above. The pullback map
\[
j^\star: \mathsf{CH}^\star(X_{0,n})\to \mathsf{CH}^\star(\Mbar_{0,n})
\]
is an isomorphism.
The same is true for the {pullback map under the} inclusion of a stratum of $\Mbar_{0,n}$ into the corresponding stratum of $X_{0,n}$. 
\end{proposition}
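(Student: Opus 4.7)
The plan is to prove this by comparing two explicit presentations of the two Chow rings and checking that $j^\star$ matches them termwise. On the toric side we have the Stanley--Reisner presentation
\[
\CH^\star(X_{0,n}) \;\cong\; \mathbb{Q}[D_\rho \,:\, \rho\in \Sigma_{0,n}(1)]\big/(I_{\mathrm{SR}} + I_{\mathrm{lin}})\,,
\]
where $I_{\mathrm{SR}}$ is generated by products of divisors whose rays do not span a common cone, and $I_{\mathrm{lin}}$ consists of the linear forms $\sum_\rho \langle m,u_\rho\rangle D_\rho$ for $m$ in the character lattice of $\mathsf{T}$. On the curve side Keel's theorem supplies the presentation $\mathbb{Q}[\mathsf{D}_A]/(\mathsf{Disjoint}+\mathsf{WDVV})$.

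For surjectivity of $j^\star$, I would use that $X_{0,n}$ is a smooth toric variety, so its rational Chow ring is generated by classes of torus-invariant prime divisors. By part (i) of Proposition~\ref{prop: toric-embedding} each such divisor pulls back to a boundary divisor $\mathsf{D}_A$ of $\Mbar_{0,n}$, and these generate $\CH^\star(\Mbar_{0,n})$ by Keel. Hence $j^\star$ is surjective, and one is reduced to showing that the ideal of relations on the toric side maps exactly onto the ideal $(\mathsf{Disjoint}+\mathsf{WDVV})$.

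The Stanley--Reisner monomials are immediately identified with the Disjoint relations: a collection of rays in $\Sigma_{0,n}$ fails to span a cone precisely when the corresponding boundary divisors $\mathsf{D}_{A_i}$ are pairwise set-theoretically disjoint in $\Mbar_{0,n}$, again by the stratification compatibility in Proposition~\ref{prop: toric-embedding}(i). The hard part is then to match the linear relations $I_{\mathrm{lin}}$ with $\mathsf{WDVV}$ modulo the Disjoint ideal. Concretely I would exhibit a generating family of characters $m\in M_{\mathsf{T}}$ whose associated rational functions on $X_{0,n}$ restrict on the open stratum $\cM_{0,n}\subset \Mbar_{0,n}$ to cross-ratios pulled back under the forgetful maps $\Mbar_{0,n}\to \Mbar_{0,4}$; the principal divisor of each such cross-ratio is then precisely a WDVV relation. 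Because writing out this matching by hand is combinatorially delicate, the cleanest route is to appeal to the general theorem of de Concini--Procesi~\cite{dCP95} (or its reformulation by Feichtner--Yuzvinsky~\cite{FY04}) which, for wonderful models of hyperplane arrangement complements embedded in their associated tropical/toric compactifications, identifies the two Chow rings via pullback. Kapranov's realisation of $\Mbar_{0,n}\hookrightarrow X_{0,n}$ places us in exactly this framework, and the identification $j^\star$ gives is the desired isomorphism.

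For the statement about strata, I would use that any stratum $W$ of $\Mbar_{0,n}$ is, up to a finite quotient, a product of smaller moduli spaces $\Mbar_{0,n_v}$, and by Proposition~\ref{prop: toric-embedding}(i) it embeds into the corresponding torus orbit closure $V\subset X_{0,n}$, which is itself a smooth quasi-projective toric variety that factors compatibly as a product of smaller toric pieces. Applying the main argument to each factor (or invoking the K\"unneth formula for Chow rings of smooth toric varieties and of the boundary strata of moduli spaces of rational curves, which are themselves products of further $\Mbar_{0,m}$) reduces the stratum version to an induction on $n$ whose base case $n\leq 4$ is trivial.
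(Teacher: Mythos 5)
Your proposal is correct and follows essentially the same route as the paper: surjectivity via Keel's divisor generators together with the compatibility of the two stratifications, injectivity by appealing to de Concini--Procesi and Feichtner--Yuzvinsky to identify the wonderful-model presentation of $\CH^\star(\Mbar_{0,n})$ with the toric Chow presentation of $X_{0,n}$, and the strata case via compatible product decompositions and a K\"unneth argument with induction on $n$. The one point you gloss over is that the K\"unneth formula for Chow rings is not automatic; the paper justifies it by observing that the factors involved are smooth \emph{linear} varieties, for which a Chow K\"unneth theorem holds by Totaro's result.
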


\begin{proof}
It is straightforward to see that the pullback is surjective: Keel's presentation of the Chow ring already shows that the Chow ring of $\Mbar_{0,n}$ is generated as an algebra by boundary divisors. Moreover, the stratification of $\Mbar_{0,n}$ by topological type is the pullback of the toric stratification on $X_{0,n}$. In particular, the boundary divisors pull back to the boundary divisors, which guarantees surjectivity of $j^\star$. 

The injectivity is slightly more subtle, but follows from more general results on wonderful compactifications of hyperplane arrangement complements. Indeed, $\Mbar_{0,n}$ is the wonderful compactification of the braid arrangement complement in $\mathbb C^{n-3}$, in the sense of de Concini--Procesi~\cite[Section~4.3]{dCP95}. The cohomology presentation they give in~\cite[Section~5]{dCP95} is the same as that of Keel's. In this broader context of arrangements, in~\cite{FY04} Feichtner and Yuzvinsky showed that this explicit presentation is precisely the natural Chow presentation for smooth toric varieties\footnote{In terms of invariant divisors with relations given by characters, as in~\cite[Chapter~5]{Ful93}} of the toric variety $X_{0,n}$. The reader can find a summary of this work in~\cite[Section~6.7]{MS14}. These together show that $j^\star$ is an isomorphism. 

We turn to the statement for strata. Fix a boundary stratum in $\Mbar_{0,n}$ with marked dual graph $\Gamma$. This stratum $\Mbar_\Gamma$ is identified with a product of moduli spaces of curves associated to the vertices of $\Gamma$, marked by the flags of incident edges and markings. The stratum $\Mbar_\Gamma$ is naturally embedded in a stratum $X_\Gamma$ of the toric variety $X_{0,n}$. The fan $\Sigma_{0,n}$ of this toric variety is naturally identified with the cone complex $\cM_{0,n}^{\mathsf{trop}}$. The fan of $X_\Gamma$ is equal to the star fan, in $\cM_{0,n}^{\mathsf{trop}}$, of the cone labeled by the type $\Gamma$. This star fan is also naturally a product over vertices in $\Gamma$, of the fans associated to vertices of $\Gamma$, marked as above. 

Summarizing, the stratum $\Mbar_\Gamma$ is a product of $\Mbar_{0,k}$ for various $k<n$ and similarly, the stratum $X_\Gamma$ is a product of $X_{0,k}$ for various $k<n$. The induced embedding $\Mbar_\Gamma\hookrightarrow X_\Gamma$ is compatible with the product decomposition. 

The varieties in question are linear and smooth, and therefore satisfy a K\"unneth theorem in Chow cohomology~\cite{Tot14}. It follows that the embedding $\Mbar_\Gamma\hookrightarrow X_\Gamma$ also induces an isomorphism in Chow under pullback. 
\end{proof}

\subsubsection{Proof of Theorem \ref{Thm:logCHMbar0n}} We will prove the theorem by showing that the directed systems of Chow rings of blowups coincide, and this will be done by induction on the number of blowups. We can then use the easy isomorphism on the $X_{0,n}$ side. Let us spell out the argument in the case of a single blowup, before explaining the general case. We start with an isomorphism on Chow induced by the inclusion
\[
j:\Mbar_{0,n}\to X_{0,n}. 
\]
Let $W$ be a stratum of $\Mbar_{0,n}$. By the proposition above, $W$ is equal to $j^{-1}(V)$. Furthermore, under the pullback $j^\star$, the normal bundle of $V$ becomes that of $W$. By using Keel's blowup formula~\cite{Kee92}, we see that pullback under
\[
j':\mathsf{Bl}_W\Mbar_{0,n}\hookrightarrow\mathsf{Bl}_VX_{0,n}
\]
gives rise to a natural isomorphism of Chow rings. 

The claimed result now follows by induction. Any stratum of $\mathsf{Bl}_W\Mbar_{0,n}$ is either a blowup of a moduli space with smaller numerical data, or formed from projective bundles over such a smaller moduli space. By the arguments above, the map $j'$ still identifies these strata and their Chow rings by pullback, compatibly with normal bundles. It follows that the two directed systems of Chow rings are isomorphic. 

The stated theorem now follows from the fact that the Chow ring of the toric side is given by piecewise polynomials modulo linear relations coming from the characters of the dense torus. Since the latter is preserved by blowups, the result follows. \qed

\subsubsection{Spaces of rational curves}

The calculation of $\mathsf{logCH}^\star(\Mbar_{0,n})$ has a natural
extension to the logarithmic geometry the moduli of maps.
Let $Y$ be a nonsingular and projective toric variety, with its canonical log structure coming from the full toric boundary. Consider the moduli space $\Mbar_\Lambda(Y)$ of logarithmic stable maps of genus 0 curves to $Y$ (with respect to the full toric boundary
$\Delta_Y\subset Y$)
with fixed numerical data $\Lambda$, and assume $n \geq 3$. By results of \cite{R15b,RW19}, the stack
 $\Mbar_\Lambda(Y)$ 
is isomorphic to a logarithmic blowup of $\Mbar_{0,n}\times Y$. As
a consequence, 
$$\mathsf{logCH}^\star(\Mbar_\Lambda(Y)) = \mathsf{logCH}^\star(\Mbar_{0,n}\times Y)\, .$$
The arguments we have used for $\Mbar_{0,n}$ immediately generalize to prove
the following result.

\begin{theorem}
The logarithmic Chow ring of $\Mbar_\Lambda(Y)$ is given by
\[
\mathsf{logCH}^\star(\Mbar_\Lambda(Y)) = \mathsf{PP}^\star(\Mbar_{0,n}\times Y)\, / \, (\mathsf{WDVV,Linear})\, ,
\]
where ${\mathsf{Linear}}$ is the usual space of relations on piecewise
linear functions $\mathsf{PL}^\star(Y)$ obtained from
the divisor linear equivalences of the components of the toric boundary of $Y$. 
\end{theorem}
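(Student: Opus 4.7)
The plan is to mirror the proof of Theorem \ref{Thm:logCHMbar0n}, replacing the toric embedding $\Mbar_{0,n} \hookrightarrow X_{0,n}$ by its product with $Y$. Since $\mathsf{logCH}^\star(\Mbar_\Lambda(Y)) = \mathsf{logCH}^\star(\Mbar_{0,n}\times Y)$ is already given from \cite{R15b, RW19}, it suffices to compute the logarithmic Chow ring of the product $\Mbar_{0,n} \times Y$ equipped with its product log structure. The ambient toric variety in this case is $X_{0,n} \times Y$, which is smooth and quasi-projective, and we have a closed embedding
\[
j \times \mathrm{id}: \Mbar_{0,n} \times Y \hookrightarrow X_{0,n} \times Y.
\]
The cone stack of the product log structure is $\Sigma_{0,n} \times \Sigma_Y$, and its Artin fan is $[X_{0,n}/\mathsf{T}] \times [Y/T_Y]$, so the natural map to the Artin fan factors through $j \times \mathrm{id}$.

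Next, I would verify that Proposition \ref{prop: toric-embedding} carries over to the product: the torus-orbit stratification of $X_{0,n} \times Y$ pulls back to the product stratification of $\Mbar_{0,n} \times Y$, and normal bundles of (products of) strata in $\Mbar_{0,n} \times Y$ are pulled back from their ambient toric counterparts. This follows since both factors separately satisfy these properties, and the Cartesian square identifying the strata passes to products. The key Chow-theoretic input is that $(j \times \mathrm{id})^\star : \CH^\star(X_{0,n}\times Y) \to \CH^\star(\Mbar_{0,n}\times Y)$ is an isomorphism, as is its analogue on every pair of corresponding strata. Both $\Mbar_{0,n}$ (a wonderful compactification of a hyperplane arrangement complement) and $Y$ (toric) are linear schemes in the sense of Totaro \cite{Tot14}, so they satisfy the K\"unneth formula in Chow cohomology, and the isomorphism on each factor established in the proof of Theorem \ref{Thm:logCHMbar0n} upgrades to the product.

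With these ingredients, I would then repeat the blowup induction of Theorem \ref{Thm:logCHMbar0n} verbatim. For a stratum $W = W_1 \times W_2$ of $\Mbar_{0,n} \times Y$ contained in the corresponding stratum $V = V_1 \times V_2$ of $X_{0,n} \times Y$, the identification of normal bundles and Keel's blowup formula give an isomorphism
\[
\mathsf{CH}^\star(\mathsf{Bl}_W(\Mbar_{0,n} \times Y)) \cong \mathsf{CH}^\star(\mathsf{Bl}_V(X_{0,n} \times Y)),
\]
and the resulting strata are again of product form where the induction hypothesis applies. Passing to the colimit over all log blowups, the directed systems on the two sides coincide, giving $\mathsf{logCH}^\star(\Mbar_{0,n}\times Y) \cong \mathsf{logCH}^\star(X_{0,n}\times Y)$.

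Finally, the right-hand side is computed by the standard toric description: it is the ring of piecewise polynomials on the product fan $\Sigma_{0,n} \times \Sigma_Y$ modulo the characters of the product dense torus $\mathsf{T} \times T_Y$. The character relations from $\mathsf{T}$ give precisely the $\mathsf{WDVV}$ relations, as in the proof of Theorem \ref{Thm:logCHMbar0n}, while the character relations from $T_Y$ give the $\mathsf{Linear}$ relations coming from linear equivalences among the toric divisors of $Y$. This yields the claimed presentation. The principal obstacle is checking that the K\"unneth property and the normal bundle compatibility survive the iterated blowup procedure, so that the induction actually closes; this reduces to the fact that every stratum appearing in an iterated blowup is either a product of a smaller $\Mbar_{0,k}$-type space with a toric stratum of $Y$, or a projective bundle over such a product, all of which remain linear schemes.
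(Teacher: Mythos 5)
Your proposal is correct and follows essentially the same route as the paper, which explicitly states that the arguments for $\Mbar_{0,n}$ "immediately generalize": reduce to $\Mbar_{0,n}\times Y$ via the identification of $\Mbar_\Lambda(Y)$ with a log blowup, embed into $X_{0,n}\times Y$, and run the same blowup induction, with the character relations of the product torus splitting into the $\mathsf{WDVV}$ and $\mathsf{Linear}$ ideals. Your use of Totaro's K\"unneth theorem for linear schemes to upgrade the Chow isomorphism to the product is exactly the mechanism already invoked in the paper's treatment of strata of $\Mbar_{0,n}$, so nothing new is needed and the induction closes as you describe.
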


A parallel calculation without the {\em full toric boundary} condition would
be interesting.
For example, the logarithmic Chow rings of the moduli
spaces of genus 0 logarithmic maps to $\mathbb P^n$ with the
logarithmic structure coming from a subset of the toric boundary
are well-behaved, and 
there is evidence that their Chow rings are entirely tautological (as in the case of $\Mbar_{0,n}$), see~\cite{KHNSZ,Opr06,Pan99}.

\subsection{Results and counterexamples in genus \texorpdfstring{$1$}{1}}
\label{sec:mapsspaces}
For all $g$ and $n$, there is a canonical map
$$\mu_{g,n}:\mathsf{PP}^\star(\Mbar_{g,n}, \Delta) \, \rightarrow\, 
\mathsf{logR}^\star(\Mbar_{g,n})\, .$$
The map $\mu_{0,n}$ is surjective in genus 0, and 
the calculation of 
$\mathsf{logCH}^\star(\Mbar_{0,n})$ can be restated as:
{\em the kernel of $\mu_{0,n}$ is generated by
the canonical lifts of the $\mathsf{WDVV}$ relations}.
A more elegant restatement is as 
an isomorphism:
$$\mu_{0,n}^{\mathsf{R}}:\mathsf{R}^\star(\Mbar_{0,n}) \otimes_{\mathsf{sPP}^\star(\Mbar_{0,n}, \Delta)}
\mathsf{PP}^\star(\Mbar_{0,n}, \Delta)
\ \stackrel{\sim}{\longrightarrow} \
\mathsf{logR}^\star(\Mbar_{0,n})\, .$$
Here, ${\mathsf{sPP}^\star(\Mbar_{0,n})}$
is the algebra of {strict piecewise polynomials}
on the Artin fan of $\Mbar_{0,n}$.


In higher genus, the map $\mu^{\mathsf{R}}_{g,n}$ surjects onto 
$\mathsf{logR}_{\mathrm{sm}}^\star(\Mbar_{g,n})$, 
$$\mu_{g,n}^{\mathsf{R}}:\mathsf{R}^\star(\Mbar_{g,n}) \otimes_{\mathsf{sPP}^\star(\Mbar_{g,n}, \Delta)}
\mathsf{PP}^\star(\Mbar_{g,n}, \Delta)
\ {\twoheadrightarrow} \
\mathsf{logR}_{\mathrm{sm}}^\star(\Mbar_{g,n})\, .$$

\noindent {\bf Question B.} {\em 
Can the 
kernel of $\mu_{g,n}^{\mathsf{R}}$
be understood?}

\vspace{8pt}

A non-trivial kernel of $\mu_{g,n}^{\mathsf{R}}$ can be found
even in genus 1.
\begin{proposition} \label{Prop:mu_1_n_not_injective}
For $g=1, n\geq 3$ the map $\mu_{g,n}^{\mathsf{R}}$ is not injective.
\end{proposition}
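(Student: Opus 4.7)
The plan is to exhibit an explicit non-zero element in the kernel of $\mu_{1,n}^{\mathsf{R}}$ by leveraging two features peculiar to genus one. On $\Mbar_{1,1}$, Mumford's relation $12\psi_1 = \delta_0$ identifies the generator of $\mathsf{R}^1(\Mbar_{1,1})$ with the image of a piecewise linear function under $\Phi$, and the dimensional vanishing $\psi_1^2 = 0$ provides a codimension-$2$ tautological relation. Pulled back to $\Mbar_{1,n}$ via a forgetful map $\pi: \Mbar_{1,n} \to \Mbar_{1,1}$, these combine with non-strict piecewise polynomials to produce relations in $\mathsf{logCH}^\star(\Mbar_{1,n})$ that are not imposed on $\mathsf{R}^\star(\Mbar_{1,n}) \otimes_{\mathsf{sPP}^\star(\Sigma_{1,n})} \mathsf{PP}^\star(\Sigma_{1,n})$ through the $\mathsf{sPP}^\star$-action alone.

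The first step is to fix a stable graph $\Gamma$ on $\Mbar_{1,n}$ whose stratum has an $\Mbar_{1,1}$ factor. The simplest such graph, which is stable precisely for $n\geq 3$, has one genus-one vertex joined by a single edge to a genus-zero vertex carrying all $n$ markings. I would then pick a non-strict piecewise polynomial $h \in \mathsf{PP}^\star(\Sigma_{1,n}) \setminus \mathsf{sPP}^\star(\Sigma_{1,n})$ supported on the star of $\sigma_\Gamma$, for example a $\min$-function of the type used in the proof of Proposition \ref{Pro:mugn_not_surjective}. The candidate kernel element is a tensor $\xi = \alpha \otimes h$, where $\alpha \in \mathsf{R}^\star(\Mbar_{1,n})$ is a product of $\psi$-classes chosen so that $\alpha \cdot \Phi^{\mathrm{log}}(h) = 0$ in $\mathsf{logCH}^\star(\Mbar_{1,n})$: the vanishing follows from the combination of Mumford's relation and $(\pi^\star \psi_1)^2 = 0$ after restricting to the appropriate log blowup of the stratum $\Mbar_\Gamma$. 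Crucially, $\alpha$ is not in the image of $\mathsf{sPP}^\star(\Sigma_{1,n}) \to \mathsf{R}^\star(\Mbar_{1,n})$, so $\alpha \otimes h$ cannot be exchanged for a pure $\mathsf{sPP}^\star$-action on $h$.

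The main obstacle is to verify that $\xi$ is genuinely non-zero in the tensor product. The strategy is to fix a log blowup $\widetilde{\cM} \to \Mbar_{1,n}$ whose cone subdivision makes $h$ a strict piecewise polynomial $\widetilde h \in \mathsf{sPP}^\star(\widetilde \Sigma)$, so that $\xi$ has an image in the more tractable quotient
\[
\mathsf{R}^\star(\Mbar_{1,n}) \otimes_{\mathsf{sPP}^\star(\Sigma_{1,n})} \mathsf{sPP}^\star(\widetilde \Sigma)\,,
\]
which maps to $\mathsf{CH}^\star(\widetilde{\cM})$ by the factorisation of $\mu_{1,n}^\mathsf{R}$ through log blowups. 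I would then pair $\xi$ with a test cycle on $\widetilde{\cM}$, chosen to be supported on the exceptional divisor of the blowup of $\Mbar_\Gamma$ and to pick up a non-trivial contribution from the $\psi$-factors of $\alpha$, in the spirit of the push/pull computation at the end of the proof of Proposition \ref{Pro:mugn_not_surjective}. The resulting intersection number is non-zero, while any element of the form $1 \otimes s$ with $s \in \mathsf{sPP}^\star$ pairs trivially with the same test cycle by construction. This separates $\xi$ from zero in the tensor product and establishes the non-injectivity of $\mu_{1,n}^\mathsf{R}$.
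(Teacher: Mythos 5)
Your overall strategy --- multiply a genus-one tautological relation by a non-strict piecewise polynomial supported near a boundary stratum, and argue the result dies under $\mu_{1,n}^{\mathsf{R}}$ but survives in the tensor product --- is the same in spirit as the paper's, which uses the graph with two genus-$0$ vertices joined by two edges and the element $1 \otimes \big((x-y)\cdot\min(\ell_1,\ell_2)\big)$, where $x-y$ encodes the WDVV relation pulled back from the $\Mbar_{0,4}$ factor. However, as written your argument has two genuine gaps. First, the pivot of your construction is the claim that your decoration $\alpha$ (a product of $\psi$-classes) is \emph{not} in the image of $\mathsf{sPP}^\star(\Sigma_{1,n}) \to \mathsf{R}^\star(\Mbar_{1,n})$, so that $\alpha \otimes h$ ``cannot be exchanged for a pure $\mathsf{sPP}^\star$-action on $h$.'' This is false in genus $1$: by the divisorial relations of Arbarello--Cornalba, every $\psi$-class on $\Mbar_{1,n}$ (and on $\Mbar_{1,1}$, via Mumford's own relation $12\psi_1=\delta_0$ that you invoke) is a linear combination of boundary divisors, so the map $\mathsf{sPP}^\star(\Sigma_{1,n}) \to \mathsf{R}^\star(\Mbar_{1,n})$ is \emph{surjective} --- this is exactly what the paper exploits in Proposition \ref{Pro:logR_pp_genus_1}. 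Consequently $\alpha = \Phi(s)$ for some strict piecewise polynomial $s$ and $\alpha \otimes h = 1 \otimes (s\cdot h)$ in the tensor product; the structural reason you give for the element's survival evaporates, and you are forced back to analyzing when $1 \otimes (\text{something})$ is nonzero, which is precisely the computation the paper carries out by identifying the domain of $\mu^{\mathsf{R}}_{1,3}$ with $\mathsf{PP}^\star/I\cdot\mathsf{PP}^\star$ and checking degrees.

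Second, and more seriously, your proposed non-vanishing argument is self-defeating. You want $\xi$ to lie in $\ker \mu_{1,n}^{\mathsf{R}}$, and you propose to detect $\xi \neq 0$ by mapping to $\mathsf{CH}^\star(\widetilde{\cM})$ ``by the factorisation of $\mu_{1,n}^{\mathsf{R}}$ through log blowups'' and pairing with a test cycle. But if $\mu_{1,n}^{\mathsf{R}}(\xi)=0$ then the image of $\xi$ in $\mathsf{CH}^\star(\widetilde{\cM})$ is already zero (pullback along further log blowups is injective since pushforward splits it), so \emph{every} intersection number with a test cycle vanishes; a non-zero intersection number would contradict membership in the kernel rather than establish non-triviality in the tensor product. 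Any non-vanishing certificate must live on the domain side of $\mu$, not factor through it. The paper's solution is purely algebraic: $\mathsf{R}^\star(\Mbar_{1,3}) = \mathsf{sPP}^\star(\Sigma_{1,3},\Delta)/I$ with $I$ generated in degree $\geq 2$ (boundary divisors are linearly independent in degree $1$), so the degree-$2$ part of $I\cdot\mathsf{PP}^\star$ consists of \emph{strict} piecewise polynomials, while the candidate element is not strict --- hence nonzero. To repair your proof you would need an analogous presentation-based argument; the test-cycle pairing cannot do this job.
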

\begin{proof}
Below we construct a non-trivial element in the kernel of $\mu_{1,3}^{\mathsf{R}}$. For $n>3$ this element pulls back to a non-zero element in the kernel of $\mu_{1,n}^{\mathsf{R}}$ under the natural forgetful map.

For $n=3$, consider the stable graph $\Gamma_0$ with two genus $0$ vertices, carrying markings $\{1,2\}$ and $\{3\}$, respectively, and connected by two edges. In Figure \ref{fig:star_Mbar_13} we depict the star of $\Gamma_0$ in the tropicalization $\Sigma_{1,3}$ of $\Mbar_{1,3}$. The cone of $\Gamma_0$ corresponds to the vertical line, and the central dot corresponds to the ray $\tau_{\Gamma_0}$ where $\ell_1=\ell_2$. 
The drawn subdivision $\widehat \Gamma$ at $\tau_{\Gamma_0}$  corresponds to the blowup $\widehat{\mathcal{M}} \to \Mbar_{1,3}$ of the stratum associated to $\Gamma_0$. 
We claim that
\begin{equation} \label{eqn:nonzero_element_in_kernel}
0 \neq 1 \otimes ( (x-y) \cdot \min(\ell_1, \ell_2)) \in \ker \mu_{g,n}^{\mathsf{R}}
\end{equation}
is a nonzero element of the kernel.

To see that it maps to zero under $\mu_{g,n}^{\mathsf{R}}$ we note that the piecewise linear function $\min(\ell_1, \ell_2)$ has value $1$ on the generator of the new ray $\tau_{\Gamma_0}$ and $0$ on all other rays of $\widehat \Sigma$. Thus it corresponds to the exceptional divisor $E$ of the blowup $\widehat{\mathcal{M}} \to \Mbar_{1,3}$. Multiplying this function by $x-y$ corresponds to pulling back the WDVV relation under the projection $E \to \Mbar_{\Gamma_0} = \Mbar_{0,4}$, since this relation is given by the piecewise polynomial $x=y$ on $\Sigma_{0,4}$. This shows that $ (x-y) \cdot \min(\ell_1, \ell_2)$ indeed maps to zero in $\mathsf{logR}_{\mathrm{sm}}^\star(\Mbar_{1,3})$.

Finally, to see that \eqref{eqn:nonzero_element_in_kernel} is nonzero, note that the map $\mathsf{sPP}^\star(\Mbar_{1,3}, \Delta) \to \mathsf{R}^\star(\Mbar_{1,3})$ is surjective (since the boundary strata generate the tautological ring in genus $1$) and it is an isomorphism in degree at most $1$ (since the boundary divisors in $\Mbar_{1,3}$ are linearly independent by \cite{AC98}). Thus 
\[
\mathsf{R}^\star(\Mbar_{1,3}) = \mathsf{sPP}^\star(\Mbar_{1,3}, \Delta) / I
\]
with the ideal $I$ generated in the degree at least $2$. It follows that the domain of $\mu_{g,n}^{\mathsf{R}}$ is isomorphic to
\[
\mathsf{PP}^\star(\Mbar_{g,n}, \Delta) / I \cdot \mathsf{PP}^\star(\Mbar_{g,n}, \Delta)\,.
\]
Since the generators of $I$ have degree at least $2$, the degree $2$ part of $I \cdot \mathsf{PP}^\star(\Mbar_{g,n}, \Delta)$ agrees with the degree $2$ part of $I$, and thus consists of strict piecewise polynomial functions on $\Sigma_{1,3}$. Since $(x-y) \cdot \min(\ell_1, \ell_2)$ is not strict piecewise polynomial on $\Sigma_{1,3}$ (only on its subdivision $\widehat \Sigma$), it is not contained in $I \cdot \mathsf{PP}^\star(\Mbar_{g,n}, \Delta)$, and thus nonzero.
\end{proof}

\begin{figure}
    \centering
\begin{tikzpicture}
    \draw (-2,0) --node[above,sloped] {\( \ell_1=0 \)} (0,2) --node[above,sloped] {\( \ell_1=0 \)} (2,0) --node[below,sloped] {\( \ell_2=0 \)} (0,-2) --node[below,sloped] {\( \ell_2=0 \)} (-2,0);
    \draw (0,-2) -- (0,2);
    \draw[red] (-2,0) -- (2,0);
    \draw[red, fill] (0,0)  circle (2pt) node[below right] {$\tau_{\Gamma_0}$};

    \fill (0,-3) circle (0.06);
    \draw[->] (0,-3) --node[above]{y}  (1,-3) ;
    \draw[->] (0,-3) --node[above]{x}  (-1,-3) ;

\begin{scope}[shift={(4,-2)}, scale=0.7]
    
    \coordinate (A) at (0,0);
    \coordinate (B) at (-1,2);
    \coordinate (C) at (1,2);
    
    \coordinate (leg1) at ([xshift=-0.5cm]B);
    \coordinate (leg2) at ([xshift=0.5cm]C);
    \coordinate (leg3) at ([yshift=-0.5cm]A);
    
    \draw (A) -- (B) -- (C) -- cycle;
    
    \draw (B) -- (leg1) node[left] {1};
    \draw (C) -- (leg2) node[right] {2};
    \draw (A) -- (leg3) node[below] {3};
    
    \path (A) -- (B) node[midway,left] {\( \ell_1 \)};
    \path (A) -- (C) node[midway,right] {\( \ell_2 \)};
    \path (B) -- (C) node[midway,above] {y};
    
    \fill (A) circle (0.1);
    \fill (B) circle (0.1);
    \fill (C) circle (0.1);
    
\end{scope}

\begin{scope}[shift={(-4,-2)}, scale=0.7]
    \coordinate (A) at (0,0);
    \coordinate (B) at (0,2);
    \coordinate (C) at (0,3);

    \draw (A) to[bend left] node[left] {\( \ell_1 \)} (B);
    \draw (A) to[bend right] node[right] {\( \ell_2 \)} (B);
    \draw (B) -- node[left] {x} (C);

    \coordinate (leg1) at ([xshift=-0.5cm, yshift=0.5cm]C);
    \coordinate (leg2) at ([xshift=0.5cm, yshift=0.5cm]C);
    \coordinate (leg3) at ([yshift=-0.5cm]A);

    \draw (C) -- (leg1) node[left] {1};
    \draw (C) -- (leg2) node[right] {2};
    \draw (A) -- (leg3) node[right] {3};

    \fill (A) circle (0.1);
    \fill (B) circle (0.1);
    \fill (C) circle (0.1);
\end{scope}

\end{tikzpicture}
    \caption{A cross section through the star of $\Gamma_0$ in the cone stack of $\Mbar_{1,3}$. For better visibility, we draw the double-cover of the actual picture where the two edges with lengths $\ell_1, \ell_2$ are distinguishable. The star of $\Gamma_0$ is the quotient of the figure under reflection along the horizontal axis (the red subdivision defined
    by $\ell_1 = \ell_2$).}
    \label{fig:star_Mbar_13}
\end{figure}
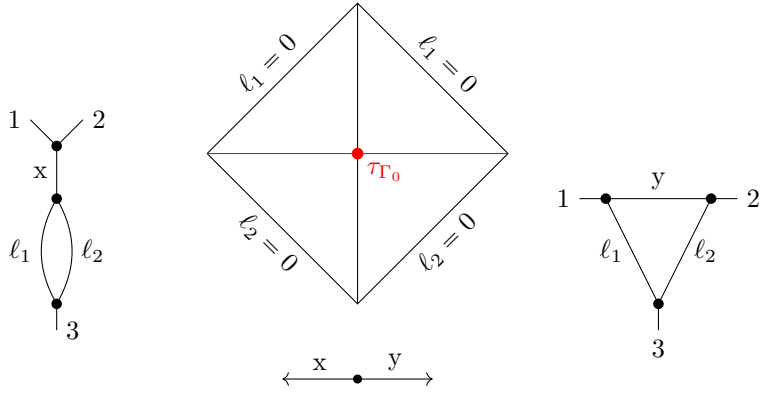

\begin{proposition} \label{Pro:logR_pp_genus_1}
In genus 1, 
$\mathsf{logR}^\star_{\mathrm{pp}}(\Mbar_{1,n})
=\mathsf{logR}^\star(\Mbar_{1,n})$ 
for all $n\geq 1$.
\end{proposition}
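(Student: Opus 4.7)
The plan is to establish the nontrivial inclusion $\mathsf{logR}^\star(\Mbar_{1,n}) \subseteq \mathsf{logR}^\star_{\mathrm{pp}}(\Mbar_{1,n})$, namely that every generator $[\Gamma, f, \gamma]$ (with $\Gamma$ a stable graph of genus $1$, $f \in \pPP_\star(\Sigma_\Gamma, \Delta_\Gamma)$, and $\gamma \in \R^\star(\Mbar_\Gamma)$ a decoration) can already be realized by a piecewise polynomial on $\Sigma_{1,n}$.

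The key reduction is to absorb the decoration $\gamma$ into the piecewise polynomial $f$. Since every vertex $v$ of $\Gamma$ satisfies $g(v) \in \{0,1\}$, and since $\Sigma_\Gamma = \prod_v \Sigma_{g(v),n(v)} \times \RR_{\geq 0}^{E(\Gamma)}$ and $\Mbar_\Gamma = \prod_v \Mbar_{g(v),n(v)}$ factor compatibly, the required vertex-wise statement is that $\R^\star(\Mbar_{g(v),n(v)})$ is contained in the image of $\Phi : \sPP^\star(\Sigma_{g(v),n(v)}) \to \CH^\star(\Mbar_{g(v),n(v)})$. For $g(v)=0$, this is Keel's theorem, since boundary divisors are normally decorated strata classes. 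For $g(v)=1$, it follows from the classical fact---already invoked in the proof of Proposition \ref{Prop:mu_1_n_not_injective}---that the boundary strata of $\Mbar_{1,m}$ generate $\R^\star(\Mbar_{1,m})$ as a ring, combined with the observation that products of boundary strata classes, computed via excess intersection, yield normally decorated strata classes, which lie in the image of $\Phi$.

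Granting this, one obtains $g \in \sPP^\star(\Sigma_\Gamma)$ with $\Phi_\Gamma(g) = \gamma$. Using the $\sPP^\star(\Sigma_\Gamma)$-module structure on $\pPP_\star(\Sigma_\Gamma, \Delta_\Gamma)$ and the definition of the log decorated stratum class, we rewrite
\[
[\Gamma, f, \gamma] = [\Gamma, f, \Phi_\Gamma(g)] = [\Gamma, f \cdot g, 1]\,.
\]
Property (b) of log decorated stratum classes (made precise by Proposition \ref{Prop:sPP_pushforward}) then identifies this with
\[
[\Gamma, f \cdot g, 1] = \Phi^\mathrm{log}_{1,n}\bigl((\iota_\Gamma^\mathrm{trop})_\star(f \cdot g)\bigr)\,,
\]
which by Definition \ref{ooo} lies in $\mathsf{logR}^\star_{\mathrm{pp}}(\Mbar_{1,n})$, completing the argument.

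The main obstacle is the vertex-wise claim for genus $1$ vertices: that the tautological ring of $\Mbar_{1,m}$ is generated, as a ring, by normally decorated boundary strata classes. This is exactly where the specificity of genus $1$ enters---the analogous statement fails in high genus (consistent with Proposition \ref{Pro:mugn_not_surjective}), reflecting the existence of tautological classes on $\Mbar_{g,m}$ for $g \geq 2$ that cannot be expressed in this way.
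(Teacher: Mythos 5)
Your argument is correct and follows essentially the same route as the paper's proof: reduce the vertex decorations to classes in the image of $\Phi$ using the genus $\leq 1$ fact that $\psi$-classes (hence all decorations) are expressible in undecorated boundary strata via the relations of Arbarello--Cornalba, absorb the resulting strict piecewise polynomial into $f$, and identify $[\Gamma, f\cdot g, 1]$ with $\Phi^{\mathrm{log}}\bigl((\iota_\Gamma^{\mathrm{trop}})_\star (f\cdot g)\bigr)$. The only cosmetic difference is that you justify surjectivity of the vertex-wise $\Phi$ via ring generation by boundary strata and excess intersection, whereas the paper argues directly that the tautological ring is additively spanned by undecorated strata classes; both rest on the same input.
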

\begin{proof}
By Definition \ref{Def:large_taut_ring} it suffices to show that any decorated log stratum $[\Gamma, f, \alpha]$ is contained in $\mathsf{logR}^\star_{\mathrm{pp}}(\Mbar_{1,n})$. For this, we note that $\alpha \in \mathsf{R}^\star(\Mbar_\Gamma)$ is a product of strata classes (decorated by $\kappa$ and $\psi$-classes) on the factors $\Mbar_{g(v), n(v)}$ of $\Mbar_\Gamma$, where clearly all $g(v) \leq 1$. 
But in genus at most $1$, any $\psi$-class (and thus after pushforward any $\kappa$-class) can be expressed in terms of undecorated boundary strata using the divisorial relations from \cite{AC98}. Thus the map $\Phi: \mathsf{sPP}^\star(\Mbar_{g',n'}) \to \mathsf{R}^\star(\Mbar_{g', n'})$ is surjective for all $(g',n')$ with $g'=0,1$.  This means there exists $h_v \in \mathsf{sPP}^\star(\Sigma_{g(v),n(v)})$ such that
\[
\alpha = \prod_{v \in V(\Gamma)} \Phi(h_v) \in \mathsf{R}^\star(\Mbar_\Gamma)\,.
\]
Let $\pi_v : \Sigma_\Gamma \to \Sigma_{g(v), n(v)}$ be the projection to the factor associated to vertex $v$ in \eqref{eqn:Sigma_Gamma}. Then it follows that
\[
[\Gamma, f, \alpha] = [\Gamma, f \cdot \prod_{v \in V(\Gamma)} h_v \circ \pi_v, 1]\,.
\]
Finally we conclude by noting that for any class $[\Gamma,g,1]$ we have
\[
[\Gamma,g,1] = (\iota_\Gamma)_\star \Psi(g) = \Phi((\iota_\Gamma)_\star^\trop g) \in \mathsf{logR}_{\mathrm{pp}}^\star(\Mbar_{1,n}).\qedhere
\]
\end{proof}

\vspace{8pt}
\noindent{\bf Question C.} {\em Find a presentation of 
$\mathsf{logR}^\star(\Mbar_{1,n})$}.

\subsection{Study in higher genus}
\label{sec:highergenus}
While $\mathsf{logR}^\star(\Mbar_{g,n})$ appears larger
and more complicated than the standard tautological ring 
$\mathsf{R}^\star(\Mbar_{g,n})$, we view the study as {\em not
being essentially more difficult}. The calculation of 
$\mathsf{logR}^\star(\Mbar_{0,n})$  is the first evidence
of the tractability of these log Chow rings. We show here how
relations in $\mathsf{R}^\star(\Mbar_{g,n})$ can be used
to determine the structure of $\mathsf{logR}^\star(\Mbar_{g,n})$.

The tautological ring $\mathsf{R}^\star(\Mbar_{g,n})$
admits a surjection from the strata algebra 
$$ \phi_{g,n}:\mathsf{S}^\star_{g,n} \twoheadrightarrow
\mathsf{R}^\star(\Mbar_{g,n})\, ,$$
see \cite[Appendix A]{GP03}.
A full description of the tautological rings of
the moduli spaces of curves is provided by 
presenting a complete set of additive generators of the kernel,
$$\mathcal{P}_{g,n} \subset \text{ker}(\phi_{g,n})\, ,$$
for all stable $g$ and $n$. 
Pixton has conjectured a complete set of generators of  
$\mathcal{P}_{g,n}$
\cite{Pixton_relations}.

Given the domain $\Mbar_\Gamma$ of a gluing map $\iota_\Gamma$, there
is a similar surjection
$$ \phi_{\Gamma}:\mathsf{S}^\star_{\Gamma} = \bigotimes_{v \in V(\Gamma)} \mathsf{S}^\star_{g(v),n(v)} \twoheadrightarrow
\mathsf{R}^\star(\Mbar_{\Gamma}) \subseteq \mathsf{CH}^\star(\Mbar_\Gamma)\, ,$$
whose image agrees with the image of the natural composition
\begin{equation} \label{eqn:Chow_tensor_composition}
    \bigotimes_{v \in V(\Gamma)} \mathsf{R}^\star(\Mbar_{g(v),n(v)}) \hookrightarrow \bigotimes_{v \in V(\Gamma)} \mathsf{CH}^\star(\Mbar_{g(v),n(v)}) \xrightarrow{\mathsf{pr}} \mathsf{CH}^\star(\Mbar_\Gamma)\,.
\end{equation}
It is natural to expect that the kernel $\mathcal{P}_\Gamma$ of $\phi_\Gamma$ is the ideal generated by the tautological relations $\mathcal{P}_{g(v),n(v)}$ on the individual factors $\mathsf{S}^\star_{g(v),n(v)}$ and this would follow if the composition \eqref{eqn:Chow_tensor_composition} is injective. However, in contrast to the case of singular cohomology, the Chow group of a product does not agree with the tensor product of its Chow groups. 

Thus, in order to fully control the Chow groups of all normalizations of strata closures in $\Mbar_{g,n}$ we a priori need the full system of tautological relations $\mathcal{P}_\Gamma$. However, there is a sufficient condition to ensure that the $\mathcal{P}_\Gamma$ are indeed generated by the $\mathcal{P}_{g(v),n(v)}$: assume that for all pairs $g(v), n(v)$, the system $\mathcal{P}_{g(v),n(v)}$ also gives the complete system of tautological relations in cohomology. Then we have $\mathsf{R}^\star(\Mbar_{g(v),n(v)}) \cong \mathsf{RH}^{2\star}(\Mbar_{g(v),n(v)})$ and a commutative diagram
\begin{equation}
\begin{tikzcd}
\bigotimes_{v \in V(\Gamma)} \mathsf{R}^\star(\Mbar_{g(v),n(v)}) \arrow[d, "\cong"]  \arrow[rr] &  & \mathsf{CH}^\star(\Mbar_\Gamma) \arrow[d, "\mathsf{cl}"]\\
\bigotimes_{v \in V(\Gamma)} \mathsf{RH}^{2\star}(\Mbar_{g(v),n(v)}) \arrow[r, hookrightarrow] & \bigotimes_{v \in V(\Gamma)} \mathsf{H}^{2\star}(\Mbar_{g(v),n(v)}) \arrow[r,"\cong"] & \mathsf{H}^{2\star}(\Mbar_\Gamma)
\end{tikzcd}
\end{equation}
The injectivity and isomorphisms along the lower left path of the diagram (where the last isomorphism follows from the K\"unneth formula in cohomology) imply that the upper arrow is injective. As mentioned before, this then implies that $\mathcal{P}_\Gamma$ is the ideal generated by the $\mathcal{P}_{g(v),n(v)}$.

\begin{theorem}  \label{Thm:logR_det_by_Pixton}
The log tautological ring $\mathsf{logR}^\star(\Mbar_{g,n})$
is determined by the set of relations
\[
\mathcal{P}^{\leq}_{g,n}\ =\  \Big\{\,  \mathcal{P}_{\Gamma} \subset \mathsf{S}^\star_\Gamma \ \Big|\  \Gamma \text{ stable graph of genus $g$ with $n$ legs}\,  \Big\}\, .
\]
\end{theorem}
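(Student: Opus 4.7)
The plan is to describe the kernel of the surjection
\[
q : \mathsf{logS}^\star_{g,n} \twoheadrightarrow \mathsf{logR}^\star(\Mbar_{g,n})
\]
explicitly in terms of the $\mathcal{P}_\Gamma$. Specifically, I would show that $\ker(q)$ coincides with the two-sided ideal generated, with respect to the product of Proposition \ref{Prop:iota_pushforward_intersection}, by the natural lifts $\pPP_\star(\Sigma_\Gamma, \Delta_\Gamma) \otimes_{\sPP^\star(\Sigma_\Gamma)} \mathcal{P}_\Gamma$ of the tautological relations $\mathcal{P}_\Gamma$ inside the $\Gamma$-summand of the log strata algebra. One containment is immediate from the definition of $q$: since $[\Gamma, f, \gamma] = (\iota_\Gamma)_\star(\phi_\Gamma(\gamma) \cdot \afanmap_\Gamma^\star \Psi^{\log}_\Gamma(f))$ and $\phi_\Gamma$ kills $\mathcal{P}_\Gamma$, any such lift lies in $\ker(q)$.

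For the converse, I would argue by induction on the maximal number of edges $p$ occurring among the graphs $\Gamma$ indexing nonzero terms of a given relation, using the natural codimension filtration $F^p \mathsf{logS}^\star_{g,n}$ of summands indexed by graphs with at least $p$ edges, which is preserved by the product. Graphs $\Gamma$ of a fixed codimension $p$ contribute on disjoint open strata of $\Mbar_{g,n}$, so the top-codimension part of a given relation can be analyzed one graph $\Gamma_0$ at a time by restricting to the complement of the codimension-$(p+1)$ strata. Via the fiber diagram \eqref{5ttgd}, the isomorphism $\Psi^{\log}_{\Gamma_0}:\pPP_\star(\Sigma_{\Gamma_0},\Delta_{\Gamma_0})\cong\logCH_\star(\mathcal{P}_{\Gamma_0})$, and pullback along the smooth surjection $\afanmap_{\Gamma_0}$, such a local relation becomes a relation in
\[
\pPP_\star(\Sigma_{\Gamma_0}, \Delta_{\Gamma_0}) \otimes_{\sPP^\star(\Sigma_{\Gamma_0})} \mathsf{R}^\star(\Mbar_{\Gamma_0}).
\]
Right-exactness of tensor products applied to $0 \to \mathcal{P}_{\Gamma_0} \to \mathsf{S}^\star_{\Gamma_0} \to \mathsf{R}^\star(\Mbar_{\Gamma_0}) \to 0$ then forces the local relation to be a combination of lifts of $\mathcal{P}_{\Gamma_0}$. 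After subtracting an explicit witness of this form, the residual relation is supported on strata of strictly larger codimension, or with strictly smaller leading edge count, and the induction closes. The interaction with correction terms supported on lower-codimension strata is encoded by the explicit product formula of Proposition \ref{Prop:iota_pushforward_intersection}, which is already part of the definition of the log strata algebra product, hence poses no new difficulty.

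The main obstacle is the Zariski-local step that isolates a single graph $\Gamma_0$: one must verify that a log Chow relation that is supported on the closure of the locally closed stratum $\mathcal{M}^{\Gamma_0}$ and vanishes away from the codimension-$(p+1)$ boundary is genuinely witnessed, after pullback along $\afanmap_{\Gamma_0}$, by a relation in the tensor-product description $\pPP_\star(\Sigma_{\Gamma_0}, \Delta_{\Gamma_0}) \otimes_{\sPP^\star(\Sigma_{\Gamma_0})} \mathsf{R}^\star(\Mbar_{\Gamma_0})$. This requires a localization sequence for the log Chow groups together with flat base change along the smooth surjection $\afanmap_{\Gamma_0}$, and a careful identification of the $\sPP^\star(\Sigma_{\Gamma_0})$-module structures on both sides (which come, via $\Psi_{\Gamma_0}^{\log}$, from the action of normal bundles of boundary components and hence match the module structure on $\mathsf{S}^\star_{\Gamma_0}$ via $\phi_{\Gamma_0}$). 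Once this localization and compatibility is set up, the rest of the argument is a formal bookkeeping exercise combining right exactness of tensor products and the product formula of Proposition \ref{Prop:iota_pushforward_intersection}.
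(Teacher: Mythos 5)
Your proposal aims at something strictly stronger than what the paper proves, and its two key steps both have genuine gaps. The paper's own argument never touches the kernel of $q$: it reads ``determined'' in the weak sense that the quotients $\mathsf{R}^\star(\Mbar_\Gamma)=\mathsf{S}^\star_\Gamma/\mathcal{P}_\Gamma$, together with the push/pull maps between them, reconstruct the induced tautological system on every iterated boundary blowup via Theorem \ref{Thm:R_blow_up_determined} (projective bundle formula plus Fulton's blowup exact sequence), and such blowups are cofinal among log blowups. The paper then explicitly flags that an explicit presentation of the relations --- which is what you are attempting --- is open (Questions C and D, and the remark that ``the study via Fulton's blowup sequence is not practical''). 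If your argument closed, it would resolve those questions, which is already a warning sign.

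The first gap is that your proposed description of $\ker(q)$ is false: $\ker(q)$ is strictly larger than the two-sided ideal generated by the lifts $\pPP_\star(\Sigma_\Gamma,\Delta_\Gamma)\otimes\mathcal{P}_\Gamma$. By the product formula of Proposition \ref{Prop:iota_pushforward_intersection} and the fact that pullback preserves relations, every element of that ideal lies in $\bigoplus_\Gamma \pPP_\star(\Sigma_\Gamma,\Delta_\Gamma)\otimes\mathcal{P}_\Gamma$, i.e.\ each graph-indexed summand separately maps to zero under $q$. But $\ker(q)$ also contains the cross-graph rewriting relations of Remark \ref{Rmk:strata_log_lift}(c), which identify a class on the $\Gamma$-summand decorated by a pushforward $(\iota_{\Gamma'\to\Gamma})_\star\gamma_0$ with a class on the $\Gamma'$-summand; the two terms of such a relation are individually nonzero in $\logCH^\star(\Mbar_{g,n})$, so the relation cannot lie in your ideal. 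The second gap is in the local step: after isolating a graph $\Gamma_0$, you pass from the vanishing of a class in the (localized) log Chow group of $\Mbar_{\Gamma_0}$ to the vanishing of the corresponding element of $\pPP_\star(\Sigma_{\Gamma_0},\Delta_{\Gamma_0})\otimes_{\sPP^\star(\Sigma_{\Gamma_0})}\mathsf{R}^\star(\Mbar_{\Gamma_0})$, so that right-exactness produces a witness in $\pPP_\star\otimes\mathcal{P}_{\Gamma_0}$. That comparison map is not injective: Proposition \ref{Prop:mu_1_n_not_injective} exhibits nonzero kernel elements already for $(g,n)=(1,3)$, of exactly the shape your argument must exclude (a relation living on a \emph{deeper} stratum multiplied into a non-strict piecewise polynomial supported near $\Gamma_0$). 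Consequently the residual relation after your subtraction step need not move up in the filtration, and the induction does not close.
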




\begin{proof}
By assumption, the above system of relations determines the tautological rings $\mathsf{R}^\star(\Mbar_{\Gamma}) = \mathsf{S}^\star_\Gamma / \mathcal{P}_{\Gamma}$ of all spaces $\Mbar_\Gamma$. But these are precisely the tautological rings of the monodromy torsors $\Mbar_{\Gamma} = P_{\sigma_\Gamma}$ with $\sigma_\Gamma \in \Sigma_{g,n}$ appearing in the standard tautological system $\mathsf{R}_{\Mbar_{g,n}}$ on $\Mbar_{g,n}$. Thus by Theorem \ref{Thm:R_blow_up_determined} this information uniquely determines the tautological system $\pi^\star \mathsf{R}_{\Mbar_{g,n}}$ on any iterated boundary blowup $\pi: \widehat{\mathcal{M}} \to \Mbar_{g,n}$ via Fulton's blowup exact sequence. Since the iterated boundary blowups are cofinal in the system of all log blowups, we obtain a description of the log tautological ring $\mathsf{logR}^\star(\Mbar_{g,n})$.
\end{proof}

While ${\mathsf{logR}}^\star(\Mbar_{g,n})$
is determined by set of 
tautological relations  $\mathcal{P}^{\leq}_{g,n}$, the study via Fulton's blowup sequence is not 
practical. A more useful direction
would be to lift the relations known
among the tautological classes of
$\Mbar_{g,n}$.

\vspace{8pt}
\noindent{\bf Question D.} {\em Are there canonical lifts of Pixton's relations to
$\mathsf{logR}^\star(\Mbar_{g,n})$ ?}

\vspace{8pt}
In the case of Pixton's double ramification cycle relations,
lifts (depending upon a choice of 
stability condition for line bundles)
have been given in \cite{HMPPS} via the
formula for the logarithmic double
ramification cycle.

Another potential source of relations comes from log double ramification cycles of higher rank. The rank $r$ logarithmic double ramification cycle is a virtually log smooth compactification of the space of pointed smooth curves equipped with $r$ principal divisors, each with prescribed zeroes and poles at the marked points~\cite{HS22,MR21}. The space is equipped with logarithmic evaluation maps to a certain toric variety~\cite{RUK22}. The standard toric boundary relations give rise to relations on the higher rank log double ramification cycles, and by pushforward, in the log tautological ring of $\Mbar_{g,n}$. 

\section{Homological piecewise polynomials}
\label{sec:hompp}

\subsection{Conventions and homological Chow groups}
\label{sec:conventions}
Piecewise polynomials have been an important tool in logarithmic intersection theory on moduli spaces. For log smooth schemes/stacks, the theory is developed in \cite{MPS23, HS22, HMPPS}. However, for log stacks that are not log smooth, such as
the strata of $\Mbar_{g,n}$ or the moduli space of log pointed curve $\mathbb{M}_{g,n}^{\text{st}}$ of \cite{Holmes2023LogarithmicCohomologicalFT}, the theory is not yet well developed. 
In singular geometries, Chow {\it homology} classes are more natural to study, but piecewise polynomials are {\it cohomology classes}. There is, as yet, no homological version of piecewise polynomials. 

We propose here a definition of {\em homological piecewise polynomials} for idealised log smooth schemes. We establish basic properties, state the tropical interpretation, and study the question of 
proper pushforwards for
homological piecewise polynomials.
For log smooth schemes, the theory recovers the usual piecewise polynomials. 

In Section \ref{sec:logtautgeneral}, we will use language developed here  
to define the log tautological ring of $\Mbar_{g,n}$ (and, furthermore, to 
describe the log Chow ring of 
any scheme with a divisorial log structure).


\begin{definition}
An \emph{idealised log scheme} is a tuple $(\ul{X}, \alpha: M_X \to \Ocal_X, K_X)$ where $(\ul{X}, \alpha)$ is a log scheme and $K_X \subset \alpha^{-1}(0)$ is a monoid ideal inside $M_X$.
A morphism of idealised log schemes $f: X \to Y$ is a map on the underlying log schemes such that the map $f^\star K_Y \to M_X$ factors through $K_X$.
\end{definition}

A basic example may be helpful. Affine space $\mathbb A^n$ has a natural divisorial log structure coming from its coordinate boundary. Take any {\it monomial subscheme} $Z\subset\mathbb A^n$ and equip it with the pullback log structure. The monomials that vanish on $Z$ give a monoid ideal inside the logarithmic structure of $Z$. See Ogus \cite{Ogu06}, for a detailed treatment of idealised log schemes. We record a few examples that are relevant to our goals.

\begin{example}
Any log scheme with the empty sheaf of ideals forms an idealised log scheme. We call this a log scheme with \emph{trivial idealised structure}.
\end{example}

\begin{example}
\label{ex:schemencdividealised}
Let $X$ be a log scheme with log structure given by a normal crossings divisor $D$. Let $S$ be a stratum closure of $X$
endowed with the strict log structure from the embedding $i: S \to X$. For $U \subseteq X$ open we have \[M_X(U) = \{x \in \Ocal_X(U) : x|_{U \setminus D} \in \Ocal_{X}^\times(U\setminus D)\}.\]
As $S$ is a stratum, it is given by an ideal $K \subset M_X$. Then \[(S,\Mcal_S, K|_S)\] is an idealised log scheme.
In particular, if $S$ is a point and $\dim X = d$, then this idealised log scheme is
\[
(\mathsf{pt},\N^d,\N^d \setminus 0).
\]
\end{example}


The notion of idealised log smoothness is defined, as usual, via the infinitesimal lifting criterion, with respect to an appropriate class of square zero extensions. The appropriate class here is {\it idealised log thickenings}. These are square-zero thickenings $S\hookrightarrow S'$ of idealised log schemes that are strict as well as ideally strict. The latter condition being that the monoidal ideal $K_{S'}$ of $S'$ pulls back to the monoidal ideal $K_S$ associated to $S$. 

\begin{definition}
A map of idealised log stacks $f: X \to Y$ is \emph{idealised log smooth} if $f$ satisfies the infinitesimal lifting criterion for all idealised log thickenings of $Y$. A log scheme $X$ is \emph{idealised log smooth} if $X$ is idealised log smooth over the base point.
\end{definition}

We refer the reader to~\cite[Section~IV]{Ogu06} for further details.  

\begin{example}
All examples of idealised log schemes so far are idealised log smooth. 
Only those with generically trivial sheaf of monoids are log smooth.
\end{example}

For an Artin stack $X$, we let $\CH_\star(X)$ denote Kresch's Chow group \cite{Kresch_cycle} with $\Q$-coefficients. We let $\CH^\star(X)$ denote the operational Chow ring of $X$ with test objects given by
algebraic stacks of finite type stratified by global quotient stacks, as defined in \cite{BaeSchmitt1}. For DM-stacks, these correspond to the usual Chow groups and rings.
When $X$ is smooth, there is a natural Poincar\'e duality isomorphism
\[
\CH^\star(X) \xrightarrow{\sim} \CH_\star(X), \alpha \mapsto \alpha \cap [X]\,.
\]
Thus, when $X$ carries a log structure and is both smooth and log-smooth, we can define the logarithmic Chow ring $\LogCH^\star(X)$ either as a direct limit of $\CH^\star(\widehat X)$ over all log blowups $\widehat X \to X$, or as the direct limit of $\CH_\star(\widehat X)$ for smooth log blowups $\widehat X$, and obtain isomorphic results.

However, when $X$ is only idealized log smooth (as for the strata $S$ from Example \ref{ex:schemencdividealised}), there may be log blowups $\widehat X \to X$ which are no longer smooth (or even equidimensional). For these cases, we will need a definition of homological log Chow groups $\logCH_\star(X)$ -- and the necessary level of generality is to allow $X$ to be an Artin stack.

When $X$ is a log scheme, a definition of 
$\logCH_\star(X)$
was presented in \cite[Definition~2.7]{Barrott2019Logarithmic-Cho}. 
In the remainder of Section \ref{sec:conventions}, we recall the treatment of \cite{Barrott2019Logarithmic-Cho}.

\begin{definition} Log spaces and log stacks:
\begin{enumerate}[(i)]
    \item An \emph{algebraic log space} is an algebraic space with a log structure. 
    \item An \emph{algebraic log stack} is an algebraic stack with a log structure. 
    \item A \emph{log stack} is a stack in groupoids over $\LogSch$, with a log \'etale cover by an algebraic log stack and with diagonal representable by algebraic log spaces.
    \item A log stack $X$ is \emph{dominable} if it has a log blowup $\tilde{X}$ that is an algebraic log stack.  \defendhere
\end{enumerate}
\end{definition}

\begin{remark}
Given any algebraic log stack $X$, the category of log maps $S \to X$ is a log stack. For many purposes, it suffices to study algebraic log stacks: all examples of log stacks that we consider (outside of this Remark) are algebraic log stacks. However, there are important examples of log stacks which are not represented by an algebraic log stack.
\begin{enumerate}[(i)]
\item The log stacks $\G_m^\log$ and $\G_m^\trop$, which represent the functors $X \mapsto M_X(X)$ and $X \mapsto \ghost_X$ respectively. They are dominable by $\P^1$ and $[\P^1/\G_m]$ respectively, see~\cite[Proposition~1]{RW19}
\item The logarithmic Picard group $\mathsf{LogPic}/S$ of a log curve $C/S$, and, in particular, the universal logarithmic Picard group $\mathsf{LogPic}_{g,n}/\Mbar_{g,n}$. The space $\mathsf{LogPic}_{g,n}$ has representable log blowups given by universal compactified Jacobians associated to non-degenerate stability conditions, see  \cite[Section~4]{HMPPS}.
\item The moduli space $\mathsf{LogA}_g$ of log abelian varieties of dimension $g$, first defined in \cite{KKN21}. 
The standard toroidal compactifications of the moduli space $\mathcal{A}_g$ of principally polarized abelian varieties of dimension $g$, such as the perfect cone compactification and the second Voronoi compactification, are log blowups of $\mathsf{LogA}_g$.
\end{enumerate}
\end{remark}


\begin{definition}
A log stack $X$ is \emph{locally free} if every stalk of $\ghost_X$ is isomorphic to $\N^r$ for some $r$.
\end{definition}


\begin{example} The basic cases for us are:
\begin{enumerate}[(i)]
    \item The stack $\Mbar_{g,n}$ with its divisorial log structure is locally free, since the stalk of the ghost sheaf at $(C,p_1, \ldots, p_n)$ is isomorphic to $\mathbb{N}^r$ for $r$ the number of nodes of $C$. 
    \item A toric variety with the toric log structure is locally free if and only if the underlying toric variety is smooth.
\end{enumerate}
\end{example}

\begin{definition} \label{Def:idealized_log_smooth}
The category of idealised log schemes $(\ul{X}, \alpha: M_X \to \Ocal_X, K_X)$ is denoted $\mathsf{IdLogSch}$. The subcategory consisting of
idealised log schemes with maps $f: X \to Y$ satisfying the equality $K_X = f^\star K_Y$ of monoid ideals is denoted $\mathsf{IdLog}$. 
An idealised log stack is a log stack $X/\LogSch$ together with a map to $\mathsf{IdLog}/\LogSch$. 
\end{definition}

\begin{remark}
The category of maps from idealised log schemes to an idealised log stack forms a stack over $\mathsf{IdLogSch}$, and any stack over $\mathsf{IdLogSch}$ with a smooth cover by an idealised log stack is of this form.
\end{remark}

\begin{example}
For the gluing map $\iota_\Gamma : \Mbar_\Gamma \to \Mbar_{g,n}$ associated to a stable graph $\Gamma$, we write $\Mbar_{\Gamma}^{\str}$ for the stack $\Mbar_\Gamma$ endowed with the strict log structure induced from the standard divisorial log structure on $\Mbar_{g,n}$ via $\iota_\Gamma$.
The stack 
    $\Mbar_\Gamma^\str$ is locally free, where the stalk at $(C_v, q_{1,v}, \ldots, q_{n(v),v})_{v \in V(\Gamma)}$ is 
    \[
    \overline{\mathsf M}_{\Mbar_\Gamma^\str, (C_v)_v} = \mathbb{N}^{E(\Gamma)} \oplus \bigoplus_{v \in V(\Gamma)} \overline{\mathsf M}_{\Mbar_{g(v), n(v)}, (C_v, q_{1,v}, \ldots, q_{n(v),v})}\,.\]
To give $\Mbar_\Gamma^{\str}$ an idealised log structure, we must  give an idealised log structure on $S$ for every map $S \to \Mbar_{\Gamma}^{\str}$. We let the log ideal $K_S \subset M_S$ be the ideal generated by the lengths of the edges of $\Gamma$.
\end{example}


\begin{definition}
\label{def:virtreldim0pullback}
Let $X$ be a locally free algebraic log stack, and let $\pi: \tilde{X} \to X$ be a log blowup of locally free log stacks. We let $\pi^!$ denote the virtual relative dimension $0$ pullback $\CH_\star(X) \to \CH_\star(\tilde{X})$ constructed in \cite[Construction~2.4]{Barrott2019Logarithmic-Cho}.
\end{definition}


\begin{example}
Let $X$ be the point with characteristic monoid $\N^2$, and let $\tilde{X}$ be the $\P^1$ obtained by blowing up in the log ideal $(x_1,x_2)$. Using the excess intersection formula, we obtain 
\[
\pi^! [X] = -[H] \in \CH_\star(\tilde{X}),
\]
where $H$ is the hyperplane class on $\P^1$.
\end{example}
\begin{example}
Let $\tilde{X} \to X$ be a log blowup of locally free log smooth log stacks. Then, the Gysin pullback map $\pi^!$ is equal to the cohomological $\pi^\star$, applied via Poincar\'e duality.
\end{example}

\begin{definition}
\label{Def:homologicallogCH}
Let $X$ be a dominable log stack. We define the homological log Chow group $\logCH_\star(X)$ to be the colimit  
\[\logCH_\star(X) = \colim_{\widetilde{X} \to X} \CH_\star(\widetilde{X})\,,\]
where the colimit runs over all locally free algebraic log blowups of $X$, and the transition maps for a log blowup $\pi: \tilde{X}_1 \to \tilde{X}_2$ is the Gysin pullback $\pi^!$ from Definition \ref{def:virtreldim0pullback}.

We define the cohomological log Chow group $\LogCH^\star(X)$ to be the operational Chow ring for $\logCH_\star$, consisting of bivariant classes acting on $\logCH_\star(T)$ for maps $T \to X$, and commuting with saturated proper pushforward, log flat pullback and all strict Gysin maps (see \cite[Definition~2.20]{Barrott2019Logarithmic-Cho}).
\end{definition}

\begin{remark}
For a log smooth log stack $X$, we have by \cite[Corollary~2.23]{Barrott2019Logarithmic-Cho} a natural isomorphism
\[
\LogCH^\star(X) \to \logCH_\star(X)
\]
given by acting on the fundamental class $[X]$.
\end{remark}

\begin{remark}
There are several different definitions of $\LogCH^\star$ in the literature. For example, in \cite{HS22}, the definition
\[
\LogCH_{\mathsf{HSc}}^\star(X) =\colim_{\tilde{X} \to X} \CH_{\op}(\tilde{X})
\]
is used for log smooth stacks $X$.  In \cite{Holmes2023LogarithmicCohomologicalFT}, the definition
\[
\LogCH_{\mathsf{HSp}}^\star(X) = \colim_{\tilde{X} \to X} \CH_{\sf{OP}}(\tilde{X})
\]
is used (where $\CH_{\sf{OP}}$ is the operational Chow ring defined in \cite{BaeSchmitt1}).

There is a map $\LogCH_{\mathsf{HSp}}^\star(X) \to \LogCH_{\mathsf{HSc}}^\star(X)$. In \cite[Appendix~A]{Holmes2023LogarithmicCohomologicalFT}, a map $\LogCH_{\mathsf{HSp}}^\star(X) \to \LogCH^\star(X)$ is
constructed. Many constructions in the literature lie in $\LogCH_{\mathsf{HSp}}^\star(X)$ or can be canonically lifted to $\LogCH_{\mathsf{HSp}}^\star(X)$, and can then be mapped to $\LogCH^\star(X)$.
\end{remark}

\begin{remark}
When $X$ is not log smooth, many nice properties, such as the Poincar\'e duality isomorphism, fail. In  \cite{Barrott2019Logarithmic-Cho}, the notion of a \emph{compatible fundamental class} $[X] \in \logCH_\star(X)$,  represented by the fundamental class of any locally free log blowup of $X$ of maximal dimension, is introduced. However, it is then no longer the case that every class in $\logCH_\star(X)$ can be obtained from $[X]$ by the action of a logarithmic operational class. 

For example, for $X = (\mathsf{pt}, \mathbb{N}^2)$, the group $\logCH_\star(\mathsf{pt}, \mathbb{N}^2)$ is supported in degree $0$ and $1$ for dimension reasons. Moreover, it follows from Proposition~\ref{Pro:LogCH_pt_Nr} below that 
\[
\LogCH_1(\mathsf{pt}, \mathbb{N}^2) \cong \left\{f \in \pPP^1(\mathbb{R}_{\geq 0} ^2) : f(x,0)=f(0,y) = 0 \text{ for all }x,y \in \mathbb{R}_{\geq 0}\right\}
\]
is an infinite-dimensional $\QQ$-vector space (all functions $f=\min(ax, by)$ for $a,b \in \mathbb{Z}_{\geq 1}$ with $\gcd(a,b)=1$ are linearly independent). On the other hand, since $\LogCH^0(\mathsf{pt}, \mathbb{N}^2) = \mathbb{Q}$,  the map 
\[
\logCH^\star(\mathsf{pt}, \mathbb{N}^2) \to \logCH_{1-\star}(\mathsf{pt}, \mathbb{N}^2), \alpha \mapsto \alpha \cap [X]\]
only covers a 1-dimensional subspace of $\LogCH_1(\mathsf{pt}, \mathbb{N}^2)$.
An important motivation for introducing homological piecewise polynomials in Section \ref{subsec:definitionhomPP} is
to describe significantly larger parts of $\logCH_\star(X)$.

\end{remark}

\subsection{Cone stacks with boundary}
Recall from \cite{CCUW}  that a (combinatorial) cone stack is a category fibered in groupoids 
\begin{equation}
    C : \Sigma \to \mathbf{RPC}^f
\end{equation}
over the category $\mathbf{RPC}^f$ of rational polyhedral cones (with morphisms being face inclusions). For the precise technical conditions, see \cite[Definition 2.15]{CCUW}. We denote the objects of $\Sigma$ by $\sigma$ and the morphisms by $\sigma \to \sigma'$. Where there is no risk of confusion,
we identify the morphisms with the cones and face morphisms that they correspond to under $C$.

The category of Artin fans is usually defined as the category of Artin stacks that are log \'etale over $\Spec k$ and admit a strict \'etale cover by Artin cones, which are Artin stacks of the form $[\Spec k[M]/\Spec k[M^{\gp}]]$.
The category of cone stacks $\Sigma$ is naturally isomorphic to the category of Artin fans $\mathcal{A}_\Sigma$, see \cite[Theorem 6.11]{CCUW} for more details. 

While the Artin fan $\mathcal{A}_\Sigma$ itself is log smooth, a closed subset  $\mathcal{B} \subseteq \mathcal{A}_\Sigma$ with its reduced stack structure and induced log structure will in general only be idealized log smooth. Conversely, as explained in Section \ref{subsubs:bartinfan}, any idealized log smooth stack admits a strict and smooth map to such a set $\mathcal{B}$. 
In $\Sigma_X$, the set $\mathcal{B}$ is described by a collection of cones $\sigma_0$ in $\Sigma_X$ that are closed under taking face maps $\sigma_0 \to \sigma$. 
These ideas motivate the following definition.
\begin{definition}
\label{def:conestack}
A \emph{cone stack with boundary} $(\Sigma, \Sigma^0, \Delta)$ is a cone stack $\Sigma$ together with a full subcategory $\Sigma^0 \subseteq \Sigma$ which is forward-closed: for $\sigma_0 \in \Sigma^0$ and a morphism $\sigma_0 \to \sigma$ in $\Sigma$, we also have $\sigma \in \Sigma^0$. We call $\Sigma^0$ the \emph{interior} of $\Sigma$ and the complementary full subcategory $\Delta = \Sigma \setminus \Sigma^0$ its \emph{boundary}.
A morphism $$(\Sigma_1, \Sigma_1^0, \Delta_1) \to (\Sigma_2, \Sigma_2^0, \Delta_2)$$ of idealized cone stacks is a morphism $\Sigma_1 \to \Sigma_2$ of cone stacks sending $\Sigma_1^0$ to $\Sigma_2^0$. 
\end{definition}




For a cone stack with boundary $(\Sigma, \Sigma^0, \Delta)$, the subcategory $\Delta$ is a cone stack itself, and the map $\Delta \to \Sigma$ corresponds to an open embedding $\mathcal{A}_\Delta \to \mathcal{A}_\Sigma$ of the corresponding Artin fans. We denote by 
$$\mathcal{B}_{\Sigma^0} = \mathcal{A}_\Sigma \setminus \mathcal{A}_\Delta \subseteq \mathcal{A}_\Sigma$$
the complement of $\mathcal{A}_\Delta$ with its reduced substack structure. We can extract a colimit presentation of the stack $\mathcal{B}_{\Sigma^0}$ from the combinatorial data. Indeed, recall the classical presentation of the Artin fan
\[
\mathcal{A}_\Sigma = \colim_{\sigma \in \Sigma}\ \underbrace{\left[ \operatorname{Spec} k[S_{\sigma}] / \operatorname{Spec} k[S_{\sigma}^{\mathrm{gp}}] \right]}_{= \Acal_\sigma}\,,
\]
with $S_{\sigma} \subseteq \sigma^\vee$ the semigroup associated to $\sigma$. 

To find the corresponding presentation of the closed substack $\mathcal{B}_{\Sigma^0} \subseteq \mathcal{A}_\Sigma$, consider the \'etale cover $\coprod_\sigma \Acal_\sigma \to \Acal_\Sigma$ of the Artin fan $\Acal_\Sigma$ by Artin cones $\Acal_\sigma$. We then identify the closed substack of $\Acal_\sigma$ obtained as the preimage of the union of strata $\Bcal_{\Sigma^0}$ insider $\Acal_\Sigma$. This preimage is cut out by a toric ideal $J_{\sigma, \Sigma^0} \subseteq k[S_{\sigma}]$.

To calculate this ideal, note that in $\Acal_\Sigma$, the strata of $\mathcal{B}_{\Sigma^0}$ contained in the image of $\Acal_\sigma$ correspond to morphisms $\sigma' \to \sigma$ with $\sigma' \in \Sigma^0$. Since $\Sigma^0$ is forward-closed, this forces $\sigma \in \Sigma^0$. Given such a morphism, we see by an argument similar to \cite[Exercise 3.2.6]{CLS11} that the functions in $k[S_\sigma]$ vanishing on the preimage of the stratum associated to $\sigma'$ 
are given by the ideal\footnote{Note that equation (3.2.7) in \cite{CLS11} has a very unfortunate typo and should read $$I = \langle \chi^m | m \notin \tau^\perp \cap (\sigma')^\vee \cap M \rangle \subseteq \mathbb{C}[(\sigma')^\vee \cap M] = \mathbb{C}[S_{\sigma'}]\, .$$}
\[
J_{\sigma' \to \sigma} = (\chi^s : s \in S_\sigma \text{ with } s|_{\sigma'} \neq 0 \in (\sigma')^\vee) \subseteq k[S_\sigma]\,.
\]
To give an example: when $\sigma' = \sigma = \mathbb{R}_{\geq 0}^n$ then $S_\sigma = \mathbb{N}^n$ so that $k[S_\sigma] = k[x_1, \ldots, x_n]$. Then  among all elements $s \in S_\sigma$ only $s=0$ restricts to zero on the dual cone $(\sigma')^\vee = (\mathbb{R}_{\geq 0}^n)^\vee$. Thus
\[
J_{\sigma' \to \sigma} = (\chi^s : s \in S_\sigma \setminus \{0\}) = (x_1, \ldots, x_n) \subseteq k[x_1, \ldots, x_n]\,.
\]
And indeed, in the toric variety $\mathbb{A}^n = \mathrm{Spec} k[x_1, \ldots, x_n]$ associated to $\sigma$, the cone $\sigma' = \sigma$ itself corresponds to the origin in $\mathbb{A}^n$. Correspondingly, we have that $(x_1, \ldots, x_n)$ is the toric ideal of functions vanishing at the origin.

Returning to the general setting, we define the ideal
\[
J_{\sigma, \Sigma^0} = \bigcap_{\substack{\sigma' \to \sigma\\\sigma' \in \Sigma^0}} J_{\sigma' \to \sigma} \subseteq k[S_\sigma]
\]
of all functions in $k[S_\sigma]$ vanishing on some preimage of a stratum in $\Bcal_{\Sigma^0}$. Then we have a presentation
\begin{equation}
\label{eqn:Bartin_fan_as_colimit}
\Bcal_{\Sigma^0} = \colim_{\sigma \in \Sigma^0} \left[\Spec\left( k[S_{\sigma}]/J_{\sigma, \Sigma^0} \right)/\Spec\left( k[S_{\sigma}]^\gp \right) \right]\,.
\end{equation}
Compared to the presentation of $\Acal_\Sigma$, we could restrict to those $\sigma$ contained in $\Sigma^0$, since the others satisfy $J_{\sigma, \Sigma^0} = (1)$ and thus their contribution to the colimit \eqref{eqn:Bartin_fan_as_colimit} would be empty. In particular, the presentation \eqref{eqn:Bartin_fan_as_colimit} shows that the stack $\Bcal_{\Sigma^0}$ only depends on the category $\Sigma^0$ and the functor from this category to the category of rational polyhedral cones. 

\begin{remark}
The construction above gives an equivalence between the category of cone stacks with boundary and the category of embeddings $\Bcal \to \Acal$ of a reduced closed substack, where the maps are commutative squares. In general, the more natural category might be the category of (reduced) closed substacks of Artin fans, where the maps are maps of {algebraic log} stacks. For example, Proposition~\ref{prop:bartinfantocan} is not true for embedded idealised Artin fans, per Remark~\ref{rem:artinfansnomaps}.

Combinatorially, this category of reduces closed substacks of Artin fans is equivalent to the category of cone stacks with boundary localised at maps that are an isomorphism on the interior.
\end{remark}

\subsection{Scheme theoretic images inside Artin fans}
\label{subsubs:bartinfan}

We will use the language of cone stacks (with boundary) and Artin fans to describe the tropicalization of a log stack $X$. 

\begin{definition} \label{Def:an_Artin_fan}
For an algebraic log stack $X$,  \emph{an Artin fan of $X$} is the data of a strict morphism $X \to \mathcal{A}$  to an Artin fan $\mathcal{A}$ with geometrically connected fibers (where the empty set is connected) and non-empty fibers over minimal strata of $\Acal$.\footnote{We do not require smoothness of the map $X \to \mathcal{A}$, as this does not hold in general for idealised log smooth stacks. We will see in Lemma \ref{lem:xsmoothoverbx} that any idealised log smooth stack is smooth over its image in the Artin fan.}
Given an Artin fan $\Acal$ of $X$, the cone stack $\Sigma$ associated to $\mathcal{A}$ is \emph{a tropicalization} of $X$.
\end{definition}

\begin{remark}
\label{rem:theartinfananartinfan}
Every algebraic log stack $X$ satisfying some mild hypotheses admits a canonical Artin fan $X \to \mathcal{A}_X^\textup{can}$ as constructed in~\cite[Section~3.2]{ACMW} building on~\cite{AW}. This canonical Artin fan deserves to be called {\it the Artin fan}, but following this path leads to various well-known issues:
\begin{enumerate}[(i)]
    \item The canonical Artin fan of a log stack is not functorial as a map $X \to Y$ of log stacks does not necessarily induce a map $\mathcal{A}_X^\textup{can} \to \mathcal{A}_Y^\textup{can}$ which makes the natural diagram commute (see \cite[Section~5.4]{ACMUW} for a discussion). 
    \item The stack $X = B \mathbb{Z}/2\mathbb{Z}$ with trivial log structure is an Artin fan, but $\mathcal{A}_X^\textup{can}$ is point
    and is therefore not identical to $X$. In other words, $X$ is an Artin fan which is not its own (canonical) Artin fan.
    \item For $X = \Mbar_{g,n}$ and $\Sigma_{g,n}$ the moduli space of tropical curves (Definition \ref{Def:Mgntrop}), the map from $X$ to its (canonical) Artin fan factors as
    \begin{equation}
        \Mbar_{g,n} \xrightarrow{\afanmap} \mathcal{A}_{\Sigma_{g,n}} \xrightarrow{\afanmap'} \mathcal{A}_{\Mbar_{g,n}}^\textup{can}
    \end{equation}
    but the map $\afanmap'$ is not in general an isomorphism. The issue arises since the cone $\sigma_\Gamma \in \mathcal{A}_{\Sigma_{g,n}}$ associated to a stable graph $\Gamma$ can have non-trivial automorphisms acting trivially on the set of edges (by flipping two half-edges forming a loop). This automorphism acts trivially on the branches of the boundary divisor cutting out the stratum associated to $\Gamma$ and is therefore not seen in the canonical Artin stack $\mathcal{A}_{\Mbar_{g,n}}^\textup{can}$ (see \cite[Example 4.10]{U16} for a related discussion).
\end{enumerate}
All these issues are resolved by the added flexibility of being able to choose \emph{an} Artin fan:
\begin{enumerate}[(i)]
\item For any morphism $X \to Y$ of log stacks, we  have a commutative diagram
\[
\begin{tikzcd}
  X  \arrow[r]\arrow[d] & Y \arrow[d]\\
  \Acal_X \arrow[r] & \Acal_Y
\end{tikzcd}
\]
where $\Acal_X$ and $\Acal_Y$ are Artin fans for $X$ and $Y$ respectively, see~\cite[Section~2.5]{AW}.
\item The identity $B \mathbb{Z}/2\mathbb{Z} \to B \mathbb{Z}/2\mathbb{Z}$ is an Artin fan.
\item The map $\afanmap: \Mbar_{g,n} \to \mathcal{A}_{\Sigma_{g,n}}$ is a choice of an Artin fan, as proven in \cite[Theorem 4]{CCUW}.
\end{enumerate}

Whenever we refer to the Artin fan of a log stack $X$, we mean a fixed choice of an Artin fan, which we denote by $X \to \mathcal{A}_X$ with associated cone stack $\Sigma_X$. 
\end{remark}

\begin{remark}
A log stack with an Artin fan is automatically algebraic.  In Section \ref{sec:hompp}, we often assume the log stack has an Artin fan, to ease the exposition. For a dominable non-algebraic log stack, we can either blowup to make it admit an Artin fan, or use the {\em non-algebraic Artin fan} (a log stack that admits a log blowup by an Artin fan).
\end{remark}

To define homological piecewise polynomials, we first need one more ingredient lying inside the Artin fan.
\begin{definition}
\label{def:idealisedArtinfan}

Let $X$ be a log stack, and $X \to \Acal_X$ the map to an Artin fan of $X$. We define an \emph{idealised Artin fan} $\Bcal_X$ of $X$ as the scheme theoretic image of $X$ inside its Artin fan. We denote the scheme theoretic image of $X$ inside its canonical Artin fan $X \to \Acal_X^\textup{can}$ by $\Bcal_X^\textup{can}$.
\end{definition}
\begin{example}
Let $X$ be the point with log structure $\N^r$. Then we can choose $\Acal_X = \Acal_{\A^r} = [\A^r/\G_m^r]$, and $\Bcal_X = B\G_m^r$.
\end{example}
Despite the name, the definition of the idealized Artin fan (and other definitions in Section \ref{sec:hompp}) a priori make sense for any log stack $X$. However, the only context in which we know them to be well behaved is for idealised log smooth log stacks, the setting to which we restrict in the following. It is reasonable to expect that much of the theory we develop below is well behaved for any log stack that is log flat over its image in its Artin fan.

We will now show that any idealised Artin fan admits an \'etale map to the canonical idealised Artin fan, and that idealised Artin fans are functorial (in the sense of Remark~\ref{rem:theartinfananartinfan}).

\begin{proposition}
\label{prop:bartinfantocan}
    Let $X$ be an idealised log stack, with idealised Artin fan $X \to \Bcal_X$ and canonical idealised Artin fan $X \to \Bcal_X^\textup{can}$. There is a strict \'etale map $f: \Bcal_X \to \Bcal_X^\textup{can}$ making the diagram
    \[
    \begin{tikzcd}
        X \arrow[d] \arrow[dr] & \\
        \Bcal_X \arrow[r,"f"] & \Bcal_X^\textup{can}
    \end{tikzcd}
    \]
    commute.
\end{proposition}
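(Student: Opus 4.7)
The plan is to reduce the statement to the level of the ambient Artin fans, where the analogous comparison is easier, and then take scheme-theoretic images. First, I would invoke a standard lifting result: for any choice of Artin fan $X \to \Acal_X$ as in Definition \ref{Def:an_Artin_fan}, the canonical Artin fan construction of \cite{ACMW, ACMUW} provides a strict \'etale morphism $\pi: \Acal_X \to \Acal_X^\textup{can}$ making
\[
\begin{tikzcd}
X \arrow[r] \arrow[rd] & \Acal_X \arrow[d, "\pi"] \\
 & \Acal_X^\textup{can}
\end{tikzcd}
\]
commute. This is essentially the universal property alluded to in Remark \ref{rem:theartinfananartinfan}: the canonical Artin fan is terminal among Artin fans receiving a strict, connected-fibered morphism from $X$, and comparison morphisms between such are strict \'etale because Artin fans are \'etale locally toric and $\pi$ is a local isomorphism on characteristic monoids.

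Next, I would construct $f$ by functoriality of the scheme-theoretic image. Since $X \to \Acal_X \xrightarrow{\pi} \Acal_X^\textup{can}$ factors through the reduced closed substack $\Bcal_X^\textup{can} \hookrightarrow \Acal_X^\textup{can}$, the preimage $\pi^{-1}(\Bcal_X^\textup{can})$ is a reduced closed substack of $\Acal_X$ containing the image of $X$, and hence contains the minimal such substack $\Bcal_X$. This produces the required morphism $f : \Bcal_X \to \Bcal_X^\textup{can}$, and its strictness follows because $\pi$ is strict and both inclusions $\Bcal_X \hookrightarrow \Acal_X$ and $\Bcal_X^\textup{can} \hookrightarrow \Acal_X^\textup{can}$ carry the induced (strict) log structure.

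Finally, to show $f$ is \'etale, I would prove that the square
\[
\begin{tikzcd}
\Bcal_X \arrow[r, "f"] \arrow[d, hook] & \Bcal_X^\textup{can} \arrow[d, hook] \\
\Acal_X \arrow[r, "\pi"] & \Acal_X^\textup{can}
\end{tikzcd}
\]
is Cartesian as reduced stacks, so that $f$ is a base change of the strict \'etale morphism $\pi$. The inclusion $\Bcal_X \subseteq \pi^{-1}(\Bcal_X^\textup{can})^{\mathrm{red}}$ is the content of the previous paragraph. The reverse inclusion is the main obstacle: using the colimit presentation \eqref{eqn:Bartin_fan_as_colimit} it amounts to the combinatorial statement that a cone $\sigma \in \Sigma_X$ lies in the interior $\Sigma_X^0$ if and only if $\pi(\sigma)$ lies in $\Sigma_{X}^{\textup{can},0}$. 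The forward direction is immediate; for the converse, strict \'etaleness of $\pi$ gives a local identification of cones, and the "non-empty fibers over minimal strata" hypothesis from Definition \ref{Def:an_Artin_fan} (applied to the \'etale-local pullback of $\Acal_X$ sitting over a given stratum of $\Acal_X^\textup{can}$) rules out components of $\pi^{-1}(\Bcal_X^\textup{can})$ that are not in the image of $X$. Once the square is Cartesian, the ideals $J_{\sigma, \Sigma^0}$ and $J_{\pi(\sigma), \Sigma^{0,\textup{can}}}$ correspond under the local isomorphism of cones, and $f$ inherits strict \'etaleness from $\pi$.
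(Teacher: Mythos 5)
Your argument breaks down at the very first step, and the failure is not a technical gap but an appeal to a false statement. You assert that for any choice of Artin fan $X \to \Acal_X$ in the sense of Definition \ref{Def:an_Artin_fan} there is a strict \'etale morphism $\pi: \Acal_X \to \Acal_X^{\textup{can}}$ commuting with the maps from $X$, attributing this to a terminality property of the canonical Artin fan. No such $\pi$ exists in general: Remark \ref{rem:artinfansnomaps} gives the counterexample $X = (\mathsf{pt},\N) \sqcup (\mathsf{pt},\N)$ with $\Acal_X = [\P^1/\G_m]$ and $\Acal_X^{\textup{can}} = [\A^1/\G_m] \sqcup [\A^1/\G_m]$, where the connected stack $[\P^1/\G_m]$ admits no map to the disjoint union compatible with the two points of $X$ landing in distinct components. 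This is precisely why Proposition \ref{prop:bartinfantocan} is stated for the scheme-theoretic images and not for the ambient Artin fans: a choice of $\Acal_X$ may connect strata of $\Acal_X^{\textup{can}}$ through cones that $X$ never meets (the generic point of $\P^1$ in the example), and it is exactly those cones that $\Bcal_X$ discards. Since your second step defines $f$ as the restriction of $\pi$ and your third step proves \'etaleness by exhibiting $f$ as a base change of $\pi$, the entire argument depends on an object that does not exist, and it cannot be repaired locally.

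The paper's proof constructs $f$ without ever producing a map of ambient Artin fans. The mechanism is the equality $H^0(X,\ghost_X) = H^0(\Bcal_X, \ghost_{\Bcal_X})$, which holds because $X \to \Bcal_X$ is strict with geometrically connected fibers. When $X$ is \emph{small}, i.e.\ $\Acal_X^{\textup{can}}$ is a single Artin cone with monoid $P$, maps into $\Acal_X^{\textup{can}}$ are classified by $\Hom(P, H^0(-,\ghost))$, so the map $X \to \Acal_X^{\textup{can}}$ descends directly to $\Bcal_X \to \Acal_X^{\textup{can}}$, and $f$ is obtained by functoriality of the scheme-theoretic image; \'etaleness is then checked \'etale-locally, where both $\Acal_X$ and $\Acal_X^{\textup{can}}$ become Artin cones and $f$ is an isomorphism. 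The general case is assembled from a groupoid presentation $\Bcal_V \rightrightarrows \Bcal_U \to \Bcal_X$ by small pieces, using that $\Bcal^{\textup{can}}_{-}$ commutes with colimits along strict maps. If you want to rescue your approach, you must replace the nonexistent $\pi$ by this descent-through-characteristic-monoids argument; the Cartesian-square formulation of \'etaleness then also has to be abandoned in favor of the local computation.
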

\begin{proof}
First note that as the map $X \to \Bcal_X$ is strict with geometrically connected fibers we have 
\begin{equation} \label{eqn:monoid_equality_technical}
H^0(X,\Mbar_X) = H^0(\Bcal_X,\Mbar_{\Bcal_X}).
\end{equation}
We say $X$ is \emph{small} if the Artin fan $\Acal_X^\textup{can}$ of $X$ is an Artin cone.
We first prove the existence of $f$ for small $X$, with associated monoid $P$. Then $\Hom(Y,\Acal_X^\textup{can})$ is in bijection with $\Hom(P, H^0(Y,\Mbar_Y))$. By the equality \eqref{eqn:monoid_equality_technical} the map $X \to \Acal_X^\textup{can}$ now descends to a map $\Bcal_X \to \Acal_X^\textup{can}$. By functoriality of schematic image we obtain the map $f: \Bcal_X \to \Bcal_X^\textup{can}$ fitting in the diagram.

Now we show that if $X$ is small, then $f$ is strict \'etale. For this, we can work \'etale locally on $\Acal_X$ and on $X$. Then we can assume $\Acal_X$ is itself an Artin cone $[\Spec k[Q]/\Spec k[Q^\gp]]$ for some sharp fs monoid $Q$, and $X$ is still small. Then the argument above shows that $\Acal_X \to \Acal_X^\textup{can}$ is an isomorphism, and hence the map of idealised Artin fans is also an isomorphism. All in all, $f$ is \'etale locally an isomorphism, so $f$ is \'etale.

Now we will show this proposition holds for general $X$. We take a groupoid representation $\Bcal_{V} \rightrightarrows \Bcal_{U} \to \Bcal_X$ where the $\Bcal_U$ and $\Bcal_V$ are disjoint unions of small idealised Artin fans. Let $V \rightrightarrows U \to X$ be the representation obtained by pulling back $\Bcal_{V} \rightrightarrows \Bcal_{U} \to \Bcal_X$. In particular, $V \to \Bcal_V$ and $U \to \Bcal_V$ are idealised Artin fans, as they are strict maps with geometrically connected fibers to idealised Artin fans. We get the diagram
\[
\begin{tikzcd}
& V \arrow[ld] \arrow[rd] \arrow[d,shift left = 1.2] \arrow[d,shift right = 1.2]& \\
\Bcal_V\arrow[d,shift left = 1.2] \arrow[d,shift right = 1.2] & U \arrow[ld] \arrow[rd] \arrow[d] & \Bcal_V^\textup{can} \arrow[d,shift left = 1.2] \arrow[d,shift right = 1.2]\\
\Bcal_U \arrow[d] & X \arrow[ld] \arrow[rd]  & \Bcal_U^\textup{can}\arrow[d]\\
\Bcal_X & & \Bcal_X^\textup{can}\\
\end{tikzcd}
\]

Because $U$ and $V$ are themselves small, we obtain \'etale maps $f_U: \Bcal_U \to \Bcal_U^\textup{can}$ and $f_V: \Bcal_V \to \Bcal_V^\textup{can}$ fitting in the diagram, by the first part of the argument.

Recall that for a log stack $Z$, the idealised Artin fan $\Bcal_Z^\textup{can}$ is the colimit of $\Bcal_C^\textup{can} \rightrightarrows \Bcal_D^\textup{can}$ where $C \rightrightarrows D \to Z$ is a colimit diagram of strict maps and $C,D$ are disjoint unions of small log schemes. Hence $\Bcal_-^\textup{can}$ commutes with colimits of strict maps, and in particular $\Bcal_X^\textup{can}$ is the colimit of $\Bcal_V^\textup{can} \rightrightarrows \Bcal_U^\textup{can}$.
Then $f_U, f_V$ descend to a strict \'etale map $\Bcal_X \to \Bcal_X^\textup{can}$.
\end{proof}

\begin{remark}
\label{rem:artinfansnomaps}
Proposition~\ref{prop:bartinfantocan} is false for Artin fans. For example, for $X = (\mathsf{pt}, \mathbb{N}) \sqcup (\mathsf{pt}, \mathbb{N})$, the canonical Artin fan is $[\mathbb{A}^1 / \mathbb{G}_m] \sqcup [\mathbb{A}^1 / \mathbb{G}_m]$. If we pick $\Acal_X = [\P^1/\mathbb{G}_m]$, then there is no commutative diagram
\[
\begin{tikzcd}
X  \arrow[r, "\mathfrak{t}"] \arrow[rd, "\mathfrak{t}^\textup{can}",swap] & \Acal_X \arrow[d]\\
 & \mathcal{A}_X^\textup{can}
\end{tikzcd}
\]
\end{remark}

\begin{lemma}
\label{lem:funcartinfuncbartin}
Let $f\colon X \to Y$ is a map of log stacks, and assume functoriality of the Artin fan for $f$, i.e. that there is a $\Acal_f\colon \Acal_X \to \Acal_Y$ fitting in the obvious diagram. Then there is a map $\Bcal_f\colon \Bcal_X \to \Bcal_Y$ fitting in the obvious diagram.
\end{lemma}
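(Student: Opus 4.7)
The plan is to deduce the existence of $\mathcal{B}_f$ from the universal property of the scheme-theoretic image together with the hypothesis of functoriality at the level of Artin fans.

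First I would set up the relevant commutative diagram. By hypothesis we have $\mathcal{A}_f : \mathcal{A}_X \to \mathcal{A}_Y$ making the square
\[
\begin{tikzcd}
X \arrow[r,"f"] \arrow[d,"\mathfrak{t}_X"] & Y \arrow[d,"\mathfrak{t}_Y"] \\
\mathcal{A}_X \arrow[r,"\mathcal{A}_f"] & \mathcal{A}_Y
\end{tikzcd}
\]
commute. By Definition \ref{def:idealisedArtinfan}, the map $\mathfrak{t}_Y : Y \to \mathcal{A}_Y$ factors through the closed substack $\mathcal{B}_Y \subseteq \mathcal{A}_Y$, and similarly for $X$. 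Consequently the composite $\mathcal{A}_f \circ \mathfrak{t}_X = \mathfrak{t}_Y \circ f : X \to \mathcal{A}_Y$ factors through $\mathcal{B}_Y$.

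Next I would form the closed substack $\mathcal{A}_f^{-1}(\mathcal{B}_Y) := \mathcal{A}_X \times_{\mathcal{A}_Y} \mathcal{B}_Y \subseteq \mathcal{A}_X$. The previous paragraph shows that $\mathfrak{t}_X : X \to \mathcal{A}_X$ factors through this closed substack. By the universal property of the scheme-theoretic image, $\mathcal{B}_X \subseteq \mathcal{A}_f^{-1}(\mathcal{B}_Y)$ as closed substacks of $\mathcal{A}_X$. Restricting $\mathcal{A}_f$ to $\mathcal{B}_X$ and composing with the projection $\mathcal{A}_f^{-1}(\mathcal{B}_Y) \to \mathcal{B}_Y$ then produces the desired map
\[
\mathcal{B}_f : \mathcal{B}_X \longrightarrow \mathcal{B}_Y,
\]
and commutativity of the resulting square with $X \to Y$ is immediate from the construction.

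The main subtlety I anticipate is verifying that the universal property of the scheme-theoretic image actually applies in the stacky setting considered here: one needs that the scheme-theoretic image commutes with pullback along the closed immersion $\mathcal{B}_Y \hookrightarrow \mathcal{A}_Y$, and that the formation of the fiber product $\mathcal{A}_X \times_{\mathcal{A}_Y} \mathcal{B}_Y$ yields a closed substack of $\mathcal{A}_X$. Both are standard for reasonable Artin stacks (closed immersions being stable under base change, and scheme-theoretic image being characterized by the vanishing of a quasi-coherent ideal sheaf), so no essentially new input should be needed beyond Definition \ref{def:idealisedArtinfan}.
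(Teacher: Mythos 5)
Your proposal is correct and follows essentially the same route as the paper: both arguments note that $X \to \mathcal{A}_Y$ factors through $\mathcal{B}_Y$, pull back $\mathcal{B}_Y$ to a closed substack of $\mathcal{A}_X$ through which $X$ factors, and invoke the minimality of the scheme-theoretic image to conclude $\mathcal{B}_X$ lands inside that pullback. The only difference is that you spell out the stack-theoretic caveats more explicitly, which the paper leaves implicit.
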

\begin{proof}
For this, note that there is (by composition) a map $X \to \Bcal_Y$, and so the pullback of $\Bcal_Y$ to $\Acal_X$ is a closed substack of $A_X$  via which $X$ factors. But then (by definition of the scheme theoretic image) we find that $\Bcal_X$ is a subscheme of the pullback of $\Bcal_Y$, so we get a map $\Bcal_X \to \Bcal_Y$.
\end{proof}

\begin{definition}
Let $X$ be a log scheme and $d \in \Z$. We say $X$ has \emph{pure log dimension $d$} if for any log stratum\footnote{Here with a log stratum, we mean a log stratum on a strict \'etale cover.} $Z$ with generic point $\zeta$ the local log dimension $\mathsf{rk } \ghost_{\zeta} + \dim Z$ is equal to $d$. 
\end{definition}

\begin{lemma}
\label{lem:xsmoothoverbx}
Let $X$ be an idealized log smooth stack over $k$. Then the morphism $X \to \Bcal_X$ is smooth. If $X$ is pure of dimension $e$ and pure of log dimension $d$, then $\Bcal_X$ has dimension $e - d$ and the map $X \to \Bcal_X$ has relative dimension $d$. 
\end{lemma}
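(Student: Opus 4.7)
The plan is to reduce the statement to a local toric model using the structure theorem for idealised log smooth morphisms. Since smoothness and relative dimension are étale local on source and smooth local on target, we may pass to a strict étale cover of $\Acal_X$ (and a smooth cover of $X$ to reduce to a scheme). By the structure theorem for idealised log smooth morphisms (see \cite[IV.3.3.1]{Ogu06}), around any point there is an étale chart
\[
X \to \Spec(k[P]/J_K) \times \mathbb{A}^r,
\]
where $P$ is a fine sharp monoid, $K \subseteq P$ is the monoid ideal defining the idealised structure, $J_K = \langle \chi^k : k \in K\rangle \subseteq k[P]$, and the log and idealised structures on $X$ are pulled back from $(P,K)$. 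The Artin fan $\Acal_X$ is then étale locally the Artin cone $[\Spec k[P]/\Spec k[P^{\gp}]]$.

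Next I would identify $\Bcal_X$ inside this chart. Since $X \to \Acal_X$ factors through the closed substack
\[
\Bcal := [\Spec(k[P]/J_K)/\Spec k[P^{\gp}]] \subseteq \Acal_X,
\]
we have $\Bcal_X \subseteq \Bcal$. The reverse inclusion combines the smoothness of $X \to \Spec(k[P]/J_K)$ (which makes the map faithfully flat onto its image, hence schematically dominant there) with the non-empty-fibres-over-minimal-strata hypothesis of Definition \ref{Def:an_Artin_fan} (which ensures $X$ hits the deepest torus orbit, forcing, via torus equivariance and the smoothness of the chart, the image to exhaust all surviving orbits of $\Spec(k[P]/J_K)$). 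This yields $\Bcal_X = \Bcal$ étale locally, so the map $X \to \Bcal_X$ factors as
\[
X \xrightarrow{\text{\'et}} \Spec(k[P]/J_K) \times \mathbb{A}^r \xrightarrow{\mathrm{pr}_1} \Spec(k[P]/J_K) \to [\Spec(k[P]/J_K)/\Spec k[P^{\gp}]] = \Bcal_X,
\]
a composition of smooth morphisms. Hence $X \to \Bcal_X$ is smooth.

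For the dimension statement, I would compute $\dim X = \dim \Spec(k[P]/J_K) + r$ and $\dim \Bcal_X = \dim \Spec(k[P]/J_K) - \rk P^{\gp}$, yielding relative dimension $r + \rk P^{\gp}$. To see this equals $d$, pick any point $u \in X$ lying over the stratum of $\Spec(k[P]/J_K)$ corresponding to a face $Q \subseteq P$: the characteristic monoid at $u$ has rank $\rk P^{\gp} - \rk Q^{\gp}$ while the enclosing stratum has dimension $r + \rk Q^{\gp}$, and the sum equals $d$ by purity of the log dimension. Thus the relative dimension is $d$ and $\dim \Bcal_X = e - d$.

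The most delicate step, I expect, is the identification $\Bcal_X = \Bcal$ as scheme theoretic images rather than merely as a containment. The non-empty-fibres-over-minimal-strata hypothesis is essential here: without it, $X$ could avoid a component of $\Spec(k[P]/J_K)$ and the scheme theoretic image would have strictly smaller support. A further subtlety is that $J_K$ need not be radical, so one must confirm that the scheme theoretic image carries precisely the (possibly nonreduced) structure $k[P]/J_K$ for the dimension count to be correct; this follows from faithful flatness of the chart $X \to \Spec(k[P]/J_K)$, which guarantees schematic dominance.
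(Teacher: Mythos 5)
Your proposal is correct and follows essentially the same route as the paper: both reduce via Ogus's structure theorem for idealised log smooth morphisms to the local toric model $\Spec k[Q]/(K) \to [\Spec k[Q]/\Spec k[Q^{\gp}]]$ and check smoothness and the dimension count there. The only cosmetic difference is that the paper passes through the canonical idealised Artin fan via the strict \'etale comparison map of Proposition~\ref{prop:bartinfantocan}, whereas you identify the scheme-theoretic image in the chosen Artin fan directly using the non-empty-fibres-over-minimal-strata hypothesis; both handle the same issue.
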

\begin{proof}
We will prove this locally on $X$. 
By definition, the map $X \to \mathrm{Spec}(k)$ to the point with trivial log structure is idealised log smooth.
Using \cite[Variant IV.3.3.5]{Ogu06}, such an  idealised log smooth map has charts, which in our case means that \'etale locally on $X$ there is a monoid $Q$, an ideal $K$ of $Q$ and a strict smooth\footnote{\cite[Variant IV.3.3.5]{Ogu06} claims this map is \'etale, but is not true (for example, it fails for $X$ a smooth scheme of positive dimension with trivial log structure). It should say ``smooth (resp. \'etale)''.} map 
\begin{equation}
b\colon X \to \Spec k[Q]/(K) =:Y. 
\end{equation}
Since the map $b$ is strict, there exists a strict and \'etale map $\Acal_X^\textup{can} \to \Acal_Y^\textup{can}$ of their canonical Artin fans such that the rectangular diagram on the right below commutes:
\[
\begin{tikzcd}
  & X  \arrow[r]\arrow[d] \arrow[dl] & Y \arrow[d]\\
  \Bcal_X \arrow[r, dashed, "\text{Pro.}~\ref{prop:bartinfantocan}", swap]  & \Bcal_X^\textup{can} \arrow[d, hook] \arrow[r, dashed] & \Bcal_Y^\textup{can} \arrow[d, hook]\\
  & \Acal_X^\textup{can} \arrow[r] & \Acal_X^\textup{can}
\end{tikzcd}
\]
In the diagram above, the surjectivity of $X \to \Bcal_X^\textup{can}$ implies that we obtain a map $\Bcal_X^\textup{can} \to \Bcal_Y^\textup{can}$ as indicated, and in fact it then follows that the lower square diagram is Cartesian, so that this map is also strict and \'etale. Secondly, from Proposition \ref{prop:bartinfantocan} it follows that we get a strict \'etale map $\Bcal_X \to \Bcal_X^\textup{can}$ as indicated.

Summarizing, we have obtained a commutative diagram of maps
\[
\begin{tikzcd}
  X  \arrow[r]\arrow[d] & Y \arrow[d]\\
  \Bcal_X \arrow[r] & \Bcal_Y^\textup{can}
\end{tikzcd}
\]
with $X \to Y$ smooth and $\Bcal_X \to \Bcal_Y^\textup{can}$ \'etale. Then if we show that $Y \to \Bcal_Y^\textup{can}$ is smooth, it follows that $X \to \Bcal_X$ is also smooth (since smoothness can be checked after composition with the \'etale morphism $\Bcal_X \to \Bcal_Y^\textup{can}$). 

All in all we are reduced to proving the claim for log schemes of the form $Y = \Spec k[Q]/(K)$, and for the canonical idealised Artin fan. The Artin fan of $\Spec k[Q]/(K)$ is $$\mathcal{A}(Q) = [\Spec k[Q]/\Spec k[Q^\gp]],$$ and the idealised Artin fan is the substack cut out by the ideal generated by $K$. Indeed, the map from $\Spec k[Q]$ to its Artin fan $\mathcal{A}(Q)$ is smooth, and the pullback of the substack cut out by $(K)$ is the subscheme cut out by $(K)$. In particular, the induced map $\Spec k[Q]/(K) \to \mathcal{A}(Q)$ is smooth onto its scheme theoretic image.

The claim on the relative dimension also follows immediately from the same claim for the charts.
\end{proof}

Given an idealised log smooth log stack, we will now use its idealised Artin fan to create a cone stack with boundary. We recall that there is an equivalence of categories between Artin fans and cone stacks.
\begin{definition}
Let $X$ be an idealised log smooth log stack and $X \to \mathcal{A}_X$ an Artin fan. Let $\Sigma_X$ be the cone stack corresponding to $\Acal_X$, and $\Delta_X \to \Acal_X$ be the cone stack corresponding to the open embedding $\Acal_X \setminus \Bcal_X \to \Acal_X$. Then we define $\Sigma_X^\circ$ to be $\Sigma_X \setminus \Delta_X$, and the \emph{tropicalization} of $X$ to be the cone stack with boundary $(\Sigma_X, \Sigma_X^\circ, \Delta_X)$.
\end{definition}
\begin{example}  
If $X$ is log smooth, then $\Acal_X = \Bcal_X$, and we get the empty boundary.
\end{example}
\begin{example}\label{Exa:Faces}
Given a rational polyhedral cone $\sigma$, consider the cone stack $\mathsf{Faces}(\sigma)$ whose objects are the faces $\sigma_0 \prec \sigma$ and whose morphisms are inclusions of faces induced by the identity of $\sigma$. If $\sigma$ is smooth, the stack $\mathsf{Faces}(\sigma)$ has $2^{\dim \sigma}$ many pairwise non-isomorphic objects. Declare its interior $\sigma^0$ to be the full subcategory consisting only of $\sigma$ itself. Then we obtain a cone stack with boundary $(\mathsf{Faces}(\sigma), \sigma^0, \Delta_\sigma)$.
If $X$ is the point with characteristic monoid $\N^r$ and sheaf of ideals $\N^{r} \setminus 0$, then its cone stack (with boundary) is given by  $\Sigma_X = \mathsf{Faces}(\mathbb{R}_{\geq 0}^r)$.
\end{example}

In Section \ref{subsec:definitionhomPP}, we will be interested in the (log) Chow groups of these Artin fans and their substacks, and their relations with the Chow group of $X$. We have the following result on the virtual relative dimension $0$ pullback used in Definition~\ref{Def:homologicallogCH}, the definition of the log Chow group.
\begin{lemma}
\label{lem:virtreldim0pullbackeqgysinpullback}
Let $\pi\colon \tilde{\Acal}_X \to \Acal_X$ be a log blowup. Consider the associated diagram of cartesian squares
\[
\begin{tikzcd}
 \tilde{X} \arrow[r] \arrow[d, "\pi_X"] & \tilde{\Bcal}_X \arrow[d,"\pi_{\Bcal}"] \arrow{r} & \tilde{\Acal}_X \arrow[d,"\pi"] \\
 X \arrow[r] & \Bcal_X \arrow[r] & \Acal_X
\end{tikzcd}
\]
Then the virtual relative dimension $0$ pullbacks
\begin{align*}
\CH_\star(X) \xrightarrow{\pi_X^!} \CH_\star(\tilde{X}), \ \ \CH_\star(\Bcal_X) \xrightarrow{\pi_\Bcal^!} \CH_\star(\tilde{\Bcal}_X), \ \ \CH_\star(\Acal_X) \xrightarrow{\pi^!} \CH_\star(\tilde{\Acal}_X) 
\end{align*}
in Definition~\ref{def:virtreldim0pullback} are all equal to the Gysin pullback of the log blowup $\pi$.
\end{lemma}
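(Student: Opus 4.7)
The plan is to reduce all three cases to the Artin fan case by exploiting the two Cartesian squares in the statement. The chain of maps
\[
\tilde X \longrightarrow \tilde\Bcal_X \longrightarrow \tilde\Acal_X
\]
is Cartesian over $X \to \Bcal_X \to \Acal_X$, with the left arrow smooth (by Lemma~\ref{lem:xsmoothoverbx}) and the right arrow a strict closed embedding. Both pullback operations (Barrott's virtual pullback and the Gysin pullback of $\pi$) behave well under these base changes, so the three cases are linked.

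First, I would verify the claim for $\pi^!$ on $\Acal_X$ itself. Both $\Acal_X$ and $\tilde{\Acal}_X$ are smooth Artin stacks, since any log blowup of an Artin fan is again an Artin fan, hence log smooth and smooth over $k$, and the map $\pi$ is proper, birational, and log étale. Under these hypotheses the perfect obstruction theory entering Barrott's construction in \cite[Construction~2.4]{Barrott2019Logarithmic-Cho} has trivial virtual tangent bundle, so $\pi^!$ coincides, via Poincaré duality, with the cohomological pullback $\pi^\star$ for the map of smooth Artin stacks. This is by definition the Gysin pullback of $\pi$.

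Second, I would transfer this equality to $\Bcal_X$ by base change along the right-hand Cartesian square, whose horizontal arrows are strict closed embeddings. Barrott's virtual pullback is compatible with strict base change, and the Gysin pullback of $\pi$ likewise restricts along the closed embedding $\Bcal_X \hookrightarrow \Acal_X$ to the Gysin pullback of $\pi_\Bcal$; combining these base-change compatibilities with Step~1 yields $\pi_\Bcal^! = $ Gysin pullback on $\Bcal_X$. Third, for $X$, the map $X \to \Bcal_X$ is smooth by Lemma~\ref{lem:xsmoothoverbx}, so smooth-strict base change applies to both $\pi_X^!$ and the Gysin pullback of $\pi_X$, and Step~2 transfers to give the claim on $X$.

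\textbf{Main obstacle.} The heart of the argument is Step~1, namely identifying Barrott's virtual relative dimension zero pullback with the cohomological Gysin pullback on smooth Artin stacks. This requires unpacking the construction of $\pi^!$ via the normal cone / perfect obstruction theory of the log blowup and verifying that the log étale hypothesis trivializes the deformation-theoretic contribution, leaving only the pullback of the fundamental class. Once this identification is in place, Steps~2 and 3 are formal consequences of the base-change compatibilities enjoyed by both pullback operations, and no further subtlety arises from $X$ or $\Bcal_X$ being only idealised log smooth.
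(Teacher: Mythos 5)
Your proposal is correct and matches the paper's intent: the paper's entire proof is the single sentence ``This follows immediately from Construction 2.4 of \cite{Barrott2019Logarithmic-Cho}'', and your three-step unpacking (Poincar\'e duality identification on the smooth Artin fans, then strict base change to $\Bcal_X$, then smooth base change to $X$) is exactly the content that citation is standing in for — indeed your Step 1 is stated verbatim as an example in the paper immediately after Definition \ref{def:virtreldim0pullback}. The only caveat is a minor imprecision in Step 2: there is no intrinsic ``Gysin pullback of $\pi_\Bcal$'' since $\pi_\Bcal$ need not be lci, so what you are really comparing throughout is the refined Gysin pullback of the ambient lci morphism $\pi$ applied via the Cartesian squares, which is what the lemma means by ``the Gysin pullback of the log blowup $\pi$''.
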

\begin{proof}
This follows immediately from Construction 2.4 of \cite{Barrott2019Logarithmic-Cho}.
\end{proof}

\subsection{Definitions}
\label{subsec:definitionhomPP}
For a log stack $X$ with Artin fan $\mathcal{A}_X$ and associated cone stack $\Sigma_X$, we denote the ring of strict piecewise polynomials on $\Sigma_X$ by $\sPP^\star(X)$, and similarly the ring of piecewise polynomials on $\Sigma_X$ by $\pPP^\star(X)$. There are natural isomorphisms 
\begin{equation}
    \Phi: \sPP^\star(X) \to \CH^\star(\Acal_X) \text{ and }\Phi^\mathrm{log}: \pPP^\star(X) \to \LogCH^\star(\Acal_X)\,,
\end{equation}
of graded algebras (see \cite[Section 3.4]{HS22} for the construction and \cite[Theorem 14]{MPS23} for the proof that they give isomorphisms). Next we introduce homological versions of these polynomials.



\begin{definition}
\label{def:sppandpp}
Let $(\Sigma, \Sigma^\circ, \Delta)$ be a cone stack with boundary. Then we define
\[
\sPP_\star(\Sigma, \Delta) \subseteq \sPP^\star(\Sigma) \text{ and }\pPP_\star(\Sigma, \Delta) \subseteq \pPP^\star(\Sigma)
\]
to be the set of (strict) piecewise polynomials on $\Sigma$ vanishing on all cones of $\Delta$. These $\QQ$-vector spaces carry a grading with values $k \in \mathbb{Z}_{\leq 0}$ given by the \emph{negative degree} of the piecewise polynomial, i.e. $\sPP_{k} \subseteq \sPP^{-k}$. They also have a natural module structure over $\sPP^\star(\Sigma)$ (respectively, $\pPP^\star(\Sigma)$).

For $X$ an idealised log smooth log stack of pure log dimension $d$ and $(\Sigma, \Sigma^\circ, \Delta) = (\Sigma_X, \Sigma^\circ_X, \Delta_X)$ the associated tropicalization of $X$, we also write
\[
\sPP_\star(X) = \sPP_\star(\Sigma_X, \Delta_X) \subseteq \sPP^\star(X) \text{ and }\pPP_\star(X) = \pPP_\star(\Sigma_X, \Delta_X) \subseteq \pPP^\star(X)
\]
for the sets above.
\end{definition}

\begin{remark}
A priori $\sPP_\star(\Sigma,\Delta)$ depends on the full data of $(\Sigma,\Sigma^\circ, \Delta)$. A posteriori however, it only depends on $\Sigma^\circ$, as we can also define it as a system of polynomials $(f_\sigma)_{\sigma}$ for $\sigma \in \Sigma^\circ$ that is compatible with face-pullbacks for morphisms in $\Sigma^\circ$ and such that for a face $\tau \preceq \sigma$ that is \emph{not} the image of some object in $\Sigma^\circ$, we have $f_{\sigma}|_{\tau} = 0$.
\end{remark}

\begin{theorem} \label{Thm:sPP_lowerstar_isom}
Let $(\Sigma, \Sigma^\circ, \Delta)$ be a cone stack with boundary, with $\Sigma$ smooth and denote by $i: \mathcal{B}_{\Sigma^\circ} \to \mathcal{A}_\Sigma$ the inclusion of the associated closed substack. Then there is a commutative diagram
\begin{equation}
\begin{tikzcd}
\sPP_\star(\Sigma, \Delta) \arrow[r] \arrow[d, "\Psi"] & \sPP^\star(\Sigma) \arrow[d, "\Phi"]\\
    \CH_\star(\Bcal_{\Sigma^\circ}) \arrow[r,"i_\star"]  & \CH_\star(\Acal_\Sigma) = \CH^\star(\Acal_\Sigma) 
\end{tikzcd}
\end{equation}
where both vertical maps are isomorphisms of graded $\QQ$-vector spaces. Similarly we have a diagram
\begin{equation} \label{eqn:PP_log_CH_Artin_fan_diagram}
\begin{tikzcd}
\pPP_\star(\Sigma, \Delta) \arrow[r] \arrow[d, "\Psi^\mathrm{log}"] & \pPP^\star(\Sigma) \arrow[d, "\Phi^\mathrm{log}"]\\
    \logCH_\star(\Bcal_{\Sigma^\circ}) \arrow[r,"i_\star"]  & \logCH_\star(\Acal_\Sigma) = \logCH^\star(\Acal_\Sigma) 
\end{tikzcd}
\end{equation}
with both vertical maps being isomorphisms, even for $\Sigma$ not necessarily smooth.
\end{theorem}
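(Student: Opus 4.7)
The plan is to establish the strict case first (left diagram), and then derive the log case by passing to colimits over subdivisions. For the strict case, I would combine the already-established isomorphism $\Phi: \sPP^\star(\Sigma) \xrightarrow{\sim} \CH^\star(\Acal_\Sigma) = \CH_\star(\Acal_\Sigma)$ from \cite{MPS23} with the localization sequence for the closed immersion $i: \Bcal_{\Sigma^\circ} \hookrightarrow \Acal_\Sigma$ and its open complement $j: \Acal_\Delta \hookrightarrow \Acal_\Sigma$:
$$\CH_\star(\Bcal_{\Sigma^\circ}) \xrightarrow{i_\star} \CH_\star(\Acal_\Sigma) \xrightarrow{j^\star} \CH_\star(\Acal_\Delta) \longrightarrow 0\,.$$
Applying $\Phi$ also to $\Delta$, the restriction $j^\star$ is identified with the restriction of strict piecewise polynomials from $\Sigma$ to $\Delta$, whose kernel is $\sPP_\star(\Sigma, \Delta)$ by definition. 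Consequently, $\Phi$ restricts to an identification of $\sPP_\star(\Sigma, \Delta)$ with $\im(i_\star) \subseteq \CH_\star(\Acal_\Sigma)$, yielding commutativity of the left diagram. It then remains to upgrade this into an isomorphism $\Psi: \sPP_\star(\Sigma, \Delta) \xrightarrow{\sim} \CH_\star(\Bcal_{\Sigma^\circ})$, which amounts to showing that $i_\star$ is injective.

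The injectivity of $i_\star$ will be the main technical obstacle. I would prove it by induction on the stratification, using the colimit presentation \eqref{eqn:Bartin_fan_as_colimit}. Each locally closed stratum $\mathcal{S}_\sigma$ with $\sigma \in \Sigma^\circ$ is of the form $B(\mathbb{G}_m^{\dim \sigma} \rtimes \Aut(\sigma))$ and is identical as an algebraic stack to the corresponding stratum of $\Acal_\Sigma$, so on each open stratum the pushforward is an isomorphism. In explicit local models, $i_\star$ acts on each Artin cone $\Acal_\sigma = [\Spec k[S_\sigma]/T_\sigma]$ as multiplication by the toric generator of the ideal $J_{\sigma, \Sigma^\circ}$ cutting out $\Bcal_{\Sigma^\circ}\cap \Acal_\sigma$, which is a non-zero-divisor in the polynomial ring $\CH_\star(\Acal_\sigma)$. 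Applying the localization sequences for the stratifications of $\Bcal_{\Sigma^\circ}$ and $\Acal_\Sigma$ indexed by the partial orders of $\Sigma^\circ$ and $\Sigma$ (with compatible boundary maps), a five-lemma/induction argument glues these local injectivity statements into injectivity of $i_\star$.

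For the log case, both $\pPP_\star(\Sigma, \Delta)$ and $\logCH_\star(\Bcal_{\Sigma^\circ})$ are defined as colimits (Definition \ref{Def:homologicallogCH}): the former over subdivisions $\hat\Sigma \to \Sigma$, the latter over locally free algebraic log blowups. A subdivision $\hat\Sigma$ of the cone stack with boundary induces a log blowup $\tilde\Acal \to \Acal_\Sigma$ and, by strict transform, a log blowup $\tilde\Bcal \to \Bcal_{\Sigma^\circ}$. Since smooth refinements of $\hat\Sigma$ are always available and cofinal, the strict case already established applies to each $(\tilde\Sigma, \tilde\Delta)$ and yields isomorphisms $\Psi_{\hat\Sigma}: \sPP_\star(\hat\Sigma, \hat\Delta) \xrightarrow{\sim} \CH_\star(\tilde\Bcal)$. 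The compatibility of transition maps under further refinement — pullback of piecewise polynomials on the PP side and the virtual relative dimension $0$ pullback $\pi^!$ on the Chow side — follows from Lemma \ref{lem:virtreldim0pullbackeqgysinpullback}, which identifies $\pi^!$ with the strict Gysin pullback in this setup. Passing to the colimit produces $\Psi^\mathrm{log}$ and the commutativity of the right diagram, and cofinality of the smooth subdivisions lets the smoothness hypothesis on $\Sigma$ be dropped in the log statement.
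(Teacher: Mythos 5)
Your overall skeleton matches the paper's: establish the strict case via the excision sequence, identify $\ker(j^\star)$ with $\sPP_\star(\Sigma,\Delta)$ through $\Phi$, and then pass to the colimit over subdivisions using Lemma \ref{lem:virtreldim0pullbackeqgysinpullback} for the log statement. The log half of your argument is essentially the paper's. The gap is in the one step you correctly flag as the crux: injectivity of $i_\star$. The localization sequence for Chow groups of Artin stacks is only right-exact, so when you stratify $\Bcal_{\Sigma^\circ}$ and $\Acal_\Sigma$ you do not get short exact sequences, and a five-lemma induction over strata has nothing to grip: at each inductive stage you would need to already know that pushforward from a closed substack is injective, which is precisely the statement being proved. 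Your local-model claim is also not right as stated: $J_{\sigma,\Sigma^\circ}$ is an intersection of monomial ideals and is essentially never principal (e.g.\ the triple-axis Example \ref{Exa:triple_axis}, where it is $(xy,xz,yz)$); when $\Bcal_{\Sigma^\circ}$ is reducible, $\CH_\star(\Bcal_{\Sigma^\circ})$ is not a free rank-one module over $\CH^\star(\Acal_\sigma)$ and $i_\star$ is not multiplication by any single element, so the non-zero-divisor argument does not apply.

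The paper's fix is to extend the excision sequence one step to the left by the first higher Chow group of the open complement,
\[
\CH_\star(\Acal_\Delta,1)\xrightarrow{\ \partial\ }\CH_\star(\Bcal_{\Sigma^\circ})\to\CH_\star(\Acal_\Sigma)\to\CH_\star(\Acal_\Delta)\to 0\,,
\]
and to show $\partial=0$. This uses that $\partial$ factors through the indecomposable part $\overline{\CH}_\star(\Acal_\Delta,1)$, which vanishes because $\Acal_\Delta$ is a smooth Artin fan: the vanishing holds for each stratum $B(\G_m^n\rtimes G)$ by the Bae--Schmitt computation of higher Chow groups, and is propagated to all of $\Acal_\Delta$ by a four-lemma induction on the stratification (the proposition immediately preceding the theorem in the paper). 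To repair your proof you must import this higher-Chow input, or find a genuinely different argument for injectivity; the purely $\CH_\star$-level induction you sketch cannot close.
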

In order to prove the above result, we want to use the theory of higher Chow groups of Artin stacks, applied to the various Artin fans and their open or closed substacks. This theory was originally developed by Bloch \cite{Bloch} for schemes, with the purpose of extending the excision exact sequence of Chow groups on the left. It was generalized to the setting of Artin stacks by Kresch \cite{Kresch_cycle}, and there are now modern approaches via \'etale motivic Borel-Moore homology \cite{Khan}. For recent applications of these higher Chow groups in intersection theory of moduli spaces see \cite{BaeSchmitt2, Eric_Larson_higher_Chow, Hannah_Larson_higher_Chow, Bishop_integral}. Below, we use several formal properties of these higher Chow groups which were proven in \cite{Khan}, such as their functoriality under proper pushforwards and flat pullbacks, the extension of the excision sequence \cite[Theorem 2.18]{Khan} and certain (cap) products \cite[Section 2.2.6]{Khan}. 
The forthcoming paper \cite{BaePark_comparison} by Bae and Park will give a comparison result of those (higher) Chow groups with the groups defined by Kresch (and used in the remaining paper). The comparison for the (zeroth) Chow groups is established in \cite[Example 2.10]{Khan}, and since these are what we ultimately care about in the proof of Theorem~\ref{Thm:sPP_lowerstar_isom}, we can use Khan's formalism in the technical arguments and specialize to the standard Chow groups in the end.

To set up notation, let $\mathcal{X}$ be an algebraic stack of finite type over $k$, stratified by quotient stacks. Then there exist the first higher Chow groups $\CH_\star(\mathcal{X}, 1)$ with $\mathbb{Q}$-coefficients. If $\mathcal{U} \subseteq \mathcal{X}$ is open with complement $\mathcal{Z}$, then there is an exact sequence
\begin{equation} \label{eqn:long_excision}
    \CH_\star(\mathcal{Z}, 1) \to \CH_\star(\mathcal{X},1) \to \CH_\star(\mathcal{U},1) \xrightarrow{\partial} \CH_\star(Z) \to \CH_\star(X) \to \CH_\star(U) \to 0\,.
\end{equation}
In general, the groups $\CH_\star(\mathcal{X}, 1)$ can be quite large, even for very simple stacks $\mathcal{X}$. One reason for this is that they always contain a piece coming from the higher Chow group $\CH^1(k, 1)$ of the base field $k$, which is non-trivial in general. Denote by
\begin{equation}
    \overline{\CH}_\star(\mathcal{X}, 1) = \CH_\star(\mathcal{X},1) / \mathsf{im}(\CH_{\star+1}(\mathcal{X})  \otimes \CH^1(k, 1))
\end{equation}
the \emph{indecomposable part} of the higher Chow group (see \cite[Section 2.3]{BaeSchmitt2}). The boundary map $\partial$ in the excision sequence \eqref{eqn:long_excision} factors through $\overline{\CH}_\star(\mathcal{U}, 1)$ by \cite[Remark 2.18]{BaeSchmitt2}. In particular, when this latter group vanishes, the pushforward of Chow groups under the inclusion $\mathcal{Z} \hookrightarrow \mathcal{X}$ is injective. When $\mathcal{U} = B(\mathbb{G}_m^n \rtimes G)$ for $G$ finite, this vanishing $\overline{\CH}_\star(\mathcal{U}, 1) = 0$ follows from \cite[Proposition 2.14, Remark 2.21]{BaeSchmitt2}. The following result allows to extend this vanishing to a broader range of Artin stacks.
\begin{proposition} 
Let $\mathcal{U}$ be an algebraic stack of finite type over $k$, stratified by quotient stacks, such that it has a locally closed stratification by stacks $\mathcal{U}_i$ with $\overline{\CH}_\star(\mathcal{U}_i, 1) = 0$. Then $\overline{\CH}_\star(\mathcal{U}, 1) = 0$. In particular, this vanishing holds if $\mathcal{U}$ is a smooth Artin fan.
\end{proposition}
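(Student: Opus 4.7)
The plan is to induct on the number of strata in the given locally closed stratification. The base case (one stratum) is the hypothesis. For the inductive step, pick a stratum $\mathcal{U}_0$ that is open in $\mathcal{U}$ (for any finite locally closed stratification such an open stratum exists, for instance as a stratum of maximal dimension), and let $\mathcal{Z} = \mathcal{U} \setminus \mathcal{U}_0$ equipped with its induced locally closed stratification by the remaining strata. By the inductive hypothesis, $\overline{\CH}_\star(\mathcal{Z}, 1) = 0$, while $\overline{\CH}_\star(\mathcal{U}_0, 1) = 0$ holds by assumption.

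I would then apply the long excision sequence \eqref{eqn:long_excision} for the open/closed decomposition $\mathcal{Z} \stackrel{i}{\hookrightarrow} \mathcal{U} \stackrel{j}{\hookleftarrow} \mathcal{U}_0$ and chase the diagram. Given $\alpha \in \CH_\star(\mathcal{U}, 1)$, its restriction $j^\star \alpha \in \CH_\star(\mathcal{U}_0, 1)$ is decomposable by assumption, so $j^\star \alpha = \sum_\ell \beta_\ell \cdot t_\ell$ with $\beta_\ell \in \CH_{\star+1}(\mathcal{U}_0)$ and $t_\ell \in \CH^1(k, 1)$. Using the surjectivity of $j^\star : \CH_{\star+1}(\mathcal{U}) \to \CH_{\star+1}(\mathcal{U}_0)$ at the right end of the excision sequence, lift each $\beta_\ell$ to $\tilde\beta_\ell \in \CH_{\star+1}(\mathcal{U})$. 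By compatibility of the cap product with flat pullback, the difference $\alpha - \sum_\ell \tilde\beta_\ell \cdot t_\ell$ lies in the kernel of $j^\star$, hence by exactness equals $i_\star \gamma$ for some $\gamma \in \CH_\star(\mathcal{Z}, 1)$. By the inductive hypothesis $\gamma$ is decomposable, $\gamma = \sum_m \delta_m \cdot s_m$, and the projection formula for $i_\star$ against classes pulled back from $\CH^1(k, 1)$ then shows $i_\star \gamma$, and hence $\alpha$, is decomposable in $\CH_\star(\mathcal{U}, 1)$.

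For the ``in particular'' claim, a smooth Artin fan $\mathcal{A}_\Sigma$ carries the natural finite locally closed stratification by 1-point substacks $\mathcal{S}_\sigma \cong B(\mathbb{G}_m^{\dim \sigma} \rtimes \mathrm{Aut}(\sigma))$ indexed by isomorphism classes of cones $\sigma \in \Sigma$, with $\mathrm{Aut}(\sigma)$ finite. Each of these satisfies $\overline{\CH}_\star(-, 1) = 0$ by the vanishing recalled just before the proposition, so the first part applies. The main obstacle I anticipate is the careful verification that $i_\star$ and the boundary operator $\partial$ in the excision sequence are compatible with the $\CH^\star(k, \star)$-module structure, so that the sequence descends cleanly to the indecomposable parts; this amounts to the projection formula for the cap product and should follow from the formalism of \cite[Section 2.2.6]{Khan}, but requires some bookkeeping.
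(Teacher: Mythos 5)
Your proposal is correct and follows essentially the same route as the paper: induction on the number of strata, peeling off an open stratum, and exploiting the excision sequence together with the compatibility of the cap product by $\CH^1(k,1)$ with $j^\star$ and with $i_\star$ (projection formula). The only difference is presentational — the paper packages your explicit diagram chase as an application of the four-lemma to the two-row diagram of cap products over the excision sequence — and your closing worry about $\partial$ is moot, since your chase only uses exactness at $\CH_\star(\mathcal{U},1)$ and never the boundary map.
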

\begin{proof}
We do induction on the number of strata $\mathcal{U}_i$, with the base case $\mathcal{U}=\mathcal{U}_i$ being trivial. In the general case, let $\mathcal{V}=\mathcal{U}_i$ be an open stratum inside $\mathcal{U}$, with complement $\mathcal{Z}$. Then letting $K = \CH^1(k, 1)$ and using the excision sequence \eqref{eqn:long_excision}, we have a diagram
\begin{equation}
\begin{tikzcd}
\CH_{\star+1}(\mathcal{Z}) \otimes K \arrow[r] \arrow[d] & \CH_{\star+1}(\mathcal{U}) \otimes K \arrow[r] \arrow[d] & \CH_{\star+1}(\mathcal{V}) \otimes K \arrow[r] \arrow[d] & 0  \arrow[d]\\
\CH_\star(\mathcal{Z},1) \arrow[r] \arrow[d] & \CH_\star(\mathcal{U},1) \arrow[r] \arrow[d] & \CH_\star(\mathcal{V},1) \arrow[r] \arrow[d] & \CH_\star(Z)\\
\underbrace{\overline \CH_\star(\mathcal{Z},1)}_{=0\text{ by induction}} & \overline \CH_\star(\mathcal{U},1) & \underbrace{\overline \CH_\star(\mathcal{V},1)}_{=0\text{ by assumption}} & 
\end{tikzcd}
\end{equation}
with exact rows and columns. Here the vertical arrows from the first to the second row are the cap products mentioned before, which are compatible with the proper pushforward under $\mathcal{Z} \hookrightarrow \mathcal{X}$ and the open restriction under $\mathcal{V} \subseteq \mathcal{U}$. 
This shows that the two left squares are commutative, and the commutativity of the right square follows from \cite[Remark 2.18]{BaeSchmitt2}. Then the four-lemma implies the vanishing of $\overline{\CH}_\star(\mathcal{U}, 1)$. 
Finally, any smooth Artin fan has a locally closed stratification with strata $B(\mathbb{G}_m^n \rtimes G)$ (corresponding to an object $\sigma$ in the associated cone stack mapping to $\mathbb{R}_{\geq 0}^n$ and having automorphism group $G$). The vanishing of the indecomposable part of the higher Chow group for these pieces was discussed before, so the proposition applies.
\end{proof}

\begin{proof}[Proof of Theorem \ref{Thm:sPP_lowerstar_isom}]
Let $\mathcal{A}_{\Delta} \subseteq \mathcal{A}_\Sigma$ be the open complement of $\mathcal{B}_{\Sigma^\circ}$. Then applying the excision sequence to that open substack, we have a commutative diagram of solid arrows
\[
\begin{tikzcd}
\CH_\star(\mathcal{A}_{\Delta}, 1) \arrow[r, "\partial"]  & \CH_\star(\mathcal{B}_{\Sigma^\circ}) \arrow[r] &  \CH_\star(\mathcal{A}_{\Sigma})\arrow[r] & \CH_\star(\mathcal{A}_{\Delta}) \arrow[r] & 0\\
0\arrow[r] & \sPP_\star(\Sigma, \Delta) \arrow[r] \arrow[u, dashed, "\Psi"]  & \sPP^\star(\Sigma) \arrow[r] \arrow[u, "\sim"] & \sPP^\star(\Delta)\arrow[r] \arrow[u, "\sim"] & 0
\end{tikzcd}
\]
with exact rows (the upper row by excision, the lower by definition of $\sPP_\star(\Sigma, \Delta)$. Since the indecomposable part $\overline{\CH}_\star(\mathcal{A}_\Delta, 1)$ of $\CH_\star(\mathcal{A}_{\Delta}, 1)$ vanishes and since $\partial$ factors through it, we have that $\partial=0$. Then it follows that a unique dashed arrow $\Psi$ as above exists and is an isomorphism.

To show the statement for the logarithmic Chow groups, first note that all spaces involved in the diagram \eqref{eqn:PP_log_CH_Artin_fan_diagram} are invariant under replacing $\Sigma$ by a refinement and $\Bcal_{\Sigma^\circ}$ and $\Acal_\Sigma$ by the corresponding log blowups. Choosing a suitable refinement we can reduce to the case that $\Sigma$ is smooth. In this setting, note that any log blowup $q: \widehat \Bcal \to \Bcal_{\Sigma^\circ}$ is obtained as a total transform of a log blowup $p: \Acal_{\widehat \Sigma} \to \Acal_\Sigma$ for a subdivision $\widehat \Sigma$ of $\Sigma$. Moreover, for the choice of such a subdivision, the diagram
\[
\begin{tikzcd}
\CH_\star(\widehat \Bcal) \arrow[r, hook] & \CH_\star(\Acal_{\widehat \Sigma})\\
\CH_\star(\mathcal{B}_{\Sigma^\circ}) \arrow[r, hook] \arrow[u, "q^!"]  &  \CH_\star(\mathcal{A}_{\Sigma}) \arrow[u, "p^\star"]
\end{tikzcd}
\]
commutes by Lemma~\ref{lem:virtreldim0pullbackeqgysinpullback}. Under the identifications with spaces of strict piecewise polynomials in the first part, this shows that the virtual Gysin pullback $q^!$ is given by the standard restriction map
\[
\sPP_\star(\Sigma, \Delta) \to \sPP_\star(\widehat \Sigma, \widehat \Delta)
\]
to the subdivision $\widehat \Sigma$ of $\Sigma$. Taking the colimit over such subdivisions, we obtain the diagram \eqref{eqn:PP_log_CH_Artin_fan_diagram}
where the isomorphism in the lower right corner was proven in \cite[Corollary~2.23]{Barrott2019Logarithmic-Cho}.
\end{proof}

\begin{corollary}
\label{cor:vanishingpp}
Let $X$ be an idealised log smooth log stack of pure log dimension $d$. Let $i: \Bcal_X \to \Acal_X$ denote the inclusion. Then if $X$ is locally free, there is a commutative diagram
\[
\begin{tikzcd}
    \sPP_\star(X) \arrow[r,"i_\star"] \arrow[d, "\Psi"] &  \arrow[d, "\Phi"] \sPP^\star(X)\\
    \CH_\star(\Bcal_X) \arrow[r] & \CH_\star(\Acal_X) = \CH^\star(\Acal_X)
\end{tikzcd}
\]
where both vertical maps are isomorphisms of graded groups.

The same diagram holds verbatim, replacing $\sPP$ by $\pPP$ and $\CH$ by $\LogCH$ (see \eqref{eqn:PP_log_CH_Artin_fan_diagram}), even for $X$ not necessarily locally free.
\end{corollary}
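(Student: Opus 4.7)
The plan is to derive this corollary essentially formally from Theorem \ref{Thm:sPP_lowerstar_isom}, applied to the cone stack with boundary $(\Sigma_X, \Sigma_X^\circ, \Delta_X)$ that was attached to $X$ in Section \ref{subsubs:bartinfan}.

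First, I would unpack the definitions. By Definition \ref{def:sppandpp} we have
\[
\sPP_\star(X) = \sPP_\star(\Sigma_X, \Delta_X), \qquad \pPP_\star(X) = \pPP_\star(\Sigma_X, \Delta_X),
\]
and by our choice of Artin fan $\Acal_X = \Acal_{\Sigma_X}$. The key identification to make explicit is that the closed substack $\Bcal_{\Sigma_X^\circ} \subseteq \Acal_{\Sigma_X}$ attached combinatorially to the forward-closed subcategory $\Sigma_X^\circ$ coincides with the idealised Artin fan $\Bcal_X$ of $X$ as defined in Definition \ref{def:idealisedArtinfan}: indeed, $\Delta_X$ was defined by passing through the equivalence between cone stacks and Artin fans applied to the open embedding $\Acal_X \setminus \Bcal_X \hookrightarrow \Acal_X$, so the two definitions of ``$\Bcal$'' agree on the nose. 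Once this is in place, the horizontal maps in the diagram to be proved match the horizontal maps of the diagram of Theorem \ref{Thm:sPP_lowerstar_isom}, and the vertical maps $\Psi$, $\Phi$ are identified with the ones supplied by that theorem (together with the isomorphism of \cite[Theorem 14]{MPS23} on the right hand column).

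Second, I would verify the hypothesis on $\Sigma$. Recall that $X$ locally free means every stalk of $\ghost_X$ is $\N^r$, equivalently every cone of $\Sigma_X$ is a smooth (unimodular simplicial) cone, so $\Sigma_X$ is smooth. Therefore the first part of Theorem \ref{Thm:sPP_lowerstar_isom} applies and gives the asserted isomorphism $\Psi: \sPP_\star(X) \xrightarrow{\sim} \CH_\star(\Bcal_X)$ together with commutativity of the square on Chow groups. For the logarithmic statement, the second half of Theorem \ref{Thm:sPP_lowerstar_isom} explicitly does not require smoothness of $\Sigma$; this is exactly why the hypothesis ``locally free'' can be dropped in that case. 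The grading conventions line up since $\sPP_k \subseteq \sPP^{-k}$ by Definition \ref{def:sppandpp}, matching the negative grading of $\CH_\star$ against $\CH^\star$ on the smooth stack $\Acal_\Sigma$.

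The main obstacle is essentially bookkeeping rather than mathematical content: one must check that the combinatorial substack $\Bcal_{\Sigma_X^\circ}$ and the scheme-theoretic image $\Bcal_X$ really are the same as closed substacks of $\Acal_X$ (not merely abstractly isomorphic), and that the identification is compatible with the pushforward arrow $i_\star$. This is immediate from the way $\Delta_X$ was introduced, but is worth spelling out so that the commutativity of the diagram is unambiguous.
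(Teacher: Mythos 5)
Your proposal is correct and follows exactly the paper's own argument: the corollary is deduced by applying Theorem \ref{Thm:sPP_lowerstar_isom} to the tropicalization $(\Sigma_X, \Sigma_X^\circ, \Delta_X)$ of $X$, with the hypothesis that $X$ is locally free supplying the smoothness of $\Sigma_X$ needed for the non-logarithmic half. The extra bookkeeping you spell out (identifying $\Bcal_{\Sigma_X^\circ}$ with $\Bcal_X$ and matching the gradings) is implicit in the paper's one-line proof but is accurately and correctly verified.
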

\begin{proof}
Let $(\Sigma_X, \Sigma^\circ_X, \Delta_X)$ be the tropicalization of $X$. The corollary follows by applying Theorem \ref{Thm:sPP_lowerstar_isom} to the cone stack $(\Sigma, \Sigma^\circ, \Delta) = (\Sigma_X, \sigma_X^\circ, \Delta_X)$.
\end{proof}
Assume $X$ is idealized log smooth of pure log dimension $d$. Composing $\Psi$ and $\Psi^\mathrm{log}$ with the pullback under the map $X \to \Bcal_X$, which is strict and smooth by Lemma \ref{lem:xsmoothoverbx}, we obtain maps
\begin{equation}
\Psi : \sPP_\star(X) \to \CH_{\star+d}(X) \text{ and }\Psi^\mathrm{log} : \pPP_\star(X) \to \logCH_{\star+d}(X)\,,
\end{equation}
which we denote again by $\Psi, \Psi^\mathrm{log}$ if there is no risk of confusion (and by $\Psi_X, \Psi_X^\mathrm{log}$ otherwise).

\begin{example}
\label{ex:ptwithnr}
Consider the log scheme $X = (\mathsf{pt}, \mathbb{N}^r)$ with cone stack $\Sigma_X = (\mathbb{R}_{\geq 0})^r$, which is idealized log smooth of pure log dimension $r$. Then $\sPP_\star(X)$ is the free module over $\sPP^\star(X)=\mathbb{Q}[x_1, \ldots, x_r]$ generated by $x_1x_2 \dots x_r$ of degree $-r$. 
We have piecewise polynomial functions
\[
x_1x_2 \dots x_r \in \sPP_{-r}(X) \text{ and }\min(x_1,\dots,x_r) \in \pPP_{-1}(X)\,.
\]
If $\P^{r-1} \to \mathsf{pt}$ denotes the log blowup obtained by  blowing up in the monoid ideal $(x_1,\dots,x_r)$, then we have
\[
\Psi(x_1x_2 \dots x_r) = [\mathsf{pt}] \in \CH_0(\mathsf{pt}) \text{ and }\Psi^\textup{log}(\min(x_1,\dots,x_r)) = [\P^{r-1}] \in \logCH_{r-1}(\mathsf{pt})
\]
give the fundamental class of $X$ and the fundamental class of its log blowup $\P^{r-1}$, respectively. To see these formulas, consider the Artin fan $\mathcal{A}_X = [\mathbb{A}^n / \mathbb{G}_m^n]$ and its blowup $\widetilde{\mathcal{A}} \to \mathcal{A}_X$ at the origin $B \mathbb{G}_m^n$ with exceptional divisor $\mathcal{E} \subseteq \widetilde{\mathcal{A}}$. We obtain 
\[
\Phi(x_1 x_2 \dots x_r) = [B \mathbb{G}_m^n] \in \CH_{-r}(\mathcal{A}_X) \text{ and } \Phi^\textup{log}(\min(x_1,\dots,x_r)) = [\mathcal{E}] \in \CH_{-1}(\widetilde{\mathcal{A}})
\]
using the results of \cite[Section 6.2]{HMPPS}. Together with the injectivity of the horizontal maps in Corollary \ref{cor:vanishingpp}  this implies the claimed formulas.
\end{example}

In fact, with some more work we can fully understand the logarithmic Chow group of the log scheme $X$ above.
\begin{proposition}  \label{Pro:LogCH_pt_Nr}
For the log scheme $X = (\mathsf{pt}, \mathbb{N}^r)$ with cone stack $\Sigma_X = (\mathbb{R}_{\geq 0})^r$ we have that the map
\[
\Phi: \mathsf{PP}_\star(X) \to \logCH_\star(\mathsf{pt}, \mathbb{N}^r)
\]
is surjective. Its kernel is given by the submodule
\[
K = \{c_1 x_1 + \ldots + c_r x_r : c_1, \ldots, c_k \in \mathsf{PP}_\star(X)\} \subseteq \mathsf{PP}_\star(X)\,.
\]
obtained from the ideal $I = (x_k: k=1, \ldots, r) \subseteq \mathsf{PP}^\star(X)$ as $K = I \cdot \mathsf{PP}_\star(X)$.
\end{proposition}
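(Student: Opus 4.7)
The plan is to compute $\logCH_\star(X)$ by passing through the idealised Artin fan $\Bcal_X = B\mathbb{G}_m^r$, where Corollary \ref{cor:vanishingpp} provides an isomorphism $\Psi^{\mathrm{log}}: \pPP_\star(X) \xrightarrow{\sim} \logCH_\star(\Bcal_X)$. The atlas map $q: X = \mathsf{pt} \to \Bcal_X$ is the universal principal $\mathbb{G}_m^r$-bundle, and the map $\Phi$ of the proposition factors as $\Psi^{\mathrm{log}}$ followed by the pullback $q^!$. The proposition therefore reduces to showing that $q^!: \logCH_\star(\Bcal_X) \to \logCH_\star(X)$ is surjective with kernel $(\lambda_1, \ldots, \lambda_r) \cdot \logCH_\star(\Bcal_X)$, where $\lambda_i = c_1(L_i) \in \CH^1(\Bcal_X)$ is the first Chern class of the $i$-th tautological line bundle on $B\mathbb{G}_m^r$, corresponding under $\Phi: \pPP^\star(X) \xrightarrow{\sim} \CH^\star(\Bcal_X)$ to the coordinate variable $x_i$.

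Every smooth simplicial subdivision $\widetilde \Sigma$ of $\mathbb{R}_{\geq 0}^r$ induces a log blowup $\widetilde \Bcal \to \Bcal_X$ whose base change along $q$ yields a Cartesian square
\[
\begin{tikzcd}
\widetilde X \arrow[r] \arrow[d, "\widetilde q"] & X \arrow[d, "q"]\\
\widetilde \Bcal \arrow[r] & \Bcal_X
\end{tikzcd}
\]
in which $\widetilde X$ is the scheme-theoretic fiber over the origin in the toric blowup $\widetilde{\mathbb{A}^r} \to \mathbb{A}^r$ -- a connected projective union of smooth toric components meeting transversely along torus-invariant strata -- and $\widetilde q$ is itself a $\mathbb{G}_m^r$-torsor. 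This gives $\widetilde \Bcal = [\widetilde X / \mathbb{G}_m^r]$, identifies $\CH_\star(\widetilde \Bcal)$ with the $\mathbb{G}_m^r$-equivariant Chow group $\CH_\star^{\mathbb{G}_m^r}(\widetilde X)$, and realises $\widetilde q^!$ as the forgetful map from equivariant to ordinary Chow. The inclusion $K \subseteq \ker \Phi$ is now formal, since $q^\star \lambda_i = 0$ for each $i$.

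For the surjectivity and the reverse kernel inclusion, the core assertion at each finite stage is that the forgetful map $\CH_\star^{\mathbb{G}_m^r}(\widetilde X) \to \CH_\star(\widetilde X)$ is surjective with kernel $(\lambda_1, \ldots, \lambda_r) \cdot \CH_\star^{\mathbb{G}_m^r}(\widetilde X)$. I would prove this using equivariant intersection theory: a generic one-parameter subgroup $\mathbb{G}_m \hookrightarrow \mathbb{G}_m^r$ induces a Bialynicki--Birula decomposition of $\widetilde X$ by torus-invariant affine cells, showing that $\CH_\star^{\mathbb{G}_m^r}(\widetilde X)$ is free as a $\mathbb{Q}[\lambda_1, \ldots, \lambda_r]$-module of rank $|\widetilde X^{\mathbb{G}_m^r}|$, so that quotienting by the augmentation ideal recovers $\CH_\star(\widetilde X)$ cleanly. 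Compatibility of this description with the virtual pullbacks of Definition \ref{def:virtreldim0pullback} and with proper pushforwards along log blowups then allows passage to the colimit, yielding the surjectivity of $\Phi$ and the equality $\ker \Phi = I \cdot \pPP_\star(X) = K$.

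The principal technical hurdle is establishing the Bialynicki--Birula freeness of $\CH_\star^{\mathbb{G}_m^r}(\widetilde X)$ in the reducible case, where $\widetilde X$ is only piecewise smooth. For irreducible smooth projective toric varieties this is classical; the general case should follow by induction on the number of components, using the torus-invariant stratification of $\widetilde X$ and excision to reduce to intersections of components, which are themselves lower-dimensional toric schemes to which the induction hypothesis applies.
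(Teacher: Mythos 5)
Your reduction is sound and matches the paper's: $\Phi$ factors through the isomorphism $\pPP_\star(X)\cong\logCH_\star(\Bcal_X)$ of Corollary \ref{cor:vanishingpp}, every log blowup of $X$ is pulled back from one of $\Bcal_X$ (so the colimits match, with transition maps controlled by Lemma \ref{lem:virtreldim0pullbackeqgysinpullback}), and at each finite stage the question becomes whether $\CH_\star(\widetilde\Bcal)\to\CH_\star(\widetilde X)$ is surjective with kernel $(\lambda_1,\dots,\lambda_r)$. Your identification of this map as the forgetful map $\CH_\star^{\G_m^r}(\widetilde X)\to\CH_\star(\widetilde X)$ is also correct. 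Where the two arguments diverge is in how this finite-stage statement is established: the paper embeds $\widetilde X$ as the open complement, inside the pulled-back vector bundle $\widetilde V\to\widetilde\Bcal$, of the $r$ coordinate hyperplane sub-bundles $\widetilde Z_k$, and reads off both surjectivity and the kernel from the excision sequence, using that $(j_k)_\star$ is multiplication by $x_k$ under the identifications $\CH_\star(\widetilde Z_k)\cong\CH_\star(\widetilde V)\cong\sPP_\star(\widetilde{\Bcal})$.

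The gap in your version is the Bia{\l}ynicki--Birula freeness claim. For the reducible fiber $\widetilde X$ the BB "cells" need not be affine spaces, the freeness of $\CH_\star^{\G_m^r}(\widetilde X)$ over $\Q[\lambda_1,\dots,\lambda_r]$ is a genuinely nontrivial spline-theoretic statement (it amounts to freeness of the module of homological piecewise polynomials on the subdivision), and the proposed induction via excision over components only yields right-exact sequences, not the splittings needed to conclude freeness; you would additionally have to show that $\dim_\Q\CH_\star(\widetilde X)$ equals the number of fixed points, which is likewise not automatic for a reducible union. None of this is needed. Since $\widetilde X\to\widetilde\Bcal$ is a $\G_m^r$-torsor, the statement you actually require --- surjectivity of the forgetful map with kernel exactly $(\lambda_1,\dots,\lambda_r)\cdot\CH_\star^{\G_m^r}(\widetilde X)$ --- is the elementary torsor case of Brion's general result $\CH_\star(Y)\cong\CH_\star^T(Y)/\hat T\,\CH_\star^T(Y)$, and its proof is precisely the excision argument on the associated rank-$r$ vector bundle described above. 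Replacing your freeness step by that excision argument closes the gap and collapses your proof onto the paper's.
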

\begin{proof}
Consider the stack $\mathcal{B}_X = (B \mathbb{G}_m^r, \mathbb{N}^r)$ associated to $X$ and the universal vector bundle $\pi: V = [\mathbb{A}^r / \mathbb{G}_m^r] \to \mathcal{B}_X$ with its induced strict log structure from the target. Then we have a diagram
\[
X \xrightarrow{i} V \xrightarrow{\pi} \mathcal{B}_X
\]
representing $X \cong [\mathbb{G}_m^r / \mathbb{G}_m^r]$ as an open substack (via the map $i$) in the vector bundle $V$. Since the map $X \to \mathcal{B}_X$ induces an isomorphism on Artin fans, any log blowup $\widetilde X$ of $X$ fits into a fiber diagram
\begin{equation} \label{eqn:exact_sequence_point_NNr}
\begin{tikzcd}
\widetilde X \arrow[r, "\widetilde i"] \arrow[d] & \widetilde V \arrow[r, "\widetilde \pi"] \arrow[d] & \widetilde{\mathcal{B}} \arrow[d]\\
X \arrow[r, "i"] & V \arrow[r, "\pi"] & \mathcal{B}_X
\end{tikzcd}
\end{equation}
where $\widetilde \pi$ is still a vector bundle of rank $r$, and $\widetilde i$ is an open embedding, representing $\widetilde X$ as the open complement of the union $\widetilde Z$ of $r$ hyperplane sections $\widetilde Z_1, \ldots, \widetilde Z_r \subseteq \widetilde V$. Let $j_k : \widetilde Z_k \to \widetilde V$ be the inclusion, then the map
\[
\bigsqcup_{k=1}^r \widetilde Z_k \xrightarrow{\sqcup j_k} \widetilde Z
\]
is proper, representable and surjective and thus induces a surjection of Chow groups. Then the excision sequence for Chow groups implies that we have an exact sequence
\begin{equation}
    \bigoplus_{k=1}^r \CH_\star(\widetilde Z_k) \xrightarrow{\sum_k (j_k)_\star} \CH_\star(\widetilde V) \xrightarrow{\widetilde i^\star} \widetilde X \to 0\,.
\end{equation}
Both $\widetilde V$ and all $\widetilde Z_k$ are vector bundles over $\widetilde{\Bcal}$, and thus the composition
\[
\sPP_\star(\widetilde{\mathcal B}) \xrightarrow{\Psi} \CH_\star(\widetilde{\mathcal B}) \xrightarrow{\pi^\star} \CH_{\star+r}(\widetilde V)
\]
is an isomorphism (the first map is an isomorphism by Corollary \ref{cor:vanishingpp}, the second by \cite[Theorem 2.1.12 (vi)]{Kresch_cycle}). Similarly, we have an isomorphism
\[
\sPP_\star(\widetilde{\mathcal B}) \xrightarrow{\Psi} \CH_\star(\widetilde{\mathcal B}) \xrightarrow{\pi^\star} \CH_{\star+r-1}(\widetilde Z_k)\,.
\]
Using these identifications in \eqref{eqn:exact_sequence_point_NNr}, the map $(j_k)_\star : \sPP_\star(\widetilde{\mathcal B}) \to \sPP_\star(\widetilde{\mathcal B})$ is given by multiplication with the piecewise linear function $x_i$. 

This shows that $\sPP_\star(\widetilde{\mathcal B}) \to \CH_\star(\widetilde {\mathcal B})$ is surjective with kernel $(x_k: k=1, \ldots, r)$. Taking the direct limit over all log blowups $\widetilde{\mathcal B} \to {\mathcal B}_X$ we obtain the desired statement (where similar to the proof of Theorem \ref{Thm:sPP_lowerstar_isom} we use Lemma \ref{lem:virtreldim0pullbackeqgysinpullback} to show that the refined Gysin pullbacks between the groups $\CH_\star(\widetilde {\mathcal B})$ correspond to restrictions of piecewise polynomials to subdivisions).
\end{proof}

\begin{example} \label{Exa:triple_axis}
Let $X$ be given by the union of the three axes inside $\A^3$, with log structure the pullback of the toric log structure on $\A^3$. Then $\Bcal_X = [X/\G_m^3]$ and $\Acal_X = [\A^3/\G_m^3]$. We have $\Sigma_X = \RR_{\geq 0}^3$, and the boundary $\Delta_X$ consists of the three rays generated by the three basis vectors. The strict piecewise polynomials vanishing on $\Delta_X$ are, as a $\Q[x,y,z]$-module, generated by $yz,xz$ and $xy$. These three generators map to the classes of the three axes in $\CH_\star(X)$ under the map $\Psi$.
\end{example}

\begin{example}
Let $X$ be a log smooth scheme pure of dimension $d$ (which hence is pure of log dimension $d$). Then we have a commutative diagram
\[
\begin{tikzcd}
\pPP^\star(X) \arrow[d] \arrow[r, "\sim"] & \pPP_{-\star}(X) \arrow[d] \\
\logCH^\star(X) \arrow[r, "\sim"] & \LogCH_{d-\star}(X)
\end{tikzcd}
\]
where the horizontal maps are given by acting on the constant function $1$ (top) and the fundamental class of $X$ (bottom) respectively.
Note that the isomorphism $\pPP_{-\star}(X) \to \pPP^\star(X)$ also has a natural interpretation, as the inclusion of the piecewise polynomials vanishing on the empty boundary $\Delta_X = \emptyset$ of $\Sigma_X$ inside the set of all piecewise polynomials.
\end{example}


\subsection{Pushing forward piecewise polynomials} \label{Sect:push_forward_PP}
We define, for a certain class of maps, a pushforward on the level of homological piecewise polynomials, show the projection formula, and verify compatibility with the usual pushforward on (log) Chow groups. 

Our notion of pushforward works for maps of so-called relative log dimension $0$, as per the following definition.
\begin{definition} \label{Def:relative_log_dimension_0}
Let $X$ and $Y$ be two idealised log smooth log stacks of pure log dimension $d$. Let $f: X \to Y$ be a proper saturated morphism of log stacks which descends to a map on Artin fans. 

We say $f$ is \emph{of relative log dimension $0$} if the map on tropicalisations $\Sigma_X \to \Sigma_Y$ is a relative dimension $0$ map between the dimension $d$ cone stacks, i.e. any cone $\sigma \in \Sigma_X$ is mapped to a cone of dimension $\dim(\sigma)$ in $\Sigma_Y$.
\end{definition}

\begin{proposition}  \label{Prop:sPP_pushforward}
Let $X$ and $Y$ be two idealised log smooth log stacks and $f: X \to Y$ be a proper map of relative log dimension $0$ and of relative Deligne--Mumford type, for which there is a cartesian square 
\begin{equation} \label{eqn:X_B_fiber_diagram}
\begin{tikzcd}
  X  \arrow[r,"f"]\arrow[d] & Y \arrow[d] \\
  \Bcal_X \arrow[r,"\Bcal_f"] & \Bcal_Y
\end{tikzcd}
\end{equation}
Then the map $\Bcal_f$ is proper and saturated. 
Let $f^\textup{trop} : \Sigma_X \to \Sigma_Y$ be the map associated to $\Acal_f$. Then we have a map
\[
(f^\textup{trop})_\star : \sPP_\star(X) \overset{\Psi_X}{\cong}\CH_\star(\Bcal_X) \xrightarrow{(\Bcal_{f})_\star} \CH_\star(\Bcal_Y) \overset{\Psi_Y}{\cong} \sPP_\star(Y)
\]
of $\sPP^\star(Y)$ modules such that the diagram
\begin{equation} \label{eqn:trop_pushforward_commutativity}
\begin{tikzcd}
\sPP_\star(X) \arrow[r, "(f^\textup{trop})_\star"] \arrow[d, "\Psi_X"] & \sPP_\star(Y) \arrow[d, "\Psi_Y"]\\
\CH_\star(X) \arrow[r, "f_\star"] & \CH_\star(Y)
\end{tikzcd}
\end{equation}
commutes. The same statement holds verbatim when replacing $\sPP$ by $\pPP$ and $\CH$ by $\LogCH$.
\end{proposition}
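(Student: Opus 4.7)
The plan is to reduce the entire statement to a base-change computation on the idealized Artin fans, exploiting the Cartesian square \eqref{eqn:X_B_fiber_diagram} and the identifications of Theorem~\ref{Thm:sPP_lowerstar_isom}. First I would verify that $\Bcal_f$ inherits the properties needed to produce a proper pushforward. By Lemma~\ref{lem:xsmoothoverbx} the vertical maps $p_X\colon X \to \Bcal_X$ and $p_Y\colon Y \to \Bcal_Y$ are strict, smooth, and surjective, hence fppf. Since properness, saturation, and representability of relative DM type are all fppf-local on the target, each descends from $f$ to $\Bcal_f$. In particular one obtains a proper pushforward $(\Bcal_f)_\star\colon \CH_\star(\Bcal_X) \to \CH_\star(\Bcal_Y)$, and the map $(f^\textup{trop})_\star$ is defined by transporting this along the isomorphisms $\Psi_{\Bcal_X}, \Psi_{\Bcal_Y}$.

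The commutativity of \eqref{eqn:trop_pushforward_commutativity} then reduces to flat base change for Chow groups: by construction $\Psi_X = p_X^\star \circ \Psi_{\Bcal_X}$, so for $\alpha \in \sPP_\star(X)$ one computes
\[
f_\star \Psi_X(\alpha) = f_\star p_X^\star \Psi_{\Bcal_X}(\alpha) = p_Y^\star (\Bcal_f)_\star \Psi_{\Bcal_X}(\alpha) = \Psi_Y \big((f^\textup{trop})_\star \alpha\big),
\]
where the middle equality uses flat base change along the smooth map $p_Y$ and the proper map $\Bcal_f$. The $\sPP^\star(Y)$-linearity of $(f^\textup{trop})_\star$ then follows by transporting the projection formula $(\Bcal_f)_\star(\Bcal_f^\star \beta \cap \gamma) = \beta \cap (\Bcal_f)_\star(\gamma)$ through $\Psi$ and $\Phi$.

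For the logarithmic version, the relative log dimension $0$ hypothesis ensures that any subdivision $\widehat{\Sigma}_Y \to \Sigma_Y$ pulls back along $f^{\textup{trop}}$ to a well-defined subdivision $\widehat{\Sigma}_X \to \Sigma_X$, since no cone of $\Sigma_X$ is collapsed. The induced map $\widehat{X} \to \widehat{Y}$ of log blowups remains proper, saturated, of relative DM type, and fits into a Cartesian square with the pulled-back idealized Artin fans $\widehat{\Bcal}_X, \widehat{\Bcal}_Y$. Applying the strict case at each level produces pushforwards, and taking the direct limit over such subdivisions yields the desired map $(f^{\textup{trop}})_\star\colon \pPP_\star(X) \to \pPP_\star(Y)$ together with the commutative diagram. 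The main obstacle I anticipate is the fppf-descent step for the properties of $\Bcal_f$, together with verifying that the transition maps in the colimit defining $\logCH_\star$ commute with the pushforwards at each stage; Lemma~\ref{lem:virtreldim0pullbackeqgysinpullback} is essential here, as it identifies the virtual relative dimension $0$ pullbacks appearing in the colimit with the Gysin pullbacks to which base change applies.
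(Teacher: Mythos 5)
Your proposal is correct and follows essentially the same route as the paper: properness and saturation descend to $\Bcal_f$ because these properties are local on the base for the smooth surjective cover $Y \to \Bcal_Y$, and the commutativity of \eqref{eqn:trop_pushforward_commutativity} in the strict case is exactly the compatibility of proper pushforward with flat pullback in the Cartesian square \eqref{eqn:X_B_fiber_diagram}. The only difference is that for the logarithmic version the paper simply invokes Barrott's Construction~2.15 and Theorem~2.19 applied to $\Bcal_f$, whereas you unpack that construction by hand as a colimit over subdivisions (with Lemma~\ref{lem:virtreldim0pullbackeqgysinpullback} identifying the transition maps) — the underlying content is the same.
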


Here the $\sPP^\star(Y)$-module structure on $\sPP_\star(X)$ is induced from the $\sPP^\star(X)$-structure on $\sPP_\star(X)$ via the pullback map $\sPP^\star(Y) \to \sPP^\star(X)$.
\begin{proof}
Properness is local on the base in the fpqc topology, hence $\Bcal_f$ is proper. And similarly, being saturated is local in the log smooth topology. Then for strict piecewise polynomials and usual Chow groups, the commutativity of \eqref{eqn:trop_pushforward_commutativity} follows from the compatibility of proper pushforwards and flat pullbacks in the diagram \eqref{eqn:X_B_fiber_diagram}. The existence of a proper pushforward $(\Bcal_{f})_\star$ of logarithmic Chow groups and the corresponding commutativity follow from \cite[Construction~2.15, Theorem~2.19]{Barrott2019Logarithmic-Cho} applied to the map $\Bcal_f$.
\end{proof}

\begin{remark}
The class of morphisms $f$ as in Proposition \ref{Prop:sPP_pushforward} contains log blowups, inclusions of strata closures, and finite $G$-torsors, and is closed under compositions.
\end{remark}

\begin{proposition} \label{Pro:proper_pushforward_surjective}
In the situation of Proposition \ref{Prop:sPP_pushforward}, if $f$ is additionally surjective and of relative DM-type, then the map
\[
(\mathcal{B}_f)_\star : \sPP_\star(X) \to \sPP_\star(Y)
\]
is surjective.
\end{proposition}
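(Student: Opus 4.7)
The plan is to translate the surjectivity of the pushforward on strict piecewise polynomials into a Chow-theoretic statement via the isomorphism of Corollary \ref{cor:vanishingpp}. Recall from Proposition \ref{Prop:sPP_pushforward} that the map $(f^\textup{trop})_\star$ is defined as the composition $\Psi_Y \circ (\Bcal_f)_\star \circ \Psi_X^{-1}$, so it suffices to show that the Chow pushforward
\[
(\Bcal_f)_\star : \CH_\star(\Bcal_X) \to \CH_\star(\Bcal_Y)
\]
is surjective with $\mathbb{Q}$-coefficients.

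To this end, I would first verify that $\Bcal_f$ inherits all the relevant geometric properties of $f$. Properness and saturation of $\Bcal_f$ are already recorded in Proposition \ref{Prop:sPP_pushforward}. Surjectivity and relative DM-type descend from $f$ along the cartesian square \eqref{eqn:X_B_fiber_diagram}, since the vertical maps $X \to \Bcal_X$ and $Y \to \Bcal_Y$ are smooth and surjective by Lemma \ref{lem:xsmoothoverbx}. Combining the relative log dimension $0$ hypothesis with the equal pure log dimensions of $X$ and $Y$ and the identity $\dim \Bcal_X = \dim X - d$ (and likewise for $Y$) of Lemma \ref{lem:xsmoothoverbx}, one concludes that $\Bcal_f$ has relative dimension $0$, and is therefore generically finite on each irreducible component dominating its image.

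Given these properties, the conclusion follows from the standard cycle-lifting argument with $\mathbb{Q}$-coefficients. For any integral closed substack $Z \subseteq \Bcal_Y$, the preimage $(\Bcal_f)^{-1}(Z)$ is proper and surjects onto $Z$; picking an irreducible component $W \subseteq (\Bcal_f)^{-1}(Z)$ that dominates $Z$, the generic finiteness of $\Bcal_f$ over the generic point of $Z$ forces $\dim W = \dim Z$, so that the restriction $W \to Z$ is proper, surjective, and generically finite of some positive degree $d$. Then $(\Bcal_f)_\star [W] = d \cdot [Z]$, whence $[Z] = d^{-1} (\Bcal_f)_\star [W]$ lies in the image. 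Since the fundamental classes of integral closed substacks generate $\CH_\star(\Bcal_Y)$, surjectivity follows.

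The main obstacle is the dimension bookkeeping in the second paragraph: the relative log dimension $0$ hypothesis is combinatorial, imposed on the map of cone stacks $\Sigma_X \to \Sigma_Y$, and one must carefully unpack how it controls the fibre dimensions of $\Bcal_f$ stratum by stratum. The cleanest route is probably to work \'etale locally on $\Bcal_Y$, reduce to the case where $\Bcal_X$ and $\Bcal_Y$ come from Artin cones with monoidal ideals as in the colimit presentation \eqref{eqn:Bartin_fan_as_colimit}, and verify the dimension statement directly from the combinatorial input before reassembling the global argument.
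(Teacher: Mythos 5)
Your reduction to surjectivity of the Chow pushforward $(\Bcal_f)_\star : \CH_\star(\Bcal_X) \to \CH_\star(\Bcal_Y)$, and your descent of surjectivity and relative DM-type from $f$ to $\Bcal_f$ along the cartesian square, match the paper's proof exactly (the paper phrases the descent as these properties being fpqc-local on the base). The divergence, and the problem, is in your final step.

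The claim that ``the fundamental classes of integral closed substacks generate $\CH_\star(\Bcal_Y)$'' is false here. The stacks $\Bcal_X$ and $\Bcal_Y$ are closed substacks of Artin fans and carry positive-dimensional stabilizers: their strata are of the form $B(\mathbb{G}_m^n \rtimes G)$. For such Artin stacks, Kresch's Chow groups are not spanned by classes of integral closed substacks --- already $\CH_\star(B\mathbb{G}_m^r) \cong \mathbb{Q}[t_1,\dots,t_r]$ is concentrated in negative degrees and infinite-dimensional, while $B\mathbb{G}_m^r$ has only itself as an integral closed substack. (This is visible in the present setting: under $\Psi$, the groups $\sPP_\star$ contain elements like $x_1\cdots x_r$ of degree $-r$, which are not fundamental classes of substacks.) So the standard cycle-lifting argument, which is fine for schemes and DM stacks with $\mathbb{Q}$-coefficients, does not close the proof. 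The paper instead invokes \cite[Proposition B.19]{BaeSchmitt1}, which establishes surjectivity of proper pushforward along surjective morphisms of relative DM-type between Artin stacks stratified by quotient stacks; its proof works through equivariant Chow groups and finite-dimensional approximations, where a cycle-lifting argument of the kind you describe can legitimately be run. A secondary remark: your dimension bookkeeping via relative log dimension $0$ is not actually needed for the lifting step --- properness and surjectivity alone let you choose an integral $W$ over the generic point of $Z$ with $\dim W = \dim Z$ --- but repairing the generation claim is essential.
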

\begin{proof}
Again we can use that surjectivity and being of relative DM-type is local on the base in the fpqc topology, so these properties descend from $f$ to $\mathcal{B}_f$. Then the surjectivity follows from \cite[Proposition B.19]{BaeSchmitt1}. 
\end{proof}


\begin{proposition}
Let $f: X \to Y$ be as in Proposition \ref{Prop:sPP_pushforward}, with both $X,Y$ of pure log dimension $d$. We get a commutative diagram
\[
\begin{tikzcd}
  \LogCH_i(X)  \arrow[r] & \LogCH_{i}(Y) \\
  \LogCH_{i-d}(\Bcal_X) \arrow[r]\arrow[u] & \LogCH_{i-d}(\Bcal_Y)\arrow[u]
\end{tikzcd}
\]
where the vertical maps are pullbacks and the horizontal maps are pushforwards.
\end{proposition}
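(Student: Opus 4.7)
The plan is to deduce the desired commutativity from flat base change for ordinary Chow groups of algebraic stacks, applied to compatible log blowups of the Cartesian square of Proposition~\ref{Prop:sPP_pushforward}. Since $\LogCH_\star$ is defined as a colimit over locally free algebraic log blowups, and since the pullback $\pi_X^\star$ (along the strict smooth morphism of Lemma~\ref{lem:xsmoothoverbx}) and the proper pushforward $f_\star$ are defined levelwise, the problem reduces to checking base change at each finite level and then controlling the virtual relative dimension $0$ Gysin pullbacks of Definition~\ref{def:virtreldim0pullback} that connect different levels of the colimit.

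Concretely, I would represent a class $\alpha \in \LogCH_{i-d}(\Bcal_X)$ by some $\widetilde{\alpha} \in \CH_{i-d}(\widetilde{\Bcal}_X)$ on a locally free algebraic log blowup $q_X : \widetilde{\Bcal}_X \to \Bcal_X$. After refining if necessary, I would choose a compatible log blowup $q_Y : \widetilde{\Bcal}_Y \to \Bcal_Y$ together with a proper lift $\widetilde{\Bcal}_f : \widetilde{\Bcal}_X \to \widetilde{\Bcal}_Y$ of $\Bcal_f$. Base changing along $\pi_X$ and $\pi_Y$ produces log blowups $\widetilde X \to X$ and $\widetilde Y \to Y$ together with a proper map $\widetilde f : \widetilde X \to \widetilde Y$ sitting in a cube of Cartesian squares. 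Its top face
\[
\begin{tikzcd}
\widetilde X \arrow[r, "\widetilde f"] \arrow[d, "\widetilde \pi_X"] & \widetilde Y \arrow[d, "\widetilde \pi_Y"]\\
\widetilde{\Bcal}_X \arrow[r, "\widetilde{\Bcal}_f"] & \widetilde{\Bcal}_Y
\end{tikzcd}
\]
has smooth vertical maps of relative dimension $d$ and proper horizontal maps, so standard flat base change for Chow groups of algebraic stacks yields $\widetilde f_\star \circ \widetilde \pi_X^\star(\widetilde\alpha) = \widetilde \pi_Y^\star \circ (\widetilde{\Bcal}_f)_\star(\widetilde\alpha)$.

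To pass to the colimit, I would verify that smooth pullback and proper pushforward both commute with the virtual Gysin pullbacks of Definition~\ref{def:virtreldim0pullback} connecting different levels of the colimit. For smooth pullback, this follows from standard excess intersection applied to the base-changed log blowup squares; for pushforward it is essentially the content of \cite[Construction~2.15, Theorem~2.19]{Barrott2019Logarithmic-Cho}. Assembling the levelwise identities then gives the claimed equality in $\LogCH_i(Y)$.

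The main obstacle will be this last compatibility check, since $X$, $Y$, $\Bcal_X$ and $\Bcal_Y$ are only idealised log smooth (not log smooth): the pullbacks between colimit levels are genuinely virtual and cannot be controlled by Poincar\'e duality. The key input will be that the cube obtained by base change is Cartesian on all faces, which should allow a projection-formula type argument to intertwine the ordinary smooth pullbacks on each level with the virtual Gysin pullbacks between levels, giving the required compatibility at the colimit.
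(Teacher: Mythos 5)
Your argument is correct in outline, but it takes a very different route from the paper: the paper's entire proof is a citation of \cite[Theorem~2.19]{Barrott2019Logarithmic-Cho}, which is precisely the statement that saturated proper pushforward of homological log Chow groups commutes with (strict, flat) pullback; your proposal essentially reproves that theorem from scratch in this special case. Two remarks on the details. First, the direction of your cofinality step should be reversed: rather than starting from an arbitrary locally free log blowup $q_X:\widetilde{\Bcal}_X\to\Bcal_X$ carrying $\widetilde\alpha$ and trying to find a compatible blowup of $\Bcal_Y$ below it (which need not exist), one fixes a log blowup $\widetilde{\Bcal}_Y\to\Bcal_Y$, forms the total transform $\Bcal_X\times_{\Bcal_Y}\widetilde{\Bcal}_Y$, and refines the given representative until it dominates this fiber product --- this is how the pushforward is defined in \cite[Construction~2.15]{Barrott2019Logarithmic-Cho}, and it is what makes all six faces of your cube Cartesian, so that smooth base change applies on the nose. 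Second, the step you flag as the main obstacle --- intertwining the levelwise smooth pullbacks and proper pushforwards with the virtual relative dimension $0$ transition maps --- is already resolved by Lemma~\ref{lem:virtreldim0pullbackeqgysinpullback}: for a log blowup $\pi:\tilde{\Acal}\to\Acal$ of Artin fans, the transition maps on $\CH_\star(X)$, $\CH_\star(\Bcal_X)$ and $\CH_\star(\Acal_X)$ are all the \emph{same} refined Gysin pullback $\pi^!$ applied to the various fiber products, and refined Gysin maps commute with proper pushforward and flat pullback in Cartesian squares by the standard bivariant formalism; no projection-formula argument or appeal to Poincar\'e duality is needed, and the idealised (non-log-smooth) nature of $X$ and $Y$ causes no difficulty here. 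With these two adjustments your proof goes through; what the paper's one-line citation buys is simply that all of this bookkeeping has already been carried out once and for all in \cite{Barrott2019Logarithmic-Cho}.
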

\begin{proof}
This follows from \cite[Theorem~2.19]{Barrott2019Logarithmic-Cho}.
\end{proof}


Now we apply this theory to the gluing maps.
\begin{definition}
Let $\Gamma$ be a genus-decorated graph of genus $g$ with $n$ markings. Let $\Mbar_\Gamma^{\str}$ be the stack $\Mbar_\Gamma$ with log structure a pullback along the gluing map $\gl: \Mbar_\Gamma \to \Mbar_{g,n}$.
\end{definition}
\begin{lemma}
The map $\gl^\str: \Mbar_\Gamma^{\str} \to \Mbar_{g,n}$ is of relative log dimension $0$ with both spaces having log dimension $3g - 3 + n$. 
\end{lemma}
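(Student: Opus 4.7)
The strategy is to compute both the usual dimension and the rank of the ghost sheaf generically on each stratum and verify that the sum is constant equal to $3g-3+n$, and then to show that the tropical map $\Star_\Gamma(\Sigma_{g,n}) \to \Sigma_{g,n}$ induced by $\gl^\str$ preserves the dimension of every cone.

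First, I would verify the log dimension of $\Mbar_{g,n}$. The strata of $\Mbar_{g,n}$ are indexed by stable graphs $\Gamma'$ of genus $g$ with $n$ legs; the open stratum of $\Mbar_{g,n}$ associated to $\Gamma'$ has dimension $3g-3+n - |E(\Gamma')|$, while the stalk of the ghost sheaf at its generic point is $\N^{E(\Gamma')}$, of rank $|E(\Gamma')|$. Their sum is $3g-3+n$, independent of $\Gamma'$, so $\Mbar_{g,n}$ has pure log dimension $3g-3+n$.

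Next I would treat $\Mbar_\Gamma^\str$. Using $|V|-|E(\Gamma)|+h^1(\Gamma)=1$ and $\sum_v n(v) = 2|E(\Gamma)| + n$ together with the genus formula $g = \sum_v g(v) + h^1(\Gamma)$, one gets
\[
\dim \Mbar_\Gamma \;=\; \sum_{v\in V(\Gamma)} \bigl(3g(v)-3+n(v)\bigr) \;=\; 3g-3+n - |E(\Gamma)|.
\]
A stratum of $\Mbar_\Gamma^\str$ is indexed by a choice of stable graph $\Gamma_v$ for each vertex $v$, and corresponds to a stratum of $\Mbar_{g,n}$ whose dual graph $\Gamma'$ is obtained by inserting the $\Gamma_v$'s at the vertices of $\Gamma$. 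Its dimension equals $\sum_v (3g(v)-3+n(v)-|E(\Gamma_v)|)$, while the ghost sheaf of the pulled-back log structure has rank $|E(\Gamma)| + \sum_v |E(\Gamma_v)|$ at the generic point (one generator per edge of $\Gamma$ and one per edge of each $\Gamma_v$). The sum is again
\[
\sum_v (3g(v)-3+n(v)) + |E(\Gamma)| \;=\; (3g-3+n - |E(\Gamma)|) + |E(\Gamma)| \;=\; 3g-3+n,
\]
so $\Mbar_\Gamma^\str$ has pure log dimension $3g-3+n$ as well.

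Finally, for relative log dimension $0$, I would describe the tropical map explicitly. A cone of $\Star_\Gamma(\Sigma_{g,n}) = \prod_v \Sigma_{g(v),n(v)} \times \mathbb{R}_{\geq 0}^{E(\Gamma)}$ has the form $\sigma = \prod_v \sigma_{\Gamma_v} \times \mathbb{R}_{\geq 0}^{E(\Gamma)}$ for some choice of stable graphs $\Gamma_v$ at the vertices, with dimension $\sum_v |E(\Gamma_v)| + |E(\Gamma)|$. Under the tropical gluing map to $\Sigma_{g,n}$, it maps to the cone $\sigma_{\Gamma'}$ of the graph $\Gamma'$ obtained by inserting each $\Gamma_v$ at the vertex $v$ of $\Gamma$; this graph has exactly $\sum_v |E(\Gamma_v)| + |E(\Gamma)|$ edges. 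Thus $\dim(f^{\mathrm{trop}}(\sigma)) = \dim(\sigma)$, verifying the relative log dimension $0$ condition of Definition \ref{Def:relative_log_dimension_0}. The only mild subtlety is bookkeeping the contribution of the new edges arising from $\Gamma$ itself versus those internal to each $\Gamma_v$; this is a purely combinatorial check on dual graphs and is not a real obstacle.
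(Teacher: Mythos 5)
Your computation is correct and is exactly the argument the paper has in mind — the paper in fact states this lemma without proof, so your stratum-by-stratum check that $\dim Z + \mathrm{rk}\,\ghost_\zeta = 3g-3+n$ on both sides (using the ghost-sheaf stalk $\mathbb{N}^{E(\Gamma)}\oplus\bigoplus_v \overline{\mathsf M}_{\Mbar_{g(v),n(v)}}$ recorded in the paper's example) and that the tropical gluing map preserves cone dimensions is the intended content. The only thing worth adding for completeness is that Definition \ref{Def:relative_log_dimension_0} also asks the map to be proper and saturated and to descend to the Artin fans; these are immediate here since $\iota_\Gamma$ is finite and $\gl^\str$ is strict by construction, but a one-line remark to that effect would make the verification of the definition complete.
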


\begin{corollary}
There is a map $\sPP_\star(\Mbar_\Gamma^\str) \to \sPP_\star(\Mbar_{g,n})$ that lies over the pushforward map $\logCH_\star(\Mbar_\Gamma^\str) \to \logCH_\star(\Mbar_{g,n})$. 
\end{corollary}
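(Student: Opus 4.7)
The plan is to invoke Proposition~\ref{Prop:sPP_pushforward} with $f = \gl^\str \colon \Mbar_\Gamma^\str \to \Mbar_{g,n}$. This reduces the corollary to checking a short list of hypotheses, each of which should be immediate from structures already set up in the paper.

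First I would verify idealised log smoothness of both sides. The target $\Mbar_{g,n}$ is log smooth for its divisorial log structure, hence idealised log smooth with trivial idealised structure. The source $\Mbar_\Gamma^\str$ is idealised log smooth by the template of Example~\ref{ex:schemencdividealised}, applied to the stratum of $\Mbar_{g,n}$ cut out by the edges of $\Gamma$ equipped with the pullback log structure and the log ideal generated by the edge lengths. Next I would check the map-theoretic hypotheses on $\gl^\str$: properness is inherited from properness of $\Mbar_\Gamma$; relative DM-type is automatic since the gluing map is finite of degree $|\Aut(\Gamma)|$; and saturatedness of the induced log morphism is standard for strata inclusions of toroidal stacks. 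The preceding lemma already supplies relative log dimension~$0$.

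The substantive hypothesis is the Cartesian square~\eqref{eqn:X_B_fiber_diagram} on idealised Artin fans. Here I would simply quote diagram~\eqref{5ttgd} from the introduction: with the choices $\mathcal{A}_{\Mbar_{g,n}} = \mathcal{A}_{\Sigma_{g,n}}$ (via $\afanmap$ of~\eqref{eqn:q_map_intro}) and $\mathcal{A}_{\Mbar_\Gamma^\str}$ built from the product cone stack $\Star_\Gamma(\Sigma_{g,n})$ of~\eqref{eqn:Sigma_Gamma}, the idealised Artin fans compute to $\Bcal_{\Mbar_{g,n}} = \mathcal{A}_{\Sigma_{g,n}}$ (as $\afanmap$ surjects onto every stratum of the Artin fan) and $\Bcal_{\Mbar_\Gamma^\str} = \mathcal{P}_\Gamma$. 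Diagram~\eqref{5ttgd} is then precisely the Cartesian square appearing in~\eqref{eqn:X_B_fiber_diagram}, so no further construction is needed.

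The main conceptual obstacle is checking that the scheme-theoretic image pins down the idealised Artin fan of $\Mbar_\Gamma^\str$ as $\mathcal{P}_\Gamma$ and not as some smaller or larger closed substack; but this is forced by the requirement in~\eqref{5ttgd} that the vertical maps be smooth surjections, and the construction of $\mathcal{P}_\Gamma$ in the introduction is arranged exactly so that this holds. Once this identification is in place, Proposition~\ref{Prop:sPP_pushforward} directly yields the tropical pushforward $(\gl^\trop)_\star \colon \sPP_\star(\Mbar_\Gamma^\str) \to \sPP_\star(\Mbar_{g,n})$ together with compatibility with proper pushforward on $\CH_\star$ (and verbatim on $\pPP$ and $\logCH$). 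The corollary then follows by postcomposing with the canonical map $\CH_\star \to \logCH_\star$.
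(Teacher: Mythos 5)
Your proposal is correct and follows exactly the route the paper intends: the corollary is stated without proof precisely because it is the application of Proposition~\ref{Prop:sPP_pushforward} to $\gl^\str$, with relative log dimension~$0$ supplied by the preceding lemma and the Cartesian square~\eqref{eqn:X_B_fiber_diagram} supplied by Corollary~\ref{Cor:Psigma_cartesian_diagram} (equivalently diagram~\eqref{5ttgd}), where $\Bcal_{\Mbar_\Gamma^\str}=\mathcal{P}_\Gamma$ and $\Bcal_{\Mbar_{g,n}}=\Acal_{\Sigma_{g,n}}$. The only simplification available is that saturatedness needs no toroidal argument: $\gl^\str$ is strict by the very definition of $\Mbar_\Gamma^\str$, hence saturated.
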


While we don't need the following description, the pushforward of piecewise polynomials can be carried out explicitly within the language of polynomials on combinatorial cone stacks, using results from Brion.
\begin{definition}
Let $f:\Sigma' \to \Sigma$ be a map of cone stack, and let $\sigma \in \Sigma$ be a cone. We let $\chi_{\sigma,f}$ be the groupoid consisting of elements
\[
(\sigma' \to \sigma \isom \sigma)
\]
where $\sigma' \in \Sigma'$, and $\sigma' \to \sigma$ is a minimal factorisation of $f|_{\sigma'} \to \Sigma'$. The isomorphisms in this groupoid are given by diagrams
\[
\begin{tikzcd}
    \sigma'  \arrow[r] \arrow[d,"\sim"] & \sigma  \arrow[r,"\sim"] \arrow[d,"\sim"] & \sigma \arrow[d,equal] \\
    \sigma'' \arrow[r] & \sigma \arrow[r,"\sim"] & \sigma
\end{tikzcd}
\]
\end{definition}
\begin{remark}
If $f$ is a map of cone complexes, then $\chi_{\sigma,f}$ is the set of $\sigma' \in \Sigma'$ such that $\sigma$ is the smallest cone containing $f(\sigma')$.
\end{remark}

\begin{proposition}
\label{prop:pushforwardppstack}
Let $\psi\colon \Sigma_X \to \Sigma_Y$ be a relative dimension $0$, proper map between smooth cone stacks. Then $\phi_\star: \sPP^\star(X) \to \sPP^\star(Y)$ is uniquely determined by the fact that for any maximal cone $\sigma \in \Sigma_Y$ we have
\[
(\pi_\star f)_{\sigma} = \phi_{\sigma} \cdot \sum_{(\sigma' \to \sigma \isom \sigma) \in \chi_{\sigma,f}} \phi_{\sigma'}^{-1} \cdot f_{\sigma'}.
\]
Here $\sum_{(\sigma' \to \sigma \isom \sigma) \in \chi_{\sigma,f}}$ refers to the groupoid sum, where each term $(\sigma' \to \sigma \isom \sigma)$ is counted with weight $\#\Aut((\sigma' \to \sigma \isom \sigma))^{-1}$.
\end{proposition}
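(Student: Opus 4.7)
The plan is to verify the formula locally at each maximal cone of $\Sigma_Y$ and then use a uniqueness statement to conclude. First, since $\Sigma_Y$ is smooth, every cone is a face of some maximal cone, so the restriction map $\sPP^\star(\Sigma_Y) \to \prod_{\sigma \text{ maximal}} \sPP^\star(\sigma)$ is injective: a polynomial on a cone is determined by its values on the interior, and interiors of maximal cones are dense in $|\Sigma_Y|$. Hence specifying $(\phi_\star f)_\sigma$ on every maximal $\sigma$ pins down $\phi_\star f$ uniquely. This handles the uniqueness half of the proposition; the substance is the formula itself.

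To derive the formula, I would work via the equivalence between cone stacks and Artin fans and use Proposition~\ref{Prop:sPP_pushforward} together with Corollary~\ref{cor:vanishingpp} to match $\phi_\star$ with the proper push-forward of Chow groups of the associated Artin fans. Fix a maximal cone $\sigma \in \Sigma_Y$ with its Artin cone $\Acal_\sigma \subseteq \Acal_Y$. The preimage of $\Acal_\sigma$ in $\Acal_X$ decomposes, stratum-by-stratum, into open pieces indexed by the cones $\sigma' \in \Sigma_X$ with image contained in $\sigma$. Among these, only the $\sigma'$ whose minimal factorization onto $\sigma$ is an isomorphism contribute in top dimension; these are exactly the objects of $\chi_{\sigma,f}$. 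By the relative dimension zero hypothesis together with maximality of $\sigma$, each such contribution is a finite morphism $\Acal_{\sigma'} \to \Acal_\sigma$ of Artin cones of equal dimension.

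The computation then reduces to a Brion-type localization formula. For a finite morphism of Artin cones of the same dimension induced by $\sigma' \to \sigma$, the push-forward on Chow rings corresponds, under the identification with strict piecewise polynomials, to the $\sPP^\star(\sigma)$-linear map given by $f_{\sigma'} \mapsto \phi_\sigma \cdot \phi_{\sigma'}^{-1} \cdot f_{\sigma'}$, where $\phi_\tau$ is interpreted as the top-degree piecewise polynomial on $\tau$ given by the product of equations of rays (the toric Euler/fundamental class). Summing the contributions over the groupoid $\chi_{\sigma,f}$, weighted by $|\Aut(\sigma' \to \sigma \isom \sigma)|^{-1}$, yields the stated formula; the automorphism weights are standard and arise from the fact that push-forward along a non-representable finite morphism of DM-type weighs each geometric point by the reciprocal of its stabilizer.

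The main obstacle is establishing the Brion-type local formula cleanly in the stacky setting. When $\sigma$ has no automorphisms and $\sigma' \to \sigma$ is an honest isomorphism of smooth cones, this is a classical push-forward computation on smooth toric varieties. The general case requires passing to a strict \'etale cover of $\Acal_\sigma$ that trivializes $\Aut(\sigma)$, applying the classical formula there, and descending by summing over $\Aut(\sigma)$-orbits — all while keeping the groupoid bookkeeping on $\chi_{\sigma,f}$ consistent with the geometric multiplicities on the Artin-fan side. Once this local formula is in hand, the uniqueness step from the first paragraph glues the pieces into a global identity, completing the proof.
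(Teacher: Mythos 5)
Your overall strategy coincides with the paper's: reduce to Brion's formula for the cone-complex case and handle the stacky automorphisms by descending along finite covers by cone complexes, with the weights $\#\Aut(\sigma'\to\sigma\to\sigma)^{-1}$ doing the bookkeeping. (The uniqueness step is fine, though what it really uses is only that in a finite cone stack every cone is a face of a maximal one, not smoothness; and the detour through $\CH_\star$ of the Artin fans is not needed for this purely combinatorial statement --- the paper keeps that identification for the lemma that follows the proposition.)

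The one step that does not work as you describe it is the ``Brion-type local formula.'' You assert that for each individual object $(\sigma'\to\sigma)$ of $\chi_{\sigma,f}$ the pushforward along $\Acal_{\sigma'}\to\Acal_{\sigma}$ is computed by $f_{\sigma'}\mapsto \phi_{\sigma}\cdot\phi_{\sigma'}^{-1}\cdot f_{\sigma'}$. This cannot be correct term by term. When $\Sigma_X\to\Sigma_Y$ is a nontrivial subdivision --- take the paper's own barycentric example, where the identity
\[
\frac{1}{xy}=\frac{1}{x(y-x)}+\frac{1}{y(x-y)}
\]
is the assertion of the proposition for $f=1$ --- the individual term $\phi_{\sigma}\phi_{\sigma'}^{-1}f_{\sigma'}=\tfrac{y}{y-x}$ is a genuine rational function on $\sigma$, not a polynomial, so it cannot be the image of any class under a pushforward of Chow groups; moreover for a proper subcone $\sigma'\subsetneq\sigma$ the map $\Acal_{\sigma'}\to\Acal_{\sigma}$ is not proper (only the total map over $\Acal_\sigma$ is), so there is no pushforward along it to compute in the first place. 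The cancellation of the poles among the terms of $\chi_{\sigma,f}$ is precisely the content of Brion's theorem, and the paper's proof simply invokes \cite[Theorem 2.3.(iii)]{Brion} as a black box for maps of cone complexes before descending along a finite cover of $\Sigma_Y$ by a cone complex. If you replace your term-by-term localization sketch by that citation, the rest of your argument, including the automorphism descent, goes through; attempting to re-derive the formula as a sum of honest pushforwards along the strata of the fiber product will not.
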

\begin{proof}
For a map of cone complexes, this is simply Brion's formula \cite[Theorem 2.3.(iii)]{Brion}. The general case follows by taking finite covers by cone complexes.
\end{proof}

We present three examples that fall outside of the scope of Brion's original formula \cite[Theorem 2.3.(iii)]{Brion}.

\begin{example}
We take $f: X \to Y$ to be $\A^2 \to [\A^2/(\Z/2\Z)]$. Let $\sigma'$ denote the unique maximal cone in $\Sigma_X$, and $\sigma$ the unique maximal cone in $\Sigma_Y$. We fix an isomorphism $\sigma' \to \sigma$. Note that $\sigma$ has a non-trivial isomorphism $\tau$. Then $\chi_{\sigma,f}$ is the set \[(\sigma' \to \sigma \xrightarrow{\id} \sigma),(\sigma' \to \sigma \xrightarrow{\tau} \sigma)\].

Then by Proposition \ref{prop:pushforwardppstack} the linear functions $x$ and $y$ on $\Sigma_X$ both get sent to $x+y$, and the quadratic function $xy$ gets sent to $2xy$.
\end{example}

\begin{example}
We take $\Sigma$ to be the cone shown in Figure \ref{fig:a2z2}, and $\Sigma' \to \Sigma$ to be the barycentric subdivision in the maximal cone. Again, with $\sigma$ the unique maximal cone in $\Sigma$, the groupoid $\chi_{\sigma,f}$ is a set of size $2$. The formula from Proposition~\ref{prop:pushforwardppstack} then gives the equality
\[
\frac{1}{xy} = \frac{1}{x(y-x)} + \frac{1}{y(x-y)}.
\]
\end{example}

\begin{example}
We take $f: \Sigma' \to \Sigma$ to be $B(\Z/2\Z) \to \{*\}$. The groupoid $\chi_{*,f}$ is $B(\Z/2\Z)$, and the constant function $1$ pushes forward to the constant function $\frac12$.
\end{example}

The following lemma shows that pushing forward homological piecewise polynomials is the same as pushing forward the corresponding piecewise polynomials.
\begin{lemma}
Let $f: \Bcal_X \to \Bcal_Y$ be a proper, saturated map of idealised Artin fans, extending to a diagram
\[
\begin{tikzcd}
    \Bcal_X \arrow[r] \arrow[d] & \Bcal_Y \arrow[d] \\ 
    \Acal_X \arrow[r] & \Acal_Y
\end{tikzcd}
\]
where the map $\Acal_X \to \Acal_Y$ is a proper, saturated map of Artin fans.
Under the identification of $\logCH_\star(\Bcal_X)$ with piecewise polynomials on $\Sigma_X$ vanishing on the boundary $\Delta_X$ as per Corollary~\ref{cor:vanishingpp}, the pushforward 
\[
\LogCH_{i-d}(\Bcal_X) \to \LogCH_{i-d}(\Bcal_Y)
\]
is given by pushing forward piecewise polynomial functions as in Proposition~\ref{prop:pushforwardppstack}.
\end{lemma}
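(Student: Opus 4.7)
The plan is to reduce the claim to the corresponding statement at the level of the ambient Artin fans $\Acal_X$ and $\Acal_Y$, where it follows from the combinatorial pushforward formula of Proposition~\ref{prop:pushforwardppstack} applied to their cone stacks. Denoting by $i_X : \Bcal_X \hookrightarrow \Acal_X$ and $i_Y : \Bcal_Y \hookrightarrow \Acal_Y$ the closed immersions, the functoriality of proper pushforward in logarithmic Chow (\cite[Theorem~2.19]{Barrott2019Logarithmic-Cho}) makes the extended square
\[
\begin{tikzcd}
\logCH_\star(\Bcal_X) \arrow[r, "f_\star"] \arrow[d, "(i_X)_\star"] & \logCH_\star(\Bcal_Y) \arrow[d, "(i_Y)_\star"] \\
\logCH_\star(\Acal_X) \arrow[r, "g_\star"] & \logCH_\star(\Acal_Y)
\end{tikzcd}
\]
commute. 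By the logarithmic part of Corollary~\ref{cor:vanishingpp}, under the isomorphisms $\Psi^\mathrm{log}$ and $\Phi^\mathrm{log}$ the vertical maps become the natural injective inclusions
\[
\pPP_\star(\Sigma_X,\Delta_X) \hookrightarrow \pPP^\star(\Sigma_X), \qquad \pPP_\star(\Sigma_Y,\Delta_Y) \hookrightarrow \pPP^\star(\Sigma_Y),
\]
so any equality in $\pPP_\star(\Sigma_Y,\Delta_Y)$ may be verified after pushing forward into $\pPP^\star(\Sigma_Y)$.

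The essential content is therefore the analogous statement for the bottom row: that the proper pushforward $g_\star$ on $\logCH_\star$ corresponds under $\Phi^\mathrm{log}$ to the combinatorial pushforward $g^\mathrm{trop}_\star$ of Proposition~\ref{prop:pushforwardppstack}. After passing to a common subdivision making a given piecewise polynomial strict, it suffices to verify the strict analogue: that $g_\star : \CH_\star(\Acal_X) \to \CH_\star(\Acal_Y)$ corresponds under $\Phi$ to the strict combinatorial pushforward. I would check this by pulling back along a strict, smooth, surjective cover $\coprod_\sigma \Acal_\sigma \to \Acal_Y$ by Artin cones as in the presentation of Section~\ref{subsubs:bartinfan} and then a similar cover of the fiber product with $\Acal_X$, reducing to the case of a proper, saturated, toric map of smooth toric varieties. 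There the identification of algebraic proper pushforward on Chow with the piecewise polynomial formula is precisely \cite[Theorem~2.3(iii)]{Brion}; descent back to the Artin fans is a formal consequence of the compatibility of proper pushforward and flat pullback, together with injectivity of the pullback along the cover. The main technical obstacle I anticipate is the bookkeeping of nontrivial cone automorphisms in this descent — exactly what turns Brion's formula into the groupoid-weighted sum appearing in Proposition~\ref{prop:pushforwardppstack}.

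Once the Artin-fan case is established, the lemma follows formally. For $\alpha \in \pPP_\star(\Sigma_X,\Delta_X)$, commutativity of the square together with the Artin-fan case yields
\[
(i_Y)_\star\, f_\star \alpha \;=\; g_\star\, (i_X)_\star \alpha \;=\; g^\mathrm{trop}_\star\bigl((i_X)_\star \alpha\bigr) \;\in\; \pPP^\star(\Sigma_Y).
\]
Since the left-hand side lies in the subspace $\pPP_\star(\Sigma_Y,\Delta_Y) \subseteq \pPP^\star(\Sigma_Y)$, the combinatorial pushforward $g^\mathrm{trop}_\star \alpha$ automatically vanishes on the boundary $\Delta_Y$, and injectivity of $(i_Y)_\star$ identifies it with $f_\star \alpha$ inside $\pPP_\star(\Sigma_Y,\Delta_Y)$, giving the desired combinatorial description of the pushforward.
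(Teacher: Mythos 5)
Your argument is correct and is essentially the paper's proof: the paper likewise deduces the lemma from the commutative square of pushforwards relating $\Bcal_X,\Bcal_Y$ to $\Acal_X,\Acal_Y$, the injectivity of the vertical pushforwards coming from Corollary~\ref{cor:vanishingpp}, and the identification of the Artin-fan-level pushforward with the combinatorial formula. Your extra paragraph reducing the Artin-fan case to toric varieties via covers by Artin cones and Brion's formula is just an expansion of what the paper already delegates to Proposition~\ref{prop:pushforwardppstack}, so there is no substantive difference in approach.
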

\begin{proof}
This immediately follows from the commutative diagram
\[
\begin{tikzcd}
  \LogCH_i(\Bcal_X)  \arrow[r]\arrow[d] & \LogCH_{i}(\Bcal_Y)\arrow[d] \\
  \LogCH_{i}(\Acal_X) \arrow[r] & \LogCH_{i}(\Acal_Y)
\end{tikzcd}
\]
where all the maps are pushforwards, using the injectivity of the vertical maps and the identifications with spaces of piecewise polynomials from Corollary \ref{cor:vanishingpp}.
\end{proof}

\begin{example}
Let $X$ be the point with log structure $\N^2$, let $Y$ be $\P^2$ with toric log structure, and let $f: X \to Y$ be the strict proper map sending $X$ to the origin. Then we get the commutative diagram
\[
\begin{tikzcd}
  X  \arrow[r]\arrow[d] & \P^2 \arrow[d] \\
  \B \G_m^2 \arrow[r] & {[\P^2/\G_m^2]}
\end{tikzcd}
\]

and this induces the commutative diagram
\[
\begin{tikzcd}
  \LogCH_i(X)  \arrow[r] & \LogCH_{i}(Y) \\
  \LogCH_{i-2}(\B \G_m^2) \arrow[r]\arrow[u] & \LogCH_{i-2}[\P^2/\G_m^2]\arrow[u]
\end{tikzcd}
\]
With the identification from Corollary~\ref{cor:vanishingpp}, we have that $\sPP_\star(X)$ consists of PP functions on $\RR_{\geq 0}^2$ vanishing on the boundary, and $\sPP_\star(X)$ of PL functions on the fan $\Sigma$ of $\P^2$. The pushforward then sends a PP function $f$ on $\RR_{\geq 0}^2$ vanishing on the boundary to the PP function on $\Sigma$ that is $f$ on $\RR_{\geq 0}^2$ and $0$ everywhere else.
\end{example}


\section{A general treatment of log tautological rings}
\label{sec:logtautgeneral}
Let $(X,D)$ be a smooth Deligne-Mumford stack over a characteristic $0$ field $k$. For simplicity assume $X$ to be connected. We consider $X$ as a smooth log smooth log stack with the divisorial log structure induced by $D$. Let $X \to \mathcal{A}_X$ be an Artin fan as in Definition \ref{Def:an_Artin_fan}, such that the generic point of $\mathcal{A}_X$ has trivial automorphism group.\footnote{As an example, the canonical Artin fan $\mathcal{A}_X^\textup{can}$ always has this property.} Denote  by $\Sigma_X$ the associated cone stack. The assumptions above guarantee that it has a unique object $0 \in \Sigma_X$ (up to isomorphism)  mapping to the zero cone, and its automorphism group $\Aut(0)=\{\mathrm{id}\}$ in $\Sigma_X$ is trivial. 

Our goal here is to define notions of tautological classes, both in the Chow ring of $X$ itself, as well as for log blowups of $X$ (leading to a notion of log tautological classes). These classes are constructed by combining information from piecewise polynomials together with decorations by Chow classes defined on strata closures of $X$. Modeling our definition on the decorated strata classes of $\Mbar_{g,n}$, we require an analogue of the gluing maps $\iota_\Gamma : \Mbar_\Gamma \to \Mbar_{g,n}$ parameterizing these strata closures.

For this purpose, let $\sigma \in \Sigma_X$ be a cone and denote by $S_\sigma \subseteq X$ the associated locally closed stratum, and by $\overline S_\sigma$ its closure. The normalization $\widetilde S_\sigma \to \overline S_\sigma$ is smooth. For $G_\sigma$ the group of automorphisms of $\sigma$ in $\Sigma_X$ we claim that there is a universal principal $G_\sigma$-bundle $p_\sigma: P_\sigma \to \widetilde S_\sigma$ such that the normal bundle of the composition $P_\sigma \to \widetilde S_\sigma \to X$ splits as a sum of line bundles. It is referred to as the {\it monodromy torsor}. For $X = \Mbar_{g,n}$ and $\sigma=\sigma_\Gamma$ the cone associated to a stable graph, we have $G_{\sigma_\Gamma}=\Aut(\Gamma)$ and we exactly recover that $P_{\sigma_\Gamma} = \Mbar_\Gamma$ is the domain of the gluing map $\iota_\Gamma$.

The spaces $P_\sigma$ were explained in \cite[Section 5.1]{MPS23} and \cite[Section 6.2.1]{HMPPS} when $\Acal_X$ is chosen as the canonical Artin fan. In this case, the $G_\sigma$ can be seen as the monodromy group, acting on the branches of the divisor $D$ cutting out $S_\sigma$. Analytically locally, the normal bundle of the map $\widetilde S_\sigma \to X$ has one summand for each such branch of $D$, and the cover $P_\sigma \to \widetilde S_\sigma$ precisely ensures that the pullback of this bundle to $P_\sigma$ decomposes as a sum of line bundles.
For an arbitrary choice of Artin fan $X \to \Acal_X$ we construct $P_\sigma$ in Section \ref{Sect:Star_cone_stacks} below and verify several properties used in later proofs (see in particular Lemma \ref{Lem:Sigma_sigma_construction} and Corollary \ref{Cor:Psigma_cartesian_diagram}).





To provide some intuition and illustrate our constructions, we have the following running example  throughout Section \ref{sec:logtautgeneral}.
\begin{example}
\label{ex:a2z2}
Let $\Sigma_X$ be the combinatorial cone stack from \cite[Example~2.21]{CCUW}, given by\footnote{Note that the diagram below omits the identity morphisms of each object, and the unique morphism $0 \to \sigma$.}
\begin{equation}
\label{eqn:cone_stack_a2z2}
    \begin{tikzcd}
    0 \arrow[r] & \rho \arrow[r, shift left = 1] \arrow[r, shift right = 1] & \sigma \arrow[loop right]{r}
    \end{tikzcd}
\end{equation}
where $0$ is the cone point, $\rho$ is a ray, $\sigma$ is $\RR_{\geq 0}^2$. The two maps $\rho \to \sigma$ are the two inclusions of $\rho$ as a ray of $\sigma$, and the non-trivial automorphism on $\sigma$ is the swap $(x,y) \mapsto (y,x)$.
A more geometric visualization can be found in Figure~\ref{fig:a2z2}.
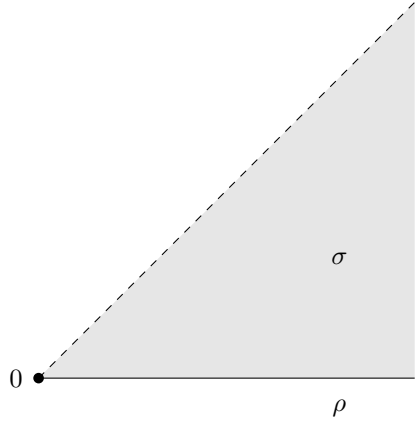
\begin{figure}
\begin{center}
\begin{tikzpicture}
    \fill [gray!20] (0,0) -- (5,5) -- (5,0) -- (0,0);
    \draw (0,0) -- (5,0);
    \fill (0,0) node {} circle (2pt);
    \node at (4,1.6) {$\sigma$};
    \node at (4,-.4) {$\rho$};
    \node at (-.3,0) {$0$};
    \draw[dashed] (0,0) -- (5,5);
\end{tikzpicture}
\end{center}
\caption{The fan $\Sigma_X$ defined in Example~\ref{ex:a2z2}}
\label{fig:a2z2}
\end{figure}

This is similar to the $\Z/2\Z$-quotient of $\RR_{\geq 0}^2$ except with trivial stabilizer on the cone point. There are multiple stacks which have $\Sigma_X$ as cone stack. A trivial example is the Artin fan $\Acal_{\Sigma_X}$ itself. For an example in  algebraic log spaces, we can take the colimit of the diagram
\[
\begin{tikzcd}
    \A^1 \times \G_m \arrow[r, shift left = 1] \arrow[r, shift right = 1] & \A^2 \arrow[loop right]{r}
    \end{tikzcd}
\]
where the map $\A^2 \to \A^2$ sends $(x,y)$ to $(y,x)$, and the two maps $\A^1 \times \G_m \to \A^2$ are $(x,y) \mapsto (x,y)$ and $(x,y) \mapsto (y,x)$. Here with the colimit we mean the universal algebraic log space $Y$ with maps $\A^1 \times \G_m \to Y$ and $\A^2 \to Y$ such that the diagram
\[
\begin{tikzcd}
    \A^1 \times \G_m \arrow[rr, shift left = 1] \arrow[dr] \arrow[rr, shift right = 1] & & \A^2 \arrow[ld] \arrow[loop right]{r} \\
    & Y &
    \end{tikzcd}
\]
commutes.
A final example which we will study further is to take $X = \A^2 \times \G_m$, with divisorial log structure induced by the divisor 
$$D = \{(x,y,z) \in \A^2 \times \G_m : x^2 - y^2z = 0\} \subset \A^2 \times \G_m\,.$$
This example is also known as the \emph{punctured Whitney umbrella}, and is also treated in detail in \cite[Example 3.3.1]{ACMW} and \cite[Example 5.4.1]{ACMUW}.
The singular locus of $D$ is the $z$-axis $\{(0,0)\} \times \G_m \subset X$, which we denote by $D^{(2)}$. We have $\ol{S}_{\sigma} = \tilde{S}_{\sigma} = D^{(2)}$, and $P_{\sigma}$ = $\G_m$ with the map to $\ol{S}_\sigma$ given by
\[
P_{\sigma} = \G_m \to \G_m = \ol{S}_{\sigma}, u \mapsto z=u^2\,,
\]
corresponding to the automorphism group of $\sigma$ having order $2$. For the ray $\rho$ we have $\ol{S}_{\rho} = D$, and 
\[
P_\rho = \tilde{S}_{\rho} \cong \{(x,y,(u:t)) \in \A^2 \times \P^1 : xt = yu\}
\]
given by the normalization of $D$ (which is the strict transform of $D$ in the blowup of $X$ at $D^{(2)}$). The scheme $P_S$ has generic log structure of rank $1$ and log structure of rank $2$ along $0 \times 0\times \P^1$.

\end{example}

\subsection{Preliminaries on cone stacks and monodromy torsors}
In the following subsections we start with a purely combinatorial construction of star cone stacks, giving the tropical analogue of the monodromy torsors above (Section \ref{Sect:Star_cone_stacks}). Via the equivalence of cone stacks and Artin fans, we use this to define the monodromy torsors (Section \ref{Sect:Monodromy_torsors}). Finally we discuss star subdivisions, which are needed when analyzing the effect of log blowups on the Chow groups of the monodromy torsors above (Section \ref{Sect:Star_subdivisions}).

\subsubsection{Star cone stacks} \label{Sect:Star_cone_stacks}
\begin{definition}
\label{def:starconestack}
Given a cone stack $\Sigma$ and $\sigma \in \Sigma$, the \emph{star cone stack} $$C : \mathsf{Star}_\sigma(\Sigma) \to \mathbf{RPC}^f$$ 
has as objects diagrams $(\sigma \xrightarrow{j'} \sigma' \xleftarrow{j''}\sigma'')$ of morphisms in $\Sigma$ such that $\sigma'$ is minimal among objects in $\Sigma$ receiving maps $j', j''$ from $\sigma, \sigma''$.
The functor $C$ sends this object to
\begin{equation}
    C(\sigma \xrightarrow{j'} \sigma' \xleftarrow{j''}\sigma'') = \sigma'' \in \mathbf{RPC}^f\,.
\end{equation}
The morphisms in $\mathsf{Star}_\sigma(\Sigma)$ are defined by commuting diagrams
\begin{equation} \label{eqn:Star_morphism}
    \mathsf{Mor}(\sigma \xrightarrow{j_1'} \sigma_1' \xleftarrow{j_1''}\sigma_1'', \sigma \xrightarrow{j_2'} \sigma_2' \xleftarrow{j_2''}\sigma_2'') = \ \ 
    \begin{tikzcd}
    & \sigma_1' \arrow[dd,"\varphi'"] & \sigma_1'' \arrow[l,"j_1''",swap] \arrow[dd,"\varphi''"]\\
    \sigma \arrow[ru, "j_1'"] \arrow[rd, "j_2'", swap] & & \\
    & \sigma_2' & \sigma_2'' \arrow[l,"j_2''"]
    \end{tikzcd}
\end{equation}
The functor $C$ sends the morphism \eqref{eqn:Star_morphism} to $\varphi'': \sigma_1'' \to \sigma_2''$.

We define the interior $\mathsf{Star}_\sigma(\Sigma)^0$ to be the full subcategory of objects $(\sigma \xrightarrow{j'} \sigma' \xleftarrow{j''} \sigma'') \in \mathsf{Star}_\sigma(\Sigma)$ such that the map $j''$ is an isomorphism, and we let $\Delta_{\sigma, \Sigma}$ be its complement.
\end{definition}
\begin{lemma}
The tuple $(\mathsf{Star}_\sigma(\Sigma), \mathsf{Star}_\sigma(\Sigma)^0, \Delta_{\sigma, \Sigma})$ is a cone stack with boundary.
\end{lemma}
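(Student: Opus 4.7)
The plan is to verify, in order, that (i) $\mathsf{Star}_\sigma(\Sigma) \to \mathbf{RPC}^f$ is a category fibered in groupoids, (ii) it satisfies the axioms of a cone stack in the sense of \cite[Definition 2.15]{CCUW}, and (iii) the full subcategory $\mathsf{Star}_\sigma(\Sigma)^0$ is forward-closed inside $\mathsf{Star}_\sigma(\Sigma)$. Given the existing equivalences in \cite{CCUW}, steps (i) and (ii) are largely bookkeeping built on the cone stack structure already present on $\Sigma$, so the bulk of the work is setting up cartesian lifts using the minimality clause in Definition~\ref{def:starconestack}.

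For the CFG property, I would take a face morphism $\varphi \colon \tau \to C(\sigma'') = \sigma''$ in $\mathbf{RPC}^f$ together with an object $(\sigma \xrightarrow{j'} \sigma' \xleftarrow{j''} \sigma'')$ of $\mathsf{Star}_\sigma(\Sigma)$ above $\sigma''$. Using the cone stack structure of $\Sigma$, the morphism $\varphi$ lifts to a unique (up to unique iso) face map $\widetilde\varphi \colon \widetilde\tau \to \sigma''$ in $\Sigma$. Composing, I obtain a pair $\sigma \xrightarrow{j'} \sigma'$ and $\widetilde\tau \xrightarrow{j'' \circ \widetilde\varphi} \sigma'$ of maps into $\sigma'$, and the cone stack structure on $\Sigma$ provides a unique minimal subcone $\widetilde\sigma' \preceq \sigma'$ through which both maps factor. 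The resulting triple $(\sigma \to \widetilde\sigma' \leftarrow \widetilde\tau)$ defines an object over $\tau$ with a tautological morphism to the original object, and it is cartesian by the universal property of the minimal receiver. The descent/gluing axioms and the faithfulness of $C$ on automorphism groups then reduce to the corresponding statements in $\Sigma$, combined with the observation that an automorphism of $(\sigma \xrightarrow{j'} \sigma' \xleftarrow{j''} \sigma'')$ with trivial action on $\sigma''$ must have trivial action on $\sigma'$ as well, because minimality forces $\sigma'$ to be rigidified by the images of $j'$ and $j''$.

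For the forward-closedness of the interior, suppose $(\sigma \xrightarrow{j_1'} \sigma_1' \xleftarrow{j_1''} \sigma_1'')$ lies in $\mathsf{Star}_\sigma(\Sigma)^0$, so that $j_1''$ is an isomorphism, and suppose a morphism of the form \eqref{eqn:Star_morphism} goes to $(\sigma \xrightarrow{j_2'} \sigma_2' \xleftarrow{j_2''} \sigma_2'')$. Then $\sigma_2''$ itself receives a map from $\sigma$, namely $\varphi'' \circ (j_1'')^{-1} \circ j_1'$, together with the identity map from itself; applying the minimality of $\sigma_2'$ to this pair, one obtains a factorization $\sigma_2' \to \sigma_2''$ of $j_2'$ and $\mathrm{id}_{\sigma_2''}$ that composes with $j_2''$ to the identity on both ends, so $j_2''$ must be an isomorphism. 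Hence the target lies in $\mathsf{Star}_\sigma(\Sigma)^0$.

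The main obstacle I expect lies in making the ``minimal cone receiving maps from $\sigma$ and $\sigma''$'' clause sufficiently precise and functorial within the language of \cite{CCUW}: one needs to know that such a minimum exists inside $\Sigma$ (essentially a pushout along a face in the smooth-cone-complex local model, globalized by descent) and that its formation is compatible with lifting face maps. Once this minimality is set up as a universal property within $\Sigma$, each of the three steps above becomes a direct check, and the resulting triple $(\mathsf{Star}_\sigma(\Sigma), \mathsf{Star}_\sigma(\Sigma)^0, \Delta_{\sigma,\Sigma})$ satisfies Definition~\ref{def:conestack}.
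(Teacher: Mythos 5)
Your argument for forward-closedness is exactly the paper's proof: you exhibit $\sigma_2''$ as an object receiving the composite $\varphi''\circ(j_1'')^{-1}\circ j_1'$ from $\sigma$ and the identity from itself, and then invoke the minimality of $\sigma_2'$ to force $j_2''$ to be an isomorphism. The paper regards the cone-stack axioms for $\mathsf{Star}_\sigma(\Sigma)$ as already settled by Definition~\ref{def:starconestack} and states that forward-closedness is ``the only thing to check,'' so your additional verification of the fibered-category structure is extra but consistent with the paper's route.
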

\begin{proof}
The only thing to check is that $\mathsf{Star}_\sigma(\Sigma)^0$ is forward-closed. Thus assume that we have a morphism
\[
\varphi: (\sigma \xrightarrow{j_1'} \sigma_1' \xleftarrow{j_1''}\sigma_1'') \to (\sigma \xrightarrow{j_2'} \sigma_2' \xleftarrow{j_2''}\sigma_2'')
\]
in $\mathsf{Star}_\sigma(\Sigma)$ such that $j_1''$ is an isomorphism. This gives a solid diagram as follows:
\begin{equation}
\begin{tikzcd}
    & \sigma_1' \arrow[d,"\varphi'"] & \sigma_1'' \arrow[l,"j_1''",swap] \arrow[d,"\varphi''"]\\
    \sigma \arrow[ru, "j_1'"] \arrow[r, "j_2'"] \arrow[rd, dashed, "\varphi'' \circ (j_1'')^{-1} \circ j_1'", swap]& \sigma_2' & \sigma_2'' \arrow[l,"j_2''", swap] \arrow[dl, dashed, "\mathsf{id}"]\\
    & \sigma_2'' \arrow[u, dashed, "j_2''", swap] & 
\end{tikzcd}
\end{equation}
But we immediately check that the dashed diagram commutes with the maps in the solid diagram. Therefore, the assumption that $\sigma_2'$ is minimal among objects receiving maps from $\sigma, \sigma_2''$ implies that $j_2''$ is an isomorphism and hence $(\sigma \xrightarrow{} \sigma_2' \xleftarrow{}\sigma_2'') \in \mathsf{Star}_\sigma(\Sigma)^0$ as desired.
\end{proof}

\begin{example}[continues Example \ref{ex:a2z2}]
We compute that $\mathsf{Star}_\rho(\Sigma)$ is given by the following diagram.
\[
\begin{tikzcd}
& (\rho \xrightarrow{\mathsf{id}} \rho \xleftarrow{\mathsf{id}} \rho) \arrow[rd, "{(\iota_1, \iota_1)}"] & \\
(\rho \xrightarrow{\mathsf{id}} \rho \leftarrow \bullet) \arrow[ru, "{(\mathsf{id}, \bullet \to \rho)}"] \arrow[rd, "{(\iota_1, \bullet \to \rho)}",swap] & & (\rho \xrightarrow{\iota_1} \sigma \xleftarrow{\mathsf{id}} \sigma)\\
& (\rho \xrightarrow{\iota_1} \sigma \xleftarrow{\iota_2} \rho) \arrow[ru, "{(\mathsf{id}, \iota_2)}",swap] & 
\end{tikzcd}
\]
Here the boundary is given by the two objects on the bottom left. Similarly, the star $\mathsf{Star}_\sigma(\Sigma)$ is given by  \[
\begin{tikzcd}
& (\sigma \xrightarrow{\mathsf{id}} \sigma \xleftarrow{\iota_1} \rho) \arrow[rd, "{(\id, \iota_1)}"] & \\
(\sigma \xrightarrow{\mathsf{id}} \sigma \leftarrow \bullet) \arrow[ru, "{(\mathsf{id}, \bullet \to \rho)}"] \arrow[rd, "{(\id, \bullet \to \rho)}",swap] & & (\sigma \xrightarrow{\mathsf{id}} \sigma \xleftarrow{\mathsf{id}} \sigma)\\
& (\sigma \xrightarrow{\mathsf{id}} \sigma \xleftarrow{\iota_2} \rho) \arrow[ru, "{(\mathsf{id}, \iota_2)}",swap] & 
\end{tikzcd}
\]
with the boundary being everything except $(\sigma \xrightarrow{\mathsf{id}} \sigma \xleftarrow{\mathsf{id}} \sigma)$. Looking at this diagram, we see that $\mathsf{Star}_\sigma(\Sigma) \cong \mathsf{Faces}(\sigma) = \mathsf{Faces}(\RR_{\geq 0}^2)$.

\end{example}

If the unique object $0 \in \Sigma$ mapping to the origin of the cones in $\Sigma$ has no non-trivial automorphisms, we have a natural identification $\mathsf{Star}_0(\Sigma) = \mathsf{Star}_0(\Sigma)^0 \cong \Sigma$. 
See also \cite[Section 3.3.1]{MR21} for more applications of the star construction.

Given a morphism $\psi: \tau \to \sigma$ in $\Sigma$ there is an induced map
\begin{align}
\mathsf{Star}_\psi : \mathsf{Star}_\sigma(\Sigma) &\to \mathsf{Star}_\tau(\Sigma)\\
(\sigma \xrightarrow{j'} \sigma' \xleftarrow{j''} \sigma'') & \mapsto (\tau \xrightarrow{j' \circ \psi} \widetilde{\sigma}' \xleftarrow{\widetilde{j}''} \sigma'')\,, \nonumber
\end{align}
where $\widetilde{\sigma}' \to \sigma'$ is the unique minimal face morphism in $\Sigma$  through which the maps $j' \circ \psi$ and $j''$ factor. Its existence and uniqueness follow the properties of cone stacks in \cite[Definition 2.15]{CCUW}. We check that $\mathsf{Star}_\psi$ sends $\mathsf{Star}_\sigma(\Sigma)^0$ to $\mathsf{Star}_\tau(\Sigma)^0$ and thus defines a map of cone stacks with boundary.

This construction is functorial (in the sense that $\mathsf{Star}_\varphi \circ \mathsf{Star}_\psi = \mathsf{Star}_{\psi \circ \varphi}$), and hence when $\psi: \sigma \to \sigma$ runs through the automorphisms of $\sigma$, we obtain a compatible system of automorphisms $\mathsf{Star}_\psi$ of $\mathsf{Star}_\sigma(\Sigma)$ and a quotient cone stack\footnote{The \emph{quotient cone stack} $\Sigma/G$ for a finite group $G$ acting on $\Sigma$ is the cone stack with  objects given by the cones of $\Sigma$, and morphisms given by the compositions $g \circ \psi$ of morphisms $\psi$ in $\Sigma$ and elements $g \in G$.} (with boundary) 
\begin{equation} \label{eqn:q_sigma}
q_\sigma: \mathsf{Star}_\sigma(\Sigma) \to \mathsf{Star}_\sigma(\Sigma) / \mathsf{Aut}(\sigma)\,.
\end{equation}


Another important functoriality is given by a map from $\mathsf{Star}_\sigma(\Sigma)$ to $\sigma$ itself in the case when $\Sigma$ is smooth. To set this up, recall the cone stack $\mathsf{Faces}(\sigma)$ from Example \ref{Exa:Faces} obtained from the rational polyhedral cone $\sigma$ and its faces $\sigma_0 \prec \sigma$ with interior given by the full subcategory consisting only of $\sigma$ itself.

To construct a map $\mathsf{Star}_\sigma(\Sigma) \to \mathsf{Faces}(\sigma)$ we observe that for each morphism $j:\sigma \to \sigma'$ in $\Sigma$ there exists a unique map of the underlying cones $\pi_j : \sigma' \to \sigma$ which is the projection from $\sigma'$ onto its face $\sigma$.\footnote{Here we stress that $\pi_j$ is in general \emph{not} a morphism of the original cone stack $\Sigma$; it is just a map of rational polyhedral cones!} Clearly we have $\pi_j \circ j = \mathrm{id}_\sigma$.
\begin{proposition} \label{Pro:pi_sigma}
There is a morphism of cone stacks with boundary 
\begin{equation}
\pi_\sigma^\textup{trop} : (\mathsf{Star}_\sigma(\Sigma), \mathsf{Star}_\sigma(\Sigma)^0, \Delta_{\sigma, \Sigma}) \to (\mathsf{Faces}(\sigma), \sigma^0, \Delta_\sigma)
\end{equation}
sending an object $(\sigma \xrightarrow{j'} \sigma' \xleftarrow{j''} \sigma'')$ to the face inclusion $(\pi_{j'} \circ j'')(\sigma'') \preceq \sigma$, whose underlying map 
\[
C(\sigma \xrightarrow{j'} \sigma' \xleftarrow{j''} \sigma'') = \sigma'' \xrightarrow{\pi_{j'} \circ j''} (\pi_{j'} \circ j'')(\sigma'')
\]
on cones is likewise given by $\pi_{j'} \circ j''$. Moreover, the morphism satisfies
\begin{equation} \label{eqn:pi_sigma_trop_preimage}
(\pi_\sigma^\textup{trop})^{-1}(\sigma^0) = \mathsf{Star}_\sigma(\Sigma)^0\,.
\end{equation}
\end{proposition}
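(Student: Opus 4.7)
The plan is to define $\pi_\sigma^\textup{trop}$ on objects, verify it extends functorially to morphisms, and finally check the preimage identity. For an object $(\sigma \xrightarrow{j'} \sigma' \xleftarrow{j''} \sigma'')$ of $\mathsf{Star}_\sigma(\Sigma)$, I would exploit the smoothness of $\sigma'$ by choosing coordinates $\sigma' \cong \RR_{\geq 0}^n$ under which $j'(\sigma)$ is the face indexed by a subset $I_1 \subseteq \{1,\dots,n\}$ and $j''(\sigma'')$ is the face indexed by a subset $I_2 \subseteq \{1,\dots,n\}$. In these coordinates $\pi_{j'}$ is the linear forgetful map dropping coordinates outside $I_1$, and $(\pi_{j'} \circ j'')(\sigma'')$ is the face of $\sigma$ indexed by $I_1 \cap I_2$. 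Independence of the chosen coordinates follows because any two such identifications differ by an automorphism of $\sigma'$ preserving the face $j'(\sigma)$, which commutes with $\pi_{j'}$.

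To extend the assignment to morphisms as in \eqref{eqn:Star_morphism}, the key input is the compatibility
\[
\pi_{j_1'} = \pi_{j_2'} \circ \varphi' \quad \text{as maps of cones } \sigma_1' \to \sigma,
\]
which follows from $j_2' = \varphi' \circ j_1'$ together with smoothness: in coordinates, $\varphi'$ realizes $\sigma_1'$ as a face of $\sigma_2'$ containing $j_2'(\sigma)$, so both projections agree on the coordinates of $j_2'(\sigma)$. Combining this with $\varphi' \circ j_1'' = j_2'' \circ \varphi''$ yields
\[
\pi_{j_1'} \circ j_1'' \;=\; \pi_{j_2'} \circ \varphi' \circ j_1'' \;=\; \pi_{j_2'} \circ j_2'' \circ \varphi''.
\]
Since $\varphi''$ is a face inclusion and linear projections preserve faces, the image of $\pi_{j_1'} \circ j_1''$ is a face of the image of $\pi_{j_2'} \circ j_2''$, giving the required face inclusion inside $\mathsf{Faces}(\sigma)$ and the stated underlying map on cones. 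Functoriality is then a routine diagram chase, since chains of face inclusions compose to face inclusions.

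For the preimage statement \eqref{eqn:pi_sigma_trop_preimage}, I would use the minimality condition defining $\mathsf{Star}_\sigma(\Sigma)$: the cone $\sigma'$ is the smallest face of itself containing both $j'(\sigma)$ and $j''(\sigma'')$, which in the coordinates above translates to $I_1 \cup I_2 = \{1,\dots,n\}$. The object is sent into the interior $\sigma^0$ of $\mathsf{Faces}(\sigma)$ precisely when $I_1 \cap I_2 = I_1$, i.e.\ $I_1 \subseteq I_2$; together with $I_1 \cup I_2 = \{1,\dots,n\}$ this forces $I_2 = \{1,\dots,n\}$, which is equivalent to $j''$ being an isomorphism, i.e.\ to the object lying in $\mathsf{Star}_\sigma(\Sigma)^0$. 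The main obstacle I anticipate is establishing the compatibility $\pi_{j_1'} = \pi_{j_2'} \circ \varphi'$ cleanly: the maps $\pi_{j_i'}$ are not morphisms of $\Sigma$ but only of the underlying cones, so the identity must be checked directly using compatible coordinate systems rather than through the categorical structure of $\Sigma$.
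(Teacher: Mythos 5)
Your proposal is correct and follows essentially the same route as the paper: the compatibility $\pi_{j_1'} = \pi_{j_2'}\circ\varphi'$ you isolate is precisely what the paper means by "adding the projection morphisms to the diagram, the relevant maps still commute," and your coordinate argument for \eqref{eqn:pi_sigma_trop_preimage} (minimality gives $I_1\cup I_2=\{1,\dots,n\}$, so $I_1\subseteq I_2$ forces $j''$ to be an isomorphism) is the same observation the paper phrases as "there is a ray of $\sigma$ whose image under $j'$ is not contained in the image of $j''$." The only difference is presentational: you make the smooth-cone coordinates explicit where the paper leaves the chase abstract.
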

\begin{proof}
The necessary compatibility checks to show that $\pi_\sigma^\textup{trop}$ is a morphism of cone stacks are a short chase in the diagram \eqref{eqn:Star_morphism}. This uses that adding the projection morphisms $\pi_{j_1'}$ and $\pi_{j_2'}$ to the diagram, we still have the relevant maps in the diagram commute.

For proving \eqref{eqn:pi_sigma_trop_preimage} we have to show that given an object $(\sigma \xrightarrow{j'} \sigma' \xleftarrow{j''} \sigma'')$  of $\mathsf{Star}_\sigma(\Sigma)$ with $\pi_j : \sigma' \to \sigma$ the projection onto the face given by $j'$, we have that the composition $\pi_{j'} \circ j''$ surjects onto $\sigma$ if and only if $j''$ is an isomorphism (the two respective criteria of being in the interior). Clearly it is true that $j''$ being an isomorphism implies the surjectivity since the projection $\pi_{j'}$ is surjective. 
Conversely, if $j''$ is a strict face inclusion, then by the minimality of $\sigma'$ in the diagram $(\sigma \xrightarrow{j'} \sigma' \xleftarrow{j''} \sigma'')$ there must be a ray of $\sigma$ whose image under $j'$ is not contained in the image of $j''$. But then this ray cannot be contained in the image of the composition $\pi_{j'} \circ j''$ and thus this composition is not surjective.
\end{proof}

A final result that is necessary later is a compatibility of the star cone stack construction with morphisms of the ambient cone stacks.

\begin{lemma}  \label{Lem:iota_sigmahat_section}
Let $\varphi: \widehat \Sigma \to \Sigma$ be a morphism of cone stacks, and for a choice of $\widehat \sigma \in \widehat \Sigma$ denote by $\sigma = \varphi(\widehat \sigma)$ the image cone of $\widehat \sigma$ in $\Sigma$. Then there exists a well-defined map of cone-stacks
\begin{equation} \label{eqn:t_section_definition}
    t_{\widehat \sigma \to \sigma} : \mathsf{Star}_{\widehat \sigma}(\widehat \Sigma) \to \mathsf{Star}_{\sigma}(\Sigma), (\widehat \sigma \to \widehat \sigma' \leftarrow \widehat \sigma'') \mapsto (\sigma \to \sigma' = \varphi( \widehat \sigma') \leftarrow \sigma'' = \varphi( \widehat \sigma'') )
\end{equation}
commuting with the maps $\mathsf{Star}_{\sigma}(\Sigma) \to \Sigma$ and $\mathsf{Star}_{\widehat \sigma}(\widehat \Sigma) \to \widehat \Sigma \xrightarrow{\varphi} \Sigma$. Moreover, the construction is functorial: for a morphism $\widehat \sigma \to \widehat \sigma'$ in $\widehat \Sigma$ mapping to $\sigma \to \sigma'$ under $\varphi$, we have 
\begin{equation} \label{eqn:t_functoriality}
\mathsf{Star}_{\sigma \to \sigma'} \circ t_{\widehat \sigma' \to \sigma'} = t_{\widehat \sigma \to \sigma} \circ \mathsf{Star}_{\widehat \sigma \to \widehat \sigma'}\,.
\end{equation}
\end{lemma}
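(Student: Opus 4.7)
The plan is to construct $t_{\widehat\sigma\to\sigma}$ by the same ``minimal factorization'' trick that appears in the definition of $\mathsf{Star}_\psi$ just above the lemma, then verify well-definedness and functoriality by diagram chases in $\Sigma$. The only subtlety to watch out for is that the naive formula in \eqref{eqn:t_section_definition} is not literally correct as written: given an object $(\widehat\sigma\xrightarrow{\widehat j'}\widehat\sigma'\xleftarrow{\widehat j''}\widehat\sigma'')$ of $\mathsf{Star}_{\widehat\sigma}(\widehat\Sigma)$, the cone $\varphi(\widehat\sigma')$ need not be minimal in $\Sigma$ among objects receiving maps from $\sigma=\varphi(\widehat\sigma)$ and $\sigma''=\varphi(\widehat\sigma'')$. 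So the genuine assignment is to send the object to $(\sigma\to \widetilde\sigma'\leftarrow\sigma'')$ where $\widetilde\sigma'\to\varphi(\widehat\sigma')$ is the unique minimal face of $\varphi(\widehat\sigma')$ in $\Sigma$ through which both $\varphi(\widehat j')\colon\sigma\to\varphi(\widehat\sigma')$ and $\varphi(\widehat j'')\colon\sigma''\to\varphi(\widehat\sigma')$ factor; existence and uniqueness of $\widetilde\sigma'$ follow from the cone stack axioms of \cite[Definition 2.15]{CCUW}, and its construction is identical to the one used to define $\mathsf{Star}_\psi$ earlier.

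With this corrected definition, well-definedness on objects is tautological. To extend to morphisms, consider a morphism in $\mathsf{Star}_{\widehat\sigma}(\widehat\Sigma)$ given by the commutative diagram \eqref{eqn:Star_morphism} (with hats on all objects). Applying $\varphi$ produces an analogous commuting diagram in $\Sigma$; the minimality property of $\widetilde\sigma'_1,\widetilde\sigma'_2$ yields unique factorizations of the resulting vertical maps through $\widetilde\sigma'_1,\widetilde\sigma'_2$, and one checks by uniqueness that these factorizations fit together into a morphism in $\mathsf{Star}_\sigma(\Sigma)$. Functoriality of $t_{\widehat\sigma\to\sigma}$ on $\mathsf{Star}_{\widehat\sigma}(\widehat\Sigma)$ then follows from the uniqueness in the factorization step.

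Commutativity with the natural projections to $\Sigma$ is immediate from the construction, since the underlying cone of an object $(\widehat\sigma\to\widehat\sigma'\leftarrow\widehat\sigma'')$ is $\widehat\sigma''$ and its image under $t_{\widehat\sigma\to\sigma}$ has underlying cone $\sigma''=\varphi(\widehat\sigma'')$, which agrees with the composition $\mathsf{Star}_{\widehat\sigma}(\widehat\Sigma)\to\widehat\Sigma\xrightarrow{\varphi}\Sigma$ applied to the object.

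Finally, the functoriality equation \eqref{eqn:t_functoriality} is a diagram chase. Given $\widehat\sigma\to\widehat\sigma'$ mapping to $\sigma\to\sigma'$, both compositions send an object $(\widehat\sigma'\xrightarrow{\widehat j'}\widehat\tau'\xleftarrow{\widehat j''}\widehat\tau'')$ of $\mathsf{Star}_{\widehat\sigma'}(\widehat\Sigma)$ to an object of $\mathsf{Star}_\sigma(\Sigma)$ whose second map is $\sigma''=\varphi(\widehat\tau'')$ and whose middle cone is characterized as the minimal face of $\varphi(\widehat\tau')$ in $\Sigma$ through which the two relevant compositions $\sigma\to\varphi(\widehat\tau')$ and $\sigma''\to\varphi(\widehat\tau')$ factor. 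Since both $\mathsf{Star}_{\sigma\to\sigma'}\circ t_{\widehat\sigma'\to\sigma'}$ and $t_{\widehat\sigma\to\sigma}\circ\mathsf{Star}_{\widehat\sigma\to\widehat\sigma'}$ produce such a minimal factorization, uniqueness forces them to agree. The main obstacle in writing this out carefully is bookkeeping the two competing minimality conditions (one in $\widehat\Sigma$, one in $\Sigma$) and checking that the minimal face constructions interact correctly under $\varphi$; this is where the cone stack axioms do the essential work.
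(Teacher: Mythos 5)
Your proposal rests on the premise that the displayed formula \eqref{eqn:t_section_definition} is ``not literally correct as written'' because $\varphi(\widehat\sigma')$ might fail to be minimal among cones of $\Sigma$ receiving maps from $\sigma$ and $\sigma''$. That premise is false, and it causes you to skip the one genuine verification the lemma requires. The paper's proof identifies exactly this as the central point: minimality of $\varphi(\widehat\sigma')$ follows from minimality of $\widehat\sigma'$ in $\widehat\Sigma$ together with the fact that $\varphi|_{\widehat\sigma'}:\widehat\sigma'\to\sigma'$ does not factor through a proper face of $\sigma'$ (which is part of what it means for $\varphi$ to be a morphism of cone stacks). Concretely: pick $x_1$ in the relative interior of $\widehat j'(\widehat\sigma)$ and $x_2$ in the relative interior of $\widehat j''(\widehat\sigma'')$; minimality of $\widehat\sigma'$ forces $x_1+x_2\in\mathrm{relint}(\widehat\sigma')$, and since $\varphi|_{\widehat\sigma'}$ does not land in a proper face it carries relative interior to relative interior, so $\varphi(x_1)+\varphi(x_2)\in\mathrm{relint}(\sigma')$. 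Any face of $\sigma'$ containing both $j'(\sigma)$ and $j''(\sigma'')$ contains $\varphi(x_1)+\varphi(x_2)$ and hence equals $\sigma'$. So the ``minimal face $\widetilde\sigma'$'' you construct is always $\varphi(\widehat\sigma')$ itself.

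The consequence is that your proof, as written, does not establish the lemma as stated: the lemma asserts that the specific assignment \eqref{eqn:t_section_definition} is a well-defined map of cone stacks, whereas you declare that assignment wrong and construct a (notionally different) map instead. Your map happens to coincide with the stated one, but you neither prove that coincidence nor prove the minimality that makes the stated formula work --- and that minimality is precisely the content of the paper's argument. The remaining parts of your write-up (extension to morphisms via uniqueness of factorizations, compatibility with the projections to $\Sigma$, and the functoriality \eqref{eqn:t_functoriality} by comparing both composites on objects) are fine and essentially match the paper's ``straightforward compatibility checks,'' but the proof needs the minimality argument restored, after which the detour through $\widetilde\sigma'$ becomes unnecessary.
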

\begin{proof}
The central point in checking that $t_{\widehat \sigma \to \sigma}$ is well-defined is to verify that $(\sigma \to \sigma' \leftarrow \sigma'')$ is an object of $\mathsf{Star}_{\sigma}(\Sigma)$, i.e. that in this diagram, the cone $\sigma'$ is minimal among cones of $\Sigma$ receiving maps from $\sigma, \sigma''$. This follows very easily from the corresponding minimality property of $\widehat \sigma'$ and the fact that $\widehat \sigma' \xrightarrow{\varphi} \sigma'$ does not factor through a proper face of $\sigma'$ (by definition of $\varphi$ being a morphism of cone stacks). Thus equation \eqref{eqn:t_section_definition} gives a well-defined map on objects, whose associated map on cones is just $\widehat \sigma'' \xrightarrow{\varphi} \sigma''$. Filling in the remaining data and verifying the compatibilities is straightforward, just like the fact that $t_{\widehat \sigma \to \sigma}$ commutes with the maps of its domain and target to $\Sigma$. 
Finally, we check the functoriality \eqref{eqn:t_functoriality} by observing that both maps act on the cones of $\widehat \Sigma_{\widehat \sigma'}$ as
\[
(\widehat \sigma' \to \widehat \sigma'' \leftarrow \widehat \sigma''') \mapsto (\sigma \to \varphi(\widehat \sigma'') \leftarrow \varphi(\widehat \sigma'''))\,. \qedhere
\]
\end{proof}

\subsubsection{Monodromy torsors} \label{Sect:Monodromy_torsors}
In the following we use the star cone stacks above to define the monodromy torsors mentioned in the introduction of Section \ref{sec:logtautgeneral}. For this recall that by \cite[Theorem 3]{CCUW} the category of cone stacks is equivalent to the category of Artin fans. Given a cone stack $\Sigma$ with associated Artin fan $\mathcal{A}_\Sigma$ and a cone $\sigma \in \Sigma$, we write
\begin{itemize}
    \item $\mathcal{S}_\sigma \subseteq \mathcal{A}_\Sigma$ for the associated locally closed stratum,
    \item $\overline{\mathcal{S}}_\sigma$ for its closure in $\mathcal{A}_\Sigma$, and
    \item $\widetilde{\mathcal{S}}_\sigma$ for the normalization of $\overline{\mathcal{S}}_\sigma$.
\end{itemize}
Let $\Sigma$ be a smooth cone such that $0 \in \Sigma$ has trivial automorphism group and let $\sigma \in \Sigma$ be an object of $\Sigma$. Then the natural map $\mathsf{Star}_{0 \to \sigma} : \mathsf{Star}_\sigma(\Sigma) \to \mathsf{Star}_0(\Sigma) \cong \Sigma$ factors through the quotient \eqref{eqn:q_sigma}. For the morphisms
\[
\mathsf{Star}_\sigma(\Sigma) \to \mathsf{Star}_\sigma(\Sigma) / \mathsf{Aut}(\sigma) \to \Sigma
\]
of cone stacks with boundary, we obtain associated morphisms of the closed substacks $\mathcal{B}$ of their Artin fans
\begin{equation} \label{eqn:BStarsigma_identification}
\mathcal{B}_{\mathsf{Star}_\sigma(\Sigma)^0} \to \mathcal{B}_{\mathsf{Star}_\sigma(\Sigma)^0/\mathsf{Aut}(\sigma)} \to \mathcal{B}_\Sigma = \mathcal{A}_\Sigma\,.
\end{equation}
Moreover, since $\Aut(\sigma)$ acts on $\mathsf{Star}_\sigma(\Sigma)^0$, we obtain an induced action of $\Aut(\sigma)$ on $\mathcal{B}_{\mathsf{Star}_\sigma(\Sigma)^0}$ by functoriality.
\begin{lemma} \label{Lem:Sigma_sigma_construction}
For a smooth cone stack $\Sigma$ with trivial automorphism group of the zero cone $0 \in \Sigma$ and for a choice of $\sigma \in \Sigma$, there is a canonical isomorphism $$\mathcal{B}_{\mathsf{Star}_\sigma(\Sigma)^0/\mathsf{Aut}(\sigma)} \cong \widetilde{\mathcal{S}}_\sigma\,.$$
Moreover, the action of $\Aut(\sigma)$ on $\mathcal{B}_{\mathsf{Star}_\sigma(\Sigma)^0}$ makes it a principal $\Aut(\sigma)$-bundle over $\widetilde{\mathcal{S}}_\sigma$.
\end{lemma}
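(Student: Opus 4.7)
The plan is to use the colimit presentation \eqref{eqn:Bartin_fan_as_colimit} of the idealised Artin fans $\mathcal{B}_{\Star_\sigma(\Sigma)^0}$ and $\mathcal{B}_{\Star_\sigma(\Sigma)^0/\Aut(\sigma)}$, and to compare the resulting stratifications directly with the stratification of $\widetilde{\mathcal{S}}_\sigma$. First I would unpack the interior: an object of $\Star_\sigma(\Sigma)^0$ is, up to isomorphism, simply a face inclusion $(\sigma \xrightarrow{j} \sigma')$ in $\Sigma$, since forcing $j''$ to be an isomorphism in $(\sigma \to \sigma' \xleftarrow{j''} \sigma'')$ allows the third entry to be absorbed. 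Unpacking \eqref{eqn:Star_morphism} gives
\[
\Aut\bigl(\sigma \xrightarrow{j} \sigma'\bigr) \;=\; \bigl\{(\varphi_\sigma, \varphi_{\sigma'}) \in \Aut(\sigma) \times \Aut(\sigma') : \varphi_{\sigma'} \circ j = j \circ \varphi_\sigma\bigr\}\,,
\]
and the $\Aut(\sigma)$-action of \eqref{eqn:q_sigma} acts by precomposing $j$ with an automorphism of $\sigma$. Hence the cones of the quotient $\Star_\sigma(\Sigma)^0/\Aut(\sigma)$ are pairs $(\sigma', [j])$ for $[j]$ an $\Aut(\sigma)$-orbit of face inclusions $\sigma \hookrightarrow \sigma'$, with automorphism group the $\Aut(\sigma')$-stabiliser of $[j]$.

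On the other side, the strata of $\overline{\mathcal{S}}_\sigma \subseteq \mathcal{A}_\Sigma$ are the $\mathcal{S}_{\sigma'} \cong B\bigl(\mathbb{G}_m^{\dim \sigma'} \rtimes \Aut(\sigma')\bigr)$ for cones $\sigma' \succeq \sigma$, and the normalisation $\widetilde{\mathcal{S}}_\sigma$ separates the branches of $\overline{\mathcal{S}}_\sigma$ meeting such a stratum. A short local analysis on the toric chart $[\Spec k[S_{\sigma'}]/\Spec k[S_{\sigma'}^\gp]]$ of $\mathcal{A}_{\sigma'}$ shows that these branches are in natural bijection with the set of face inclusions $\sigma \hookrightarrow \sigma'$, with the $\Aut(\sigma')$-monodromy acting by its natural action on that set; this is the generalisation to an arbitrary Artin fan of the analyses carried out in \cite[Section 5.1]{MPS23} and \cite[Section 6.2.1]{HMPPS}. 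Consequently the strata of $\widetilde{\mathcal{S}}_\sigma$ are parametrised by pairs $(\sigma', [j])$ exactly as above, matching $\mathcal{B}_{\Star_\sigma(\Sigma)^0/\Aut(\sigma)}$ stratum-by-stratum.

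To globalise, I would use the natural morphism of cone stacks $\Star_\sigma(\Sigma)^0 \to \Sigma$, $(\sigma \to \sigma') \mapsto \sigma'$: via the Artin fan/cone stack equivalence of \cite{CCUW} this yields a morphism $\mathcal{B}_{\Star_\sigma(\Sigma)^0} \to \mathcal{A}_\Sigma$ landing in $\overline{\mathcal{S}}_\sigma$; smoothness of $\Sigma$ makes the source normal, so the universal property of normalisation lifts it to $\widetilde{\mathcal{S}}_\sigma$, and the obvious $\Aut(\sigma)$-invariance (the image data does not record $\sigma$) makes it descend to the desired map $\mathcal{B}_{\Star_\sigma(\Sigma)^0/\Aut(\sigma)} \to \widetilde{\mathcal{S}}_\sigma$. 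The stratum bijection of the previous paragraph then promotes this to an isomorphism. For the torsor claim, one verifies that the $\Aut(\sigma)$-stabiliser of any interior object $(\sigma \xrightarrow{j} \sigma')$ is trivial: the relation $j \circ \varphi_\sigma = \varphi_{\sigma'} \circ j$ together with injectivity of the face inclusion $j$ forces $\varphi_\sigma = \id$ once the class $[j]$ is preserved, so the action is free in the sense of quotient stacks. The main obstacle I expect is the local analysis of the branches of $\overline{\mathcal{S}}_\sigma$ in the previous paragraph; once this is in place, the rest is routine bookkeeping within the colimit presentation.
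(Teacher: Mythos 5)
Your overall strategy---reduce everything to the combinatorics of the cone stack, match the strata of $\mathcal{B}_{\Star_\sigma(\Sigma)^0/\Aut(\sigma)}$ with those of $\widetilde{\mathcal{S}}_\sigma$, and use normality plus the universal property of normalization to produce and then upgrade the comparison map---is close in spirit to the paper's proof, which base-changes to the \'etale cover of $\mathcal{A}_\Sigma$ by Artin cones $[V_{\sigma_0}/T_{\sigma_0}]$, computes the fiber product of cone stacks $\Star_\sigma(\Sigma)^0\times_\Sigma\mathsf{Faces}(\sigma_0)$, and invokes the honest toric orbit--cone correspondence (Lemma \ref{Lem:Sigma_sigma_construction_toric}). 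However, two steps of your write-up are genuinely problematic. First, the step you yourself flag as the main obstacle---that the branches of $\overline{\mathcal{S}}_\sigma$ along $\mathcal{S}_{\sigma'}$ biject with face inclusions $\sigma\hookrightarrow\sigma'$ compatibly with the $\Aut(\sigma')$-monodromy---is not a side computation but the entire content of the lemma; deferring it leaves the proof incomplete. Moreover, a stratum-by-stratum bijection does not by itself ``promote'' your map to an isomorphism: you need the map to be finite and birational onto the normal target (or to identify it chart-by-chart), and neither finiteness nor birationality is established in your sketch, although both are within reach given what you set up.

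Second, and more seriously, your argument for the torsor claim is incorrect. You assert that the $\Aut(\sigma)$-stabiliser of an interior object $(\sigma\xrightarrow{j}\sigma')$ is trivial because $j\circ\varphi_\sigma=\varphi_{\sigma'}\circ j$ and injectivity of $j$ force $\varphi_\sigma=\id$. This is false: taking $j=\id_\sigma$ and any $\varphi_\sigma\in\Aut(\sigma)$, the choice $\varphi_{\sigma'}=\varphi_\sigma^{-1}$ exhibits $(\sigma\xrightarrow{j\circ\varphi_\sigma}\sigma)\cong(\sigma\xrightarrow{\id}\sigma)$, so the stabiliser of the isomorphism class of the generic object is all of $\Aut(\sigma)$. (Your displayed formula for $\Aut(\sigma\xrightarrow{j}\sigma')$ also misreads Definition \ref{def:starconestack}: morphisms in $\Star_\sigma(\Sigma)$ fix the leg from $\sigma$ on the nose, so the automorphism group inside $\Star_\sigma(\Sigma)$ is the postcomposition stabiliser of $j$ in $\Aut(\sigma')$; the pairs $(\varphi_\sigma,\varphi_{\sigma'})$ only appear in the quotient.) Freeness on isomorphism classes of objects is in any case the wrong criterion for the quotient map to be a torsor, as the basic example $B\mathbb{G}_m^2\to B(\mathbb{G}_m^2\rtimes\mathbb{Z}/2\mathbb{Z})$ shows: the $\mathbb{Z}/2\mathbb{Z}$-action fixes the unique point yet the map is a torsor. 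The correct route, and the one the paper takes, is to verify freeness after base change to the chart $\mathsf{Faces}(\sigma_0)$, where objects become honest morphisms $\sigma\to\sigma_0$ (no residual automorphisms of the target), and there injectivity of face inclusions really does give a free action; the torsor property then descends.
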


\begin{definition} \label{Def:stacky_monodromy_torsor}
We call $\mathcal{P}_\sigma \coloneqq \mathcal{B}_{\mathsf{Star}_\sigma(\Sigma)^0} \to \widetilde{\mathcal{S}}_\sigma$ the \emph{stacky monodromy torsor} associated to $\sigma \in \Sigma$ and write $\iota_\sigma : \mathcal{P}_\sigma \to \mathcal{A}_\Sigma$ for the induced map to the Artin fan of $\Sigma$.
\end{definition}
\begin{example}[continues Example \ref{ex:a2z2}]
The cone stack $\Sigma_X$ from \eqref{eqn:cone_stack_a2z2} has an Artin fan with a locally closed decomposition
\[
\mathcal{A}_{\Sigma_X} = \underbrace{\mathsf{pt}}_{\mathcal S_0} \sqcup \underbrace{B \mathbb{G}_m}_{\mathcal S_\rho} \sqcup \underbrace{B \mathbb{G}_m^2 \rtimes (\mathbb{Z}/2\mathbb{Z})}_{\mathcal S_\sigma}\,.
\]
On the other hand, we saw that $\mathsf{Star}_\sigma(\Sigma)^0 \cong \mathsf{Faces}(\sigma)$ so that 
\[
\Bcal_{\mathsf{Star}_\sigma(\Sigma)^0} = B \mathbb{G}_m^2 \subseteq \Acal_{\mathsf{Star}_\sigma(\Sigma)^0} = [\mathbb{A}^2 / \mathbb{G}_m^2]\,.
\]
Correspondingly, the stacky monodromy torsor $\mathcal{P}_\sigma \to \widetilde{\mathcal{S}}_\sigma$ is given by $B \mathbb{G}_m^2 \to B \mathbb{G}_m^2 \rtimes (\mathbb{Z}/2\mathbb{Z})$.
\end{example}

The proof of Lemma \ref{Lem:Sigma_sigma_construction} proceeds by reduction to the case of toric varieties. We state and prove that case separately.
\begin{lemma} \label{Lem:Sigma_sigma_construction_toric}
Let $\Sigma$ be a smooth fan with toric variety $X_\Sigma$, $\sigma \in \Sigma$ and consider the strata closure $\overline{S}_\sigma \subseteq X_\Sigma$. Then the map of cone stacks with boundary associated to $\overline{S}_\sigma \subseteq X_\Sigma$ is given by $\mathsf{Star}_\sigma(\Sigma) \to \Sigma$. In particular, there is a fiber diagram
\[
\begin{tikzcd}
\overline{S}_\sigma \arrow[r,hook] \arrow[d] & X_\Sigma \arrow[d]\\
\mathfrak{B}_{\mathsf{Star}_\sigma(\Sigma)^0} \arrow[r, hook] & \mathcal{A}_\Sigma
\end{tikzcd}\,.
\]
\end{lemma}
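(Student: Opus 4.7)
The plan is to prove the fiber diagram by a local computation on toric affine charts of $X_\Sigma$, and to deduce the identification of cone stacks with boundary as a corollary.

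First I would identify $\mathsf{Star}_\sigma(\Sigma)$ combinatorially: since $\Sigma$ is a fan, its cones carry no nontrivial automorphisms, and so $\mathsf{Star}_\sigma(\Sigma)$ is equivalent to the subfan of $\Sigma$ consisting of those cones $\tau$ which lie together with $\sigma$ inside some cone of $\Sigma$. Under this identification, $\mathsf{Star}_\sigma(\Sigma)^0$ corresponds to the cones $\tau \succeq \sigma$, and the star projection $\mathsf{Star}_\sigma(\Sigma) \to \Sigma$ is the subfan inclusion, which on Artin fans is an open immersion $\mathcal{A}_{\mathsf{Star}_\sigma(\Sigma)} \hookrightarrow \mathcal{A}_\Sigma$.

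Second I would compute, for each $\tau \in \mathsf{Star}_\sigma(\Sigma)$, the ideal $J_{\tau, \mathsf{Star}_\sigma(\Sigma)^0} \subseteq k[S_\tau]$ cutting out $\mathfrak{B}_{\mathsf{Star}_\sigma(\Sigma)^0}$ inside the affine chart $\mathcal{A}_\tau$ via the formula preceding \eqref{eqn:Bartin_fan_as_colimit}. The key observation is that when $\sigma'' \preceq \sigma'''$ are both faces of $\tau$ containing $\sigma$, one has $J_{\sigma'' \to \tau} \subseteq J_{\sigma''' \to \tau}$ (a larger face has more non-vanishing monomials, giving more generators), so the intersection collapses to the smallest term $J_{\sigma \to \tau} = (\chi^s : s \in S_\tau,\ s|_\sigma \neq 0)$. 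This is precisely the classical toric ideal of the orbit closure $V(\sigma) \cap U_\tau = \overline{S}_\sigma \cap U_\tau$ inside the chart $U_\tau = \Spec k[S_\tau]$ of $X_\Sigma$.

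Third I would globalize via the \'etale cover $\coprod_\tau U_\tau \to X_\Sigma$. For $\tau \in \mathsf{Star}_\sigma(\Sigma)$, the previous step identifies the pullback of $\mathfrak{B}_{\mathsf{Star}_\sigma(\Sigma)^0}$ to $U_\tau$ with $\overline{S}_\sigma \cap U_\tau$; for $\tau \notin \mathsf{Star}_\sigma(\Sigma)$, both sides are empty, since $\overline{S}_\sigma$ only meets strata $S_{\tau'}$ with $\sigma \preceq \tau'$ (which forces $\tau' \in \mathsf{Star}_\sigma(\Sigma)$). Gluing these local descriptions produces the desired fiber diagram, and since the $J_{\tau, \mathsf{Star}_\sigma(\Sigma)^0}$ are radical, $\mathfrak{B}_{\mathsf{Star}_\sigma(\Sigma)^0}$ is reduced and serves as a choice of idealized Artin fan for $\overline{S}_\sigma$ with its strict log structure, yielding the stated identification of cone stacks with boundary. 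The main obstacle is the combinatorial content of Step 2: correctly writing $J_{\tau, \mathsf{Star}_\sigma(\Sigma)^0}$ as an intersection of the $J_{\sigma'' \to \tau}$ and seeing that it collapses to the toric ideal of the orbit closure; the rest is a gluing argument plus the standard toric dictionary.
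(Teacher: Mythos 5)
Your proof is correct, and it arrives at the same essential content as the paper's, but by a more computational route. The paper's proof is a two-line argument: since a fan has at most one morphism between any two cones, $\mathsf{Star}_\sigma(\Sigma)^0 \to \Sigma$ is fully faithful, so $\mathfrak{B}_{\mathsf{Star}_\sigma(\Sigma)^0}$ is the unique reduced closed substack of $\mathcal{A}_\Sigma$ whose points are the strata $\mathcal{S}_\tau$ with $\sigma \preceq \tau$; the toric orbit--cone correspondence then identifies the preimage in $X_\Sigma$ topologically with $\overline{S}_\sigma$, and reducedness of the preimage is free because $X_\Sigma \to \mathcal{A}_\Sigma$ is smooth. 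You instead compute the defining ideals $J_{\tau,\mathsf{Star}_\sigma(\Sigma)^0}$ chart by chart via the formula preceding \eqref{eqn:Bartin_fan_as_colimit}, observe the nesting $J_{\sigma'' \to \tau} \subseteq J_{\sigma''' \to \tau}$ for $\sigma'' \preceq \sigma'''$ so that the intersection collapses to $J_{\sigma \to \tau}$, and match this with the classical toric ideal of $V(\sigma) \cap U_\tau$. Both arguments ultimately rest on the same toric fact (the ideal/support of an orbit closure), but your version verifies the scheme structure on the nose rather than deducing it from uniqueness of a reduced substack with prescribed support, at the cost of being longer; it also makes explicit the useful intermediate observation that for a fan $\mathsf{Star}_\sigma(\Sigma)$ is literally the subfan of cones sharing a common cone with $\sigma$, hence an open substack of $\mathcal{A}_\Sigma$, which the paper leaves implicit. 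Your handling of the degenerate charts ($\tau$ in the boundary of the star, where the intersection of ideals is over the empty set and gives $(1)$, and $\tau$ outside the star entirely, where $\overline{S}_\sigma \cap U_\tau = \emptyset$) is also correct and is needed for the gluing step.
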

\begin{proof}
It is immediate to check that the map $\mathsf{Star}_\sigma(\Sigma)^0 \to \Sigma$ is a fully faithful embedding, using that the cone stack (associated to) $\Sigma$ has at most one morphism between each of its objects. This implies that $\mathfrak{B}_{\mathsf{Star}_\sigma(\Sigma)^0}$ is the unique closed reduced substack of $\mathcal{A}_\Sigma$ whose points correspond to the cones of $\Sigma$ containing $\sigma$ as a face. By the toric Orbit-Cone correspondence (\cite[Theorem 3.2.6]{CLS11}) the preimage of that substack in $X$ is precisely the orbit closure $\overline{S}_\sigma$.
\end{proof}

\begin{proof}[Proof of Lemma \ref{Lem:Sigma_sigma_construction}]
By construction, the Artin fan $\mathcal{A}_\Sigma$ is a colimit
\begin{equation}
    \mathcal{A}_\Sigma = \varinjlim_{\sigma_0 \in \Sigma} [\underbrace{\mathrm{Spec}\ k[\sigma_0^\vee]}_{V_{\sigma_0}} / \underbrace{\mathrm{Spec}\ k[(\sigma_0^\vee)^\textup{gp}]}_{T_{\sigma_0}}]
\end{equation}
of Artin cones $[V_{\sigma_0}/T_{\sigma_0}]$ where $V_{\sigma_0}$ is the affine toric variety with torus $T_{\sigma_0}$ associated to the cone $\sigma_0$. Thus we obtain a strict smooth cover $\coprod_{\sigma_0} [V_{\sigma_0}/T_{\sigma_0}] \to \mathcal{A}_\Sigma$ by quotient stacks of affine toric varieties. We prove the claimed isomorphisms after pullback to one of the varieties $[V_{\sigma_0}/T_{\sigma_0}]$ of the cover.

To calculate this pullback, note that for the cone stack $\mathsf{Faces}(\sigma_0)$ associated to the cone $\sigma_0$ we have an isomorphism $[V_{\sigma_0}/T_{\sigma_0}] \cong \mathcal{A}_{\mathsf{Faces}(\sigma_0)}$. 
By the equivalence of $2$-categories between cone stacks and Artin fans (\cite[Theorem 6.11]{CCUW}), we can calculate the fiber product as the Artin fan
\begin{equation} \label{eqn:AStar_fiber_product}
    \mathcal{A}_{\mathsf{Star}_\sigma(\Sigma)} \times_{\mathcal{A}_\Sigma} \mathcal{A}_{\mathsf{Faces}(\sigma_0)} = \mathcal{A}_{\mathsf{Star}_\sigma(\Sigma) \times_\Sigma \mathsf{Faces}(\sigma_0) }
\end{equation}
associated to the fiber product of $\mathsf{Star}_\sigma(\Sigma)$ and $\mathsf{Faces}(\sigma_0)$ over $\Sigma$. 
Then the pullbacks of the stacks $\mathcal{B}_{\mathsf{Star}_\sigma(\Sigma)^0}$ and $\mathcal{B}_{\mathsf{Star}_\sigma(\Sigma)^0/\mathsf{Aut}(\sigma)}$ in \eqref{eqn:BStarsigma_identification} under $\mathcal{A}_{\mathsf{Faces}(\sigma_0)} \to \mathcal{A}_\Sigma$ are closed substacks of \eqref{eqn:AStar_fiber_product} associated to the pullback of the interior $\mathsf{Star}_\sigma(\Sigma)^0$  of $\mathsf{Star}_\sigma(\Sigma)$. We conclude by showing that they are canonically isomorphic to the pullbacks of $\mathcal{P}_\sigma$ and $\widetilde{\mathcal{S}}_\sigma$, respectively.

Spelling out the definition of the fiber product $\mathsf{Star}_\sigma(\Sigma)^0 \times_\Sigma \mathsf{Faces}(\sigma_0)$ of cone stacks, its objects are given by triples
\[
\left((\sigma \xrightarrow{j'} \sigma' \xleftarrow{j''} \sigma''), \tau \preceq \sigma_0, \sigma'' \cong \tau \right)
\]
of an object in $\mathsf{Star}_\sigma(\Sigma)^0$ (which thus satisfies that $j''$ is an isomorphism), a face $\tau$ of $\sigma_0$ and an isomorphism of their associated objects in $\Sigma$. Using $j''$ to identify $\sigma'$ and $\sigma''$ and applying the provided isomorphism of $\sigma''$ and $\tau$, this is just equivalent (up to unique isomorphism) to the data $(\sigma \to \sigma' \preceq \sigma_0)$ of a morphism $\sigma \to \sigma'$ in $\Sigma$ and an identification of $\sigma'$ as a face of $\sigma_0$. 

Before returning to geometry, we need to classify these objects a bit further. Note that we can separate them into a disjoint union depending on the image $\mathsf{im}(\sigma) = \tau_\sigma \subseteq \sigma_0$ of the morphism $\sigma \to \sigma_0$. Any two morphisms $\sigma \to \sigma_0$ with the same image $\tau_\sigma$ are uniquely related by an automorphism of $\sigma$, and thus $\mathsf{Aut}(\sigma)$ acts freely on the objects of $\mathsf{Star}_\sigma(\Sigma)^0 \times_\Sigma \mathsf{Faces}(\sigma_0)$. 
Dividing out by $\mathsf{Aut}(\sigma)$, once we fix a choice of face $\tau_\sigma$ that can be the image of a morphism $\sigma \to \sigma_0$, the intermediate face $\sigma'$ is just any element of the (classical) star of $\tau_\sigma$ in $\sigma_0$. Thus we have
\[
\mathsf{Star}_\sigma(\Sigma)^0 \times_\Sigma \sigma_0 / \mathsf{Aut}(\sigma) = \bigsqcup_{\tau_\sigma \subseteq \sigma_0} \mathsf{Star}_{\tau_\sigma}(\sigma_0)^0
\]
To finish the proof, note that by Lemma \ref{Lem:Sigma_sigma_construction_toric} the toric variety associated to $\mathsf{Star}_{\tau_\sigma}(\sigma_0)^0$ is precisely the (normalization of the) strata closure of the stratum of $V_{\sigma_0}$ associated to $\tau_\sigma \subseteq \sigma_0$. Since taking the normalization is compatible under smooth base change, this descends to the quotient stack $\mathcal{A}_{\mathsf{Faces}(\sigma_0)}$, and we verify that the stacks $\mathcal{B}_{\mathsf{Star}_{\tau_\sigma}(\sigma_0)}$ indeed glue together to the pullback of the strata closure normalization $\widetilde{\mathcal{S}}_\sigma$. 

Finally, since $\mathsf{Star}_\sigma(\Sigma) \to \mathsf{Star}_\sigma(\Sigma)/\Aut(\sigma)$ is clearly an $\Aut(\sigma)$-quotient, the equivalence of categories between cone stacks and Artin fans implies that the associated map $\mathcal{P}_\sigma \to \widetilde{\mathcal{S}}_\sigma$ is a principal $\Aut(\sigma)$-bundle, as desired.
\end{proof}
An important property of the monodromy torsors considered in \cite{MPS23, HMPPS} was that their normal bundles (for the natural map to the ambient stack) decomposes into a sum of line bundles. Using the setup above, we can make this property very explicit for the stacks $\mathcal{P}_\sigma$, using the projection morphism $\pi_\sigma^\textup{trop}$ from Proposition \ref{Pro:pi_sigma}. For this observe that there is a natural isomorphism
\[
\mathcal{B}_{\mathsf{Faces}(\sigma)^\circ} = (B \mathbb{G}_m)^{\sigma(1)} = \prod_{\rho \in \sigma(1)} B \mathbb{G}_m\,.
\]
For $\rho \in \sigma(1)$ let $\mathcal{L}_\rho$ be the universal line bundle on $\mathcal{B}_{\mathsf{Faces}(\sigma)^0}$ associated to the factor $B \mathbb{G}_m$ corresponding to $\rho$. 
Then the normal bundle of the inclusion
\[
i : \mathcal{B}_{\mathsf{Faces}(\sigma)^\circ} \to \mathcal{A}_{\mathsf{Faces}(\sigma)}
\]
is given by the direct sum of $\mathcal{L}_\rho$ for $\rho \in \sigma(1)$.
\begin{proposition}  \label{Prop:B_Faces}
For a smooth cone stack $\Sigma$ such that $0 \in \Sigma$ has trivial automorphism group and $\sigma \in \Sigma$ consider the diagram
\begin{equation}
\begin{tikzcd}
~&  \mathcal{B}_{\mathsf{Star}_\sigma(\Sigma)^0} \arrow[r, "i'"] \arrow[rr, bend left, "\iota_\sigma"] \arrow[l, phantom, "\mathcal{P}_\sigma ="] \arrow[d, "\pi_\sigma"] &  \mathcal{A}_{\mathsf{Star}_\sigma(\Sigma)} \arrow[d, "\widetilde \pi_\sigma"] \arrow[r, "q_\sigma"] & \mathcal{A}_\Sigma\\
& \mathcal{B}_{\mathsf{Faces}(\sigma)^\circ} \arrow[r, "i"] & \mathcal{A}_{\mathsf{Faces}(\sigma)} & 
\end{tikzcd}
\end{equation}
where 
\begin{itemize}
    \item $i, i'$ are the closed embeddings associated to the respective cone stacks with boundary,
    \item $q_\sigma$ and the composition $\iota_\sigma = q_\sigma \circ i'$ are induced from the map $\mathsf{Star}_{0 \to \sigma}$,
    \item $\pi_\sigma, \widetilde \pi_\sigma$ are induced from $\pi_\sigma^\textup{trop}$.
\end{itemize}
Then $q_\sigma$ and $\widetilde{\pi_{\sigma}}$ are \'etale, 
and the square diagram is cartesian. In particular, the normal bundle of $\iota_\sigma$ splits as
\begin{equation}
    \mathcal{N}_{\mathcal{P}_\sigma/\mathcal{A}_\Sigma} = \bigoplus_{\rho \in \sigma(1)} \pi_\sigma^\star \mathcal{L}_\rho\,,
\end{equation}
and $c_1(\pi_\sigma^\star \mathcal{L}_\rho) = \Phi((\pi_\sigma^\textup{trop})^\star x_\rho)$, with $x_\rho$ the (piecewise) linear function on $\mathsf{Faces}(\sigma)$ associated to the ray $\rho$ of $\sigma$.

Moreover, for $F_\sigma = \prod_{\rho \in \sigma(1)} (\pi_\sigma^\textup{trop})^\star x_\rho$ we have $F_\sigma \in \sPP_\star(\mathcal{P}_\sigma)$ and $\Psi(F_\sigma) = [\mathcal{P}_\sigma] \in \CH_{-\dim(\sigma)}(\mathcal{P}_\sigma)$. In fact 
\begin{equation} \label{eqn:sPP_hom_free_module}
    \sPP_\star(\mathcal{P}_\sigma) = \sPP^\star(\mathcal{P}_\sigma) \cdot F_\sigma
\end{equation}
is a free $\sPP^\star(\mathcal{P}_\sigma)$-module with generator $F_\sigma$. 
\end{proposition}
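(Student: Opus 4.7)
The plan is to reduce every assertion to the model cone stack $\mathsf{Faces}(\sigma)$, where each claim becomes an elementary toric computation, and then transfer the results across the Cartesian square.

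First, the Cartesian property of the square follows directly from the correspondence between cone stacks with boundary and pairs $(\mathcal{A}, \mathcal{B})$ of an Artin fan and reduced closed substack, combined with the preimage identity $(\pi_\sigma^\textup{trop})^{-1}(\sigma^\circ) = \mathsf{Star}_\sigma(\Sigma)^\circ$ established in Proposition \ref{Pro:pi_sigma}. For étaleness, the map $q_\sigma$ is induced by $\mathsf{Star}_{0 \to \sigma}: \mathsf{Star}_\sigma(\Sigma) \to \Sigma$, which on underlying cones is the identity $(\sigma \to \sigma' \leftarrow \sigma'') \mapsto \sigma''$, so the induced Artin-fan map acts chart-wise as the identity on Artin cones. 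For $\widetilde \pi_\sigma$ I would argue chart-locally: near an object $(\sigma \to \sigma' \leftarrow \sigma'')$ the morphism factors through the Artin cone of $(\pi_{j'} \circ j'')(\sigma'')$, and the coordinate directions complementary to the face $\sigma \preceq \sigma'$ assemble into an étale cover of this target.

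Granted étaleness and Cartesianness, the normal bundle is computed by base change: étaleness of $q_\sigma$ yields $\mathcal{N}_{\mathcal{P}_\sigma/\mathcal{A}_\Sigma} = \mathcal{N}_{\mathcal{P}_\sigma/\mathcal{A}_{\mathsf{Star}_\sigma(\Sigma)}}$, and the Cartesian property identifies this with $\pi_\sigma^\star \mathcal{N}_{\mathcal{B}_{\mathsf{Faces}(\sigma)^\circ}/\mathcal{A}_{\mathsf{Faces}(\sigma)}}$. On the model $\mathsf{Faces}(\sigma)$, smoothness of $\sigma$ presents the embedding as the zero section $B\mathbb{G}_m^{\sigma(1)} \hookrightarrow [\mathbb{A}^{\sigma(1)}/\mathbb{G}_m^{\sigma(1)}]$ with normal bundle $\bigoplus_{\rho \in \sigma(1)} \mathcal{L}_\rho$, so pulling back along $\pi_\sigma$ produces the claimed splitting. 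The Chern class identity $c_1(\pi_\sigma^\star \mathcal{L}_\rho) = \Phi((\pi_\sigma^\textup{trop})^\star x_\rho)$ follows from functoriality of $\Phi$ under strict morphisms, together with the base identification $\Phi(x_\rho) = c_1(\mathcal{L}_\rho)$ on the model.

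For the final assertions, I again reduce to $\mathsf{Faces}(\sigma)$. Since $\sigma$ is smooth, a strict piecewise polynomial on $\mathsf{Faces}(\sigma)$ is an honest polynomial in the $x_\rho$ on $\sigma$, and vanishing on every proper face of $\sigma$ is equivalent to divisibility by $\prod_\rho x_\rho$; hence $\sPP_\star(\mathsf{Faces}(\sigma), \Delta_\sigma) = \sPP^\star(\mathsf{Faces}(\sigma)) \cdot \prod_\rho x_\rho$ is free of rank one. The self-intersection formula for the zero section gives $\Psi(\prod_\rho x_\rho) = c_\textup{top}\bigl(\bigoplus_\rho \mathcal{L}_\rho\bigr) \cap [\mathcal{A}_{\mathsf{Faces}(\sigma)}] = [\mathcal{B}_{\mathsf{Faces}(\sigma)^\circ}]$. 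Pulling back via the étale map $\widetilde \pi_\sigma$ (equivalently, $\pi_\sigma$) transports both statements to $\mathcal{P}_\sigma$, yielding $F_\sigma \in \sPP_\star(\mathcal{P}_\sigma)$, the identity $\Psi(F_\sigma) = [\mathcal{P}_\sigma]$, and the free module presentation \eqref{eqn:sPP_hom_free_module}. The main obstacle will be the étaleness of $\widetilde \pi_\sigma$: the underlying cone maps $\pi_{j'} \circ j''$ are linear surjections onto faces of $\sigma$ rather than face inclusions, so matching up torus coordinates between source and target requires a careful local analysis of how the face $\sigma \preceq \sigma'$ determines the relevant subtorus on each Artin-cone chart.
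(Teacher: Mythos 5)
Your proposal is correct and follows essentially the same route as the paper: cartesianness of the square via the preimage identity of Proposition~\ref{Pro:pi_sigma}, chart-local Artin-cone models for $q_\sigma$ and $\widetilde\pi_\sigma$, normal-bundle identification by base change across the cartesian square, and reduction of the piecewise-polynomial claims to $\mathsf{Faces}(\sigma)$, where vanishing on the boundary is exactly divisibility by $\prod_{\rho}x_\rho$. The ``main obstacle'' you flag is resolved in the paper by observing that $\widetilde\pi_\sigma$ is chart-locally the coordinate projection $[\mathbb{A}^n/\mathbb{G}_m^n]\to[\mathbb{A}^{n'}/\mathbb{G}_m^{n'}]$ (the paper actually only verifies smoothness of relative dimension $0$ there, which is all the normal-bundle base change needs), and the identity $\Psi(F_\sigma)=[\mathcal{P}_\sigma]$ is obtained by citing \cite{HMPPS} together with the injectivity of pushforward from Corollary~\ref{cor:vanishingpp} --- the same content as your self-intersection argument on the model.
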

\begin{proof}
To see that $q_\sigma$ is \'etale, note that this property can be checked \'etale locally on the source (see \cite[Tag \href{https://stacks.math.columbia.edu/tag/036W}{036W}]{stacks-project}). The domain $\mathcal{A}_{\mathsf{Star}_\sigma(\Sigma)}$ of $q_\sigma$ has an \'etale cover by Artin cones $\mathcal{A}_{\sigma''}$ for $(\sigma \to \sigma' \leftarrow \sigma'') \in \mathsf{Star}_\sigma(\Sigma)$. But the composition $\Acal_{\sigma''} \to \mathcal{A}_{\mathsf{Star}_\sigma(\Sigma)} \xrightarrow{q_\sigma} \Acal_\Sigma$ is just part of the \'etale cover of $\Acal_\Sigma$ by its own Artin cones. Hence we conclude that $q_\sigma$ is \'etale.

Similarly, to see that $\widetilde \pi_\sigma$ is smooth, note that $\pi_\sigma^\textup{trop}$ acts on cones by projection to one of their faces. This implies, that the domain and target of $\widetilde \pi_\sigma$ have compatible covers by Artin cones, such that \'etale locally $\widetilde \pi_\sigma$ has the form
\[
[\mathbb{A}^n / \mathbb{G}_m^n] \to [\mathbb{A}^{n'} / \mathbb{G}_m^{n'}]\,,
\]
where $n \geq n'$ and the map is induced by the projection to the first $n'$ coordinates. It suffices to show that this map is smooth, which we can check on the smooth cover $\mathbb{A}^n \to [\mathbb{A}^n / \mathbb{G}_m^n]$ of its domain. But there it is just given by the composition of the two smooth morphisms
\[
\mathbb{A}^n \to \mathbb{A}^{n'} \to [\mathbb{A}^{n'} / \mathbb{G}_m^{n'}]\,,
\]
first performing a coordinate projection, followed by a principal $\mathbb{G}_m^{n'}$-bundle.

To see that the square diagram is cartesian, we only have to prove on the cone stack side that given an object $(\sigma \xrightarrow{j'} \sigma' \xleftarrow{j''} \sigma'')$ of $\mathsf{Star}_\sigma(\Sigma)$, its image under $\pi_\sigma^\textup{trop}$ is contained in the interior of $\mathsf{Faces}(\sigma)$, and is thus equal to $\sigma \prec \sigma$, if and only if the object is contained in the interior of $\mathsf{Star}_\sigma(\Sigma)$. This was proven in Proposition \ref{Pro:pi_sigma}.

Since $q_\sigma$ is \'etale and $\widetilde \pi_\sigma$ is smooth, we have an isomorphism 
\[
\mathcal{N}_{\mathcal{P}_\sigma/\mathcal{A}_\Sigma} = \mathcal{N}_{\mathcal{P}_\sigma/\mathcal{A}_{\mathsf{Star}_\sigma(\Sigma)}} = \pi_\sigma^\star\  \underbrace{\mathcal{N}_{\Bcal_{\mathsf{Faces}(\sigma)^\circ}/\Acal_{\mathsf{Faces}(\sigma)}}}_{=\bigoplus_\rho \mathcal{L}_\rho}\,.
\]
The formula for $c_1(\pi_\sigma^\star \mathcal L_\rho)$ follows since $c_1(\mathcal{L}_\rho) = \Phi(x_\rho) \in \CH^1(\Acal_{\mathsf{Faces}(\sigma)})$, essentially by definition of $\Phi$.

The claim that $F_\sigma$ vanishes on the boundary of $\mathsf{Star}_\sigma(\Sigma)$ follows since that boundary is precisely the locus where one of the coordinates $x_\rho$ vanishes. As for the statement about fundamental classes, it follows immediately from \cite[Lemma~40]{HMPPS}, which shows that $$[\mathcal{P}_\sigma] = \Phi\left(\prod_{\rho \in \sigma(1)} (\pi_\sigma^\textup{trop})^\star x_\rho \right) \in \CH^\star(\Acal_{\mathsf{Star}_\sigma(\Sigma)})\,,$$ and the injectivity of the pushforward $\CH_\star(\mathcal{P}_\sigma) \to \CH_\star(\Acal_{\mathsf{Star}_\sigma(\Sigma)})$ from Corollary~\ref{cor:vanishingpp}. 
Finally, for \eqref{eqn:sPP_hom_free_module} we simply observe that the condition of a function $f \in \sPP_\star(\mathcal{P}_\sigma)$ vanishing on the boundary, is equivalent to it being divisible by $\prod_{\rho \in \sigma(1)} x_\rho$  on any of its cones. 
\end{proof}

We now return to the geometric situation of a smooth Deligne--Mumford stack $X$ with normal crossings divisor $D$, associated cone stack  $\Sigma_X$ and Artin fan $X \to \mathcal{A}_X = \mathcal{A}_{\Sigma_X}$ as described at the beginning of Section \ref{sec:logtautgeneral}. Recall that the assumptions there ensure that the zero cone $0 \in \Sigma_X$ has trivial automorphism group. 
\begin{definition} \label{Def:monodromy_torsor}
For $\sigma \in \Sigma_X$ we call $P_\sigma = \mathcal{P}_\sigma \times_{\Acal_X} X$ the \emph{monodromy torsor} associated to $\sigma$.
\end{definition}
The action of $\Aut(\sigma)$ on $\mathcal{P}_\sigma$ induces an action on $P_\sigma$ and the natural map $\iota_\sigma : P_\sigma \to X$ is invariant under this action. Using the fact that $X \to \mathcal{A}_X$ is smooth, the content of Lemma \ref{Lem:Sigma_sigma_construction}  immediately implies the following.
\begin{corollary} \label{Cor:Psigma_cartesian_diagram}
Given a cone $\sigma \in \Sigma_X$, we have a fiber diagram
\begin{equation}
\label{eqn:cartesiandiagramstratanormalisation}
\begin{tikzcd}
{P}_\sigma \arrow[r, "p_\sigma"] \arrow[rr, bend left, "\iota_\sigma"] \arrow[d] &  \widetilde{{S}}_\sigma \arrow[r] \arrow[d] & X \arrow[d]\\
\mathcal{P}_\sigma \arrow[r] &  \widetilde{\mathcal{S}}_\sigma \arrow[r] & \mathcal{A}_{X}
\end{tikzcd}\,
\end{equation}
with $p_\sigma$ an $\Aut(\sigma)$-principal bundle.
In particular, the monodromy torsor $P_\sigma$ endowed with the strict topology from $\iota_\sigma$ has tropicalization $\mathsf{Star}_\sigma(\Sigma_X)$ and the map of cone stacks associated to $\iota_\sigma$ is given by
\[
\iota_\sigma^\textup{trop} = \mathsf{Star}_{0 \to \sigma}\,.
\]
Moreover, given a morphism $\psi: \sigma \to \sigma'$ in $\Sigma_X$ we obtain an associated map
\begin{equation} 
\label{eqn:Psigma_fiber_product}
    \iota_{\sigma \to \sigma'} : P_{\sigma'} \to P_\sigma
\end{equation}
of principal bundles as the fiber product of the map $\mathcal{P}_{\sigma'} \to \mathcal{P}_\sigma$ induced by $\mathsf{Star}_\psi$.
\end{corollary}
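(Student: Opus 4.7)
The plan is to deduce the corollary essentially formally from Lemma \ref{Lem:Sigma_sigma_construction} and its refinement Proposition \ref{Prop:B_Faces}, using the fact that the map $X \to \mathcal{A}_X$ is smooth with geometrically connected fibres, so that base change from the (stacky) Artin fan preserves the relevant incidence information.

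First I would unpack the definitions: by Definition \ref{Def:monodromy_torsor} we have $P_\sigma = \mathcal{P}_\sigma \times_{\mathcal{A}_X} X$, and Lemma \ref{Lem:Sigma_sigma_construction} identifies $\mathcal{P}_\sigma = \mathcal{B}_{\mathsf{Star}_\sigma(\Sigma_X)^0}$ as a principal $\Aut(\sigma)$-bundle over $\widetilde{\mathcal{S}}_\sigma$, together with the identification of $\widetilde{\mathcal{S}}_\sigma$ as a closed substack of $\mathcal{A}_X$ via the map $\mathcal{B}_{\mathsf{Star}_\sigma(\Sigma_X)^0/\Aut(\sigma)} \to \mathcal{A}_X$. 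Pulling this chain of maps back along the smooth morphism $X \to \mathcal{A}_X$ gives the two fibre squares on the right of diagram \eqref{eqn:cartesiandiagramstratanormalisation}: the right-hand square by definition of $\widetilde{S}_\sigma$ (using that $\widetilde{\mathcal{S}}_\sigma \to \mathcal{A}_X$ is a closed embedding composed with a normalization, and normalization commutes with smooth base change, so that $\widetilde S_\sigma = \widetilde{\mathcal{S}}_\sigma \times_{\mathcal{A}_X} X$ agrees with the normalization of the stratum closure in $X$). The left square is then obtained by a further base change, and $p_\sigma$ is an $\Aut(\sigma)$-principal bundle because principal bundles pull back to principal bundles along arbitrary morphisms.

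For the tropicalization statement, I would compose the closed embedding $\mathcal{P}_\sigma = \mathcal{B}_{\mathsf{Star}_\sigma(\Sigma_X)^0} \hookrightarrow \mathcal{A}_{\mathsf{Star}_\sigma(\Sigma_X)}$ with the smooth map $P_\sigma \to \mathcal{P}_\sigma$; since the latter has geometrically connected fibres (being obtained by base change from $X \to \mathcal{A}_X$), this exhibits the composition as a valid Artin fan for $P_\sigma$ in the sense of Definition \ref{Def:an_Artin_fan}, with associated cone stack $\mathsf{Star}_\sigma(\Sigma_X)$. That the induced map on Artin fans is $q_\sigma$ of Proposition \ref{Prop:B_Faces}, i.e.\ is associated to $\mathsf{Star}_{0 \to \sigma}$, is immediate from the construction of $\mathcal{P}_\sigma$ inside $\mathcal{A}_X$ via \eqref{eqn:BStarsigma_identification}. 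Functoriality in the cone $\sigma$ is then automatic: given $\psi : \sigma \to \sigma'$, the map $\mathsf{Star}_\psi$ of star cone stacks induces a morphism $\mathcal{P}_{\sigma'} \to \mathcal{P}_\sigma$ of Artin fans over $\mathcal{A}_X$, and base-changing to $X$ produces the desired map $\iota_{\sigma \to \sigma'} : P_{\sigma'} \to P_\sigma$.

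The only nontrivial point, and the main thing to verify carefully, is the identification $\widetilde{S}_\sigma = \widetilde{\mathcal{S}}_\sigma \times_{\mathcal{A}_X} X$, i.e.\ that the two notions of ``normalization of the stratum closure'' agree. This reduces by smoothness of $X \to \mathcal{A}_X$ to the toric statement of Lemma \ref{Lem:Sigma_sigma_construction_toric} combined with the standard fact that normalization commutes with smooth base change (which applies because $X \to \mathcal{A}_X$ is smooth and hence so is its pullback). Everything else in the corollary is a formal consequence of the Cartesian square and the naturality of the construction of stars in cone stacks.
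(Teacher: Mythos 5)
Your proposal is correct and follows essentially the same route as the paper's proof: both deduce the diagram by base change along the smooth surjection $X \to \mathcal{A}_X$ from the identifications in Lemma \ref{Lem:Sigma_sigma_construction}, with the key point being that the fiber product $\widetilde{\mathcal{S}}_\sigma \times_{\mathcal{A}_X} X$ recovers the normalization of the stratum closure in $X$ (normalization commuting with smooth base change), and both obtain the functoriality in $\sigma$ by realizing $\mathsf{Star}_\psi$ on Artin fans and pulling back. Your added remarks on Lemma \ref{Lem:Sigma_sigma_construction_toric} and Proposition \ref{Prop:B_Faces} are consistent with, if slightly more explicit than, the paper's argument.
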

\begin{proof}
First recall that $\widetilde{\mathcal{S}}_\sigma$ is the normalization of the closure of the stratum $\mathcal{S}_\sigma$ in $\mathcal{A}_{\Sigma_X}$ by Lemma \ref{Lem:Sigma_sigma_construction}. Since the map $X \to \mathcal{A}_{\Sigma_X}$ is smooth and surjective, it follows that the fiber product is indeed the normalization of the closure of $S_\sigma$ in $X$. This shows that the right diagram is a fiber square. The left square is Cartesian by the definition of $P_\sigma$. This immediately implies that the tropicalization of $\iota_{\sigma}$ is the map $\mathsf{Star}_{0 \to \sigma}$ associated to $\mathcal{P}_\sigma \to \Acal_X$. Finally, for  $\psi: \sigma \to \sigma'$ in $\Sigma_X$ as above we first obtain a map $\mathsf{Star}_{\sigma'}({\Sigma_X}) \xrightarrow{\mathsf{Star}_\psi} \mathsf{Star}_{\sigma}({\Sigma_X}) \xrightarrow{\mathsf{Star}_{0 \to \sigma}} {\Sigma_X}$ of cone stacks with boundary. Again by Lemma \ref{Lem:Sigma_sigma_construction} this induces a morphism $\mathcal{P}_{\sigma'} \to \mathcal{P}_\sigma \to \mathcal{A}_X$ of their associated Artin stacks. Taking the fiber product with $X \to \mathcal{A}_X$ gives the desired map $P_{\sigma'} \xrightarrow{\iota_{\sigma \to \sigma'}} P_\sigma \xrightarrow{\iota_{\sigma}} X$.
\end{proof}

\begin{notation} \label{Not:Sigma_sigma}
To ease the notation slightly in the following, we write $\Sigma_\sigma = \mathsf{Star}_\sigma(\Sigma_X)$ for the cone stack with boundary associated to $P_\sigma$. Given $\sigma \to \sigma'$ as above, we also write $\iota_{\sigma \to \sigma'}^\textup{trop} : \Sigma_{\sigma'} \to \Sigma_{\sigma}$ for the map $\mathsf{Star}_{\sigma \to \sigma'}$ associated to $\iota_{\sigma \to \sigma'} : P_{\sigma'} \to P_{\sigma}$.
\end{notation}


\begin{example}[continues Example \ref{ex:a2z2}]
Let $\sigma \in \Sigma_X$ be the two-dimensional cone, then the map $P_\sigma \to X$ factors as
\[
P_\sigma = \mathbb{G}_m \xrightarrow{u \mapsto (0,0,u)} Y = \A^2 \times \G_m \xrightarrow{(x,y,u) \mapsto (x,y,u^2)} X = \A^2 \times \G_m\,.
\]
As the log structure on $X$ is induced by the divisor $D \subseteq X$, the induced log structure on $Y$ comes from the pre-image $E \subseteq Y$ of $D$, cut out by the equation $x^2 - y^2u^2=(x-yu)(x+yu)$. We see that $E$ splits in $E_\pm$ cut out by $x = \pm yu$, with $E_+ \cap E_- = P_{\sigma}$. In particular, as predicted by Proposition \ref{Prop:B_Faces}, the normal bundle of the map $P_\sigma \to X$ splits canonically as
\[
\mathcal{N}_{P_\sigma/X} = \Ocal(E_{+})|_{P_{\sigma}} \oplus \Ocal(E_{-})|_{P_{\sigma}}\,.
\]
\end{example}
\begin{remark}
The Cartesian diagram \eqref{eqn:cartesiandiagramstratanormalisation} is also mentioned in \cite[Section 3.7]{Mol22}. Using the notation above, that paper studies the \emph{strata homology classes} in the Chow groups of $P_\sigma$ pulled back from the stack $\mathcal{P}_\sigma$. As observed in \cite[Lemma 3.9]{Mol22}, pushing forward such classes to $X$ just recovers classes on $X$ defined via strict piecewise polynomials on $\Sigma_X$. In Section \ref{Sect:TautSystems} below we will see how to combine strata homology classes with explicit systems of decorations on $P_\sigma$ itself to obtain potentially larger tautological rings of $X$.
\end{remark}
We conclude Section \ref{Sect:Monodromy_torsors} with some further results on the monodromy torsors which are needed in later proofs. The first one concerns the behavior of the spaces $P_\sigma$ under log blowups of $\widehat X \to X$. Recall that such a log blowup corresponds to a subdivision $\varphi: \widehat \Sigma \to \Sigma_X$. On the cone stack side, we saw in Lemma \ref{Lem:iota_sigmahat_section} that for $\widehat \sigma \in \widehat \Sigma$ mapping to $\sigma \in \Sigma_X$ we obtain a map
\[
t_{\widehat \sigma \to \sigma} : \mathsf{Star}_{\widehat \sigma}(\widehat \Sigma) \to \mathsf{Star}_{\sigma}(\Sigma)\,.
\]
Taking first the geometric realization of \eqref{eqn:t_section_definition} on Artin fans and the fiber product with $X \to \mathcal{A}_X$, we obtain a map
\begin{equation} \label{eqn:s_widehat_sigma_to_sigma}
s_{\widehat \sigma \to \sigma} : P_{\widehat \sigma} \to P_\sigma\,.
\end{equation}
To gain some geometric intuition: in the proof of Theorem \ref{Thm:R_blow_up_determined} we are going to see that if $\widehat \Sigma \to \Sigma$ corresponds to a blowup $\widehat X \to X$ of a smooth stratum closure in $X$, then the map $s_{\widehat \sigma \to \sigma}$ is given by either
\begin{itemize}
    \item a projective bundle, in case that $P_\sigma \to X$ has image inside the blowup center, or
    \item a blowup at a disjoint union of smooth strata closures, representing $P_{\widehat \sigma}$ as the strict transform of the map $P_\sigma \to X$, otherwise.
\end{itemize}

\begin{example}[continues Example \ref{ex:a2z2}]
We take $\tilde{\Sigma}$ to be the subdivision in the ray $\rho' = \langle(1,1)\rangle \subset \sigma$. Then $s_{\rho' \to \sigma}$ is a $\P^1$-bundle, giving the map from the exceptional divisor to the center of the blowup.
\end{example}

An important construction used below is the fiber product of $s_{\widehat \sigma \to \sigma}$ with a map $\iota_{\sigma \to \sigma'}$, which corresponds to restricting $s_{\widehat \sigma \to \sigma}$ to (the parametrization of) a smaller stratum closure inside $P_\sigma$. 
The following proposition shows that this fiber product can be covered by a union of monodromy torsors $P_{\widehat \sigma'}$ for $\widehat \sigma' \in \widehat \Sigma$.
\begin{proposition} \label{Pro:P_sigmahat_sigmaprime_cover}
Let $\pi: \widehat X \to X$ be a log blowup with associated map $\varphi: \widehat \Sigma \to \Sigma_X$ of cone stacks. Let $\widehat \sigma \in \widehat \Sigma$ be a cone mapping to $\sigma \in \Sigma_X$ and choose a morphism $\sigma \to \sigma'$ in $\Sigma_X$. Define $P_{\widehat \sigma \to \sigma'}$ as the fiber product
\begin{equation} \label{eqn:P_widehat_sigma_sigmaprime}
\begin{tikzcd}
P_{\widehat \sigma \to \sigma'} \arrow[d, "\pi_{\sigma'}",swap] \arrow[r, "\iota_{\widehat \sigma \to \sigma'}"] & P_{\widehat \sigma} \arrow[d, "s_{\widehat \sigma \to \sigma}"]\\
P_{\sigma'} \arrow[r, "\iota_{\sigma \to \sigma'}"] & P_{\sigma}
\end{tikzcd}
\end{equation}
whose associated cone stack $\widehat \Sigma_{\widehat \sigma} \times_{\Sigma_\sigma} \Sigma_{\sigma'}$ has interior indexed by commuting diagrams
\begin{equation} \label{eqn:H_widehat_sigma_sigmaprime_diagrams}
\begin{tikzcd}
\widehat \sigma \arrow[d, mapsto, "\varphi"] \arrow[rrr] & & & \widehat \sigma' \arrow[d, mapsto, "\varphi"]\\
\sigma \arrow[r] & \sigma' \arrow[r] & \sigma'' \arrow[r, "\sim"] & \sigma''
\end{tikzcd}\,.
\end{equation}
Let $\mathfrak{H}_{\widehat \sigma \to \sigma'} = \{(\widehat \sigma \to \widehat \sigma', \sigma' \to \sigma'')\}$ be a set of representatives of diagrams \eqref{eqn:H_widehat_sigma_sigmaprime_diagrams} associated to the minimal cones of $(\widehat \Sigma_{\widehat \sigma} \times_{\Sigma_\sigma} \Sigma_{\sigma'})^0$. Then for the induced diagram
\begin{equation} \label{eqn:P_widehat_sigma_sigmaprime_cover}
\begin{tikzcd}
\bigsqcup P_{\widehat \sigma'} \arrow[r, "J"] \arrow[rr, bend left, "\iota_{\widehat \sigma \to \widehat \sigma'}"] \arrow[d, "\sqcup s_{\widehat \sigma' \to \sigma''}", swap] & P_{\widehat \sigma \to \sigma'} \arrow[d, "\pi_{\sigma'}"] \arrow[r, "\iota_{\widehat \sigma \to \sigma'}"] & P_{\widehat \sigma} \arrow[d, "s_{\widehat \sigma \to \sigma}"]\\
\bigsqcup P_{\sigma''} \arrow[r, "\sqcup \iota_{\sigma' \to \sigma''}"] & P_{\sigma'} \arrow[r, "\iota_{\sigma \to \sigma'}"] & P_{\sigma}
\end{tikzcd}
\end{equation}
where the disjoint unions go over elements of $\mathfrak{H}_{\widehat \sigma \to \sigma'}$, 
we have that the map $J$ is proper, representable and surjective.
\end{proposition}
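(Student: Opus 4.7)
The plan is to reduce to a combinatorial assertion on cone stacks. By iterated fiber products, the whole diagram \eqref{eqn:P_widehat_sigma_sigmaprime_cover} arises by pullback along the smooth surjective morphism $X \to \mathcal{A}_X$ from the analogous diagram of Artin fans, in which $P_{\widehat\sigma \to \sigma'}$ is replaced by $\mathcal{P}_{\widehat\sigma\to\sigma'} \coloneqq \mathcal{P}_{\widehat\sigma}\times_{\mathcal{P}_\sigma}\mathcal{P}_{\sigma'}$. Since properness, representability, and surjectivity are all preserved under base change, it suffices to prove the three properties for the induced morphism
\[
J^{\mathsf{Artin}}\colon \bigsqcup_{\mathfrak{H}_{\widehat\sigma\to\sigma'}} \mathcal{P}_{\widehat\sigma'} \longrightarrow \mathcal{P}_{\widehat\sigma\to\sigma'}.
\]

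For properness and representability, each component of $J^{\mathsf{Artin}}$ factors through $\mathcal{P}_{\widehat\sigma}\times_{\mathcal{P}_\sigma}\mathcal{P}_{\sigma'}$ via the pair of morphisms $s_{\widehat\sigma'\to\widehat\sigma}\colon \mathcal{P}_{\widehat\sigma'}\to\mathcal{P}_{\widehat\sigma}$ and $\iota_{\sigma'\to\sigma''}\circ s_{\widehat\sigma'\to\sigma''}\colon \mathcal{P}_{\widehat\sigma'}\to\mathcal{P}_{\sigma'}$, whose compatibility over $\mathcal{P}_\sigma$ follows from the functoriality relation \eqref{eqn:t_functoriality} of Lemma \ref{Lem:iota_sigmahat_section}. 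By Lemma \ref{Lem:Sigma_sigma_construction} and the construction of the maps $s$ and $\iota$ from face morphisms of cone stacks, each such component is built from closed immersions of normalised strata closures and finite principal bundle projections; hence it is proper and representable. These properties persist after forming the fiber product and taking the finite disjoint union.

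The main obstacle is surjectivity, which is genuinely combinatorial. Using the correspondence from Section \ref{subsubs:bartinfan} between cone stacks with boundary and their associated stacks $\mathcal{B}$, and the fact that a map of Artin fans is surjective if and only if it hits every locally closed stratum of the target, the claim amounts to the following: every interior object of the fiber product cone stack $\widehat\Sigma_{\widehat\sigma}\times_{\Sigma_\sigma}\Sigma_{\sigma'}$, presented as a diagram \eqref{eqn:H_widehat_sigma_sigmaprime_diagrams} with upper-right cone $\widehat\sigma''$, lies in the image under the $t$-map of Lemma \ref{Lem:iota_sigmahat_section} of an interior object of $\mathsf{Star}_{\widehat\sigma'}(\widehat\Sigma)$ for some $(\widehat\sigma\to\widehat\sigma',\sigma'\to\sigma'')\in\mathfrak{H}_{\widehat\sigma\to\sigma'}$. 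Given any such diagram, one produces a minimal representative by descending along face morphisms of the fiber product cone stack to the minimal cone $\widehat\sigma'\preceq\widehat\sigma''$ through which both $\widehat\sigma\to\widehat\sigma''$ and the lift of $\sigma\to\sigma'$ factor compatibly; such a minimal cone exists by the finiteness of the poset of faces of $\widehat\sigma''$, and by construction is an element of $\mathfrak{H}_{\widehat\sigma\to\sigma'}$. The functoriality relation \eqref{eqn:t_functoriality} then expresses the original diagram as the image of the interior object $(\widehat\sigma'\to\widehat\sigma'')$ of $\mathsf{Star}_{\widehat\sigma'}(\widehat\Sigma)$ under the corresponding component of $J^{\mathsf{Artin}}$, yielding surjectivity.
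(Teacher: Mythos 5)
Your proposal is correct, and it follows the paper's overall strategy: reduce to the level of (idealised) Artin fans by base change along the smooth surjection $X \to \mathcal{A}_X$, then argue combinatorially. Your surjectivity argument is essentially the paper's: every interior object of $\widehat\Sigma_{\widehat\sigma}\times_{\Sigma_\sigma}\Sigma_{\sigma'}$ admits a minimal interior face, hence lies in the star of one of the representatives in $\mathfrak{H}_{\widehat\sigma\to\sigma'}$ and is therefore hit by the corresponding component. Where you genuinely diverge is in properness and representability. The paper checks representability by noting that the map of cone stacks is injective on automorphism groups, and checks properness via a valuative criterion phrased combinatorially: for any face inclusion between objects of the fiber-product cone stack as in \eqref{eqn:morph_in_Sigmahat_sigmahat_sigmaprime}, each cone of $\bigsqcup \mathsf{Star}_{\widehat\sigma'}(\widehat\Sigma)$ over the source admits a unique lift over the target, obtained by composing with the face map $\widehat\sigma_1'\to\widehat\sigma_2'$. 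You instead deduce both properties by cancellation: the composite $\bigsqcup \mathcal{P}_{\widehat\sigma'}\to\mathcal{P}_{\widehat\sigma}$ is the proper, representable map $\bigsqcup \iota_{\widehat\sigma\to\widehat\sigma'}$, and the projection $\mathcal{P}_{\widehat\sigma\to\sigma'}\to\mathcal{P}_{\widehat\sigma}$ is separated (being a base change of the proper map $\iota_{\sigma\to\sigma'}$), so the induced map to the fiber product is proper and representable. This is valid and arguably cleaner than the valuative criterion, but the step ``these properties persist after forming the fiber product'' is doing real work and should be spelled out — it is exactly the cancellation property for proper (resp.\ representable) morphisms, and it requires the separatedness of the projection, which you never state. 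Two minor slips: the map $\mathcal{P}_{\widehat\sigma'}\to\mathcal{P}_{\widehat\sigma}$ is $\iota_{\widehat\sigma\to\widehat\sigma'}$ (a strata inclusion within $\widehat\Sigma$), not an $s$-map, which by the paper's conventions would go from the blowup level down to $\Sigma_X$; and the compatibility of your two component maps over $\mathcal{P}_\sigma$ is indeed the functoriality \eqref{eqn:t_functoriality} of Lemma \ref{Lem:iota_sigmahat_section}, as you correctly cite.
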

\begin{proof}
The description of the interior of $\widehat \Sigma_{\widehat \sigma} \times_{\Sigma_\sigma} \Sigma_{\sigma'}$ follows directly from the definition of the fiber product of cone stacks. The minimal cones in this interior correspond to the irreducible components of the fiber product $P_{\widehat \sigma \to \sigma'}$. Then the properness, representability and surjectivity of $J$ can first be checked on the level of idealised Artin fans and is preserved under taking the fiber product with $X \to \mathcal{A}_{\Sigma}$. On the level of idealised Artin fans, surjectivity follows from surjectivity of the corresponding map of cone stacks, representability follows as the map on cones is injective on automorphism groups. Finally, properness can be checked by a valuative property, which on the level of cone stacks amounts to the following statement: consider any inclusion of cones 
\begin{equation} \label{eqn:morph_in_Sigmahat_sigmahat_sigmaprime}
    (\widehat \sigma \to \widehat \sigma'_1, \sigma' \to \sigma''_1) \to (\widehat \sigma \to \widehat \sigma'_2, \sigma' \to \sigma''_2)
\end{equation}
in the cone stack of $P_{\widehat \sigma \to \sigma'}$. This  corresponds to a morphism between diagrams of the form \eqref{eqn:H_widehat_sigma_sigmaprime_diagrams}. Now for each cone $(\widehat{\sigma}' \to \widehat{\sigma}'_1)$ of one of the $P_{\widehat \sigma'}$ mapping to the left-hand side of \eqref{eqn:morph_in_Sigmahat_sigmahat_sigmaprime}, we claim that we can find a unique inclusion in a cone mapping to the right-hand side of \eqref{eqn:morph_in_Sigmahat_sigmahat_sigmaprime}. Indeed this is the case, since \eqref{eqn:morph_in_Sigmahat_sigmahat_sigmaprime} contains the data of an inclusion $\widehat \sigma_1' \to \widehat \sigma_2'$ and thus the morphism 
\[(\widehat{\sigma}' \to \widehat{\sigma}'_1) \to (\widehat{\sigma}' \to \widehat{\sigma}'_1 \to \widehat{\sigma}'_2)\]
in $\widehat{\Sigma}_{\widehat \sigma'}$
given by composing with this face map is the unique solution we are looking for.
\end{proof}

\begin{lemma} \label{Lem:strata_closure_smooth}
The strata closure $\overline{S}_\sigma$ associated to $\sigma \in \Sigma_X$ is smooth if and only if the map
\begin{equation} \label{eqn:StarmodAut_subcategory}
    \mathsf{Star}_\sigma(\Sigma)^0 / \mathsf{Aut}(\sigma) \to \Sigma
\end{equation}
is a fully faithful embedding.
\end{lemma}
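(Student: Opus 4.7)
The approach is to reduce the statement to a question about the associated Artin fans, and then use Lemma \ref{Lem:Sigma_sigma_construction} together with the equivalence of $2$-categories between cone stacks and Artin fans from \cite[Theorem 6.11]{CCUW}. Since $X \to \mathcal{A}_X$ is strict, smooth, surjective, and the formation of scheme-theoretic closures of reduced substacks commutes with smooth base change, smoothness of $\overline{S}_\sigma \subseteq X$ is equivalent to smoothness of $\overline{\mathcal{S}}_\sigma \subseteq \mathcal{A}_X$. Thus I reduce to showing that $\overline{\mathcal{S}}_\sigma$ is smooth if and only if the map \eqref{eqn:StarmodAut_subcategory} is a fully faithful embedding of cone stacks.

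By Lemma \ref{Lem:Sigma_sigma_construction}, the morphism \eqref{eqn:StarmodAut_subcategory} corresponds under the cone stack/Artin fan equivalence to the composition $\widetilde{\mathcal{S}}_\sigma \to \overline{\mathcal{S}}_\sigma \hookrightarrow \mathcal{A}_X$ of the normalization map with the closed embedding. In particular, saying that \eqref{eqn:StarmodAut_subcategory} is a fully faithful embedding amounts to saying that $\widetilde{\mathcal{S}}_\sigma \to \mathcal{A}_X$ is a closed embedding onto the full sub-cone-stack of $\Sigma_X$ cut out by its image, which is equivalent to requiring that the normalization map $\widetilde{\mathcal{S}}_\sigma \to \overline{\mathcal{S}}_\sigma$ is itself an isomorphism.

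For the forward implication, if $\overline{\mathcal{S}}_\sigma$ is smooth, then it is normal, so the normalization map is an isomorphism; hence $\widetilde{\mathcal{S}}_\sigma \hookrightarrow \mathcal{A}_X$ is a closed embedding, which under the $2$-categorical equivalence translates to \eqref{eqn:StarmodAut_subcategory} being a fully faithful embedding. Conversely, if \eqref{eqn:StarmodAut_subcategory} is a fully faithful embedding, the image in $\Sigma_X$ is a full sub-cone-stack whose cones are all smooth (since $\Sigma_X$ is assumed smooth), and the associated closed reduced substack $\overline{\mathcal{S}}_\sigma \subseteq \mathcal{A}_X$ is therefore smooth.

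The main subtle point will be to verify precisely that fully faithful embeddings of (sub-)cone-stacks correspond to closed embeddings of smooth reduced closed substacks of $\mathcal{A}_X$; this should follow directly from \cite[Theorem 6.11]{CCUW} together with the local description of closed strata of smooth Artin fans as quotient stacks of coordinate subspaces of affine space. The remaining minor check is that the normalization-versus-smoothness dichotomy on the Artin fan side correctly matches the failure of \eqref{eqn:StarmodAut_subcategory} to be an embedding (i.e.\ multiple branches of $\overline{\mathcal{S}}_\sigma$ meeting a stratum $\mathcal{S}_{\sigma'}$ correspond to multiple $\mathrm{Aut}(\sigma)$-orbits on $\mathrm{Hom}_\Sigma(\sigma,\sigma')$), which is already implicit in the proof of Lemma \ref{Lem:Sigma_sigma_construction}.
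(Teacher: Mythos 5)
Your proposal is correct and follows essentially the same route as the paper: reduce to the Artin fan, identify \eqref{eqn:StarmodAut_subcategory} with the map $\widetilde{\mathcal{S}}_\sigma \to \mathcal{A}_X$ via Lemma \ref{Lem:Sigma_sigma_construction}, and translate ``closed embedding of substacks'' into ``fully faithful embedding of cone stacks'' (the paper phrases the chain as: $\overline{S}_\sigma$ smooth iff the normalization $\widetilde{S}_\sigma \to \overline{S}_\sigma$ is an isomorphism iff $\widetilde{S}_\sigma \to X$ is a closed embedding, checked on the Artin fan side). One caution: in your converse you infer smoothness of $\overline{\mathcal{S}}_\sigma$ from the cones of the image being smooth, which is not a valid inference for a general fully faithful forward-closed subcategory (the triple-axis Example \ref{Exa:triple_axis} is a counterexample); what actually closes the argument --- and what your earlier equivalence ``fully faithful iff the normalization is an isomorphism'' already gives you, since $\widetilde{\mathcal{S}}_\sigma$ is smooth --- is that the image is the star of the \emph{single} cone $\sigma$, so that locally on each Artin cone the substack is a single coordinate-subspace orbit closure rather than a union of them.
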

\begin{proof}
We have that $\overline{S}_\sigma$ is smooth if and only if the normalization map $\widetilde{S}_\sigma \to \overline{S}_\sigma$ is an isomorphism. Reformulating, this is the case if and only if the map $\widetilde{S}_\sigma \to X$ is a closed embedding. This can equivalently be checked for the map $\widetilde{\mathcal{S}}_\sigma \to \mathcal{A}_X$ on the Artin fan side. By Lemma \ref{Lem:Sigma_sigma_construction} the map $\widetilde{\mathcal{S}}_\sigma \to \mathcal{A}_X$ corresponds to the map \eqref{eqn:StarmodAut_subcategory} of cone stacks with boundary. It is thus a closed embedding if and only if this underlying map of cone stacks is a fully faithful embedding.
\end{proof}

Let $\Sigma$ be a cone stack and $\Sigma^0$ be a forward-closed subcategory. We say that $\Sigma^0$ is \emph{connected} if the graph whose vertices are isomorphism classes of objects of $\Sigma^0$ and whose edges correspond to morphisms in $\Sigma^0$ is connected. It is straightforward to see that $\Sigma^0$ is connected if and only if the associated stack $\mathcal{B}_{\Sigma^0}$ is connected.

\begin{lemma}
\label{Lem:smooth_eq_star_quotient}
Let $\Sigma_X^0$ be a connected forward-closed subcategory in the cone stack $\Sigma_X$ with complement $\Delta$. Then $\Bcal_{\Sigma_X^0}$ is smooth if and only if there is a stratum $\sigma \in \Sigma_X^0$ such that the embedding $\Sigma_X^0 \to \Sigma_X$ is isomorphic to $\Star_{\sigma}(\Sigma_X)^0/\Aut(\sigma) \to \Sigma_X$.
\end{lemma}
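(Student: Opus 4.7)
The plan is to prove both implications using Lemma \ref{Lem:strata_closure_smooth} as the central input, combined with an identification of $\Bcal_{\Sigma_X^0}$ with a strata closure in the Artin fan.

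For the ``if'' direction, I would assume $\Sigma_X^0 \to \Sigma_X$ is isomorphic to $\Star_\sigma(\Sigma_X)^0/\Aut(\sigma) \to \Sigma_X$ for some $\sigma \in \Sigma_X^0$. The colimit presentation \eqref{eqn:Bartin_fan_as_colimit} shows that the idealised Artin stack $\Bcal_{\Sigma^0}$ depends only on the underlying functor $\Sigma^0 \to \mathbf{RPC}^f$, so such an isomorphism induces $\Bcal_{\Sigma_X^0} \cong \Bcal_{\Star_\sigma(\Sigma_X)^0/\Aut(\sigma)}$. By Lemma \ref{Lem:Sigma_sigma_construction} the latter is canonically identified with $\widetilde{\mathcal{S}}_\sigma$, which is smooth because the cartesian square of Proposition \ref{Prop:B_Faces} exhibits $\mathcal{P}_\sigma$ as smooth and $\widetilde{\mathcal{S}}_\sigma = \mathcal{P}_\sigma/\Aut(\sigma)$ is a quotient by a free action of a finite group.

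For the ``only if'' direction, the first step is to identify the irreducible components of $\Bcal_{\Sigma_X^0}$ with the isomorphism classes of minimal cones in $\Sigma_X^0$. Each minimal $\tau \in \Sigma_X^0$ contributes an irreducible closed substack $\overline{\mathcal{S}}_\tau$ (irreducible because $\mathcal{S}_\tau = B(\mathbb{G}_m^{\dim \tau} \rtimes \Aut(\tau))$ is topologically a point), and forward-closedness of $\Sigma_X^0$ combined with \eqref{eqn:Bartin_fan_as_colimit} gives $\Bcal_{\Sigma_X^0} = \bigcup_{\tau \text{ minimal}} \overline{\mathcal{S}}_\tau$. Since smooth connected DM stacks over a characteristic zero field are irreducible, the connectedness hypothesis forces a \emph{unique} minimal cone $\sigma \in \Sigma_X^0$, whereupon forward-closedness identifies the objects of $\Sigma_X^0$ with the cones of $\Sigma_X$ receiving a morphism from $\sigma$, and in particular $\Bcal_{\Sigma_X^0} = \overline{\mathcal{S}}_\sigma$ as reduced closed substacks of $\mathcal{A}_X$.

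To upgrade this equality of objects to an isomorphism of embeddings into $\Sigma_X$, I would invoke Lemma \ref{Lem:strata_closure_smooth}: since $X \to \mathcal{A}_X$ is strict, smooth and surjective, smoothness descends between $\Bcal_{\Sigma_X^0}$ and $\overline{S}_\sigma \subseteq X$, so the lemma yields that $\Star_\sigma(\Sigma_X)^0/\Aut(\sigma) \to \Sigma_X$ is a fully faithful embedding. Its image is the full subcategory of $\Sigma_X$ whose objects are the cones receiving a morphism from $\sigma$, which is exactly $\Sigma_X^0$; full faithfulness then gives the claimed isomorphism of embeddings over $\Sigma_X$. The main technical subtlety will be carefully justifying the decomposition of $\Bcal_{\Sigma_X^0}$ into irreducible components indexed by minimal cones of $\Sigma_X^0$, via a careful unpacking of \eqref{eqn:Bartin_fan_as_colimit}; once in hand, the rest of the argument is essentially a translation between categorical and geometric statements.
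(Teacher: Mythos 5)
Your proof is correct and is essentially the paper's argument written out in full: the paper likewise uses smoothness plus connectedness to get irreducibility of $\Bcal_{\Sigma_X^0}$, hence a unique minimal cone $\sigma$, and then cites Lemma \ref{Lem:strata_closure_smooth} for both implications. Your only deviations are cosmetic --- you route the ``if'' direction through the abstract identification $\Bcal_{\Sigma_X^0}\cong\widetilde{\mathcal{S}}_\sigma$ and the smoothness of $\mathcal{P}_\sigma$ rather than applying Lemma \ref{Lem:strata_closure_smooth} directly, and you call $\Bcal_{\Sigma_X^0}$ a DM stack when it is in general an Artin stack (the step ``smooth and connected implies irreducible'' still holds in that generality).
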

\begin{proof}
First assume $\Bcal_{\Sigma_X^0}$ is smooth. Then since it is also connected, it must be irreducible and hence has a generic point corresponding to some minimal stratum $\sigma$ in $\Sigma_X$. Then the equivalence follows from Lemma~\ref{Lem:strata_closure_smooth}. The implication the other way is also direct from Lemma~\ref{Lem:strata_closure_smooth}.
\end{proof}


\begin{example}[continues Example \ref{ex:a2z2}]
We know the stratum $D$ is not smooth, as it has a self-intersection. The corresponding substack with boundary \[\Sigma_X^0 = \left(\begin{tikzcd} \rho \arrow[r, "i_1", shift left = 1] \arrow[r, swap,"i_2", shift right = 1] & \sigma \arrow[loop right]{r}\end{tikzcd}\right)\] does have a unique minimal object, but is is not of the form stated in Lemma~\ref{Lem:smooth_eq_star_quotient}, as the map $\mathsf{Star}_\rho(\Sigma_X)^0/\Aut(\rho) \to \Sigma_X$ is not isomorphic to $\Sigma_X^0 \to \Sigma_X$ (indeed, the map from the star is not even an embedding).

The stratum $D^{(2)}$ is smooth, and does trivially satisfy the condition from Lemma~\ref{Lem:smooth_eq_star_quotient}.
\end{example}

\subsubsection{Star subdivisions} \label{Sect:Star_subdivisions}

Given a cone $\sigma \in \Sigma_X$ as in Lemma \ref{Lem:strata_closure_smooth}, the blowup
\begin{equation}
    \widetilde X = \mathsf{Bl}_{\overline{S}_\sigma} X \to X
\end{equation}
is a logarithmic modification, corresponding to the \emph{star subdivision} $\mathsf{ssd}_\sigma(\Sigma_X) \to \Sigma_X$ of the cone stack $\Sigma_X$. Since it will be needed later, we give an explicit construction of this subdivision below.
\begin{construction} \label{Const:ssd}
Consider the decomposition $\Sigma_X =  \mathsf{Star}_\sigma(\Sigma_X)^0 / \mathsf{Aut}(\sigma) \sqcup \mathcal{R}$ of the objects of $\Sigma_X$ into subcategories induced by the embedding \eqref{eqn:StarmodAut_subcategory}. Objects of $\mathsf{Star}_\sigma(\Sigma_X)^0 / \mathsf{Aut}(\sigma)$ are indexed by morphisms $(\sigma \to \sigma')$ in $\Sigma_X$ and we denote $\widetilde \sigma \in \mathcal{R}$ the  objects of the residual category $\mathcal{R}$.

Then the objects of the star subdivision $\mathsf{ssd}_\sigma(\Sigma_X)$ decompose as
\[
\mathsf{ssd}_\sigma(\Sigma_X) = \mathsf{ssd}(\mathsf{Star}_\sigma(\Sigma_X)^0) / \mathsf{Aut}(\sigma) \sqcup \mathcal{R}\,,
\]
where elements of $\mathsf{ssd}(\mathsf{Star}_\sigma(\Sigma_X)^0) / \mathsf{Aut}(\sigma)$ are indexed by diagrams $(\tau \xrightarrow{h} \sigma \xrightarrow{h'} \sigma')$ in $\Sigma_X$ such that $h$ is a \emph{proper} face morphism (i.e. a morphism in $\Sigma$ whose induced map on cones is the inclusion of a proper face of $\sigma$). Denoting by $b_\sigma = \sum_{\rho \in \sigma(1)} u_\rho$ the barycenter of $\sigma$, the associated cone to this object is the subcone of $\sigma'$ given by the convex hull
\[
C(\tau \xrightarrow{h} \sigma \xrightarrow{h'} \sigma') = \mathsf{cone}(\{h'(b_\sigma)\} \cup (h' \circ h)(\tau) \cup (\sigma'(1) \setminus h'(\sigma(1)))) \subseteq \sigma'
\]
of the barycenter $b_\sigma$ (or rather its image under $h'$), the image of $\tau$ in $\sigma'$ and all the rays $\rho \in \sigma'(1)$ which are not in the image of any of the rays of $\sigma$. On the other hand, the cone $C(\widetilde \sigma)$ associated to $\widetilde \sigma \in \mathcal{R}$ is just the same cone as for $\Sigma$. See Figure \ref{fig:star subdivision} for an example of a star subdivision, showing the cones $C(\tau \xrightarrow{h} \sigma \xrightarrow{h'} \sigma')$ in red.

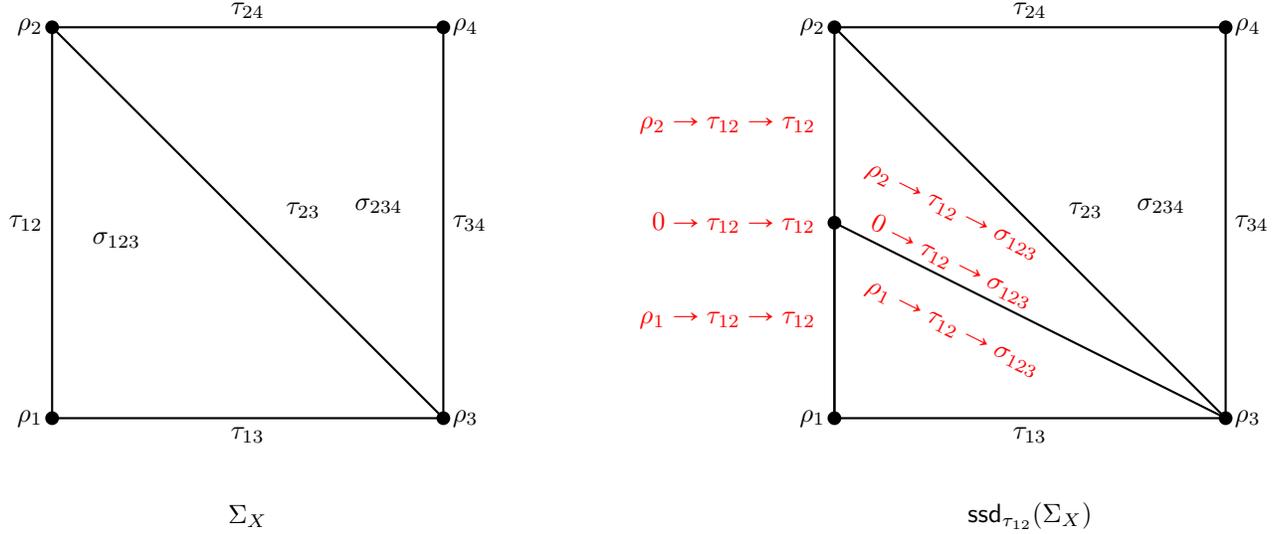
\begin{figure}[htb]
\centering
\begin{tikzpicture}[scale=1.3]

\coordinate (S1) at (0,0);
\coordinate (S2) at (0,4);
\coordinate (S3) at (4,0);
\coordinate (S4) at (4,4);

\draw [thick] (S1) -- (S2) -- (S4) -- (S3) -- cycle;
\draw [thick] (S2) -- (S3);

\node [left] at (S2) {$\rho_2$};
\node [left] at (S1) {$\rho_1$};
\node [right] at (S4) {$\rho_4$};
\node [right] at (S3) {$\rho_3$};

\node [above] at (2,4) {$\tau_{24}$};
\node [below] at (2,0) {$\tau_{13}$};
\node [left] at (0,2) {$\tau_{12}$};
\node [right] at (4,2) {$\tau_{34}$};
\node [below right] at (2.3,2.3) {$\tau_{23}$};

\node [below left] at (1,2) {$\sigma_{123}$};
\node [above right] at (3,2) {$\sigma_{234}$};

\fill (S1) circle (2pt);
\fill (S2) circle (2pt);
\fill (S3) circle (2pt);
\fill (S4) circle (2pt);

\begin{scope}[xshift=8cm]
\coordinate (R1) at (0,0);
\coordinate (R2) at (0,4);
\coordinate (R3) at (4,0);
\coordinate (R4) at (4,4);
\coordinate (M12) at (0,2); 

\draw [thick] (R1) -- (R2) -- (R4) -- (R3) -- cycle;
\draw [thick] (R2) -- (R3);
\draw [thick] (R1) -- (M12);
\draw [thick] (M12) -- (R3);

\node [left] at (R2) {$\rho_2$};
\node [left] at (R1) {$\rho_1$};
\node [right] at (R4) {$\rho_4$};
\node [right] at (R3) {$\rho_3$};

\node [above] at (2,4) {$\tau_{24}$};
\node [below] at (2,0) {$\tau_{13}$};
\node [right] at (4,2) {$\tau_{34}$};
\node [below right] at (2.3,2.3) {$\tau_{23}$};

\node [above right] at (3,2) {$\sigma_{234}$};

\node [red, left] at (-0.1,3) {$\rho_2 \to \tau_{12} \to \tau_{12}$};
\node [red, left] at (-0.1,2) {$0 \to \tau_{12} \to \tau_{12}$};
\node [red, left] at (-0.1,1) {$\rho_1 \to \tau_{12} \to \tau_{12}$};
\node [red, rotate=-26.57] at (1.2,2.1) {$\rho_2 \to \tau_{12} \to \sigma_{123}$};
\node [red, rotate=-26.57] at (1.2,1.6) {$0 \to \tau_{12} \to \sigma_{123}$};
\node [red, rotate=-26.57] at (1.2,0.9) {$\rho_1 \to \tau_{12} \to \sigma_{123}$};


\fill (R1) circle (2pt);
\fill (R2) circle (2pt);
\fill (R3) circle (2pt);
\fill (R4) circle (2pt);
\fill (M12) circle (2pt);

\node at (2,-1) {$\mathsf{ssd}_{\tau_{12}}(\Sigma_X)$};
\end{scope}
\node at (2,-1) {$\Sigma_X$};
\end{tikzpicture}
\caption{A cone stack $\Sigma_X$ and its star subdivision at an object $\tau_{12}$, with new cones on the right indicated in red. We draw here a slice through the underlying $3$-dimensional picture.}
\label{fig:star subdivision}
\end{figure}

Having defined the two types of objects $(\tau \xrightarrow{h} \sigma \xrightarrow{h'} \sigma')$ and $\widetilde \sigma$ of $\mathsf{ssd}_\sigma(\Sigma_X)$, the morphisms between them are given as follows:
\begin{align*}
\mathsf{Mor}_{\mathsf{ssd}_\sigma(\Sigma_X)}(\tau_1 \xrightarrow{h_1} \sigma \xrightarrow{h_1'} \sigma_1', \tau_2 \xrightarrow{h_2} \sigma \xrightarrow{h_2'} \sigma_2') &= \left\{
\begin{tikzcd}[ampersand replacement=\&]
    \tau_1 \arrow[r,"h_1"] \arrow[d] \& \sigma \arrow[r,"h'_1"] \arrow[d] \& \sigma_1' \arrow[d]\\
    \tau_2 \arrow[r,"h_2"] \& \sigma \arrow[r,"h_2'"] \& \sigma_2'
\end{tikzcd}
\right\}\\
\mathsf{Mor}_{\mathsf{ssd}_\sigma(\Sigma_X)}(\tau \xrightarrow{h} \sigma \xrightarrow{h'} \sigma', \widetilde \sigma) &= \emptyset\\
\mathsf{Mor}_{\mathsf{ssd}_\sigma(\Sigma_X)}(\widetilde \sigma, \tau \xrightarrow{h} \sigma \xrightarrow{h'} \sigma') &= \{(\widetilde \sigma \xrightarrow{i} \sigma') \in \mathsf{Mor}_{\Sigma_X}(\widetilde \sigma, \sigma') : \mathsf{im}(i) \subseteq C(\tau \xrightarrow{h} \sigma \xrightarrow{h'} \sigma')\}\\
\mathsf{Mor}_{\mathsf{ssd}_\sigma(\Sigma_X)}(\widetilde \sigma_1, \widetilde \sigma_2) &= \mathsf{Mor}_{\Sigma_X}(\widetilde \sigma_1, \widetilde \sigma_2)
\end{align*}
Both the composition of such morphisms and their associated face maps under the functor $C : \mathsf{ssd}_\sigma(\Sigma_X) \to  \mathbf{RPC}^f$ are straightforward to define. 
Similarly, we verify that $\mathsf{ssd}_\sigma(\Sigma_X)$ is a cone stack and it has a natural map
\begin{equation} \label{eqn:b_sigma_trop}
    b_{\sigma, \Sigma_X}^\textup{trop} : \mathsf{ssd}_\sigma(\Sigma_X) \to \Sigma_X, (\tau \xrightarrow{h} \sigma \xrightarrow{h'} \sigma') \mapsto \sigma', \widetilde \sigma \mapsto \widetilde \sigma\,.
\end{equation}
\end{construction}

\begin{proposition} \label{Pro:blow_up_description}
The morphism $b_{\sigma, \Sigma_X}: \mathcal{A}_{\mathsf{ssd}_\sigma(\Sigma_X)} \to \mathcal{A}_{\Sigma_X}$ induced by the map $b_{\sigma, \Sigma_X}^\textup{trop}$ is the blowup of the strata closure $\overline{\mathcal{S}}_{\sigma} \subseteq \mathcal{A}_{\Sigma_X}$.
\end{proposition}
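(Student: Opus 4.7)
The plan is to reduce to classical toric geometry via the standard smooth cover of $\mathcal{A}_{\Sigma_X}$ by Artin cones and invoke the well-known description of toric blowups via star subdivisions, then verify that the combinatorial construction in Construction \ref{Const:ssd} glues correctly.

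First, I would pick a cone $\sigma' \in \Sigma_X$ and consider the \'etale chart $[V_{\sigma'}/T_{\sigma'}] \to \mathcal{A}_{\Sigma_X}$ where $V_{\sigma'}$ is the affine toric variety of $\sigma'$. By Lemma \ref{Lem:Sigma_sigma_construction_toric} applied to the individual cone $\sigma'$, the preimage of $\overline{\mathcal{S}}_\sigma$ in $V_{\sigma'}$ is the union of toric orbit closures $\overline{O}_\tau$ indexed by face morphisms $\sigma \to \sigma'$ in $\Sigma_X$, with image $\tau \preceq \sigma'$, taken modulo $\mathsf{Aut}(\sigma)$. By Lemma \ref{Lem:strata_closure_smooth}, smoothness of $\overline{\mathcal{S}}_\sigma$ forces $\mathsf{Star}_\sigma(\Sigma_X)^0/\mathsf{Aut}(\sigma) \to \Sigma_X$ to be a fully faithful embedding, which guarantees that any two such images $\tau_1, \tau_2 \preceq \sigma'$ are either equal or do not share a common face contained in $\sigma$; consequently, the corresponding toric strata $\overline{O}_{\tau_1}, \overline{O}_{\tau_2}$ are disjoint inside $V_{\sigma'}$ (their intersection would force a common refinement not contained in the image of $\sigma$).

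Second, since the blowup at a smooth center is local and commutes with smooth base change, it suffices to compute the blowup of each $V_{\sigma'}$ at its intersection with $\overline{\mathcal{S}}_\sigma$. Because this intersection is a disjoint union of smooth torus-invariant subvarieties $\overline{O}_\tau$, the classical toric description (see \cite[Lemma~11.1.3, Proposition~11.1.6]{CLS11}) identifies the blowup with the toric variety of the star subdivision of $\sigma'$ centered at each face $\tau$ appearing; the new rays introduced are precisely the barycenters $h'(b_\sigma)$ for each morphism $h': \sigma \to \sigma'$ (modulo $\mathsf{Aut}(\sigma)$), and the new maximal cones are the convex hulls described in Construction \ref{Const:ssd}. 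This matches exactly the description of the objects $(\tau \xrightarrow{h} \sigma \xrightarrow{h'} \sigma')$ appearing in $\mathsf{ssd}_\sigma(\Sigma_X)$ that lie over $\sigma'$ under $b_{\sigma, \Sigma_X}^\textup{trop}$.

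Third, I would verify that these local star subdivisions glue: the morphisms in $\mathsf{ssd}_\sigma(\Sigma_X)$ defined in Construction \ref{Const:ssd} are exactly those needed to make the face inclusions compatible with the local subdivisions on each Artin cone. Under the equivalence of 2-categories between cone stacks and Artin fans (\cite[Theorem~6.11]{CCUW}), this means $\mathcal{A}_{\mathsf{ssd}_\sigma(\Sigma_X)}$ is the \'etale-local gluing of the blowups $\mathsf{Bl}_{\overline{O}_\tau \cap V_{\sigma'}} V_{\sigma'}$, hence equals $\mathsf{Bl}_{\overline{\mathcal{S}}_\sigma} \mathcal{A}_{\Sigma_X}$.

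The main obstacle will be the bookkeeping around $\mathsf{Aut}(\sigma)$: when multiple face maps $\sigma \to \sigma'$ coincide up to $\mathsf{Aut}(\sigma)$, they contribute a single component of $\overline{\mathcal{S}}_\sigma \cap V_{\sigma'}$, and one must check that Construction \ref{Const:ssd}'s quotient by $\mathsf{Aut}(\sigma)$ on the star factor (while leaving the residual category $\mathcal{R}$ untouched) reflects this correctly. The fully faithful embedding from Lemma \ref{Lem:strata_closure_smooth} is precisely what makes this bookkeeping work cleanly; the disjointness of the blowup centers in each chart is what allows the classical toric star-subdivision result to apply component by component.
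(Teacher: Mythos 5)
Your overall strategy --- restricting to the toric atlas of Artin cones $[V_{\sigma_0}/T_{\sigma_0}]$, invoking the classical correspondence between star subdivisions and blowups of smooth torus-invariant centers, and gluing via the equivalence of cone stacks and Artin fans --- is the same as the paper's. However, the step where you use smoothness of $\overline{\mathcal{S}}_\sigma$ contains a genuine error. You claim that Lemma \ref{Lem:strata_closure_smooth} forces two distinct images $\tau_1 \neq \tau_2 \preceq \sigma'$ of $\sigma$ to give \emph{disjoint} orbit closures $\overline{O}_{\tau_1}$, $\overline{O}_{\tau_2}$ inside the chart $V_{\sigma'}$, and you then propose to blow up these components separately. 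This cannot happen: in the affine toric variety $V_{\sigma'}$, every nonempty orbit closure $\overline{O}_\tau$ is the union of the orbits $O_{\tau'}$ over all faces $\tau'$ of $\sigma'$ containing $\tau$, hence contains the closed orbit $O_{\sigma'}$; so any two nonempty torus-invariant closed subvarieties of an affine chart meet. This is precisely why, in the running Example \ref{ex:a2z2}, the divisor $D$ (where $\rho$ has two images in $\sigma$) is singular: the two branches self-intersect rather than being disjoint.

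The correct consequence of the fully faithful embedding condition in Lemma \ref{Lem:strata_closure_smooth} --- and the one the paper draws --- is that the image of $\sigma$ in $\sigma'$ is \emph{unique} up to $\Aut(\sigma)$, so the preimage of $\overline{\mathcal{S}}_\sigma$ in each chart is either empty or a \emph{single} smooth orbit closure, and the pulled-back subdivision is the single star subdivision at the barycenter $h'(b_\sigma)$ (which depends only on the image of $\sigma$). With that correction, the remainder of your argument --- matching the new cones against Construction \ref{Const:ssd}, applying the local toric statement (\cite[Proposition 3.3.15]{CLS11} in the paper's citation), and checking compatibility on overlaps --- goes through and coincides with the paper's proof.
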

\begin{proof}
As in the proof of Lemma \ref{Lem:Sigma_sigma_construction}, it is sufficient to verify the claim on the toric atlas $\coprod_{\sigma_0 \in \Sigma_X} V_{\sigma_0} \to \mathcal{A}_{\Sigma_X}$. On the fan of each such $V_{\sigma_0}$, there are two cases for the pullback of the subdivision $b_{\sigma, \Sigma_X}^\textup{trop}$ under the map $\sigma_0 \to \Sigma_X$:
\begin{itemize}
    \item If there exists a map $\sigma \to \sigma_0$ in $\Sigma_X$, the condition from Lemma \ref{Lem:strata_closure_smooth} implies that the image of $\sigma$ in $\sigma_0$ is unique. Then the pullback subdivision is the star subdivision at the image of the barycenter of $\sigma$ in $\sigma_0$ (which only depends on the image of $\sigma$). 
    \item If there exists no map $\sigma \to \sigma_0$, the cone $\sigma_0$ is not subdivided when pulling back $\mathsf{ssd}_\sigma(\Sigma_X) \to \Sigma_X$ under $\sigma_0 \to \Sigma_X$.
\end{itemize}
Comparing with the classical toric correspondence between star subdivisions and blowups of smooth strata (\cite[Proposition 3.3.15]{CLS11}), we see that this precisely corresponds to the blowup of the preimage of $\overline{\mathcal{S}}_{\sigma}$ under the map $V_{\sigma_0} \to \mathcal{A}_{\Sigma_X}$. This correspondence is also compatible on the overlaps of the $V_{\sigma_0}$ and thus the original map is given by the blowup of $\overline{\mathcal{S}}_{\sigma}$ in $\mathcal{A}_{\Sigma_X}$ as claimed.
\end{proof}

\begin{example}[continues Example \ref{ex:a2z2}]
We consider the star subdivision in the two dimensional cone $\sigma$. We see we end up with the following cone stack $\tilde{\Sigma} = \mathsf{ssd}_\sigma(\Sigma_X)$:
\begin{equation}
\label{eqn:cone_stack_a2z2_blowup}
    \begin{tikzcd}
     & (0 \to \sigma \to \sigma) \arrow[loop right]{u} \arrow[rd]& \\
    \tilde{0} \arrow[r] \arrow[ru] & \tilde{\rho} \arrow[r] \arrow[r] & (\rho \to \sigma \to \sigma) 
    \end{tikzcd}
\end{equation}
On the scheme level, the blowup $\tilde{X}$ of $X$ in $D^{(2)}$ has two codimension one strata: the strict transform of $D$ and the exceptional divisor. They intersect in a single connected stratum that is abstractly isomorphic to $\G_m$. 
And indeed we can obtain $\tilde X$ as the fiber product
\[
\begin{tikzcd}
\tilde X  \arrow[r] \arrow[d]& X \arrow[d] \\
\mathcal{A}_{\widetilde \Sigma} \arrow[r] & \mathcal{A}_{\Sigma_X}
\end{tikzcd}
\]
making $\tilde X \to \mathcal{A}_{\widetilde \Sigma}$ a choice of \emph{an} Artin fan (in the sense of Definition \ref{Def:an_Artin_fan}). 

However, we warn the reader that $\widetilde \Sigma$ is \emph{not} the canonical cone stack $\Sigma_{\tilde{X}}$ of the blowup $\tilde X$, as $\Sigma_{\tilde{X}}$ does not have a non-trivial automorphism of the exceptional divisor while $\widetilde \Sigma$ does. In fact, $\widetilde \Sigma$ is the \emph{relative} cone stack (or the cone stack corresponding to the relative Artin fan), and there is no map $\Sigma_{\tilde{X}} \to \widetilde \Sigma$. In fact, $\tilde{X} \to X$ is the standard example where functoriality of the canonical Artin fan fails, see \cite[Section~5.4]{ACMUW}. In contrast, the star subdivision is functorial, by construction.
\end{example}

\subsection{Tautological systems} \label{Sect:TautSystems}
In the following, given a normal crossings pair $(X,D)$ with choice of cone stack $\Sigma_X$ and Artin fan $\mathcal{A}_X$, we define the notion of tautological classes on $X$. These will not be fully intrinsic to $(X,D)$, but depend on an additional choice of systems of Chow classes on the monodromy torsors $P_\sigma$ from Definition \ref{Def:monodromy_torsor}. For this recall that the strata $S_\sigma$ of $X$ correspond to the cones $\sigma \in \Sigma_X$, and for a given cone we defined a map $P_\sigma \to \widetilde S_\sigma \to X$ parameterizing the normalization $\widetilde S_\sigma$ of the strata closure of $S_\sigma$. We denoted by $(\Sigma_\sigma, \Delta_\sigma)$ the cone stack with boundary associated to $P_\sigma$ (Notation \ref{Not:Sigma_sigma}).
\begin{definition}
\label{def:tautsystems}
A \emph{system of tautological rings} $\R_X = (\R^\star(P_\sigma))_{\sigma \in \Sigma_X}$ on $(X,D)$ is data of a tautological subring
\begin{equation} \label{eqn:strata_taut_ring_inclusion}
    \Psi(\mathsf{sPP}^\star(\Sigma_\sigma, \Delta_\sigma)) = \Psi(\sPP_\star(P_\sigma)) \subseteq  \R^\star(P_\sigma) \subseteq \mathsf{CH}^\star(P_\sigma)
\end{equation}
which contains all classes induced from strict piecewise polynomials on $\Sigma_\sigma$ vanishing on the boundary. 


Furthermore, 
we assume that pushforward and pullback under $\iota_{\sigma \to \sigma'}$ give maps
\begin{equation} \label{eqn:gluing_map_preserving}
    \iota_{\sigma \to \sigma' \star} : \R^\star(P_{\sigma'}) \to \R^\star(P_\sigma) \text{ and }\iota_{\sigma \to \sigma'}^\star : \R^\star(P_{\sigma}) \to \R^\star(P_{\sigma'})
\end{equation}
respecting the tautological subrings on the monodromy torsors $P_\sigma$ and $P_{\sigma'}$.
\end{definition}
Note that pushing the tautological ring on $P_\sigma$ forward under the principal bundle map $P_\sigma \to \widetilde S_\sigma$ gives the tautological ring
\begin{equation}
    \R^\star(\widetilde S_\sigma) = p_\star \R^\star(P_\sigma) \cong \R^\star(P_\sigma)^{\Aut(\sigma)}
\end{equation}
where the equality with the $\Aut(\sigma)$-invariant part of $\R^\star(P_\sigma)$ follows from \cite[Lemma 2.20]{BaeSchmitt2}.
\begin{example}  \label{eqn:maximal_tautological_system}
The \emph{Chow system} $\mathsf{CH}_X$ of tautological rings on $(X,D)$ is given by setting $\R^\star(P_\sigma) = \mathsf{CH}^\star(P_\sigma)$ to be the full Chow ring. This trivially satisfies the compatibility conditions \eqref{eqn:strata_taut_ring_inclusion} and \eqref{eqn:gluing_map_preserving}.  
\end{example}

\begin{example}
For $X = \Mbar_{g,n}$ with its usual boundary divisor we take $\Sigma_X = \Sigma_{g,n}$. Its cones $\sigma$ are associated to stable graphs $\Gamma$ and for $\sigma = \sigma_\Gamma$ we have an associated principal bundle
\[
P_\Gamma = \Mbar_\Gamma \to \widetilde S_\Gamma = \Mbar_\Gamma / \mathsf{Aut}(\Gamma)
\]
for the monodromy group $\Aut(\sigma) = \mathsf{Aut}(\Gamma)$ (see \cite[Section 6.2.5]{HMPPS}). The standard system of tautological rings $\R_{\Mbar_{g,n}}$ is defined by setting  $\R^\star(P_\Gamma) = \R^\star(\Mbar_\Gamma)$ to be the image of the tensor product of tautological rings of the factors $\Mbar_{g(v), n(v)}$ as usual. The relevant ring of strict piecewise polynomials is given by
\[
\mathsf{sPP}^\star(\Mbar_\Gamma) = \left(\prod_{e \in E(\Gamma)} 1 \otimes \ell_e \right) \cdot  \bigotimes_{v \in V(\Gamma)} \mathsf{sPP}^\star(\Sigma_{g(v),n(v)}) \otimes_\mathbb{Q}  \mathbb{Q}[\ell_e : e \in E(\Gamma)] \,.
\]
Under the map $\Psi$, the strict piecewise polynomials on the fans $\Sigma_{g(v),n(v)}$ map to tautological classes in $\R^\star(\Mbar_{g(v),n(v)})$, whereas for an edge $e = (h,h') \in E(\Gamma)$, the length function $\ell_e$ maps to $-\psi_{h} - \psi_{h'}$. In particular, this implies the desired inclusion \eqref{eqn:strata_taut_ring_inclusion} of classes from strict piecewise polynomials in the tautological rings of $\Mbar_\Gamma$.

Finally, the morphisms $\sigma_{\Gamma'} \to \sigma_\Gamma$ in $\Sigma_{g,n}$ are in correspondence with stable graph morphisms $\varphi: \Gamma \to \Gamma'$ and the associated map of principal bundles is a partial gluing morphism 
\begin{equation}
    \iota_\varphi : \Mbar_{\Gamma} \to \Mbar_{\Gamma'}\,,
\end{equation}
which glue together all pairs of nodes associated to edges in $E(\Gamma) \setminus E(\Gamma')$ (see \cite[Notation 2.8]{SvZ}). By \cite[Appendix A]{GP03}, the pushforward and pullback under $\iota_\varphi$ indeed preserves the tautological rings, so that condition \eqref{eqn:gluing_map_preserving} holds.
\end{example}
Given a smooth log blowup $\pi: (\widehat X, \widehat D) \to (X,D)$ associated to a subdivision $\varphi: \widehat \Sigma \to \Sigma_X$, we claim that there is a natural system of tautological rings $(\R^\star(P_{\widehat \sigma}))_{\widehat \sigma \in \widehat \Sigma}$ on $(\widehat X, \widehat D)$. To describe it, let $\widehat \sigma \in \widehat \Sigma$ be a cone of the subdivision, and $\sigma = \varphi(\widehat \sigma) \in \Sigma_X$ its image in $\Sigma_X$. Then given any map $\sigma \to \sigma'$ in $\Sigma_X$ we can take the fiber product\footnote{Note that since the map $\iota_{\sigma \to \sigma'}$ below is strict, the fiber product in the category of fs log stacks agrees with the fiber product of stacks, by \cite[Remark~III.2.1.3]{Ogu06}. 
} 
\begin{equation} \label{eqn:def_P_sigmahat_sigma}
\begin{tikzcd}
P_{\widehat \sigma \to \sigma'} \arrow[r, "\iota_{\widehat \sigma \to \sigma'}"] \arrow[d, "\pi_{\sigma'}", swap] & P_{\widehat \sigma} \arrow[d, "s_{\widehat \sigma \to \sigma}"]\\
P_{\sigma'} \arrow[r, "\iota_{\sigma \to \sigma'}"] & P_{\sigma}
\end{tikzcd}
\end{equation}
as in Proposition \ref{Pro:P_sigmahat_sigmaprime_cover}.
The cone stack $\Sigma_{\widehat \sigma \to \sigma'}$ associated to $P_{\widehat \sigma \to \sigma'}$ is given by the analogous fiber product $\widehat \Sigma_{\widehat \sigma} \times_{\Sigma_\sigma} \Sigma_{\sigma'}$ in the category of cone stacks and has a natural boundary $\Delta_{\widehat \sigma \to \sigma}^0$.
Denote by
\begin{equation}
\sPP_\star(P_{\widehat \sigma \to \sigma'}) = \sPP^\star(\Sigma_{\widehat \sigma \to \sigma}, \Delta_{\widehat \sigma \to \sigma}^0)
\end{equation}
the strict piecewise polynomials on $\Sigma_{\widehat \sigma \to \sigma}$ vanishing on the boundary, as usual.

\begin{definition}
\label{def:logdecstratum}
Let $\pi: (\widehat X, \widehat D) \to (X,D)$ be  a smooth log blowup associated to a subdivision $\varphi: \widehat \Sigma \to \Sigma_X$. Let $\widehat \sigma \in \widehat \Sigma$ be a cone mapping to $\sigma = \varphi(\widehat \sigma) \in \Sigma_X$. 
A \emph{decorated log-strata class} on $P_{\widehat \sigma}$ is described by a triple $[\sigma', f, \deco]$ of a cone $\sigma' \in \Sigma_X$ admitting a map\footnote{In fact the class depends on the choice of such a map, but for simplicity we suppress this in the notation. \label{footnote:morphism_notation_suppression}} $\sigma \to \sigma'$ in $\Sigma_X$, a piecewise polynomial $f \in \sPP_\star(P_{\widehat \sigma \to \sigma'})$ and a decoration $\deco \in \R^\star(P_{\sigma'})$. Its associated class in $\mathsf{CH}^\star(X)$ is given by
\begin{equation} \label{eqn:decoratedlogstratum_X}
[\sigma', f, \deco] = \iota_{\widehat \sigma \to \sigma' \star} \left(\pi_{\sigma'}^\star \deco \cdot \Psi(f) \right) \in \mathsf{CH}^\star(P_{\widehat \sigma})\,.
\end{equation}
We denote by $(\pi^\star\R_X)(P_{\widehat \sigma})$ the $\mathbb{Q}$-vector subspace of $\mathsf{CH}^\star(P_{\widehat \sigma})$ spanned by all decorated log-strata classes.
\end{definition}

Our next goal is to prove the following:
\begin{theorem} \label{Thm:taut_system_induction}
The collection $\R_{\widehat X} = ((\pi^\star\R_X)(P_{\widehat \sigma}))_{\widehat \sigma \in \widehat \Sigma}$ forms a system of tautological rings on $(\widehat X, \widehat D)$. 
\end{theorem}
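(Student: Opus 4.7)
I plan to verify the three conditions of Definition~\ref{def:tautsystems}: that each $(\pi^\star \R_X)(P_{\widehat\sigma})$ is a unital subring of $\CH^\star(P_{\widehat\sigma})$, that it contains the image of $\sPP_\star(P_{\widehat\sigma})$ under $\Psi$, and that it is preserved by $\iota_{\widehat\sigma\to\widehat\sigma'}^\star$ and $\iota_{\widehat\sigma\to\widehat\sigma'\,\star}$.

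The containment of piecewise polynomials comes for free: taking $\sigma' = \sigma = \varphi(\widehat\sigma)$ with the identity morphism, and decoration $\deco=1$, the fiber product $P_{\widehat\sigma\to\sigma}$ reduces to $P_{\widehat\sigma}$ itself, $\iota_{\widehat\sigma\to\sigma}=\mathrm{id}$, and \eqref{eqn:decoratedlogstratum_X} collapses to $\Psi(f)$ for any $f\in\sPP_\star(P_{\widehat\sigma})$. Taking $f = F_{\widehat\sigma}$ from Proposition~\ref{Prop:B_Faces} in particular exhibits $1 \in (\pi^\star\R_X)(P_{\widehat\sigma})$.

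For closure under multiplication, I would develop a product formula for decorated log-strata classes. Given $[\sigma_1', f_1, \deco_1]$ and $[\sigma_2', f_2, \deco_2]$, I reduce their intersection in $\CH^\star(P_{\widehat\sigma})$ to a pushforward-pullback computation along the diagram
\[
\begin{tikzcd}
P_{\sigma_1'}\times_{P_\sigma} P_{\sigma_2'}\arrow[r]\arrow[d] & P_{\sigma_2'} \arrow[d,"\iota_{\sigma\to\sigma_2'}"]\\
P_{\sigma_1'}\arrow[r,"\iota_{\sigma\to\sigma_1'}"] & P_\sigma
\end{tikzcd}
\]
pulled back to $P_{\widehat\sigma}$. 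Using Proposition~\ref{Pro:P_sigmahat_sigmaprime_cover} to cover the fiber product by a disjoint union of monodromy torsors $P_{\sigma''}$ over commuting diagrams $\sigma\to\sigma_1',\sigma_2'\to\sigma''$, and then using Proposition~\ref{Prop:B_Faces} to compute the excess normal-bundle contribution via piecewise linear functions, I would express the product as a sum of decorated log-strata classes $[\sigma'', f', \deco']$ where $\deco' = \iota_{\sigma_1'\to\sigma''}^\star \deco_1 \cdot \iota_{\sigma_2'\to\sigma''}^\star \deco_2 \cdot \mathrm{excess}$ lies in $\R^\star(P_{\sigma''})$ by the tautological-system axiom \eqref{eqn:gluing_map_preserving} for $\R_X$, and $f'$ is a suitable combination of fiber-product piecewise polynomials.

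For condition~(iii), the pushforward direction is relatively straightforward: given $[\sigma'',f,\deco]$ on $P_{\widehat\sigma'}$, the composition $\widehat\sigma\to\widehat\sigma'$ in $\widehat\Sigma$ combined with $\sigma'\to\sigma''$ in $\Sigma_X$ factors $\iota_{\widehat\sigma\to\widehat\sigma'\,\star}[\sigma'',f,\deco]$ through the diagram obtained by comparing the base change squares for $P_{\widehat\sigma\to\sigma''}$ and $P_{\widehat\sigma'\to\sigma''}$; pushing forward $f$ via Proposition~\ref{Prop:sPP_pushforward} (applied to the relative-log-dimension-0 map between fiber-product torsors) produces a piecewise polynomial in $\sPP_\star(P_{\widehat\sigma\to\sigma''})$ and the decoration is unchanged. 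The pullback case is where I expect the main obstacle: one has to resolve $P_{\widehat\sigma'}\times_{P_{\widehat\sigma}} P_{\widehat\sigma\to\sigma'}$ via Proposition~\ref{Pro:P_sigmahat_sigmaprime_cover} applied now to the \emph{subdivided} cone stacks, and then track how $\Psi(f)$ and $\pi_{\sigma'}^\star\deco$ behave under the resulting excess-intersection correction. The combinatorial bookkeeping here---working simultaneously with the subdivision $\widehat\Sigma\to\Sigma_X$ and with fiber products of star cone stacks with boundary---will be the delicate part; I expect it mirrors the strata-intersection formula of \cite{GP03} but with an additional layer of indexing coming from $\widehat\Sigma$. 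Once this formula is in place, the hypothesis \eqref{eqn:gluing_map_preserving} for $\R_X$ provides the decorations in the output, and the closure properties of $\sPP_\star$ under pullback and the pushforwards of Section~\ref{Sect:push_forward_PP} provide the piecewise-polynomial components, completing the verification.
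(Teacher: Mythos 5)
Your outline matches the paper's: both proofs proceed by checking the three axioms of Definition~\ref{def:tautsystems}, and your treatment of the piecewise-polynomial containment (take $\sigma'=\sigma$, $\deco=1$, so that $P_{\widehat\sigma\to\sigma}=P_{\widehat\sigma}$ and $[\sigma,f,1]=\Psi(f)$) is exactly what the paper does. The difference is in how the product and the $\iota_{\widehat\sigma\to\widehat\sigma'}^\star$ computations are organized, and this is where your plan leaves a genuine gap rather than just a deferral.

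The paper's key device, which your proposal does not contain, is Lemma~\ref{Lem:alternative_generators}: the span of the decorated log-strata classes $[\sigma',f,\deco]$ (supported on the fiber products $P_{\widehat\sigma\to\sigma'}$, which are in general singular and reducible) coincides with the span of the \emph{strictly} decorated classes $\{\widehat\sigma',g,\beta\}$, which are pushed forward from honest monodromy torsors $P_{\widehat\sigma'}$ of the \emph{subdivided} cone stack $\widehat\Sigma$. Those torsors are smooth, their pairwise fiber products over $P_{\widehat\sigma}$ decompose as disjoint unions of further torsors $P_{\widehat\sigma''}$ indexed by $\mathfrak{G}_{\widehat\sigma_1\leftarrow\widehat\sigma\to\widehat\sigma_2}$ (Lemma~\ref{Lem:gluing_fiber_product_stacks}), and their excess normal bundles split into line bundles whose Chern classes are piecewise linear functions (Proposition~\ref{Prop:B_Faces}); this is what makes the product formula (Proposition~\ref{Prop:product_declogstrata_general}) and the pullback formula a clean excess-intersection computation. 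Your plan instead forms fiber products over $\Sigma_X$ (e.g.\ $P_{\sigma_1'}\times_{P_\sigma}P_{\sigma_2'}$ pulled back to $P_{\widehat\sigma}$, and $P_{\widehat\sigma'}\times_{P_{\widehat\sigma}}P_{\widehat\sigma\to\sigma'}$), which lands you on the non-smooth spaces $P_{\widehat\sigma\to\sigma'}$ where Proposition~\ref{Prop:B_Faces} does not directly apply and the excess classes are not visibly piecewise polynomial. The ``delicate bookkeeping'' you flag at the end is precisely the step that does not go through without first invoking the surjectivity of Proposition~\ref{Pro:proper_pushforward_surjective} to rewrite every $f\in\sPP_\star(P_{\widehat\sigma\to\sigma'})$ as a tropical pushforward from the smooth cover $\bigsqcup P_{\widehat\sigma'}$ of Proposition~\ref{Pro:P_sigmahat_sigmaprime_cover} --- i.e.\ without proving Lemma~\ref{Lem:alternative_generators}. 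If you add that reduction as a preliminary step, the rest of your plan executes as the paper's proof does.
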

\begin{definition}
Given a smooth log blowup $\pi: (\widehat X, \widehat D) \to (X,D)$, and a system $\R_X$ of tautological rings on $X$, we denote by $\R_{\widehat X} = \pi^\star \R_X$ the induced tautological system on $\widehat X$ from Theorem \ref{Thm:taut_system_induction}.
\end{definition}

\begin{remark}
\label{rem:pushforwardtautsystems}
As the tautological system $\R_X$ contains classes of homological piecewise polynomials per definition, there are also pushforward maps $\pi_\star: \R_{\widehat X}(P_{\widehat \sigma}) \to \R_{X}(P_\sigma)$ for a cone $\widehat \sigma \in \widehat \Sigma$ mapping to $\sigma \in \Sigma_X$.
\end{remark}

This shows that once we define tautological classes on $X$ and its strata, we obtain a notion of tautological classes on any smooth log blowup of $X$, allowing us to define log tautological rings in Section \ref{Sect:log_tautological_rings_in_general}. 

To start approaching the proof of Theorem \ref{Thm:taut_system_induction}, the first step is to give a slightly different generating set of $(\pi^\star\R_X)(P_{\widehat \sigma})$, for which some of the calculations are easier to express explicitly. For this choose $\widehat \sigma \to \widehat \sigma'$ a morphism in $\widehat \Sigma$ mapping to $\sigma \to \sigma'$ under $\varphi: \widehat \Sigma \to \Sigma_X$. From equation \eqref{eqn:t_functoriality} we see that we have a commutative diagram
\[
\begin{tikzcd}
P_{\widehat \sigma'} \arrow[r, "\iota_{\widehat \sigma \to \widehat \sigma'}"] \arrow[d, "s_{\widehat \sigma' \to \sigma'}", swap] & P_{\widehat \sigma}\arrow[d, "s_{\widehat \sigma \to \sigma}"]\\
P_{\sigma'} \arrow[r, "\iota_{\sigma \to \sigma'}"] & P_{\sigma}
\end{tikzcd}
\]
Given $\beta \in \R_X(P_{\sigma'})$ and $g \in \sPP_\star(P_{\widehat \sigma'})$ we can define the class
\begin{equation} \label{eqn:strict_decoratedlogstratum_X}
\{\widehat \sigma', g, \beta\} = (\iota_{\widehat \sigma \to \widehat \sigma'})_\star( (s_{\widehat \sigma' \to \sigma'})^\star \beta \cdot \Psi(g) ) \in \CH^\star(P_{\widehat \sigma})\,,
\end{equation}
which we call a \emph{strictly decorated log stratum class}.
\begin{lemma}  \label{Lem:alternative_generators}
Inside $\CH^\star(P_{\widehat \sigma})$, the $\mathbb{Q}$-linear spans of all classes $[\sigma', f, \deco]$ from \eqref{eqn:decoratedlogstratum_X} and of all classes $\{\widehat \sigma', g, \beta\}$ from \eqref{eqn:strict_decoratedlogstratum_X} coincide.
\end{lemma}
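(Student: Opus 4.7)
The plan is to establish the two inclusions separately, using the fiber product $P_{\widehat \sigma \to \sigma'}$ from \eqref{eqn:P_widehat_sigma_sigmaprime} as a bridge between the two generating systems. In both directions the key tool is the compatibility of the geometric pushforward with the tropical pushforward of homological piecewise polynomials (Proposition \ref{Prop:sPP_pushforward}), combined with the projection formula.

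For the inclusion of strictly decorated classes into the span of decorated classes, consider $\{\widehat\sigma', g, \beta\}$. The element $(\widehat\sigma \to \widehat\sigma', \sigma' \xrightarrow{\mathrm{id}} \sigma') \in \mathfrak{H}_{\widehat\sigma \to \sigma'}$ provides a canonical map $J_0 : P_{\widehat\sigma'} \to P_{\widehat\sigma \to \sigma'}$ satisfying both $\iota_{\widehat\sigma \to \sigma'} \circ J_0 = \iota_{\widehat\sigma \to \widehat\sigma'}$ and $\pi_{\sigma'} \circ J_0 = s_{\widehat\sigma' \to \sigma'}$ (the latter coming from the functoriality \eqref{eqn:t_functoriality}). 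Applying the projection formula along $J_0$ and commuting $(J_0)_\star$ with $\Psi$ via Proposition \ref{Prop:sPP_pushforward} yields
\[
\{\widehat\sigma', g, \beta\} = (\iota_{\widehat\sigma \to \sigma'})_\star\bigl(\pi_{\sigma'}^\star \beta \cdot \Psi((J_0)^\trop_\star g)\bigr) = [\sigma', (J_0)^\trop_\star g, \beta]\,.
\]

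For the reverse inclusion, apply Proposition \ref{Pro:P_sigmahat_sigmaprime_cover} to obtain a proper representable surjective cover $J : \bigsqcup_i P_{\widehat\sigma'_i} \to P_{\widehat\sigma \to \sigma'}$ indexed by the minimal elements $(\widehat\sigma \to \widehat\sigma'_i, \sigma' \to \sigma''_i) \in \mathfrak{H}_{\widehat\sigma \to \sigma'}$. By Proposition \ref{Pro:proper_pushforward_surjective} the induced pushforward on homological piecewise polynomials is surjective, so any $f \in \sPP_\star(P_{\widehat\sigma \to \sigma'})$ can be written as $f = \sum_i (J_0^{(i)})^\trop_\star h_i$ with $h_i \in \sPP_\star(P_{\widehat\sigma'_i})$. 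For each summand, the factorization $\iota_{\sigma \to \sigma''_i} = \iota_{\sigma \to \sigma'} \circ \iota_{\sigma' \to \sigma''_i}$ combined with functoriality of the star construction gives $\pi_{\sigma'} \circ J_0^{(i)} = \iota_{\sigma' \to \sigma''_i} \circ s_{\widehat\sigma'_i \to \sigma''_i}$, so the projection formula along $J_0^{(i)}$ produces
\[
[\sigma', (J_0^{(i)})^\trop_\star h_i, \gamma] = \{\widehat\sigma'_i, h_i, \iota_{\sigma' \to \sigma''_i}^\star \gamma\}\,,
\]
and the pulled-back decoration $\iota_{\sigma' \to \sigma''_i}^\star \gamma$ lies in $\R^\star(P_{\sigma''_i})$ by the pullback compatibility \eqref{eqn:gluing_map_preserving}.

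The one substantive technical point is to verify that the cover $J$ satisfies all the hypotheses needed to invoke Proposition \ref{Pro:proper_pushforward_surjective}, specifically saturation and relative log dimension $0$ in the sense of Definition \ref{Def:relative_log_dimension_0} (properness, surjectivity, and relative DM-type being explicit in Proposition \ref{Pro:P_sigmahat_sigmaprime_cover}). This reduces to checking that each component $P_{\widehat\sigma'_i}$ maps to an irreducible component of $P_{\widehat\sigma \to \sigma'}$ whose underlying cone has matching dimension, which follows from the combinatorial description of the components of $\widehat\Sigma_{\widehat\sigma} \times_{\Sigma_\sigma} \Sigma_{\sigma'}$ via minimal elements of $\mathfrak{H}_{\widehat\sigma \to \sigma'}$.
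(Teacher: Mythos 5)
Your proof is correct and follows essentially the same route as the paper's: both directions pass through the fiber product $P_{\widehat\sigma\to\sigma'}$, using the section $P_{\widehat\sigma'}\to P_{\widehat\sigma\to\sigma'}$ together with the tropical pushforward to convert $\{\widehat\sigma',g,\beta\}$ into $[\sigma',(J_0)^\trop_\star g,\beta]$, and the surjectivity of $(J^\trop)_\star$ from Propositions \ref{Pro:P_sigmahat_sigmaprime_cover} and \ref{Pro:proper_pushforward_surjective} to decompose $f$ and rewrite $[\sigma',f,\deco]$ as a sum of strictly decorated classes with pulled-back decorations. The only cosmetic difference is that the paper obtains the section map $j$ directly from the universal property of the fiber product rather than exhibiting it as an element of $\mathfrak{H}_{\widehat\sigma\to\sigma'}$ (membership in the set of minimal-cone representatives is not needed, only existence of the map), which sidesteps the minimality check; your added verification of the hypotheses of Proposition \ref{Pro:proper_pushforward_surjective} is sound and implicit in the paper.
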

\begin{proof}
Consider a decorated log-stratum class $[\sigma', f, \deco]$ recall from Proposition \ref{Pro:P_sigmahat_sigmaprime_cover} that we have the diagram 
\[
\begin{tikzcd}
\bigsqcup P_{\widehat \sigma'} \arrow[r, "J"] \arrow[rr, bend left, "\iota_{\widehat \sigma \to \widehat \sigma'}"] \arrow[d, "\sqcup s_{\widehat \sigma' \to \sigma''}", swap] & P_{\widehat \sigma \to \sigma'} \arrow[d, "\pi_{\sigma'}"] \arrow[r, "\iota_{\widehat \sigma \to \sigma'}"] & P_{\widehat \sigma} \arrow[d, "s_{\widehat \sigma \to \sigma}"]\\
\bigsqcup P_{\sigma''} \arrow[r, "\sqcup \iota_{\sigma' \to \sigma''}"] & P_{\sigma'} \arrow[r, "\iota_{\sigma \to \sigma'}"] & P_{\sigma}
\end{tikzcd}
\]
with the disjoint union indexed by $(\widehat \sigma \to \widehat \sigma', \sigma' \to \sigma'') \in \mathfrak{H}_{\widehat \sigma \to \sigma'}$ and the map $J$ being proper, representable and surjective.
By Proposition \ref{Pro:proper_pushforward_surjective}, we can then find $g_{\widehat \sigma'} \in \sPP_\star(P_{\widehat \sigma'})$ for $(\widehat \sigma \to \widehat \sigma', \sigma' \to \sigma'') \in \mathfrak{H}_{\widehat \sigma \to \sigma'}$ such that
\[
(J^\textup{trop})_\star(\sum g_{\widehat \sigma'}) = f \in \sPP_\star(P_{\widehat \sigma \to \sigma'})\,.
\]
Then we can conclude
\[
[\sigma', f, \deco] = \sum_{(\widehat \sigma \to \widehat \sigma', \sigma' \to \sigma'') \in \mathfrak{H}_{\widehat \sigma \to \sigma'}}  \{\widehat \sigma', g_{\widehat \sigma'}, \iota_{\sigma' \to \sigma''}^\star\deco \}\,,
\]
where we use that the tautological system on $X$ is closed under pullbacks by the maps $\iota_{\sigma' \to \sigma''}$.

On the other hand, fix a strictly decorated log-stratum class $\{\widehat \sigma', g, \beta\}$ with $\varphi(\widehat \sigma') = \sigma' \in \Sigma_X$. Then we have a solid diagram of morphisms
\[
\begin{tikzcd}
P_{\widehat \sigma'} \arrow[r, dashed, "j"] \arrow[rr, bend left, "\iota_{\widehat \sigma \to \widehat \sigma'}"] \arrow[dr, "s_{\widehat \sigma' \to \sigma'}", swap] & P_{\widehat \sigma \to \sigma'} \arrow[d, "\pi_{\sigma'}"] \arrow[r, "\iota_{\widehat \sigma \to \sigma'}"] & P_{\widehat \sigma} \arrow[d, "s_{\widehat \sigma \to \sigma}"]\\
 & P_{\sigma'} \arrow[r, "\iota_{\sigma \to \sigma'}"] & P_{\sigma}
\end{tikzcd}
\]
which is commutative by (the geometric realization of) Lemma \ref{Lem:iota_sigmahat_section}. Then the dashed arrow $j$ exists by the universal property of the fiber product $P_{\widehat \sigma \to \sigma'}$ and we have
\[
\{\widehat \sigma', g, \beta\} = [\sigma', j^\textup{trop}_\star g, \beta]\,.
\]
Thus we have expressed all classes $[\sigma', f, \deco]$ as linear combinations of cycles $\{\widehat \sigma', g, \beta\}$, and vice versa, and hence their linear spans coincide.
\end{proof}

An important first step in the proof of Theorem \ref{Thm:taut_system_induction} is to show that each group $(\pi^\star R_X)(P_{\widehat \sigma})$ forms a $\mathbb{Q}$-algebra, i.e. is closed under intersection products. In fact, it is possible to give an explicit formula for the product of two decorated log-strata, analogous to the formula for products of decorated boundary strata in $\Mbar_{g,n}$ presented in \cite[Appendix A]{GP03}. We begin by defining the relevant notation to state this formula.
\begin{definition}
Given two maps $\sigma_1 \leftarrow \sigma \rightarrow \sigma_2$ in $\Sigma_X$, a \emph{generic $(\sigma_1, \sigma_2)$-structure over $\sigma$} is given by a triple $(\sigma', \varphi_1:  \sigma_1 \to \sigma', \varphi_2: \sigma_2 \to \sigma')$ of a cone $\sigma' \in \Sigma_X$ and morphisms $\varphi_1, \varphi_2$ in $\Sigma_X$ such that the diagram
\begin{equation} \label{eqn:comm_diag_sigmas}
\begin{tikzcd}
\sigma \arrow[r] \arrow[d] & \sigma_1 \arrow[d, "\varphi_1"]\\
\sigma_2 \arrow[r, "\varphi_2"] & \sigma'
\end{tikzcd}
\end{equation}
commutes, and such that each ray of $\sigma'$ is in the image of either $\varphi_1$ or $\varphi_2$.

A second triple $(\sigma'', \varphi_1', \varphi_2')$ is called isomorphic if there is an isomorphism $\sigma'' \to \sigma'$ in $\Sigma_X$ making the obvious diagrams commute. Denote by $\mathfrak{G}_{\sigma_1 \leftarrow \sigma \rightarrow \sigma_2}$ the set of isomorphism classes of generic $(\sigma_1, \sigma_2)$-structures.
\end{definition}
When $\sigma = 0$ is the trivial cone, we will often just write
\[
\mathfrak{G}_{\sigma_1, \sigma_2} = \mathfrak{G}_{\sigma_1 \leftarrow \sigma \rightarrow \sigma_2}\,.
\]

\begin{lemma} \label{Lem:gluing_fiber_product_stacks}
Given two cone stack morphisms $\iota_{\sigma \to \sigma_i}^\textup{trop}: \Sigma_{\sigma_i} \to \Sigma_\sigma$ associated to maps $\sigma \to \sigma_i$ in $\Sigma_X$ (for $i=1,2$) consider the map of cone stacks with boundary
\begin{equation} \label{eqn:cone_stack_normalization_sortof}
\bigsqcup_{(\sigma', \varphi_1, \varphi_2) \in \mathfrak{G}_{\sigma_1 \leftarrow \sigma \rightarrow \sigma_2}} \Sigma_{\sigma'} \to \Sigma_{\sigma_1} \times_{\Sigma_\sigma} \Sigma_{\sigma_2}
\end{equation}
induced by the commutative diagram
\begin{equation} \label{eqn:gluing_comm_diag}
\begin{tikzcd}
\bigsqcup \Sigma_{\sigma'} \arrow[r,"\iota_{\varphi_2}^\textup{trop}"] \arrow[d,"\iota_{\varphi_1}^\textup{trop}", swap] & \Sigma_{\sigma_2} \arrow[d, "\iota_{\sigma \to \sigma_2}^\textup{trop}"] \\
\Sigma_{\sigma_1} \arrow[r, "\iota_{\sigma \to \sigma_1}^\textup{trop}"] & \Sigma_\sigma
\end{tikzcd}\,.
\end{equation}
Then the map \eqref{eqn:cone_stack_normalization_sortof} induces an isomorphism
\begin{equation}  \label{eqn:cone_stack_fiber_diagram_isom}
\bigsqcup_{(\sigma', \varphi_1, \varphi_2) \in \mathfrak{G}_{\sigma_1 \leftarrow \sigma \rightarrow \sigma_2}} \Sigma_{\sigma'}^0 \xrightarrow{\sim} (\Sigma_{\sigma_1} \times_{\Sigma_\sigma} \Sigma_{\sigma_1})^0
\end{equation}
on the interiors of its domain and target.
\end{lemma}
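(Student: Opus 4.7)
The plan is to construct an explicit quasi-inverse to \eqref{eqn:cone_stack_normalization_sortof} on interiors. First, I would unpack both sides. An interior object of $\Sigma_{\sigma'}$ for a generic structure $(\sigma', \varphi_1, \varphi_2)$ is, up to isomorphism, just a morphism $\sigma' \to \tau$ in $\Sigma_X$; the map \eqref{eqn:cone_stack_normalization_sortof} sends it to the pair of composed morphisms $\sigma_i \xrightarrow{\varphi_i} \sigma' \to \tau$ (which are interior objects of the $\Sigma_{\sigma_i}$), together with the canonical isomorphism between their images in $\Sigma_\sigma^0$ coming from the commutativity of \eqref{eqn:comm_diag_sigmas}. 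On the other side, an interior object of $\Sigma_{\sigma_1} \times_{\Sigma_\sigma} \Sigma_{\sigma_2}$ amounts, up to isomorphism, to a pair of morphisms $\sigma_i \to \tau$ ($i=1,2$) into a common $\tau \in \Sigma_X$ such that the compositions $\sigma \to \sigma_i \to \tau$ coincide; here one uses Proposition \ref{Pro:pi_sigma} (and the description from Lemma \ref{Lem:iota_sigmahat_section}) to see that the image in $\Sigma_\sigma$ of an interior object of $\Sigma_{\sigma_i}$ is again interior.

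The inverse is then built as follows. Given such a pair $(\sigma_1 \to \tau, \sigma_2 \to \tau)$ with the compatibility above, let $\sigma'$ be the minimal face of $\tau$ (viewed inside $\Sigma_X$) through which both $\sigma_i \to \tau$ factor. Existence and uniqueness up to unique isomorphism of such a minimal factorisation are exactly the cone stack axioms of \cite[Definition 2.15]{CCUW}, applied in the same way they are used in Definition \ref{def:starconestack} and in Lemma \ref{Lem:iota_sigmahat_section}. The factored maps $\varphi_i : \sigma_i \to \sigma'$ form a generic $(\sigma_1, \sigma_2)$-structure over $\sigma$, since the minimality of $\sigma'$ forces every ray of $\sigma'$ to lie in the image of $\varphi_1$ or $\varphi_2$; and $\sigma' \to \tau$ is the desired interior object of $\Sigma_{\sigma'}$.

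Next I would verify that the two constructions are mutually inverse on iso-classes of objects and that the induced assignment on morphism sets is bijective. That the compositions are the identity on objects is immediate from minimality. For morphisms, one unfolds the definition \eqref{eqn:Star_morphism}: a morphism in the fiber product is a compatible pair of diagrams of the form \eqref{eqn:Star_morphism} in $\Sigma_{\sigma_1}$ and $\Sigma_{\sigma_2}$, and the desired morphism in $\Sigma_{\sigma'}$ is reconstructed by the universal property of the minimal face at both source and target, noting that isomorphisms of the target cone $\tau$ in $\Sigma_X$ restrict to isomorphisms of the associated minimal $\sigma'$.

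The main obstacle will be the groupoid bookkeeping. Different generic structures $(\sigma', \varphi_1, \varphi_2)$ can share the same underlying cone $\sigma'$ but differ by precomposition with automorphisms of $\sigma'$; one has to check that taking isomorphism classes in $\mathfrak{G}_{\sigma_1 \leftarrow \sigma \rightarrow \sigma_2}$ exactly matches the automorphism groupoid of the corresponding object of the fiber product, so that no over- or under-counting occurs. Once this matching is verified, the bijection on both objects and morphisms of the interior categories follows, yielding the claimed isomorphism \eqref{eqn:cone_stack_fiber_diagram_isom}.
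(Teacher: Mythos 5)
Your proposal is correct and follows essentially the same route as the paper's proof: both identify an interior object of the fiber product with a compatible pair of morphisms $\sigma_i \to \tau$ into a common cone, and both construct the inverse by passing to the unique minimal face $\sigma'$ of $\tau$ through which both maps factor, with minimality (plus smoothness of the cones) forcing the rays of $\sigma'$ to be covered by the images of $\varphi_1, \varphi_2$ so that one lands in $\mathfrak{G}_{\sigma_1 \leftarrow \sigma \rightarrow \sigma_2}$. The groupoid bookkeeping you flag is handled in the paper exactly as you suggest, by working up to unique isomorphism of the reduced data.
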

\begin{proof}
By definition of the fiber product, the existence of the map \eqref{eqn:cone_stack_normalization_sortof} follows from the commutativity of the diagram \eqref{eqn:gluing_comm_diag}. This commutativity in turn follows from the commutativity of the diagram \eqref{eqn:comm_diag_sigmas} and the functoriality 
\[
\mathsf{Star}_{\sigma' \to \sigma_1 \to \sigma} = \mathsf{Star}_{\sigma' \to \sigma_2 \to \sigma}\,.
\]
It remains to verify that \eqref{eqn:cone_stack_normalization_sortof} induces an isomorphism on the interiors. For this, recall that objects of the cone stack fiber product $\Sigma_{\sigma_1} \times_{\Sigma_\sigma} \Sigma_{\sigma_2}$ are indexed by objects in $\Sigma_1, \Sigma_2$ together with an isomorphism of their images in $\Sigma_\sigma$. This data boils down to a diagram of the form
\[
\begin{tikzcd}
\sigma \arrow[r] \arrow[rd] & \sigma_1 \arrow[r, "j_1'"] & \sigma_1' \arrow[d, "\varphi'"] & \sigma_1'' \arrow[l, "j_1''"] \arrow[d,"\varphi''"]\\
& \sigma_2 \arrow[r, "j_2'"] & \sigma_2' & \sigma_2'' \arrow[l, "j_2''"]
\end{tikzcd}
\]
in $\Sigma$ with $\varphi', \varphi''$ isomorphisms. The original objects are contained in $\Sigma_1^0, \Sigma_2^0$ if and only if also $j_1'', j_2''$ are isomorphisms. In this case, denoting $\sigma'' = \sigma_2''$ the data of the diagram boils down (up to unique isomorphism) to the data of the maps
\[
\widetilde \varphi_1 = \varphi'' \circ (j_1'')^{-1} \circ j_1' : \sigma_1 \to \sigma'' \text{ and } \widetilde \varphi_2 = (j_2'')^{-1} \circ j_2' : \sigma_2 \to \sigma''\,,
\]
commuting with the morphisms from $\sigma$. 
By the properties of the cone stack $\Sigma_X$, there is a unique minimal morphism $j : \sigma' \to \sigma''$ such that $\widetilde \varphi_1, \widetilde \varphi_2$ factor through $j$:
\[
\begin{tikzcd}
& \sigma_1 \arrow[dr, "\varphi_1", swap] \arrow[drr, "\widetilde{\varphi}_1", bend left] & & \\
\sigma\arrow[ru] \arrow[rd] & & \sigma' \arrow[r, "j"] & \sigma'' \\
& \sigma_2 \arrow[ur, "\varphi_2"] \arrow[urr, "\widetilde{\varphi}_2", swap, bend right] & &
\end{tikzcd}
\]
Then by the minimality of $j$ we have $(\sigma', \varphi_1, \varphi_2) \in \mathfrak{G}_{\sigma_1 \leftarrow \sigma \rightarrow \sigma_2}$ and $(\sigma' \xrightarrow{j} \sigma'' \xleftarrow{\mathrm{id}} \sigma'')$ gives an object of $\Sigma_{\sigma'}^0$. Conversely it is straightforward to check that $(\iota_{\varphi_1}^\textup{trop}, \iota_{\varphi_2}^\textup{trop})$ sends this object back to the original cone in $\Sigma_{\sigma_1} \times_{\Sigma_\sigma} \Sigma_{\sigma_2}$, and that these inverse functors induce isomorphisms on the cone $C(\sigma_2'') = C(\sigma'')$ associated to the objects in domain and target by the structure of their respective cone stacks.
\end{proof}
\begin{proposition} \label{Prop:iota_pushforward_intersection}
Let $\Sigma_X$ be the cone stack of $(X,D)$ and $\sigma_1 \leftarrow \sigma \rightarrow \sigma_2$ morphisms in $\Sigma_X$. Then there is a fiber diagram
\begin{equation} \label{eqn:gluing_fiber_diag}
\begin{tikzcd}
\bigsqcup P_{\sigma'} \arrow[r,"\iota_{\varphi_2}"] \arrow[d,"\iota_{\varphi_1}", swap] & P_{\sigma_2} \arrow[d, "\iota_{\sigma_2}"] \\
P_{\sigma_1} \arrow[r, "\iota_{\sigma_1}"] & P_\sigma
\end{tikzcd}\,.
\end{equation}
where the disjoint union is over $(\sigma', \varphi_1, \varphi_2) \in \mathfrak{G}_{\sigma_1 \leftarrow \sigma \rightarrow \sigma_2}$. Let $\deco_i \in \CH^\star(P_{\sigma_i})$ and $f_i \in \sPP_\star(P_{\sigma_i})$ for $i=1,2$ and denote by $F_\sigma \in \sPP_\star(P_\sigma)$ the strict piecewise polynomial with $\Psi(F_\sigma) = [P_\sigma]$ from Proposition \ref{Prop:B_Faces}. Then we have an equality
\begin{equation}  \label{eqn:iota_pushforward_products}
(\iota_{\sigma_1 \star} \deco_1 \cdot \Psi(f_1)) \cdot (\iota_{\sigma_2 \star} \deco_2 \cdot \Psi(f_2)) = \sum_{(\sigma', \varphi_1, \varphi_2) \in \mathfrak{G}_{\sigma_1 \leftarrow \sigma \rightarrow \sigma_2}} \iota_{\sigma' \star} \left((\iota_{\varphi_1}^\star \deco_1) \cdot (\iota_{\varphi_2}^\star \deco_2) \cdot \Psi \left(\frac{(\iota_{\varphi_1}^\textup{trop})^\star f_1 \cdot (\iota_{\varphi_2}^\textup{trop})^\star f_2}{(\iota_{\sigma' \to \sigma}^\textup{trop})^\star F_\sigma} \right) \right)
\end{equation}
in $\CH_\star(P_\sigma)$, where we use that the above fraction gives a well-defined element of $\sPP_\star(P_{\sigma'})$.
\end{proposition}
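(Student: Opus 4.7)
The strategy is to reduce the calculation to one carried out entirely in the idealised Artin fans $\mathcal{P}_{\sigma_i}$ (which is purely combinatorial by Corollary \ref{cor:vanishingpp}) and then base-change back to $X$. For the fiber diagram, recall from Corollary \ref{Cor:Psigma_cartesian_diagram} that $P_{\sigma_i} = \mathcal{P}_{\sigma_i} \times_{\mathcal{A}_X} X$, so the fiber product $P_{\sigma_1} \times_{P_\sigma} P_{\sigma_2}$ is identified with $(\mathcal{P}_{\sigma_1} \times_{\mathcal{P}_\sigma} \mathcal{P}_{\sigma_2}) \times_{\mathcal{A}_X} X$. Under the equivalence between cone stacks and Artin fans (\cite[Theorem 6.11]{CCUW}), the fiber product on the idealised Artin fan side is precisely the idealised Artin fan associated to the fiber product of cone stacks $\Sigma_{\sigma_1} \times_{\Sigma_\sigma} \Sigma_{\sigma_2}$. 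Lemma \ref{Lem:gluing_fiber_product_stacks} identifies the interior of this fiber product with the disjoint union $\bigsqcup_{(\sigma',\varphi_1,\varphi_2) \in \mathfrak{G}} \Sigma_{\sigma'}^0$, and since homological classes vanish outside the interior, this disjoint union of monodromy torsors faithfully captures the intersection-theoretic fiber product after base-changing to $X$.

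For the product formula, I will combine the fiber diagram with the projection formula and the refined Gysin pullback. The maps $\iota_{\sigma \to \sigma_i}$ are proper of relative log dimension $0$, so after base change we have
\[
\iota_{\sigma \to \sigma_1 \star}(\alpha_1) \cdot \iota_{\sigma \to \sigma_2 \star}(\alpha_2) = \sum_{(\sigma', \varphi_1, \varphi_2) \in \mathfrak{G}} \iota_{\sigma' \star}\bigl(\iota_{\varphi_1}^\star \alpha_1 \cdot \iota_{\varphi_2}^\star \alpha_2 \cdot e_{\sigma'}\bigr)
\]
for an excess class $e_{\sigma'} \in \CH^\star(P_{\sigma'})$ measuring the discrepancy between the actual codimension of $P_{\sigma'} \hookrightarrow P_\sigma$ and the sum of the codimensions of $P_{\sigma_1}, P_{\sigma_2}$ in $P_\sigma$. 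To combine this with the piecewise polynomial factors $\Psi(f_i)$, I will use that $\Psi$ commutes with pullbacks and products, i.e.\ $\iota_{\varphi_i}^\star \Psi(f_i) = \Psi((\iota_{\varphi_i}^\textup{trop})^\star f_i)$, so that the PP contributions multiply inside a single $\Psi(-)$.

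The main obstacle is identifying the excess class $e_{\sigma'}$ with the explicit piecewise polynomial $1/(\iota_{\sigma' \to \sigma}^\textup{trop})^\star F_\sigma$. For this, recall from Proposition \ref{Prop:B_Faces} that $F_\sigma = \prod_{\rho \in \sigma(1)} (\pi_\sigma^\textup{trop})^\star x_\rho$ represents $[\mathcal{P}_\sigma]$ in $\CH_\star(\mathcal{A}_{\Sigma_\sigma})$, and the normal bundle of $\iota_\sigma$ decomposes as $\bigoplus_{\rho \in \sigma(1)} \pi_\sigma^\star \mathcal{L}_\rho$. The rays of $\sigma'$ for a generic $(\sigma_1, \sigma_2)$-structure are partitioned (up to the rays of $\sigma$ itself, which are double-counted) between those coming from $\sigma_1$ and those from $\sigma_2$; the rays of $\sigma$ appearing in both $\varphi_1(\sigma_1)$ and $\varphi_2(\sigma_2)$ yield the over-counted factor, and the product of the corresponding linear forms is precisely $(\iota_{\sigma' \to \sigma}^\textup{trop})^\star F_\sigma$. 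Carefully tracking this normal bundle splitting along the Cartesian square --- reducing to the calculation in the idealised Artin fans where it becomes an identity of piecewise polynomials verifiable on each cone --- yields the $F_\sigma$ factor in the denominator. The well-definedness of the quotient $f_1 \cdot f_2 / F_\sigma$ in $\sPP_\star(P_{\sigma'})$ then follows because both $f_i$ and $F_\sigma$ vanish along the boundary strata corresponding to the rays of $\sigma$, with the same multiplicity structure.
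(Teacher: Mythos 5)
Your proposal follows essentially the same route as the paper: establish the fiber diagram from the cone-stack fiber product of Lemma \ref{Lem:gluing_fiber_product_stacks}, apply the excess intersection formula, and identify the normal bundles and excess Euler class with piecewise polynomials via Proposition \ref{Prop:B_Faces}, so that after absorbing $[P_{\sigma'}]=\Psi(F_{\sigma'})$ the net excess factor is $1/(\iota_{\sigma'\to\sigma}^{\textup{trop}})^\star F_\sigma$. The only point to make explicit is that ``$\iota_{\varphi_i}^\star\Psi(f_i)$'' is not literally defined for homological classes; the paper first rewrites $\Psi(f_i)=\Phi(f_i/F_{\sigma_i})\cap[P_{\sigma_i}]$ using the freeness \eqref{eqn:sPP_hom_free_module}, which turns everything into operational classes capping fundamental classes and is exactly where the $F_{\sigma_i}$ enter before cancelling against the excess class.
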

\begin{proof}
The fact that \eqref{eqn:gluing_fiber_diag} is a fiber diagram just follows from the geometric realization of the isomorphism \eqref{eqn:cone_stack_fiber_diagram_isom} above. For proving the formula of the intersection product \eqref{eqn:iota_pushforward_products} consider, similar to $F_\sigma$, the polynomials $F_{\sigma_i} \in \sPP_\star(P_{\sigma_i})$ with $\Psi(F_{\sigma_i}) = [P_{\sigma_i}]$ (and analogously for $\sigma'$). 
It follows immediately that
\[
\Psi(f_i) = \Phi\left(\frac{f_i}{F_{\sigma_i}} \right) \cdot [P_{\sigma_i}]\,,
\]
where $\Phi: \sPP^\star(P_{\sigma_i}) \to \CH^\star(P_{\sigma_i})$ is the usual map. Here the fraction $f_i/F_{\sigma_i}$ is well-defined by \eqref{eqn:sPP_hom_free_module}.
Using Proposition \ref{Prop:B_Faces}, we obtain a piecewise linear function $x_\rho$ on $P_{\sigma_i}$ for each $\rho \in \sigma_i(1)$ and have that
\[
F_{\sigma_i} = \prod_{\rho \in \sigma_i(1)} x_{\rho} \eqqcolon x_{\sigma_i(1)} \text{ and } F_{\sigma'} = \prod_{\rho \in \sigma'(1)} x_{\rho} \eqqcolon x_{\sigma'(1)}\,.
\]
Moreover, again from Proposition \ref{Prop:B_Faces}, the top Chern classes of normal bundles of the two horizontal maps in the diagram \eqref{eqn:gluing_fiber_diag} are given by
\[
e(\mathcal{N}_{\iota_{\sigma_1}}) = \Phi\left(\prod_{\rho \in \sigma_1(1) \setminus \sigma(1)} x_\rho\right) = \Phi\left(\frac{x_{\sigma_1(1)}}{x_{\sigma(1)}}\right) \text{ and }e(\mathcal{N}_{\iota_{\varphi_2}}) = \Phi\left(\prod_{\rho \in \sigma'(1) \setminus \sigma_2(1)} x_\rho\right) = \Phi\left(\frac{x_{\sigma'(1)}}{x_{\sigma_2(1)}}\right)\,.
\]
By the excess intersection formula \cite[Proposition 17.4.1]{Ful98}, the intersection product \eqref{eqn:iota_pushforward_products} is given by a sum of contributions from the components $P_{\sigma'}$ of the fiber product \eqref{eqn:gluing_fiber_diag}, with the contribution of $P_{\sigma'}$ given by
\begin{align*}
\iota_{\sigma' \star} \left((\iota_{\varphi_1}^\star \deco_1 \cdot \Phi\left(\frac{f_1}{F_{\sigma_1}}\right)) \cdot (\iota_{\varphi_2}^\star \deco_2 \cdot \Phi\left(\frac{f_1}{F_{\sigma_1}}\right)) \cdot e\left(\frac{\mathcal{N}_{\iota_{\sigma_1}}}{\mathcal{N}_{\iota_{\varphi_2}}}\right) \cdot [P_{\sigma'}] \right)\,.
\end{align*}
Using the formula $h^\star \Phi(g)=\Phi((h^\textup{trop})^\star g)$ for $h=\iota_{\varphi_1}, \iota_{\varphi_2}$, together with the formulas derived above (and identifying rays of $\sigma, \sigma_1, \sigma_2$ as elements of the common set $\sigma'(1)$ via the cone morphisms $\varphi_1, \varphi_2$), this simplifies to
\[
\iota_{\sigma' \star} \left((\iota_{\varphi_1}^\star \deco_1) \cdot (\iota_{\varphi_2}^\star \deco_2) \cdot \Phi\left(\frac{(\iota_{\varphi_1}^\textup{trop})^\star f_1 \cdot (\iota_{\varphi_2}^\textup{trop})^\star f_2}{x_{\sigma_1(1)} \cdot x_{\sigma_2(1)}} \right) \cdot \Phi\left(\frac{x_{\sigma_1(1)} \cdot x_{\sigma_2(1)}}{x_{\sigma(1)} \cdot x_{\sigma'(1)} } \right) \cdot \Psi(x_{\sigma'(1)}) \right)\,.
\]
Using that $\Phi(f) \cdot \Psi(g) = \Psi(f \cdot g)$, this readily simplifies to the formula \eqref{eqn:iota_pushforward_products}.
\end{proof}


Given a subdivision $\widehat \Sigma \to \Sigma_X$ and $\varphi_i : \widehat \sigma' \to \widehat \sigma_i$ a morphism in $\widehat \Sigma$, we denote by $\overline \varphi_i : \sigma' \to \sigma_i$ the induced morphism in $\Sigma_X$ given as the image of $\varphi_i$ under the map $\widehat \Sigma \to \Sigma_X$ of cone stacks.

\begin{proposition} \label{Prop:product_declogstrata_general}
Given a cone $\widehat \sigma \in \widehat \Sigma$, the product of two strictly decorated log-strata on $P_{\widehat \sigma}$ is given by
\begin{equation}
\label{eq:product_declogstrata_general}
\{\widehat \sigma_1, g_1, \beta_1\} \cdot \{\widehat \sigma_2, g_2, \beta_2\} = \sum_{(\widehat \sigma', \widehat \varphi_1, \widehat \varphi_2) \in \mathfrak{G}_{\widehat \sigma_1 \leftarrow \widehat \sigma \rightarrow \widehat \sigma_2}} \left\{\widehat \sigma', \frac{(\iota_{\widehat \varphi_1}^\textup{trop})^\star g_1 \cdot (\iota_{\widehat \varphi_2}^\textup{trop})^\star g_2}{(\iota_{\widehat \sigma' \to \widehat \sigma}^\textup{trop})^\star  \prod_{\rho \in \widehat \sigma(1)} x_\rho},  (\iota_{\overline \varphi_1})^\star \beta_1 \cdot (\iota_{\overline \varphi_2})^\star \beta_2 \right\}\,.
\end{equation}
\end{proposition}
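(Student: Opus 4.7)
The plan is to reduce the statement to Proposition~\ref{Prop:iota_pushforward_intersection} applied inside the ambient space $P_{\widehat \sigma}$, viewing the morphisms $\iota_{\widehat \sigma \to \widehat \sigma_i}: P_{\widehat \sigma_i} \to P_{\widehat \sigma}$ as the analogues of the gluing maps $\iota_{\sigma_i}: P_{\sigma_i} \to P_\sigma$. To this end, recall from Corollary~\ref{Cor:Psigma_cartesian_diagram} that the monodromy torsor of the cone $\widehat \sigma \in \widehat \Sigma$ has associated cone stack $\widehat \Sigma_{\widehat \sigma} = \mathsf{Star}_{\widehat \sigma}(\widehat \Sigma)$, and that $\iota_{\widehat \sigma \to \widehat \sigma_i}$ is the geometric realization of the corresponding $\mathsf{Star}$ morphism. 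Thus Proposition~\ref{Prop:iota_pushforward_intersection}, applied with $(X, \Sigma_X)$ replaced by $(P_{\widehat \sigma}, \widehat \Sigma_{\widehat \sigma})$, yields an expansion of the product $\iota_{\widehat \sigma \to \widehat \sigma_1 \star}(\alpha_1 \cdot \Psi(g_1)) \cdot \iota_{\widehat \sigma \to \widehat \sigma_2 \star}(\alpha_2 \cdot \Psi(g_2))$ as a sum over $(\widehat \sigma', \widehat \varphi_1, \widehat \varphi_2) \in \mathfrak{G}_{\widehat \sigma_1 \leftarrow \widehat \sigma \rightarrow \widehat \sigma_2}$ of pushforwards by $\iota_{\widehat \sigma \to \widehat \sigma'}$, with the expected piecewise-polynomial and decoration factors.

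The first step is to substitute $\alpha_i = (s_{\widehat \sigma_i \to \sigma_i})^\star \beta_i$ and observe that the $\sPP$-factor $F_{\widehat \sigma} \in \sPP_\star(P_{\widehat \sigma})$ of Proposition~\ref{Prop:B_Faces} is precisely $\prod_{\rho \in \widehat \sigma(1)} x_\rho$, explaining the denominator in \eqref{eq:product_declogstrata_general}. The key step is then to identify the pullbacks $(\iota_{\widehat \varphi_i})^\star \alpha_i$ appearing in the resulting formula with $(s_{\widehat \sigma' \to \sigma'})^\star (\iota_{\overline \varphi_i})^\star \beta_i$. This is precisely a commutative square
\[
\begin{tikzcd}
P_{\widehat \sigma'} \arrow[r,"\iota_{\widehat \varphi_i}"] \arrow[d,"s_{\widehat \sigma' \to \sigma'}",swap] & P_{\widehat \sigma_i} \arrow[d,"s_{\widehat \sigma_i \to \sigma_i}"] \\
P_{\sigma'} \arrow[r,"\iota_{\overline \varphi_i}"] & P_{\sigma_i}
\end{tikzcd}
\]
which I would obtain by taking the geometric realization of the identity $\mathsf{Star}_{\sigma_i \to \sigma'} \circ t_{\widehat \sigma' \to \sigma'} = t_{\widehat \sigma_i \to \sigma_i} \circ \mathsf{Star}_{\widehat \sigma_i \to \widehat \sigma'}$ established in Lemma~\ref{Lem:iota_sigmahat_section}, and then base changing along $X \to \mathcal{A}_X$.

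Once these identifications are in place, the right-hand side of the formula given by Proposition~\ref{Prop:iota_pushforward_intersection} is exactly a sum of classes of the form $\iota_{\widehat \sigma \to \widehat \sigma' \star}((s_{\widehat \sigma' \to \sigma'})^\star \delta' \cdot \Psi(h))$ with $\delta' = (\iota_{\overline \varphi_1})^\star \beta_1 \cdot (\iota_{\overline \varphi_2})^\star \beta_2$ and $h$ the claimed quotient of piecewise polynomials. By definition \eqref{eqn:strict_decoratedlogstratum_X} these are precisely the strictly decorated log stratum classes $\{\widehat \sigma', h, \delta'\}$, yielding the desired formula. The main subtlety, and the step I expect to require the most care, is the verification of the commutative square above and the correct book-keeping matching the indexing set $\mathfrak{G}_{\widehat \sigma_1 \leftarrow \widehat \sigma \rightarrow \widehat \sigma_2}$ in $\widehat \Sigma$ with the generic structures produced by Lemma~\ref{Lem:gluing_fiber_product_stacks}; everything else is a direct transcription of Proposition~\ref{Prop:iota_pushforward_intersection}.
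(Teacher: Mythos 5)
Your proposal is correct and follows essentially the same route as the paper: apply Proposition~\ref{Prop:iota_pushforward_intersection} to the cone stack $\widehat\Sigma$ with base cone $\widehat\sigma$ and $\gamma_i = (s_{\widehat\sigma_i\to\sigma_i})^\star\beta_i$, then identify $\iota_{\widehat\varphi_i}^\star\gamma_i = (s_{\widehat\sigma'\to\sigma'})^\star(\iota_{\overline\varphi_i})^\star\beta_i$ via the commutative square coming from the functoriality \eqref{eqn:t_functoriality} of Lemma~\ref{Lem:iota_sigmahat_section}, and conclude from the definition \eqref{eqn:strict_decoratedlogstratum_X}. The only cosmetic difference is that the paper invokes the proposition directly on $\widehat\Sigma$ (which is already stated for an arbitrary base cone $\sigma$) rather than phrasing it as a substitution of $(X,\Sigma_X)$ by $(P_{\widehat\sigma},\widehat\Sigma_{\widehat\sigma})$; the computation is identical.
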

\begin{proof}
This follows immediately by applying the formula from Proposition \ref{Prop:iota_pushforward_intersection} on the cone stack $\widehat \Sigma$ with $\deco_i = (s_{\widehat \sigma_i \to \sigma_i})^\star \beta_i$, which explains the summation over $\mathfrak{G}_{\widehat \sigma_1 \leftarrow \widehat \sigma \rightarrow \widehat \sigma_2}$ and the decoration by strict piecewise polynomials. To see how the decorations 
$(\iota_{\overline \varphi_i})^\star \beta_i$ arise, we observe  that 
$$\iota_{\varphi_i}^\star \deco_i = (s_{\widehat \sigma' \to \sigma'})^\star (\iota_{\overline \varphi_i})^\star \beta_i$$
and apply the definition of strictly decorated log strata classes.
\end{proof}

Continuing with preparations for the proof of Theorem \ref{Thm:taut_system_induction} we show functoriality of tautological classes along the map $s_{\widehat \sigma \to \sigma}$. 
\begin{lemma} \label{Lem:taut_pullback_hat}
Pushforward and pullback under the morphism $s_{\widehat \sigma \to \sigma}$ induce well-defined maps
\begin{equation}
s_{\widehat \sigma \to \sigma \star} : (\pi^\star \R_X)(P_{\widehat \sigma})  \to \R_X(P_\sigma) \text{ and } s_{\widehat \sigma \to \sigma}^\star : \R_X(P_\sigma) \to (\pi^\star \R_X)(P_{\widehat \sigma})\,.
\end{equation}
\end{lemma}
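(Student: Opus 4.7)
The plan is to handle the pullback and pushforward claims separately, each reducing to constructions already available in the setup.

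For the pullback, I would exhibit $s_{\widehat \sigma \to \sigma}^\star \alpha$ explicitly as a decorated log-strata class by choosing $\sigma' = \sigma$ (with the identity morphism $\sigma \to \sigma$) in Definition \ref{def:logdecstratum}. For this choice the fiber product $P_{\widehat \sigma \to \sigma}$ of \eqref{eqn:def_P_sigmahat_sigma} collapses to $P_{\widehat \sigma}$, the map $\iota_{\widehat \sigma \to \sigma}$ is the identity, and $\pi_\sigma = s_{\widehat \sigma \to \sigma}$. Taking the strict piecewise polynomial $F_{\widehat \sigma}$ furnished by Proposition \ref{Prop:B_Faces}, which satisfies $\Psi(F_{\widehat \sigma}) = [P_{\widehat \sigma}]$, one computes
\[
[\sigma, F_{\widehat \sigma}, \alpha] \;=\; s_{\widehat \sigma \to \sigma}^\star \alpha \cdot [P_{\widehat \sigma}] \;=\; s_{\widehat \sigma \to \sigma}^\star \alpha,
\]
exhibiting this pullback as a member of $(\pi^\star \R_X)(P_{\widehat \sigma})$.

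For the pushforward, by Lemma \ref{Lem:alternative_generators} it is enough to test on the strictly decorated generators $\{\widehat \sigma', g, \beta\}$. The key ingredient will be a commutative square
\[
\begin{tikzcd}
P_{\widehat \sigma'} \arrow[r, "\iota_{\widehat \sigma \to \widehat \sigma'}"] \arrow[d, "s_{\widehat \sigma' \to \sigma'}"'] & P_{\widehat \sigma} \arrow[d, "s_{\widehat \sigma \to \sigma}"]\\
P_{\sigma'} \arrow[r, "\iota_{\sigma \to \sigma'}"'] & P_\sigma
\end{tikzcd}
\]
which is the geometric realization of the functoriality identity \eqref{eqn:t_functoriality} in Lemma \ref{Lem:iota_sigmahat_section} applied to a cone morphism $\widehat \sigma \to \widehat \sigma'$ mapping to $\sigma \to \sigma'$. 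Combining this commutativity with the projection formula and the tropical pushforward of Proposition \ref{Prop:sPP_pushforward}, I expect to obtain
\[
s_{\widehat \sigma \to \sigma \,\star} \{\widehat \sigma', g, \beta\} \;=\; (\iota_{\sigma \to \sigma'})_\star \bigl(\beta \cdot \Psi\bigl((s_{\widehat \sigma' \to \sigma'}^{\textup{trop}})_\star g\bigr)\bigr).
\]
The right-hand side lies in $\R_X(P_\sigma)$: the pushed-forward piecewise polynomial gives a class in $\R_X(P_{\sigma'})$ via the inclusion \eqref{eqn:strata_taut_ring_inclusion}, its product with $\beta$ stays inside $\R_X(P_{\sigma'})$, and finally $(\iota_{\sigma \to \sigma'})_\star$ preserves $\R_X$ by hypothesis \eqref{eqn:gluing_map_preserving}.

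Neither step should present a serious obstacle: the pullback argument is essentially a definition chase using that $F_{\widehat \sigma}$ represents the fundamental class, while the pushforward argument is a formal application of base change, projection formula, and the tropical pushforward on homological piecewise polynomials. The one point requiring a small verification is the commutativity of the square above; but since both $\iota$'s and both $s$'s are constructed as fiber products over $\mathcal{A}_X$ of the Artin-stack morphisms associated to $t_{\widehat \sigma \to \sigma}$ and $\mathsf{Star}_{\widehat \sigma \to \widehat \sigma'}$ via Corollary \ref{Cor:Psigma_cartesian_diagram}, the commutativity is inherited directly from \eqref{eqn:t_functoriality}.
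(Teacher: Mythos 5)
Your proposal is correct and follows essentially the same route as the paper: the pullback is realized as the decorated class $[\sigma, F_{\widehat\sigma}, \alpha]$ with $\sigma'=\sigma$, and the pushforward is checked on strictly decorated generators via the commutative square from \eqref{eqn:t_functoriality}, the projection formula, and the tropical pushforward of Proposition \ref{Prop:sPP_pushforward}, landing in $\R_X(P_\sigma)$ by the closure properties \eqref{eqn:strata_taut_ring_inclusion} and \eqref{eqn:gluing_map_preserving}. No gaps.
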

\begin{proof}
For the pushforward consider a strictly decorated log-stratum class $\{\widehat \sigma', g, \beta\}$ on $P_{\widehat \sigma}$ defined via the commutative diagram
\[
\begin{tikzcd}
~ & P_{\widehat \sigma'} \arrow[l, phantom, red, "\Psi(g)", near start] \arrow[r, "\iota_{\widehat \sigma \to \widehat \sigma'}"] \arrow[d, "s_{\widehat \sigma' \to \sigma'}", swap] & P_{\widehat \sigma} \arrow[d, "s_{\widehat \sigma \to \sigma}"] \\
~ & P_{\sigma'} \arrow[l, phantom, red, "\beta", near start] \arrow[r, "\iota_{\sigma \to \sigma'}"] & P_\sigma
\end{tikzcd}\,.
\]
Then we have
\[
s_{\widehat \sigma \to \sigma \star} \{\widehat \sigma', g, \beta\} = (\iota_{\sigma \to \sigma'})_\star \left( \beta \cdot s_{\widehat \sigma' \to \sigma' \star} \Psi(g) \right) = (\iota_{\sigma \to \sigma'})_\star \left( \beta \cdot  \Psi((s_{\widehat \sigma' \to \sigma'}^\textup{trop})_\star g) \right) \in \R_X(P_\sigma)\,.
\]
where the last containment follows as the tautological system $\R_X$ contains classes in the image of $\Psi$ and is closed under intersection products and pushforwards via $\iota_{\sigma \to \sigma'}$.
In the equality we also used that the map $s_{\widehat \sigma \to \sigma}$ is proper of relative log dimension $0$ (Definition \ref{Def:relative_log_dimension_0}) and thus its tropicalization $s_{\widehat \sigma \to \sigma}^\textup{trop} : \widehat \Sigma_{\widehat \sigma} \to \Sigma_\sigma$ admits a pushforward compatible with the map $\Psi$ (as in Proposition \ref{Prop:sPP_pushforward}).

For the pullback via $s_{\widehat \sigma \to \sigma}$, let $\deco \in \R_X(P_\sigma)$ and let $F_{\widehat \sigma} \in \sPP_\star(P_{\widehat \sigma})$ be the piecewise polynomial from Proposition \ref{Prop:B_Faces} which satisfies $\Psi(F_{\widehat \sigma})=[P_{\widehat \sigma}]$. Then taking $\sigma' = \sigma$ in the fiber diagram
\begin{equation}
\begin{tikzcd}
P_{\widehat \sigma \to \sigma} \arrow[r, "\mathsf{id}"] \arrow[d, "\pi_\sigma", swap] & P_{\widehat \sigma} \arrow[d, "s_{\widehat \sigma \to \sigma}"] \\
P_{\sigma} \arrow[r, "\mathsf{id}"] & P_\sigma
\end{tikzcd}
\end{equation}
we see
\[
[\sigma, F_{\widehat \sigma}, \deco] = (\mathsf{id})_\star \pi_\sigma^\star \deco \cdot \Psi(F_{\widehat \sigma}) = s_{\widehat \sigma \to \sigma}^\star \deco\,,
\]
proving that the pullback of tautological classes on $P_\sigma$ under $s_{\widehat \sigma \to \sigma}$ indeed lands in $(\pi^\star \R_X)(P_{\widehat \sigma})$.
\end{proof}

\begin{proof}[Proof of Theorem \ref{Thm:taut_system_induction}]
For the entire proof fix $\widehat \sigma \in \widehat \Sigma$ and let $\sigma = \varphi(\widehat \sigma)$ be its image in $\Sigma_X$.
By Proposition \ref{Prop:product_declogstrata_general} we have that $\R^\star(P_{\widehat \sigma}) \subseteq \mathsf{CH}^\star(P_{\widehat \sigma})$ is a sub-$\mathbb{Q}$-algebra. To see that it contains the class $\Psi(f)$ for any $f \in \mathsf{sPP}_\star(P_{\widehat \sigma})$, just note that 
\[
\Phi(f) = [\sigma, f, 1]\,,
\]
by a computation similar to the one presented in the proof of Lemma \ref{Lem:taut_pullback_hat}.

On the other hand, consider any morphism $\widehat{\sigma} \to \widehat{\sigma}'$ in $\widehat \Sigma$, mapping to the morphism $\sigma \to \sigma'$ under $\varphi$. Then we want to check invariance of the rings under pushforwards and pullbacks by $\iota_{\widehat{\sigma} \to \widehat{\sigma}'} : P_{\widehat \sigma'} \to P_{\widehat \sigma}$. By Lemma \ref{Lem:alternative_generators} we can verify this property on the generators of the tautological rings of $P_{\widehat \sigma}, P_{\widehat \sigma'}$ given by strictly decorated log-strata classes.

For the pushforward, let $\widehat \sigma' \to \widehat \sigma''$ be another morphism in $\widehat \Sigma$. Then the class $\{\widehat \sigma'', g, \beta\}$ on $P_{\widehat \sigma'}$ is defined by the following commutative diagram
\[
\begin{tikzcd}
 ~& P_{\widehat \sigma''} \arrow[l, phantom, red, "\Psi(g)", near start] \arrow[d] \arrow[r] & P_{\widehat \sigma'} \arrow[d] \arrow[r,  "\iota_{\widehat{\sigma} \to \widehat{\sigma}'}"] & P_{\widehat \sigma}\arrow[d]\\
~& P_{\sigma''} \arrow[l, phantom, red, "\beta", near start] \arrow[r] & P_{\sigma'} \arrow[r] & P_\sigma
\end{tikzcd}
\]
Then we just observe
\[
(\iota_{\widehat{\sigma} \to \widehat{\sigma}'})_\star \{\underbrace{\widehat \sigma''}_{\widehat \sigma'' \to \widehat \sigma'}, g, \beta\} = \{\underbrace{\widehat \sigma''}_{\widehat \sigma'' \to \widehat \sigma}, g, \beta\} \in (\pi^\star \R_X)(P_{\widehat \sigma})\,,
\]
where for clarity we have temporarily undone the abuse of notation discussed in Footnote \ref{footnote:morphism_notation_suppression}.

On the other hand, consider a class $\{\widehat \sigma_0, g, \beta\}$ on $P_{\widehat \sigma}$ for a morphism $\widehat \sigma \to \widehat \sigma_0$ with $\sigma_0 = \varphi(\widehat \sigma_0)$. Then from Proposition \ref{Prop:product_declogstrata_general} we know that the fiber product $P_{\widehat \sigma_0} \times_{P_{\widehat \sigma}} P_{\widehat \sigma'}$ is given by the disjoint union of spaces $P_{\widehat \sigma'''}$ for $(\widehat \sigma''', \varphi_1, \varphi_2) \in \mathfrak{G}_{\widehat \sigma_0 \leftarrow \widehat \sigma \to \widehat \sigma'}$. Then we obtain a commutative diagram
\[
\begin{tikzcd}
~& ~ & \bigsqcup P_{\widehat \sigma'''}  \arrow[rr] \arrow[ld, "\bigsqcup \iota_{\widehat \sigma_0 \to \widehat \sigma'''}", swap] \arrow[dd] &  & P_{\widehat \sigma'} \arrow[ld,  "\iota_{\widehat{\sigma} \to \widehat{\sigma}'}", swap] \arrow[dd] \\
~&P_{\widehat \sigma_0} \arrow[l, phantom, red, "\Psi(g)", near start] \arrow[rr] \arrow[dd] & & P_{\widehat \sigma} \arrow[dd]& \\
~&~ & \bigsqcup P_{\varphi(\widehat \sigma''')} \arrow[ld, "\bigsqcup \iota_{\sigma_0 \to \widehat \varphi(\sigma''')}"] \arrow[rr]  & & P_{\sigma'} \arrow[ld]\\
~&P_{\sigma_0} \arrow[l, phantom, red, "\beta", near start] \arrow[rr] & & P_\sigma &
\end{tikzcd}
\]
whose top face is a fiber diagram. Similar to the proof of Proposition \ref{Prop:iota_pushforward_intersection} we use that
\[
\Psi(g) = \Phi\left(\frac{g}{\prod_{\rho \in \widehat \sigma_0(1)} x_\rho} \right) \cdot [P_{\widehat \sigma_0}] \text{ and } [P_{\widehat \sigma'''}] = \Psi\left(\prod_{\rho \in \widehat \sigma'''(1)} x_\rho \right)\,.
\]
Then by a short diagram chase (using commutativity of proper pushforwards and Gysin pullbacks in the top fiber diagram) and the excess intersection formula, we have
\[
\iota_{\widehat \sigma \to \widehat \sigma'}^\star \{\widehat \sigma_0, g, \beta\} = \sum_{(\widehat \sigma''', \varphi_1, \varphi_2) \in \mathfrak{G}_{\widehat \sigma_0 \leftarrow \widehat \sigma \to \widehat \sigma'}} \{\widehat \sigma''', (\iota_{\widehat \sigma_0 \to \widehat \sigma'''}^\textup{trop})^\star g \cdot \prod_{\rho \in \widehat \sigma'(1) \setminus \mathrm{im}(\widehat \sigma(1))} x_\rho,  (\iota_{\sigma_0 \to \widehat \varphi(\sigma''')})^\star \beta \} \in (\pi^\star \R_X)(P_{\widehat \sigma'})\,.
\]
This shows that the tautological rings are indeed closed under pushforwards and pullbacks by the maps $\iota_{\widehat{\sigma} \to \widehat{\sigma}'}$, concluding the proof.
\end{proof}
We conclude Section \ref{Sect:TautSystems} with a basic compatibility check, verifying that the process of inducing tautological rings is transitive for a composition of log blowups. 
\begin{proposition} \label{Prop:tautological_system_functoriality}
Let $X_2 \xrightarrow{\pi_2} X_1 \xrightarrow{\pi_1} X$ be a sequence of log blowups with $X_1, X_2$ smooth and assume that $X$ carries a system of tautological rings $\R_X$. Then $\pi_2^\star \pi_1^\star \R_X = (\pi_1 \circ \pi_2)^\star \R_X$.
\end{proposition}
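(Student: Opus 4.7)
The plan is to verify both inclusions by reducing to the strictly decorated generators from Lemma \ref{Lem:alternative_generators}. Write $\Sigma$, $\widehat\Sigma$, $\widetilde\Sigma$ for the cone stacks of $X, X_1, X_2$, with subdivision maps $\varphi_1: \widehat\Sigma \to \Sigma$ and $\varphi_2: \widetilde\Sigma \to \widehat\Sigma$. Fix $\widetilde\sigma \in \widetilde\Sigma$ with images $\widehat\sigma = \varphi_2(\widetilde\sigma) \in \widehat\Sigma$ and $\sigma = \varphi_1(\widehat\sigma) \in \Sigma$. By Lemma \ref{Lem:alternative_generators}, both $(\pi_1\circ\pi_2)^\star\R_X(P_{\widetilde\sigma})$ and $\pi_2^\star\pi_1^\star\R_X(P_{\widetilde\sigma})$ are spanned by strictly decorated log-strata $\{\widetilde\sigma', g, \delta\}$, with the decorations $\delta$ living respectively in $\R^\star(P_{\sigma'})$ (with $\sigma' = (\varphi_1\varphi_2)(\widetilde\sigma')$) or in $(\pi_1^\star\R_X)(P_{\widehat\sigma'})$ (with $\widehat\sigma' = \varphi_2(\widetilde\sigma')$).

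For the inclusion $(\pi_1\circ\pi_2)^\star\R_X \subseteq \pi_2^\star\pi_1^\star\R_X$, take a generator $\{\widetilde\sigma', g, \alpha\}$ with $\alpha \in \R^\star(P_{\sigma'})$. Applying Lemma \ref{Lem:taut_pullback_hat} to the blowup $\pi_1$, we have $s_{\widehat\sigma' \to \sigma'}^\star\alpha \in (\pi_1^\star\R_X)(P_{\widehat\sigma'})$. Unwinding \eqref{eqn:strict_decoratedlogstratum_X} and using the functoriality $s_{\widetilde\sigma'\to\sigma'}^\star = s_{\widetilde\sigma'\to\widehat\sigma'}^\star \circ s_{\widehat\sigma'\to\sigma'}^\star$, the class $\{\widetilde\sigma', g, \alpha\}$ equals $\{\widetilde\sigma', g, s_{\widehat\sigma'\to\sigma'}^\star\alpha\}$, which manifestly lies in $\pi_2^\star\pi_1^\star\R_X$.

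The reverse inclusion is the substantive step. Start with a generator $\{\widetilde\sigma', g, \beta\}$, where by Lemma \ref{Lem:alternative_generators} we may assume $\beta = \{\widehat\sigma'', h, \gamma\}$ for $\widehat\sigma' \to \widehat\sigma''$ in $\widehat\Sigma$, $h \in \sPP_\star(P_{\widehat\sigma''})$, and $\gamma \in \R^\star(P_{\sigma''})$ with $\sigma'' = \varphi_1(\widehat\sigma'')$. To control $s_{\widetilde\sigma'\to\widehat\sigma'}^\star\beta$, use the base-change identity
\[
s_{\widetilde\sigma'\to\widehat\sigma'}^\star \circ (\iota_{\widehat\sigma'\to\widehat\sigma''})_\star = (\iota_{\widetilde\sigma'\to\widehat\sigma''})_\star \circ \pi_{\widehat\sigma''}^\star
\]
on the Cartesian square defining $P_{\widetilde\sigma' \to \widehat\sigma''}$ (valid via the excess intersection formula, since $\iota_{\widehat\sigma'\to\widehat\sigma''}$ is a regular embedding whose normal bundle splits as a sum of line bundles by Proposition \ref{Prop:B_Faces}; this is exactly the same manipulation used in the pullback calculation at the end of the proof of Theorem \ref{Thm:taut_system_induction}).

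Now invoke Proposition \ref{Pro:P_sigmahat_sigmaprime_cover} (applied to the blowup $\pi_2$ and the pair $\widetilde\sigma' \to \widehat\sigma''$) to obtain a proper, representable, surjective cover $J: \bigsqcup P_{\widetilde\sigma''} \to P_{\widetilde\sigma' \to \widehat\sigma''}$, where the disjoint union is indexed by pairs $(\widetilde\sigma'\to\widetilde\sigma'', \widehat\sigma''\to\widehat\sigma''')$ in $\mathfrak H_{\widetilde\sigma'\to\widehat\sigma''}$ satisfying $\varphi_2(\widetilde\sigma'') = \widehat\sigma'''$. By Proposition \ref{Pro:proper_pushforward_surjective}, the tropical pushforward $J^{\mathrm{trop}}_\star$ on homological piecewise polynomials is surjective, so $\pi_{\widehat\sigma''}^{\mathrm{trop} \, \star}h = \sum_{\widetilde\sigma''} J^{\mathrm{trop}}_{\widetilde\sigma'' \, \star}\tilde h_{\widetilde\sigma''}$ for suitable $\tilde h_{\widetilde\sigma''} \in \sPP_\star(P_{\widetilde\sigma''})$. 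Combining these with the projection formula along $J$ and the identity $s_{\widehat\sigma''\to\sigma''} \circ \pi_{\widehat\sigma''} \circ J_{\widetilde\sigma''} = s_{\widetilde\sigma''\to\sigma''}$, one computes
\[
\{\widetilde\sigma', g, \beta\} = \sum_{\widetilde\sigma''} \left\{\widetilde\sigma'', \, (\iota_{\widetilde\sigma'\to\widetilde\sigma''}^{\mathrm{trop}})^\star g \cdot \tilde h_{\widetilde\sigma''}, \, \gamma \right\},
\]
exhibiting $\{\widetilde\sigma', g, \beta\}$ as a sum of strictly decorated log-strata classes in $(\pi_1\circ\pi_2)^\star\R_X(P_{\widetilde\sigma})$. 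The main obstacle is bookkeeping the base-change identity and the cover by monodromy torsors; both are direct analogues of steps already carried out in the proof of Theorem \ref{Thm:taut_system_induction}.
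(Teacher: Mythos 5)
Your proposal follows essentially the same route as the paper's proof: both inclusions are reduced to strictly decorated generators via Lemma \ref{Lem:alternative_generators}, the inclusion $(\pi_1\circ\pi_2)^\star\R_X\subseteq\pi_2^\star\pi_1^\star\R_X$ is obtained exactly as in the paper from Lemma \ref{Lem:taut_pullback_hat}, and the reverse inclusion is handled by rerouting the pull--push through the fiber product $P_{\widetilde\sigma'\to\widehat\sigma''}$ and its cover $J$ by monodromy torsors from Proposition \ref{Pro:P_sigmahat_sigmaprime_cover}, together with the surjectivity of Proposition \ref{Pro:proper_pushforward_surjective}. Two steps are stated more loosely than the paper allows, though neither is fatal. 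First, your final formula decorates the class on $P_{\widetilde\sigma''}$ by $\gamma$ itself: since $\varphi_2(\widetilde\sigma'')$ is a cone $\widehat\sigma'''$ strictly containing $\widehat\sigma''$ in general, the composite $s_{\widehat\sigma''\to\sigma''}\circ\pi_{\widehat\sigma''}\circ J_{\widetilde\sigma''}$ factors as $\iota_{\sigma''\to\sigma'''}\circ s_{\widetilde\sigma''\to\sigma'''}$ with $\sigma'''=\varphi_1(\widehat\sigma''')$, rather than being $s_{\widetilde\sigma''\to\sigma''}$; the decoration must therefore be $\iota_{\sigma''\to\sigma'''}^\star\gamma$, which is admissible precisely because $\R_X$ is closed under these pullbacks (this is what the paper writes as $(\iota_{\sigma''\to\widetilde\sigma})^\star\beta$). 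Second, replacing the refined Gysin pullback of $\Psi(h)$ by $\Psi$ of the tropical pullback of $h$ and then absorbing everything into the unspecified $\tilde h_{\widetilde\sigma''}$ conceals the excess intersection data: the square is not transverse, so the paper instead writes $\Psi(h)=\Phi(\tilde h)\cdot[P_{\widehat\sigma''}]$ and computes the refined pullback of the fundamental class on the components of the normalization, which produces explicit Euler classes $g_{E,\widetilde\sigma''}$ of excess bundles plus correction terms supported on intersections of distinct components. Both of these extra contributions are again of the required form (the Euler classes come from piecewise polynomials by Proposition \ref{Prop:B_Faces}, and the corrections are themselves strictly decorated classes), so your argument closes once they are carried out, but as displayed the identity is not literally correct.
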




\begin{proof}
Let $\Sigma_2 \xrightarrow{\varphi_2} \Sigma_1 \xrightarrow{\varphi_1} \Sigma_X$ be the sequence of subdivisions associated to the above sequence of smooth log blowups. Given $\sigma_2 \in \Sigma_2$ a cone, we check the claimed equality of tautological systems on $X_2$ by verifying that they give the same subring of $\CH^\star(P_{\sigma_2})$. For this, we recall that both  $(\pi_2^\star \pi_1^\star \R_X)(P_{\sigma_2})$ and $( (\pi_1 \circ \pi_2)^\star \R_X)(P_{\sigma_2})$ are defined as the $\mathbb{Q}$-linear span of certain generators, given by (strictly) decorated strata classes. Our proof will proceed by showing that each type of generator of one ring can be expressed in terms of the generators of the other.

Denote by $\sigma_1 = \varphi_2(\sigma_2)$ and $\sigma = \varphi_1(\sigma_1)$ the images of $\sigma_2$ in $\Sigma_1, \Sigma$, respectively. For a choice of $\sigma_2 \to \sigma_2'$ in $\Sigma_2$, we have that generators of  $( (\pi_1 \circ \pi_2)^\star \R_X)(P_{\sigma_2})$ are given by the classes $\{\sigma_2', g, \beta\}$ defined via the commutative diagram
\[
\begin{tikzcd}
~& P_{\sigma_2'} \arrow[l, phantom, red, "\Psi(g)", near start] \arrow[r] \arrow[d] & P_{\sigma_2} \arrow[d]\\
~& P_{\sigma_1'} \arrow[r] \arrow[d, "s_{\sigma_1' \to \sigma}", swap] & P_{\sigma_1} \arrow[d]\\
~& P_{\sigma'} \arrow[l, phantom, red, "\beta", near start] \arrow[r] & P_\sigma
\end{tikzcd}
\]
By Lemma \ref{Lem:taut_pullback_hat} we know that $s_{\sigma_1' \to \sigma}^\star \beta \in (\pi_1^\star \R_X)(P_{\sigma_1'})$, and thus
\[
\{\sigma_2', g, \beta\} = \{\sigma_2', g, s_{\sigma_1' \to \sigma}^\star \beta\} \in (\pi_2^\star \pi_1^\star \R_X)(P_{\sigma_2})\,.
\]
For the other inclusion, we recall that a set of generators
\begin{equation}  \label{eqn:pi2pi1pullback_generator}
    \{\sigma_2', g_2, \{\sigma_1'', g_1, \beta\}\} \in (\pi_2^\star \pi_1^\star \R_X)(P_{\sigma_2})
\end{equation}
by strictly decorated log-stratum classes is specified by
\begin{itemize}
    \item choosing a morphism $\sigma_2 \to \sigma_2'$ in $\Sigma_2$, mapping to $\sigma_1' \to \sigma_1$ under $\varphi_2$, and an element $g_2 \in \sPP_\star(P_{\sigma_2'})$,
    \item choosing a morphism $\sigma_1' \to \sigma_1''$ in $\Sigma_1$ mapping to $\sigma' \to \sigma''$ in $\Sigma$ under $\varphi_1$, an element $g_1 \in \sPP_\star(P_{\sigma_1''})$ and a tautological class $\beta \in \R_X(P_{\sigma''})$.
\end{itemize}
These fit into the following commutative diagram of solid arrows
\begin{equation}
\begin{tikzcd}
\bigsqcup P_{\widetilde \sigma_2} \arrow[dr, dashed, "J"] \arrow[drr, dashed, bend left, "\iota_{\sigma_2' \to \widetilde \sigma_2}"] \arrow[dd, dashed, "s_{\widetilde \sigma_2 \to \widetilde \sigma_1}", swap] & ~ & ~ & ~ \\
& P_{\sigma_2' \to \sigma_1''} \arrow[r] \arrow[d] & P_{\sigma_2'} \arrow[ur, phantom, red, "\Psi(g_2)", very near start] \arrow[r] \arrow[d] & P_{\sigma_2} \arrow[d]\\
\bigsqcup P_{\widetilde \sigma_1} \arrow[d, dashed] \arrow[r, dashed, "\iota_{\sigma_1'' \to \widetilde \sigma_1}"] & P_{\sigma_1''} \arrow[r] \arrow[d] \arrow[ld, phantom, red, "\Psi(g_1)", very near start] & P_{\sigma_1'} \arrow[r] \arrow[d] & P_{\sigma_1} \arrow[d]\\
\bigsqcup P_{\widetilde \sigma} \arrow[r, dashed, "\iota_{\sigma'' \to \widetilde \sigma}"]  & P_{\sigma''} \arrow[r] \arrow[d, phantom, red, "\beta", near start] & P_{\sigma'} \arrow[r] & P_{\sigma}\\
~ & ~ &~ & 
\end{tikzcd}
\end{equation}
where the disjoint union is indexed by $(\sigma_2' \to \widetilde \sigma_2, \sigma_1'' \to \widetilde \sigma_1) \in \mathfrak{H}_{\sigma_2' \to \sigma_1''}$. As before let $\widetilde g_1, \widetilde g_2$ be strict piecewise polynomials such that $\Psi(g_1) = \Phi(\widetilde g_1) \cdot [P_{\sigma_1''}]$ and $\Psi(g_2) = \Phi(\widetilde g_2) \cdot [P_{\sigma_2'}]$. Abusing notation, we'll use the same notation for their pullback to the components $P_{\widetilde \sigma_2}$ of the disjoint union in the above diagram.

To conclude, note that the definition of the class \eqref{eqn:pi2pi1pullback_generator} involves pushing forward along $P_{\sigma_1''} \to P_{\sigma_1'}$ and pulling back along $P_{\sigma_2'} \to P_{\sigma_1'}$. First, we use compatibility of Gysin pullbacks with proper maps to reroute this via a refined Gysin pullback to and pushforward from $P_{\sigma_2' \to \sigma_1''}$. 


To analyze the result of this procedure, we first observe that since all spaces involved are idealized log smooth, and all maps are pulled back from corresponding maps of Artin fans, we conclude that the fiber product $P_{\sigma_2' \to \sigma_1''}$ is reduced, and indeed idealized log smooth. Then we claim that the above refined Gysin pullback can be calculated via the normalization morphism $J$. Indeed, the main term\footnote{For similar excess intersection theory calculations see, for example, \cite[Section 5]{canning2024tautological}.} will be given by taking a Gysin pullback from $P_{\sigma_1''}$ to the disjoint union of the spaces $P_{\widetilde \sigma_2}$, multiplying by the Euler class of a suitable excess bundle $E$ and then pushing forward to $P_{\sigma_2'}$. As before, the Euler class of $E$ comes from some piecewise polynomial $g_{E, \widetilde \sigma_2}$ on $P_{\widetilde \sigma_2}$.
Combining this with the usual projection formula, we can conclude that the \emph{main term} of the class \eqref{eqn:pi2pi1pullback_generator} is given by
\[
\sum_{(\sigma_2' \to \widetilde \sigma_2, \sigma_1'' \to \widetilde \sigma_1) \in \mathfrak{H}_{\sigma_2' \to \sigma_1''}}  \{\widetilde \sigma_2, \widetilde g_1 \cdot \widetilde g_2 \cdot g_{E, \widetilde \sigma_2} \cdot F_{\widetilde \sigma_2} , (\iota_{\sigma'' \to \widetilde \sigma})^\star \beta \} \in (\pi_2 \circ \pi_1)^\star \R_X)(P_{\sigma_2})\,,
\]
where $F_{\widetilde \sigma_2}$ is the usual piecewise polynomial with $\Phi(F_{\widetilde \sigma_2}) = [P_{\widetilde \sigma_2}]$.

In general, there will also be correction terms coming from intersections of different components $P_{\widetilde \sigma_2}$ in the normalization. These are again supported on smaller strata $P_{\widetilde \sigma_2'}$ and their contribution is given by a similar refined Gysin pullback of $\beta$ from $P_{\sigma''}$, acting on a class of a homological piecewise polynomial on $P_{\widetilde \sigma_2'}$ and then pushed forward to $P_{\sigma_2'}$. In particular, all these correction terms are also given by strictly decorated log strata classes giving generators of $(\pi_2 \circ \pi_1)^\star \R_X)(P_{\sigma_2})$. This concludes the argument.
\end{proof}

\subsection{Tautological rings of log blowups}  \label{Sect:taut_log_blowups}
Next, we want to show that for particularly simple log blowups $\widehat X \to X$, which correspond to the blowup of a smooth stratum closure in $X$, the pullback of a tautological system on $X$ to $\widehat X$ is determined purely by the data of the original tautological system and the combinatorial data of the blowup (and does \emph{not} require further knowledge of the geometry of $X$).

The basic tool for computing the system of tautological rings on the blowup are the \emph{projective bundle formula} and the \emph{blowup formula} for Chow groups. Recall that for $p: E=\mathbb{P}(\mathcal{E}) \to S$ the projectivization of a vector bundle $\mathcal{E}$ of rank $r$ on a smooth stack $S$, we have
\begin{equation} \label{eqn:projective_bundle_formula}
    \CH^\star(E) = \CH^\star(S)[\xi]/(c_r(\mathcal{E}) + c_{r-1}(\mathcal{E}) \xi + \ldots + c_1(\mathcal{E}) \xi^{r-1} + \xi^r)\,.
\end{equation}
Here the isomorphism is induced by the map sending $p^\star: \CH^\star(S) \to \CH^\star(E)$ via pullback and sending $\xi$ to $c_1(\mathcal{O}_E(1))$. 

On the other hand, let $X$ be a smooth stack and $Z \subseteq X$ a smooth closed substack. Consider the blowup $\widehat X$ of $X$ at $Z$, fitting into a fiber diagram
\begin{equation}
\begin{tikzcd}
E \arrow[r, "j"] \arrow[d, "\pi_E"] & \widehat X \arrow[d,"\pi"]\\
Z \arrow[r, "i"] & X
\end{tikzcd}
\end{equation}
where $E=\mathbb{P}(\mathcal{N}) \to Z$ is the exceptional divisor, given by the projectivization of the normal bundle $\mathcal{N}=\mathcal{N}_{Z/X}$. Denote by $\mathcal{Q} = \pi_E^\star \mathcal{N} / \mathcal{O}_E(-1)$ the universal quotient bundle on $E$ and consider the map
\[
h : \CH^\star(Z) \to \CH^\star(E), \alpha \mapsto - c_{m-1}(\mathcal{Q}) \pi_E^\star(\alpha)\,.
\]
Then there is an exact sequence
\begin{equation} \label{eqn:blow_up_sequence}
    0 \to \CH^\star(Z) \xrightarrow{(i_\star, h)} \CH^\star(X) \oplus \CH^\star(E) \xrightarrow{(\alpha, \beta) \mapsto \pi^\star(\alpha) + j_\star(\beta)} \CH^\star(\widehat X) \to 0\,.
\end{equation}
This sequence represents the Chow group of $\widehat X$ as a quotient of $\CH^\star(X) \oplus \CH^\star(E)$, where $\CH^\star(E)$ is determined by equation \eqref{eqn:projective_bundle_formula}. In fact, the natural ring structure on $\CH^\star(\widehat X)$ descends from a product on $\CH^\star(X) \oplus \CH^\star(E)$ given by the rules
\[
(\alpha_1, 0) \cdot (\alpha_2, 0) = (\alpha_1 \alpha_2, 0), \quad (\alpha, 0) \cdot (0, \beta) = (0, \beta \cdot \pi_E^\star i^\star \alpha), \quad (0, \beta_1) \cdot (0, \beta_2) = (0, -\beta_1 \cdot \beta_2 \cdot \xi)\,,
\]
with $\xi = c_1(\mathcal{O}_E(1))$ as before. For a reference see \cite[Exercise 8.3.9]{Ful98} for the case where $X$ is a variety and \cite[Theorem 7.1]{Abramovich_blow_up} for the case when $X$ is a quotient stack.

\begin{theorem} \label{Thm:R_blow_up_determined}
For a log blowup $\pi: \widehat X \to X$ that can be represented as a sequence of blowups of smooth strata closures, and a tautological system $\R_X$ on $X$, the data of the tautological rings \eqref{eqn:strata_taut_ring_inclusion} and the pushforward and pullback maps \eqref{eqn:gluing_map_preserving} between them determine the induced tautological system $\pi^\star \R_X$ via applications of the projective bundle formula \eqref{eqn:projective_bundle_formula} and the blowup exact sequence \eqref{eqn:blow_up_sequence}.
\end{theorem}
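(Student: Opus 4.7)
The plan is to proceed by induction on the number of blowups in a factorization $\widehat X = X_N \to X_{N-1} \to \cdots \to X_0 = X$, where each $X_{i+1} \to X_i$ is the blowup of a smooth stratum closure. Proposition \ref{Prop:tautological_system_functoriality} shows that $\pi^\star \R_X$ is obtained by iteratively applying the blowup-pullback construction, so it suffices to treat the base case of a single blowup $\pi: \widehat X \to X$ at a smooth stratum closure $\overline S_\sigma$. The associated subdivision is the star subdivision $\widehat \Sigma = \mathsf{ssd}_\sigma(\Sigma_X)$ described explicitly in Construction \ref{Const:ssd} and Proposition \ref{Pro:blow_up_description}.

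Given $\widehat \sigma \in \widehat \Sigma$ with image $\sigma_0 = \varphi(\widehat \sigma) \in \Sigma_X$, I will analyze the geometry of the map $s_{\widehat \sigma \to \sigma_0}: P_{\widehat \sigma} \to P_{\sigma_0}$ from \eqref{eqn:s_widehat_sigma_to_sigma} according to the two types of cones in Construction \ref{Const:ssd}. For $\widehat \sigma = \widetilde \sigma_0 \in \mathcal{R}$ not in the star of $\sigma$, the map $s_{\widehat \sigma \to \sigma_0}$ is the blowup of $P_{\sigma_0}$ along the disjoint union of smooth strata closures parametrized by those morphisms $\sigma \to \sigma_0$ in $\Sigma_X$ (using Lemma \ref{Lem:strata_closure_smooth} to guarantee smoothness), and the blowup exact sequence \eqref{eqn:blow_up_sequence} presents $\CH^\star(P_{\widehat \sigma})$ as a quotient of $\CH^\star(P_{\sigma_0}) \oplus \bigoplus_E \CH^\star(E)$; each exceptional divisor $E$ is, by \eqref{eqn:projective_bundle_formula}, a projective bundle over the appropriate $P_{\sigma'}$ with normal bundle described by Proposition \ref{Prop:B_Faces}. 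For $\widehat \sigma = (\tau \xrightarrow{h} \sigma \xrightarrow{h'} \sigma')$ of the other type, the map $s_{\widehat \sigma \to \sigma_0}$ factors through the projective bundle giving the exceptional divisor, and \eqref{eqn:projective_bundle_formula} applies directly to express $\CH^\star(P_{\widehat \sigma})$ in terms of Chow groups of monodromy torsors already appearing in $\R_X$.

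With these presentations in hand, the next step is to identify the subring $(\pi^\star \R_X)(P_{\widehat \sigma})$. By Lemma \ref{Lem:alternative_generators} it is spanned by strictly decorated log-strata classes $\{\widehat \sigma', g, \beta\}$ for $\widehat \sigma \to \widehat \sigma'$ in $\widehat \Sigma$. Under the identification above, the piecewise polynomial decorations $g$ correspond to the projective bundle classes $\xi$ and pullbacks of strict piecewise polynomials from strata in $\Sigma_X$ (via Proposition \ref{Prop:B_Faces}), while the decorations $\beta \in \R_X(P_{\sigma'})$ are built from classes already present in $\R_X$ on monodromy torsors of $\Sigma_X$ and pulled back using the maps $\iota_{\sigma \to \sigma'}^\star$ from the tautological system data \eqref{eqn:gluing_map_preserving}. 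Thus every generator of $(\pi^\star\R_X)(P_{\widehat \sigma})$ is produced from the input data $\R_X$ together with the combinatorial star subdivision, applying only \eqref{eqn:projective_bundle_formula} and \eqref{eqn:blow_up_sequence}; the explicit product formula from Proposition \ref{Prop:iota_pushforward_intersection} and Proposition \ref{Prop:product_declogstrata_general} then shows that the ring structure is likewise determined.

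The main obstacle I anticipate is the bookkeeping required to match the combinatorics of $\mathsf{ssd}_\sigma(\Sigma_X)$ with the geometric description of $s_{\widehat \sigma \to \sigma_0}$: when $\sigma_0$ admits several different morphisms from $\sigma$ in $\Sigma_X$, the map $s_{\widehat \sigma \to \sigma_0}$ is simultaneously the blowup of $P_{\sigma_0}$ along a disjoint union of components, and one must carefully compose the blowup exact sequence with the projective bundle formula for each exceptional divisor, and check compatibility with the pushforward and pullback operations inducing $\pi^\star\R_X$ from $\R_X$. This compatibility is the analogue of the excess intersection analysis carried out in the proof of Proposition \ref{Prop:tautological_system_functoriality}, and reducing it to purely combinatorial data on $\Sigma_X$ is the technical heart of the argument.
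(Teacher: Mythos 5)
Your overall strategy coincides with the paper's: reduce to a single blowup of a smooth stratum closure via Proposition \ref{Prop:tautological_system_functoriality}, identify the blowup with the star subdivision $\mathsf{ssd}_\sigma(\Sigma_X)$, and split into the two types of cones from Construction \ref{Const:ssd}, handling the exceptional-divisor strata with the projective bundle formula and the strict-transform strata with the blowup sequence. However, there are two concrete problems. First, your description of the blowup center in the strict-transform case is wrong: for $\widehat\sigma = \widetilde\sigma \in \mathcal{R}$ there are by definition \emph{no} morphisms $\sigma \to \widetilde\sigma$ in $\Sigma_X$, so the center cannot be ``parametrized by those morphisms.'' The center is the preimage of $\overline S_\sigma$ under $\iota_{\widetilde\sigma}: P_{\widetilde\sigma}\to X$, whose components are indexed by generic $(\widetilde\sigma,\sigma)$-structures $\mathfrak{G}_{\widetilde\sigma,\sigma}$ (minimal cones $\sigma'$ receiving maps from both $\widetilde\sigma$ and $\sigma$), and one must quotient the resulting disjoint union $\bigsqcup P_{\sigma'}$ by $\Aut(\sigma)$ to identify it as a union of strata closures of $P_{\widetilde\sigma}$. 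Getting this right is what makes the term $\R_X^\star(Z_{\widetilde\sigma,\sigma})$ in the sequence expressible through the given data.

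Second, and more seriously, you only argue one inclusion. Showing that every strictly decorated log-stratum class is ``produced from the input data'' does not establish that $(\pi^\star\R_X)(P_{\widehat\sigma})$ \emph{equals} the ring predicted by the projective bundle formula, nor that the tautological version of the blowup sequence is exact; without this, the data do not determine the induced system. In Case 1 the substantive point is that if $\gamma \in (\pi^\star\R_X)(P_{\widehat\sigma})$ is written as $\sum_a q^\star\beta_a\cdot\xi^a$, then each coefficient $\beta_a$ individually lies in $\R_X(P_{\sigma'})$; the paper gets this by observing that $q_\star(\gamma\cdot\xi^a)$ is tautological (Lemma \ref{Lem:taut_pullback_hat}) and that the $\beta_a$ are recovered from these pushforwards by an invertible matrix built from Chern classes of $\mathcal{E}_{\tau\to\sigma}$, which come from piecewise polynomials. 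In Case 2 one must prove exactness of \eqref{eqn:R_Xhat_sequence} at all three spots; the right-exactness argument (normalizing $\beta$ so that $(\pi_E)_\star\beta=0$, showing $\alpha = q_\star\delta$ is tautological, then using $j^\star\delta = \beta\cdot\xi$ and uniqueness of the projective-bundle expansion to conclude $\beta$ is tautological) is the technical heart of the proof, and it is not the ``excess intersection'' bookkeeping you point to. Your proposal would need these arguments supplied to be complete.
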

\begin{proof}
By Proposition \ref{Prop:tautological_system_functoriality} it suffices to show the claim for a single blowup of a smooth stratum closure $\overline S_\sigma = \widetilde S_\sigma$ for some $\sigma \in \Sigma_X$.
As we have seen in Lemma \ref{Lem:strata_closure_smooth}, the fact that $\overline S_\sigma$ is smooth is reflected on the cone stack side by the map $\Sigma_\sigma^0 / \mathsf{Aut}(\sigma) \to \Sigma$ being a fully faithful embedding of categories.
By Proposition \ref{Pro:blow_up_description}, the blowup map $\pi: \widehat X \to X$ corresponds to the map
\[
b_{\sigma, \Sigma_X}^\textup{trop} : \widehat \Sigma = \mathsf{ssd}_\sigma(\Sigma_X) \to \Sigma_X
\]
from the star-subdivision of $\Sigma_X$ at $\sigma$ that we specified in Construction \ref{Const:ssd}. 
The claim of the above theorem is then that for any $\widehat \sigma \in \mathsf{ssd}_\sigma(\Sigma_X)$ it is possible to understand the subring $(\pi^\star \mathsf{R}_X)(P_{\widehat \sigma}) \subseteq \CH^\star(P_{\widehat \sigma})$ purely in terms of the tautological maps \eqref{eqn:strata_taut_ring_inclusion}, \eqref{eqn:gluing_map_preserving} of the original system, and the combinatorics of $\sigma \in \Sigma_X$.
Using the description of the objects $\widehat \sigma \in \mathsf{ssd}_\sigma(\Sigma_X)$ from Construction \ref{Const:ssd}, we distinguish two cases:

\noindent \textbf{Case 1:} $\widehat \sigma = (\tau \to \sigma \to \sigma')$

The cone $\widehat \sigma$ corresponds to a stratum of $\widehat X$ inside the exceptional divisor of $\pi$.
The map $\widehat \Sigma \to \Sigma$ sends $\widehat \sigma$ to $\sigma'$, so as in \eqref{eqn:s_widehat_sigma_to_sigma} we obtain a map 
$q = s_{\widehat \sigma \to \sigma'}: P_{\widehat \sigma} \to P_{\sigma'}$. We claim that this map can be identified canonically as the projective bundle
\begin{equation} \label{eqn:q_projective_bundle}
    q : P_{\widehat \sigma} = \mathbb{P}\left( \mathcal{E}_{\tau \to \sigma} \right) \to P_{\sigma'},\text{ for } \mathcal{E}_{\tau \to \sigma} = \bigoplus_{\rho \in \sigma(1) \setminus \tau(1)} \pi_\sigma^\star \mathcal{L}_\rho
\end{equation}
where the line bundles $\pi_\sigma^\star \mathcal{L}_\rho$ are defined as in Proposition \ref{Pro:pi_sigma}. Intuitively this is plausible, as we are considering a stratum in the exceptional divisor of the blowup. 
For a slightly more rigorous argument, recall that the strata of $P_{\sigma'}$ correspond to the objects $(\sigma' \to \overline \sigma') \in \Sigma_{\sigma'}^0$ in the interior of the cone stack $\Sigma_{\sigma'}$. Unravelling the definitions, the strata of $P_{\widehat \sigma}$ mapping to them are then indexed by 
objects in $\widehat \Sigma_{\widehat \sigma}^0$ corresponding to diagrams
\[
\begin{tikzcd}
\tau \arrow[r] \arrow[d] & \sigma \arrow[r] \arrow[d] & \sigma'\arrow[d] \\
\overline \tau \arrow[r] & \sigma \arrow[r] & \overline \sigma'
\end{tikzcd}
\]
where the rightmost arrow is the one given by $(\sigma' \to \overline \sigma')$. Using the remaining automorphisms of such objects, we can make the middle arrow $\sigma \to \sigma$ be the identity. Then combinatorially, the objects are indexed by composition of face inclusions $(\tau \prec \overline \tau \precneq \sigma)$, whose associated cone $C(\tau \prec \overline \tau \precneq \sigma)$ is spanned by $b_\sigma, \overline \tau(1)$ and the rays of $\overline{\sigma}$ not in the image of $\sigma$. 
The unique smallest (or primitive) object in this collection is given by choosing $\overline \tau = \tau$. 
Using the theory of fibers of toric morphisms (see \cite[Proposition 2.1.4]{Toric_fibers}), we then confirm the identification \eqref{eqn:q_projective_bundle} of $P_{\widehat \sigma}$ as a projective bundle over $P_{\sigma'}$. The direct summands of the vector bundle $\mathcal{E}_{\tau \to \sigma}$, corresponding to the rays of $\sigma$ not in $\tau$, are determined by the fact that an object $(\tau \prec \overline \tau \precneq \sigma)$ as above is precisely determined by the choice of $\overline \tau(1) \setminus \tau(1) \subsetneq \sigma(1) \setminus \tau(1)$.

By the projective bundle formula \eqref{eqn:projective_bundle_formula}, the Chow group of $P_{\widehat \sigma}$ is determined as
\begin{equation} \label{eqn:CHX_projbundle}
 \CH^\star(P_{\widehat \sigma}) = \CH^\star(P_{\sigma'})[\xi]/(c_r(\mathcal{E}_{\tau \to \sigma}) + c_{r-1}(\mathcal{E}_{\tau \to \sigma}) \xi + \ldots + c_1(\mathcal{E}_{\tau \to \sigma}) \xi^{r-1} + \xi^r)\,.
\end{equation}
We claim that the same equality holds when replacing the full Chow group $\CH$ with the tautological rings:
\begin{equation} \label{eqn:RX_projbundle}
 (\pi^\star \R_X)(P_{\widehat \sigma}) = \R_X^\star(P_{\sigma'})[\xi]/(c_r(\mathcal{E}_{\tau \to \sigma}) + c_{r-1}(\mathcal{E}_{\tau \to \sigma}) \xi + \ldots + c_1(\mathcal{E}_{\tau \to \sigma}) \xi^{r-1} + \xi^r)\,.   
\end{equation}
If we prove this, then indeed $(\pi^\star \R_X)(P_{\widehat \sigma})$ is determined by $\R_X(P_{\sigma'})$ as the Chern classes $c_i(\mathcal{E}_{\tau \to \sigma})$ are simply the elementary symmetric polynomials in $c_1(\pi_\sigma^\star \mathcal{L}_\rho)$, coming from piecewise linear functions on $P_{\sigma'}$ by Proposition \ref{Prop:B_Faces}. 
Moreover, we note that there is a piecewise linear function $e \in \sPP_\star(\widehat \Sigma_{\widehat \sigma})$ such that $\Phi(e) = \xi$. Indeed we can choose $e$ as the pullback of $- \min(x_\rho: \rho \in \sigma(1))$ from $\mathsf{Faces}(\sigma)$.

To show the inclusion $\subseteq$ of \eqref{eqn:RX_projbundle} note that the generators of $(\pi^\star \R_X)(P_{\widehat \sigma})$ are given by the decorated log strata classes $[\sigma'', f, \deco]$ for a choice of $\sigma' \to \sigma''$ in $\Sigma_X$, a homological strict piecewise polynomial $f \in \sPP_\star(P_{\widehat \sigma \to \sigma''})$ and $\deco \in \R_X(P_{\sigma''})$, appearing in the diagram
\[
\begin{tikzcd}
~& P_{\widehat \sigma \to \sigma''} \arrow[l, phantom, red, "\Psi(f)", midway] \arrow[r] \arrow[d, "q'"] & P_{\widehat \sigma} \arrow[d, "q"]\\
~& P_{\sigma''} \arrow[l, phantom, red, "\deco", midway] \arrow[r, "\iota_{\sigma' \to \sigma''}"]  & P_{\sigma'}
\end{tikzcd}
\]
To prove that $[\sigma'', f, \deco]$ is contained in the right-hand side of \eqref{eqn:RX_projbundle}, it is sufficient to show that 
$$q_\star([\sigma'', f, \deco] \cdot \xi^a) \in \R_X^\star(P_{\sigma'})\text{ for all }0 \leq a \leq r.$$
Indeed it is a general result that for a cycle $\gamma$ in a projective bundle, its coefficients (as Chow classes on the base) in the projective bundle formula \eqref{eqn:CHX_projbundle} can be reconstructed from the pushforwards $q_\star(\gamma \cdot \xi^a)$, $a=0, \ldots, r$, in terms of a matrix of classes depending only on the Chern classes of the bundle. But noting that $\xi = \Phi(e) = \Psi(e \cdot F_{\widehat \sigma})$, with $F_{\widehat \sigma} \in \sPP_\star(P_\sigma)$ as in Proposition \ref{Prop:B_Faces}, we have $[\sigma'', f, \deco] \cdot \xi^a \in (\pi^\star \R_X)(P_{\widehat \sigma})$. Therefore its pushforward under $q = s_{\widehat \sigma \to \sigma}$ is tautological by Lemma \ref{Lem:taut_pullback_hat}, concluding the proof of the inclusion $\subseteq$.


To show the inclusion $\supseteq$ of \eqref{eqn:RX_projbundle} it remains to observe that
by Lemma \ref{Lem:taut_pullback_hat} we have $q^\star \R_X(P_{\sigma'}) \subseteq (\pi^\star \R_X)(P_{\widehat \sigma})$. Since also $\xi = \Psi(e \cdot F_{\widehat \sigma})$ is contained in this ring, we have concluded the equality \eqref{eqn:RX_projbundle} and Case 1.

\noindent \textbf{Case 2:} $\widehat \sigma = \widetilde \sigma$ for $\widetilde \sigma \in \Sigma_X$ not containing $\sigma$ as a face

A cone $\widehat \sigma$ of this type corresponds to a stratum of $\widehat X$ that is the strict transform of the stratum in $X$ corresponding to $\widetilde \sigma$. In the following we always write $\widehat \sigma$ for the cone in $\widehat \Sigma$, and $\widetilde \sigma$ for the cone in $\Sigma_X$.
We claim that the map 
\begin{equation}
q : P_{\widehat \sigma} = \mathsf{Bl}_{Z_{\widetilde \sigma, \sigma}} P_{\widetilde \sigma} \to P_{\widetilde \sigma}
\end{equation}
is a blowup of a union $Z_{\widetilde \sigma, \sigma} \subseteq P_{\widetilde \sigma}$ of smooth {and disjoint} strata closures in $P_{\widetilde \sigma}$, which are precisely the preimage of the blowup center $\overline S_{\sigma} \subseteq X$ under the map $\iota_{\widetilde \sigma} : P_{\widetilde \sigma} \to X$.
To characterize this preimage of $\overline{S}_\sigma$, 
note that the map $\iota_\sigma : P_\sigma \to X$ has exactly image $\overline{S}_\sigma$, and forms an $\mathsf{Aut}(\sigma)$-torsor over this image. Then from Proposition \ref{Prop:product_declogstrata_general} we know that that the fiber product of $\iota_{\widetilde \sigma}$ and the map $\iota_\sigma$ is given by the disjoint union
\begin{equation}
\widetilde{Z}_{\widetilde \sigma, \sigma} = \bigsqcup_{(\sigma', \widetilde \varphi, \varphi) \in \mathfrak{G}_{\widetilde \sigma, \sigma}} P_{\sigma'}
\end{equation}
over all isomorphism classes of generic $(\widetilde \sigma, \sigma)$-structures $(\widetilde \sigma \xrightarrow{\widetilde \varphi} \sigma' \xleftarrow{\varphi} \sigma)$.
These fit in an iterated fiber diagram as follows
\begin{equation}
\begin{tikzcd}
\widetilde{Z}_{\widetilde \sigma, \sigma} \arrow[r] \arrow[d] & {Z}_{\widetilde \sigma, \sigma} \arrow[r, hook] \arrow[d] & P_{\widetilde \sigma} \arrow[d, "\iota_{\widetilde \sigma}"]\\
P_{\sigma} \arrow[r] \arrow[rr, "\iota_\sigma", bend right, swap] & \overline{S}_\sigma \arrow[r, hook] & X
\end{tikzcd}
\end{equation}
Since the pullback of the $\mathsf{Aut}(\sigma)$-torsor $P_\sigma \to \overline{S}_\sigma$ is again such a torsor, we have that the center
\begin{equation} \label{eqn:Z_union_quotient}
{Z}_{\widetilde \sigma, \sigma} = \left( \bigsqcup_{(\sigma', \widetilde \varphi, \varphi) \in \mathfrak{G}_{\widetilde \sigma, \sigma}} P_{\sigma'} \right)/\mathsf{Aut}(\sigma)
\end{equation}
of the blowup $q$ is a union of strata closures of $P_{\widetilde \sigma}$. 
Let $\pi_E : E \to {Z}_{\widetilde \sigma, \sigma}$ be the exceptional divisor of $q$. Then from the blowup formula \eqref{eqn:blow_up_sequence} we obtain the exact sequence
\begin{equation} \label{eqn:CH_Xhat_sequence}
    0 \to \CH^\star({Z}_{\widetilde \sigma, \sigma}) \xrightarrow{(i_\star, h)} \CH^\star(P_{\widetilde \sigma}) \oplus \CH^\star(E) \xrightarrow{(\alpha, \beta) \mapsto q^\star(\alpha) + j_\star(\beta)} \CH^\star(P_{\widehat \sigma}) \to 0\,.
\end{equation}
In this sequence, the representation  \eqref{eqn:Z_union_quotient} of ${Z}_{\widetilde \sigma, \sigma}$ shows that its Chow group is given as
\begin{equation} \label{eqn:Chow_Z_sigmatilde}
\CH^\star({Z}_{\widetilde \sigma, \sigma}) = \left( \bigoplus_{(\sigma', \widetilde \varphi, \varphi) \in \mathfrak{G}_{\widetilde \sigma, \sigma}} \CH^\star(P_{\sigma'}) \right)^{\mathsf{Aut}(\sigma)}
\end{equation}
and the Chow group $\CH^\star(E)$ is given as a finite algebra over $\CH^\star({Z}_{\widetilde \sigma, \sigma})$ by a suitable projective bundle formula \eqref{eqn:projective_bundle_formula}.
Thus we see that if we have full control over all Chow rings $(\CH^\star(P_{\sigma_0}))_{\sigma_0 \in \Sigma_X}$, the natural pushforward maps between them and the Chern classes of normal bundles of the maps $\iota_{\sigma_0}$, then we also have full control over the Chow rings of the spaces $P_{\widehat \sigma}$ above. We claim that replacing the full Chow rings with the relevant tautological rings, the same result holds, and in particular
\begin{equation} \label{eqn:R_Xhat_sequence}
    0 \to \R_X^\star({Z}_{\widetilde \sigma, \sigma}) \xrightarrow{(i_\star, h)} \R_X^\star(P_{\widetilde \sigma}) \oplus \R_X^\star(E) \xrightarrow{(\alpha, \beta) \mapsto q^\star(\alpha) + j_\star(\beta)} (\pi^\star \R_X)(P_{\widehat \sigma}) \to 0\,.
\end{equation}
Here $\R_X^\star({Z}_{\widetilde \sigma, \sigma})$ is defined by the tautological analogue of the formula \eqref{eqn:Chow_Z_sigmatilde}, and $\R_X^\star(E)$ is defined by the projective bundle formula over it. The fact that the first map $(i_\star, h)$ in the sequence \eqref{eqn:R_Xhat_sequence} is well defined follows from the invariance of tautological rings under the pushforwards $\iota_{\widetilde \sigma \to \sigma' *}$ and the fact that the Chern classes of their normal bundles come from piecewise polynomial functions. For the second arrow, the pullback map $q^\star: \R_X^\star(P_{\widetilde \sigma}) \to (\pi^\star \R_X)(P_{\widehat \sigma})$ is well-defined by Lemma \ref{Lem:taut_pullback_hat}. To see that $j_\star : \R_X^\star(E) \to (\pi^\star \R_X)^\star(P_{\widehat \sigma})$ is well-defined, observe that taking the fiber diagram of $E \to Z_{\widetilde \sigma, \sigma}$ with the $\mathsf{Aut}(\sigma)$-torsor $\widetilde Z_{\widetilde \sigma, \sigma} \to  Z_{\widetilde \sigma, \sigma}$, we obtain a similar torsor $\widetilde E \to E$. By the previous step of the proof, we see easily that
\[
\widetilde E = \bigsqcup_{(\sigma', \widetilde \varphi, \varphi) \in \mathfrak{G}_{\widetilde \sigma, \sigma}} P_{\langle b_{\sigma'}, \widetilde \sigma \rangle}\,,
\]
where $b_{\sigma'} \in \sigma'$ is the barycenter of $\sigma'$, and $\langle b_{\sigma'}, \widetilde \sigma \rangle$ is the sub-cone of $\sigma'$ spanned by $b_{\sigma'}$ and the face $\widetilde \varphi: \widehat \sigma \to \sigma'$. This cone is part of $\widehat \Sigma$ and the natural morphism $\widehat \sigma \to \langle b_{\sigma'}, \widetilde \sigma \rangle$ induces a codimension $1$ map $P_{\langle b_{\sigma'}, \widetilde \sigma \rangle} \to P_{\widehat \sigma}$. Again by Case 1 of the current proof, the tautological ring of $\widetilde E$ is given by the projective bundle formula over $\R_X(\widetilde Z_{\widetilde \sigma, \sigma})$ and hence
\[
\R_X^\star(E) = (\pi^\star \R_X)^\star(\widetilde E)^{\mathsf{Aut}(\sigma)} = \left(\bigoplus_{(\sigma', \widetilde \varphi, \varphi) \in \mathfrak{G}_{\widetilde \sigma, \sigma}} (\pi^\star \R_X)(P_{\langle b_{\sigma'}, \widetilde \sigma \rangle}) \right)^{\mathsf{Aut}(\sigma)}\,.
\]
Then the fact that $j_\star$ is well-defined follows from the invariance of $\pi^\star \R_X$ under pushforward by the maps $P_{\langle b_{\sigma'}, \widetilde \sigma \rangle} \to P_{\widehat \sigma}$. Thus we conclude that the sequence \eqref{eqn:R_Xhat_sequence} is well-defined.

To show its exactness, we note that each term is naturally a subset of the full sequence \eqref{eqn:CH_Xhat_sequence}, which is exact. From this, exactness of \eqref{eqn:R_Xhat_sequence} at $\R_X^\star(Z_{\widetilde \sigma, \sigma})$ is automatic. For exactness in the middle, assume that there is a class $(\alpha, \beta) \in \R_X^\star(P_{\widetilde \sigma}) \oplus \R_X^\star(E)$ mapping to zero in $(\pi^\star \R_X)^\star(P_{\widehat \sigma})$. Then by exactness of \eqref{eqn:CH_Xhat_sequence} it comes from some class $\gamma \in \CH^\star(Z_{\widetilde \sigma, \sigma})$ and we must show that $\gamma$ is tautological. But the map $(\alpha, \beta) \mapsto (\pi_E)_\star \beta$ is a section of the map $(i_\star, h)$, and thus $\gamma = (\pi_E)_\star \beta$. Since the pushforward by $\pi_E$ sends tautological classes to tautological classes (see Lemma \ref{Lem:taut_pullback_hat}), we have that $\gamma$ is tautological as desired.

It remains to prove exactness on the right. So let $\delta \in (\pi^\star \R_X)(P_{\widehat \sigma})$ then by the exactness of the sequence of Chow groups, there exist $(\alpha, \beta) \in \CH^\star(P_{\widetilde \sigma}) \oplus \CH^\star(E)$ mapping to $\delta$. By modifying this pair via a class coming from $\CH^\star(Z_{\widetilde \sigma, \sigma})$, we can assume that $\beta$ satisfies $(\pi_E)_\star \beta = 0$. Equivalently, writing
\[
\beta = \beta_0 + \beta_1 \xi + \ldots + \beta_{r-2} \xi^{r-2} + \beta_{r-1} \xi^{r-1} \in \CH^\star(E)
\]
for unique $\beta_i \in \CH^\star(Z_{\widetilde \sigma, \sigma})$, we can assume $\beta_{r-1} = 0$.
But from this it follows
\[
q_\star \delta = q_\star q^\star \alpha + q_\star j_\star \beta = \alpha + i_\star \underbrace{(\pi_E)_\star \beta}_{=0} = \alpha\,.
\]
Since $q_\star$ sends tautological classes to tautological classes by Lemma \ref{Lem:taut_pullback_hat}, we have $\alpha \in \R_X(P_{\widetilde \sigma})$. By replacing $\delta$ with $\delta - q^\star \alpha$, we may assume without loss of generality that $\alpha = 0$ and thus $\delta = j_\star \beta\in (\pi^\star \R_X)^\star(P_{\widehat \sigma})$. Since the tautological system $\pi^\star \R_X$ is closed under pullback by the map $j$ (from Theorem \ref{Thm:taut_system_induction}), we have $j^\star \delta \in \R_X(E)$. But on the other hand
\[
j^\star \delta = j^\star j_\star \beta = \beta \cdot \xi = \beta_0 \xi + \beta_1 \xi^2 + \ldots + \beta_{r-2} \xi^{r-1} \in \R_X(E)\,.
\]
Uniqueness of this representation implies that all $\beta_i \in \R_X(Z_{\widetilde \sigma, \sigma})$, and hence also our original $\beta$ was contained in $\R_X(E)$. This concludes the proof of exactness of \eqref{eqn:R_Xhat_sequence}, the claim of Case 2 and thus of the theorem itself.
\end{proof}

\subsection{Log tautological rings and a generating set for \texorpdfstring{$\logCH^\star(X,D)$}{logCH*(X,D)}} \label{Sect:log_tautological_rings_in_general}
Let $(X,D)$ be a smooth normal-crossings pair as before. Then using the notion of induced tautological systems on log blowups $\widehat X$ of $X$, we can define the notion of a log tautological class.
\begin{definition}
\label{def:logtautring}
For $\R = \R_X$ a tautological system on $X$, we define the \emph{logarithmic tautological ring} of $X$ as the colimit
\begin{equation}
    \mathsf{logR}^\star(X) = \varinjlim_{\pi: \widehat X \to X} (\pi^\star\R_X)(\widehat X) \subseteq \mathsf{logCH}^\star(X)\,.
\end{equation}
of the induced tautological rings on smooth log blowups $\widehat X$ of $X$.
\end{definition}

By Remark~\ref{rem:pushforwardtautsystems}, this is indeed a logarithmic lift of the tautological ring $\R(X) \subset \CH(X)$.
\begin{proposition}
The image of $\mathsf{logR}^\star(X)$ under the pushforward map $\LogCH(X) \to \CH(X)$ is $\R^\star(X)$.
\end{proposition}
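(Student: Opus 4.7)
The plan is to prove both inclusions of the asserted equality. Write $\beta: \logCH^\star(X) \to \CH^\star(X)$ for the pushforward projection induced by proper pushforwards along log blowups, so the statement is that $\beta(\mathsf{logR}^\star(X)) = \R^\star(X)$.

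For the inclusion $\R^\star(X) \subseteq \beta(\mathsf{logR}^\star(X))$, I would use the trivial log blowup $\mathrm{id}: X \to X$, whose induced tautological system is $\R_X$ itself. The zero cone $0 \in \Sigma_X$ has monodromy torsor $P_0 = X$ with $\R^\star(P_0) = \R^\star(X)$. For any $\alpha \in \R^\star(X)$, the decorated log-strata class $[0, 1, \alpha]$ (taking $\sigma' = 0$, $f = 1 \in \sPP_0(P_0)$, decoration $\alpha$) equals $\alpha$ in $\CH^\star(X)$, so it lies in $\mathsf{logR}^\star(X)$ and is preserved by $\beta$.

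For the converse $\beta(\mathsf{logR}^\star(X)) \subseteq \R^\star(X)$, a class in $\mathsf{logR}^\star(X)$ is by Definition \ref{def:logtautring} represented, for some smooth log blowup $\pi: \widehat X \to X$ corresponding to a subdivision $\varphi: \widehat \Sigma \to \Sigma_X$, by a class $\widehat \alpha \in (\pi^\star \R_X)(\widehat X)$, with $\beta$-image equal to $\pi_\star \widehat \alpha$. The zero cone $\widehat 0 \in \widehat \Sigma$ maps to the zero cone $0 \in \Sigma_X$, both of which have trivial automorphism group (a subdivision does not introduce new automorphisms at the zero cone), so $P_{\widehat 0} = \widehat X$ and $P_0 = X$, and the induced map $s_{\widehat 0 \to 0}$ coincides with $\pi$ itself. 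Lemma \ref{Lem:taut_pullback_hat} (together with Remark \ref{rem:pushforwardtautsystems}) then yields
\[
\pi_\star \widehat \alpha \;=\; s_{\widehat 0 \to 0 \, \star}\, \widehat \alpha \;\in\; \R_X(P_0) \;=\; \R^\star(X),
\]
which gives the desired inclusion. Passing to the colimit over log blowups is harmless by Proposition \ref{Prop:tautological_system_functoriality}, which ensures compatibility of the tautological pushforwards across different subdivisions.

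I expect no essential obstacle: the statement is a formal consequence of the definitions once Lemma \ref{Lem:taut_pullback_hat} is in hand. The one point to verify carefully is the identification $s_{\widehat 0 \to 0} = \pi$, which follows by unpacking the fiber product defining the monodromy torsors: for the zero cone, $\mathcal{P}_{\widehat 0}$ coincides with the Artin fan $\mathcal{A}_{\widehat X}$, and the cartesian diagram of Corollary \ref{Cor:Psigma_cartesian_diagram} degenerates to the structure map $\widehat X \to \mathcal{A}_{\widehat X}$, so that $s_{\widehat 0 \to 0}$ is obtained by base change from the map of Artin fans induced by the subdivision $\varphi$ --- which is exactly the log blowup $\pi$.
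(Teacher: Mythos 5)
Your proof is correct and follows essentially the same route as the paper, which leaves the argument implicit by citing Remark \ref{rem:pushforwardtautsystems} (itself resting on the pushforward half of Lemma \ref{Lem:taut_pullback_hat} applied with $\widehat\sigma=\widehat 0$, $\sigma=0$, where $s_{\widehat 0\to 0}=\pi$). Your added verification that $P_{\widehat 0}=\widehat X$ and $[0,1,\alpha]=\alpha$ for the easy inclusion is a faithful unpacking of what the paper takes for granted.
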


Since the induced tautological rings above are by definition spanned by decorated log strata classes, we can write down a formal $\Q$-algebra with an explicit surjection onto $\mathsf{logR}^\star(X)$.
\begin{definition}
\label{def:logS}
Let $\R_X$ be a tautological system on $X$, then its associated \emph{log strata algebra} is given by
\begin{equation} \label{eqn:logSstar}
\logS^\star(X) = \bigoplus_{\sigma \in \Sigma_X} \pPP_\star(\Sigma_\sigma) \otimes_{\sPP^\star(\Sigma_\sigma)} \R_X(P_\sigma)\,,
\end{equation}
where the sum goes over a set of \emph{representatives} of isomorphism classes $\sigma \in \Sigma_X$ and $\R_X(P_\sigma)$ is a module over $\sPP^\star(\Sigma_\sigma)$ via the map $\Phi$.
It admits a natural map
\begin{equation} \label{eqn:logS_to_logCH}
\logS^\star(X) \to \logCH^\star(X), \sum_{\sigma \in \Sigma_X} \underbrace{f_\sigma \otimes \gamma_\sigma}_{=:[\sigma, f_\sigma, \gamma_\sigma]} \mapsto \sum_{\sigma \in \Sigma_X} (\iota_\sigma)_\star \left(\gamma_\sigma \cdot \Psi_{P_\sigma}^\mathrm{log}(f_\sigma) \right)
\end{equation}
to the logarithmic Chow ring of $X$.
\end{definition}

Our first remark is that the notation $[\sigma, f_\sigma, \gamma_\sigma]$ above is compatible with the notation for decorated strata classes in Definition \ref{def:logdecstratum}. Indeed, for the piecewise polynomial $f_\sigma$ on $\Sigma_\sigma$ we can find a subdivision $\widehat \Sigma^\sigma \to \Sigma_\sigma$ making it a strict piecewise polynomial. Choose a subdivision $\widehat \Sigma \to \Sigma_X$ which contains the image of all new walls in $\widehat \Sigma^\sigma$. Then the image of $[\sigma, f_\sigma, \gamma_\sigma]$ is a decorated stratum class (given by the same notation). Indeed, in Definition \ref{def:logdecstratum} we choose $\widehat \sigma = 0 \in \widehat \Sigma$ mapping to $0 \in \Sigma_X$, which admits a map $0 \to \sigma$ in $\Sigma_X$. The crucial insight is simply that we can see $f_\sigma$ as an element of $\sPP_\star(P_{0 \to \sigma})$, since by construction, the cone stack $\Sigma_\sigma \times_{\Sigma_X} \widehat \Sigma$ is a refinement of $\widehat \Sigma^\sigma$.

We can define a natural multiplication on the log strata algebra $\logS^\star(X)$ by
\begin{equation} \label{eqn:dec_logstrata_product_nonstrict}
[\sigma_1, f_1, \gamma_1] \cdot [\sigma_2, f_2, \gamma_2] = \sum_{(\sigma', \varphi_1, \varphi_2) \in \mathfrak{G}_{\sigma_1, \sigma_2}} [\sigma', (\iota_{\varphi_1}^\textup{trop})^\star f_1 \cdot (\iota_{\varphi_2}^\textup{trop})^\star f_2 , \iota_{\varphi_1}^\star \gamma_1 \cdot \iota_{\varphi_2}^\star \gamma_2] \,.
\end{equation}

\begin{theorem}
\label{thm:logSproduct}
The formula \eqref{eqn:dec_logstrata_product_nonstrict} defines a product on the log strata algebra $\logS^\star(X)$, making the map $\logS^\star(X) \to \logCH^\star(X)$ from \eqref{eqn:logS_to_logCH} a ring homomorphism with image $\mathsf{logR}^\star(X)$.
\end{theorem}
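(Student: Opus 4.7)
The plan is to verify the three assertions of the theorem: that \eqref{eqn:dec_logstrata_product_nonstrict} descends to a well-defined, associative product on the tensor product $\pPP_\star(\Sigma_\sigma) \otimes_{\sPP^\star(\Sigma_\sigma)} \R_X(P_\sigma)$; that the resulting map $q: \logS^\star(X) \to \logCH^\star(X)$ is a ring homomorphism; and that its image is precisely $\mathsf{logR}^\star(X)$.

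For well-definedness on the tensor product, the key point is the compatibility
\[
\iota_{\varphi}^\star \Phi(g) = \Phi((\iota_{\varphi}^{\textup{trop}})^\star g)
\]
for $g \in \sPP^\star(\Sigma_\sigma)$ and any map $\varphi: \sigma \to \sigma'$ in $\Sigma_X$, together with the fact that $\iota_\varphi^\star$ preserves the tautological subrings by Definition \ref{def:tautsystems}. Indeed, moving a factor $g$ across the tensor in a summand $[\sigma_1, g f_1, \gamma_1] \cdot [\sigma_2, f_2, \gamma_2]$ produces the factor $(\iota_{\varphi_1}^{\textup{trop}})^\star g$ on the piecewise polynomial side, which via the compatibility above corresponds to $\iota_{\varphi_1}^\star \Phi(g)$ on the decoration side, and this can then be absorbed into $\gamma_1$ in the partner expression $[\sigma_1, f_1, \Phi(g) \gamma_1] \cdot [\sigma_2, f_2, \gamma_2]$. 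Associativity and compatibility with the unit $[0, 1, 1]$ follow by unpacking the bijection between iterated generic structures $\mathfrak{G}_{\sigma_1, \sigma_2, \sigma_3}$ and the two ways of bracketing the product, using the universal properties of minimal factorizations in the cone stack $\Sigma_X$ — this is a combinatorial exercise analogous to the classical case in \cite[Appendix A]{GP03}.

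For the ring homomorphism property, I would appeal to the logarithmic version of Proposition \ref{Prop:iota_pushforward_intersection}, specialized to the case $\sigma = 0$ (so that $F_0 = 1$ and the correction denominator disappears). The resulting identity
\[
(\iota_{\sigma_1})_\star(\gamma_1 \cdot \Psi^{\log}(f_1)) \cdot (\iota_{\sigma_2})_\star(\gamma_2 \cdot \Psi^{\log}(f_2)) = \sum_{(\sigma', \varphi_1, \varphi_2) \in \mathfrak{G}_{\sigma_1, \sigma_2}} (\iota_{\sigma'})_\star\bigl( \iota_{\varphi_1}^\star \gamma_1 \cdot \iota_{\varphi_2}^\star \gamma_2 \cdot \Psi^{\log}((\iota_{\varphi_1}^{\textup{trop}})^\star f_1 \cdot (\iota_{\varphi_2}^{\textup{trop}})^\star f_2) \bigr)
\]
holds in $\logCH^\star(X)$, and its right-hand side is exactly the image under $q$ of \eqref{eqn:dec_logstrata_product_nonstrict}. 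The logarithmic version of Proposition \ref{Prop:iota_pushforward_intersection} is obtained by choosing a common smooth log blowup $\widehat X \to X$ on which both $f_1, f_2$ become strict piecewise polynomial, applying the original excess intersection argument (Proposition \ref{Prop:B_Faces} for normal bundles, excess intersection formula, and the Cartesian diagram \eqref{eqn:gluing_fiber_diag}) on $\widehat X$, and passing to the colimit — all constructions being compatible with refinement via Lemma \ref{lem:virtreldim0pullbackeqgysinpullback}.

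For the image identification, the inclusion $\operatorname{im}(q) \subseteq \mathsf{logR}^\star(X)$ is immediate: given $[\sigma, f, \gamma] \in \logS^\star(X)$, choose any smooth log blowup $\pi : \widehat X \to X$ on which $f$ becomes strict; then $(\iota_\sigma)_\star(\gamma \cdot \Psi^{\log}(f))$ is realized as a decorated log stratum class (with $\widehat{\sigma} = 0$) in $(\pi^\star \R_X)(\widehat X) \subseteq \mathsf{logR}^\star(X)$ in the sense of Definition \ref{def:logdecstratum}. Conversely, each induced tautological ring $(\pi^\star \R_X)(\widehat X)$ is by Definition \ref{def:logdecstratum} spanned by decorated log stratum classes $[\sigma, f, \gamma]$, and each such generator is visibly the image under $q$ of a corresponding element of $\logS^\star(X)$. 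Taking the colimit over $\widehat X$ yields the reverse inclusion.

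The main obstacle is the logarithmic extension of the product formula in Proposition \ref{Prop:iota_pushforward_intersection}: although the underlying argument (excess intersection plus the splitting of normal bundles from Proposition \ref{Prop:B_Faces}) goes through, one must check carefully that the relevant pushforwards, pullbacks, and Gysin maps commute with passing to log blowups, so that the formula is compatible with the directed colimit defining $\logCH^\star$. This is ultimately a consequence of Lemma \ref{lem:virtreldim0pullbackeqgysinpullback} and Proposition \ref{Prop:sPP_pushforward}, but the bookkeeping with the boundary-vanishing condition on homological piecewise polynomials requires care.
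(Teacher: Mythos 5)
Your overall architecture matches the paper's: the image identification is handled the same way (decorated log strata classes on blowups are by definition the generators of $\mathsf{logR}^\star(X)$), and the heart of the matter is indeed a product formula identifying the image under $q$ of \eqref{eqn:dec_logstrata_product_nonstrict} with the product in $\logCH^\star(X)$. Your attention to well-definedness of the tensor product over $\sPP^\star(\Sigma_\sigma)$ and to associativity is welcome; the paper leaves these implicit.

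However, there is a genuine gap in your central step. You propose to obtain the logarithmic product formula by choosing a smooth log blowup $\widehat X \to X$ on which $f_1, f_2$ become strict and then ``applying the original excess intersection argument on $\widehat X$.'' The excess intersection argument in Proposition \ref{Prop:iota_pushforward_intersection} relies essentially on the fact that each $P_{\sigma_i}$ is a smooth monodromy torsor for which $\sPP_\star(P_{\sigma_i})$ is a \emph{free} rank-one module over $\sPP^\star(P_{\sigma_i})$ generated by $F_{\sigma_i}$ with $\Psi(F_{\sigma_i}) = [P_{\sigma_i}]$ (equation \eqref{eqn:sPP_hom_free_module} of Proposition \ref{Prop:B_Faces}); this is what lets one write $\Psi(f_i) = \Phi(f_i/F_{\sigma_i}) \cap [P_{\sigma_i}]$ and feed the class into the excess intersection formula. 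After base change along $\widehat X \to X$, the spaces $\widehat P_{\sigma_i} = P_{\sigma_i} \times_X \widehat X$ are in general reducible, non-equidimensional, idealised log smooth stacks (compare Example \ref{Exa:triple_axis}, where $\sPP_\star$ needs three generators), so they are not monodromy torsors of $\widehat X$ and the representation $\Psi(f_i) = \Phi(\tilde f_i)\cap[\,\cdot\,]$ is unavailable. This is exactly the obstacle the paper's Proposition \ref{Pro:general_spp_intersection_formula} is designed to overcome: one first uses the surjectivity \eqref{eqn:fricking_spp_surjection} to decompose each $f_i$ as a tropical pushforward from the genuine monodromy torsors of $\widehat P_{\sigma_i}$ (indexed by the minimal cones, as in Proposition \ref{Pro:P_sigmahat_sigmaprime_cover}), reroutes the intersection through the resulting fiber diagram, verifies the tropical pushforward identity \eqref{eqn:fricking_pp_pushforward}, and only then applies the excess intersection argument on those smooth torsors. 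Your closing remark correctly senses that ``bookkeeping requires care,'' but the real issue is not commutation with refinements (which Lemma \ref{lem:virtreldim0pullbackeqgysinpullback} does handle) — it is that the spaces on which you want to run the excess intersection computation no longer have the structure the computation needs, and an extra reduction step is required.
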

To show this theorem, we prove a more general result about intersections of classes combining piecewise polynomials and operational Chow classes.
\begin{proposition} \label{Pro:general_spp_intersection_formula}
Let $X_1, X_2, Y$ be algebraic log stacks with $X_1, X_2$ idealized log smooth and $Y$ smooth and log smooth. Assume that the diagram
\[
\begin{tikzcd}
Z \arrow[d, "\rho_1",swap] \arrow[r, "\rho_2"] & X_2 \arrow[d, "\xi_2"]\\
X_1 \arrow[r, "\xi_1",swap] & Y
\end{tikzcd}
\]
is a fiber diagram with the maps $\xi_i$ being strict.
Let $\iota : Z \to Y$ be the map induced from this diagram.

Then for any $\gamma_i \in \CH^\star_\mathsf{op}(X_i)$ and $f_i \in \sPP_\star(X_i)$ we have
\begin{equation}  \label{eqn:fricking_intersection_formula}
\left( (\xi_1)_\star (\gamma_1 \cap \Psi(f_1)) \right) \cdot \left( (\xi_2)_\star (\gamma_2 \cap \Psi(f_2)) \right) = \iota_\star \left( (\rho_1^\star \gamma_1) \cdot (\rho_2^\star \gamma_2) \cap \Psi( (\rho_1^\textup{trop})^\star f_1 \cdot (\rho_2^\textup{trop})^\star f_2 )\right) \in \CH_\star(Y)\,.
\end{equation}
\end{proposition}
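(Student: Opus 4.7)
The plan is to reduce the identity to four standard compatibilities in idealised logarithmic intersection theory: projection formula, strict base change, compatibility of $\Psi$ with strict pullback, and the $\sPP^\star$-module structure on $\sPP_\star$. Throughout write $\alpha_i = \gamma_i \cap \Psi(f_i) \in \CH_\star(X_i)$ and assume (as is implicit in the use of $(\xi_i)_\star$) that $\xi_1, \xi_2$ are proper.

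First, since $Y$ is smooth the diagonal $\delta: Y \to Y\times Y$ is a regular embedding and the intersection product on $\CH_\star(Y)$ is $\delta^!$ of the exterior product, so
\[
(\xi_1)_\star \alpha_1 \cdot (\xi_2)_\star \alpha_2 \;=\; \delta^!\bigl((\xi_1\times\xi_2)_\star(\alpha_1\boxtimes\alpha_2)\bigr).
\]
Because the square
\[
\begin{tikzcd}
Z \arrow[d,"j"] \arrow[r,"\iota"] & Y \arrow[d,"\delta"]\\
X_1\times X_2 \arrow[r,"\xi_1\times\xi_2"] & Y\times Y
\end{tikzcd}
\]
is cartesian and $\delta$ is a regular embedding, the base change formula for refined Gysin pullbacks gives $\delta^!(\xi_1\times\xi_2)_\star = \iota_\star j^\delta$, where $j^\delta$ is the refined Gysin pullback along $\delta$. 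Thus the left hand side equals $\iota_\star j^\delta(\alpha_1\boxtimes\alpha_2)$.

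Next I will compute $j^\delta(\alpha_1\boxtimes\alpha_2)$. Using $\alpha_i = \gamma_i\cap\Psi(f_i)$ together with compatibility of cap products with exterior products, one has $\alpha_1\boxtimes\alpha_2 = (\gamma_1\boxtimes\gamma_2)\cap(\Psi(f_1)\boxtimes\Psi(f_2))$. The exterior product of the two $\Psi$-classes is identified via the K\"unneth-type formula $\Psi(f_1)\boxtimes\Psi(f_2)=\Psi_{X_1\times X_2}(f_1\boxtimes f_2)$, which follows from the construction of $\Psi$ (Corollary~\ref{cor:vanishingpp}) together with the fact that $X_1\times X_2$ has idealised Artin fan $\Bcal_{X_1}\times\Bcal_{X_2}$ and tropicalization $\Sigma_{X_1}\times\Sigma_{X_2}$. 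Pulling back operational classes commutes with $j^\delta$ in the standard way, giving $j^\delta(\gamma_1\boxtimes\gamma_2) = \rho_1^\star\gamma_1\cdot\rho_2^\star\gamma_2$.

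It remains to identify $j^\delta\bigl(\Psi_{X_1\times X_2}(f_1\boxtimes f_2)\bigr)$ with $\Psi_Z\bigl((\rho_1^\textup{trop})^\star f_1\cdot(\rho_2^\textup{trop})^\star f_2\bigr)$. This is the key geometric input: because the $\xi_i$ (and hence $\delta$ after base change to this diagram) are strict, the Gysin pullback $j^\delta$ on the homological piecewise polynomials on $\Bcal_{X_1}\times\Bcal_{X_2}$ is computed by the pullback of piecewise polynomials along the induced tropical map $j^\textup{trop}\colon \Sigma_Z\to\Sigma_{X_1}\times\Sigma_{X_2}$. More precisely, the square of idealised Artin fans associated to our diagram is again cartesian (strictness of $\xi_i$), and for strict morphisms the isomorphism $\Psi$ of Theorem~\ref{Thm:sPP_lowerstar_isom} intertwines the Gysin pullback on $\CH_\star$ with the pullback of piecewise polynomials; this is implicit in the argument of Theorem~\ref{Thm:sPP_lowerstar_isom} and in Lemma~\ref{lem:virtreldim0pullbackeqgysinpullback}. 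Since $(j^\textup{trop})^\star(f_1\boxtimes f_2) = (\rho_1^\textup{trop})^\star f_1\cdot(\rho_2^\textup{trop})^\star f_2$ in $\sPP_\star(\Sigma_Z,\Delta_Z)$ (the product of two boundary-vanishing polynomials still vanishes on the boundary), substituting everything back yields the claimed identity.

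The main obstacle is the strict base-change compatibility $j^\delta\circ\Psi = \Psi\circ(j^\textup{trop})^\star$ on the nose, in the idealised log smooth setting. Working on the level of idealised Artin fans via Corollary~\ref{cor:vanishingpp}, this reduces to showing that the strict Gysin maps of \cite[Definition~2.20]{Barrott2019Logarithmic-Cho} induce, under the identification of $\CH_\star(\Bcal)$ with $\sPP_\star$, the ordinary restriction of piecewise polynomials; I expect this to follow from the same excision argument used in the proof of Theorem~\ref{Thm:sPP_lowerstar_isom}, applied to the open complement of $\Delta_Z$ in $\Sigma_Z$, and its compatibility with the action of $\delta^!$ on the universal Chow-theoretic model of a strict cartesian square of Artin fans.
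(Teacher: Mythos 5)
Your reduction of the left-hand side to $\iota_\star j^\delta(\alpha_1\boxtimes\alpha_2)$ via the diagonal of $Y$ is fine, and the K\"unneth identification $\Psi(f_1)\boxtimes\Psi(f_2)=\Psi_{X_1\times X_2}(f_1\boxtimes f_2)$ is plausible. The gap is in the step you yourself flag as the "main obstacle": the identity $j^\delta\circ\Psi_{X_1\times X_2}=\Psi_Z\circ(j^{\textup{trop}})^\star$. You justify it by appealing to strictness and to the excision argument of Theorem \ref{Thm:sPP_lowerstar_isom}, but neither applies. First, $j:Z\to X_1\times X_2$ is \emph{not} strict: it is the base change of the diagonal $\delta:Y\to Y\times Y$, and $j^\star M_{X_1\times X_2}=\iota^\star M_Y\oplus_{\mathcal{O}^\star}\iota^\star M_Y\to M_Z=\iota^\star M_Y$ is not an isomorphism whenever $M_Y$ is nontrivial; so the compatibility of $\Psi$ with "strict Gysin maps" is not available. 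Second, the excision argument computes Chow groups of closed substacks of Artin fans; it says nothing about refined Gysin pullbacks along a regular embedding with excess normal bundle. And excess there is: already for $X_1=X_2=D$ a boundary divisor with its strict log structure, $Z=D$ has dimension one more than expected, and your identity reduces to $j^\delta([D]\boxtimes[D])=c_1(N_{D/Y})\cap[D]=\Psi_D(x^2)$ --- i.e.\ to the statement that the extra factor of $x$ in the product of piecewise polynomials reproduces the excess Euler class. That is the entire analytic content of the proposition, and your proof defers it to a lemma that the cited machinery cannot supply.

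To close the gap you would need exactly the input the paper uses: the surjection $\bigoplus_{\sigma}\sPP_\star(P_\sigma)\twoheadrightarrow\sPP_\star(X_i)$ reduces, by bilinearity, to the case where the $X_i$ are (replaced by) smooth monodromy torsors $P_i$; there $\sPP_\star(P_i)=F_{P_i}\cdot\sPP^\star(P_i)$ is free, so $\Psi(f)=\Phi(f/F_{P_i})\cap[P_i]$ is an operational class capped with a fundamental class of a smooth stack, the components of the fiber product are again monodromy torsors with split normal bundles whose Chern roots are the piecewise linear functions $x_\rho$ (Proposition \ref{Prop:B_Faces}), and the classical excess intersection formula then produces precisely the product $(\rho_1^{\textup{trop}})^\star f_1\cdot(\rho_2^{\textup{trop}})^\star f_2$. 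One also needs the pointwise tropical bookkeeping \eqref{eqn:fricking_pp_pushforward} to undo the reduction. Your diagonal reformulation is compatible with this, but as written it restates the hard part rather than proving it.
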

\begin{proof}
First, observe that both sides of \eqref{eqn:fricking_intersection_formula} are bilinear in $f_1, f_2$. Our goal is to decompose $f_1, f_2$ into a sum of simpler contributions, which can be analyzed separately using standard excess intersection theory. To find this decomposition, we claim that for $i=1,2$, the map
\begin{equation} \label{eqn:fricking_spp_surjection}
\bigoplus_{\sigma \in \Sigma_{X_i}^{\min}} \sPP_\star(P_{\sigma}) \xrightarrow{\oplus (\iota_\sigma^\textup{trop})_\star} \sPP_\star(X_i)\,,
\end{equation}
is surjective, where $\sigma$ runs through representatives of the minimal cones $\Sigma_{X_i}^{\min}$ of the cone stack of $X_i$ and $\iota_\sigma: P_\sigma \to X_i$ is the associated map from the monodromy torsor associated to $\sigma$. This is just a translation of the fact that the representable, surjective and proper map
\[
\coprod_{\sigma \in \Sigma_{X_i}^{\min}} \mathcal{P}_\sigma \to \mathcal{B}_{X_i}
\]
induces a surjection on Chow groups, using the identification in Theorem \ref{Thm:sPP_lowerstar_isom} and the compatibility of tropical and Chow pushforwards in Proposition \ref{Prop:sPP_pushforward}.
Using the surjectivity of \eqref{eqn:fricking_spp_surjection} we can without loss of generality assume that the polynomials $f_i$ are of the form $(\iota_i^\textup{trop})_\star \widetilde f_i$ for some monodromy torsor $\iota_i : P_i \to X_i$ of $X_i$. Given this data, we form the following fiber diagram:
\begin{equation} \label{eqn:fricking_fiber_diagram}
\begin{tikzcd}
\widehat Z \arrow[dd, bend right, "\widehat \xi_1", swap] \arrow[rr, bend left, "\widehat \xi_2"] \arrow[dr, dashed, "\zeta"] \arrow[d] \arrow[r] & \widehat P_2 \arrow[d] \arrow[r] & P_2\arrow[d, "\iota_2"]\\
\widehat P_1 \arrow[d] \arrow[r] & Z \arrow[d, "\rho_1",swap] \arrow[r, "\rho_2"] \arrow[dr, dashed, "\iota"] & X_2 \arrow[d, "\xi_2"]\\
P_1 \arrow[r, "\iota_1",swap] & X_1 \arrow[r, "\xi_1",swap] & Y
\end{tikzcd}
\end{equation}
Assume for now that we are able to show the claim of the proposition for $(X_i, f_i, \gamma_i)$ replaced by $(P_i, \widetilde f_i, (\iota_i)^\star \gamma_i)$. Then we would have shown that the left-hand side of \eqref{eqn:fricking_intersection_formula} is equal to
\begin{equation}
(\iota \circ \zeta)_\star \left( (\zeta^\star \rho_1^\star \gamma_1) \cdot (\zeta^\star \rho_2^\star \gamma_2) \cap \Psi( (\widehat \xi_1^\textup{trop})^\star \widetilde f_1 \cdot (\widehat \xi_2^\textup{trop})^\star \widetilde f_2) \right)\,.
\end{equation}
Using the projection formula, the desired equality with the right-hand side of \eqref{eqn:fricking_intersection_formula} then follows once we prove
\begin{equation} \label{eqn:fricking_pp_pushforward}
(\zeta^\textup{trop})_\star \left( (\widehat \xi_1^\textup{trop})^\star \widetilde f_1 \cdot (\widehat \xi_2^\textup{trop})^\star \widetilde f_2 \right) = (\rho_1^\textup{trop})^\star (\iota_1^\textup{trop})_\star \widetilde f_1 \cdot (\rho_2^\textup{trop})^\star (\iota_2^\textup{trop})_\star \widetilde f_2\,.
\end{equation}
To see \eqref{eqn:fricking_pp_pushforward}, we first make a couple of remarks:
\begin{enumerate}
    \item[a)] Associated to the fiber diagram \eqref{eqn:fricking_fiber_diagram} there exists a corresponding fiber diagram of cone stacks, connected by the maps $\zeta^\textup{trop}$, $\widehat \xi_i^\textup{trop}$ etc. In particular, slightly abusing notation\footnote{To avoid such an abuse, we could spell out the fiber product of cone stacks, saying: "Objects of $\Sigma_{\widehat P_i}$ are pairs of cones $(\sigma, \tau) \in \Sigma_{P_i} \times \Sigma_Z$ together with isomorphisms of their image cones in $X_i$". Since all relevant morphisms of cone stacks below induce isomorphisms on all their cones (see point b)), the set-theoretic description we give is sufficient to finish the argument.} we have
    \begin{align}
        \Sigma_{\widehat P_i} &= \{(p_i, z) : \iota_i^\textup{trop}(p_i) = \rho_i^\textup{trop}(z)\}\,, \nonumber\\
        \Sigma_{\widehat Z} &= \{(p_1, p_2, z) : \iota_i^\textup{trop}(p_i)=\rho_i^\textup{trop}(z) \text{ for }i=1,2\}\,. \label{eqn:fricking_cone_fiber_description}
    \end{align}
    \item[b)] The maps $\iota_i^\textup{trop}$ are associated to star fans of cones in $\Sigma_{X_i}$, and it follows that for each cone $\sigma \in \Sigma_{P_i}$ the map $\iota_i^\textup{trop}$ defines an isomorphism from $\sigma$ to its image cone in $\Sigma_{X_i}$.
    \item[c)] This implies that for both the maps $\iota_i$ and their base changes, the pushforwards of piecewise polynomials can be computed as 
    $$((\iota_i^\textup{trop})_\star g)(x_i) = \sum_{p_i \in (\iota_i^\textup{trop})^{-1}(x_i)} g(p_i)\,,$$
    where we take an appropriate groupoid sum as in Proposition \ref{prop:pushforwardppstack}. Since $\zeta^\textup{trop}$ is a composition of base-changes of the $\iota_i^\textup{trop}$, the same formula holds for $(\zeta^\textup{trop})_\star$.
\end{enumerate}
To prove formula \eqref{eqn:fricking_pp_pushforward} we now evaluate it at a point $z \in \Sigma_Z$. The left-hand side gives
\begin{equation} \label{eqn:fricking_pp_pushforward_LHS}
\sum_{(p_1, p_2, z) \in \Sigma_{\widehat Z}} \widetilde f_1(p_1) \cdot \widetilde f_2(p_2)\,.
\end{equation}
On the other hand, the right-hand side evaluates to
\begin{equation} \label{eqn:fricking_pp_pushforward_RHS}
\sum_{p_1 \in (\iota_1^\textup{trop})^{-1}(\sigma_1^\textup{trop}(z))} \widetilde f_1(p_1) \cdot \sum_{p_2 \in (\iota_2^\textup{trop})^{-1}(\sigma_2^\textup{trop}(z))} \widetilde f_2(p_2)\,.
\end{equation}
Comparing with the description in \eqref{eqn:fricking_cone_fiber_description} it follows that these two expressions are equal.

We are left with showing the claim of the proposition for $X_i$ of the form $P_i$ as above. The one advantage that the monodromy torsors have over the general situation is that they are smooth stacks and by \eqref{eqn:sPP_hom_free_module} they satisfy
\begin{equation}
\sPP_\star(P_i) \cong F_{P_i} \cdot \sPP^\star(P_i)\,,
\end{equation}
where $F_{P_i} \in \sPP_\star(P_i)$ is the polynomial with $\Psi(F_{P_i})=[P_i] \in \CH_\star(P_i)$.
Then the calculation can be finished using a standard excess intersection analysis modelled on the proof of Proposition \ref{Prop:iota_pushforward_intersection}. 
\end{proof}

\begin{proof}[Proof of Theorem \ref{thm:logSproduct}]
By  Lemma \ref{Lem:gluing_fiber_product_stacks} we have a fiber diagram
\begin{equation} \label{eqn:fib_diag_before_blowup}
\begin{tikzcd}
\bigsqcup_{\sigma'} P_{\sigma'} \arrow[r] \arrow[d] & P_{\sigma_2} \arrow[d]\\
P_{\sigma_1} \arrow[r] & X
\end{tikzcd}
\end{equation}
where the disjoint union is over $(\sigma', \varphi_1, \varphi_2) \in \mathfrak{G}_{\sigma_1, \sigma_2}$. Now in general, the polynomials $f_i \in \pPP_\star(P_{\sigma_i})$ will not be strict piecewise polynomials on $P_{\sigma_i}$. To remedy this, we can find a non-singular log blowup $\widehat X \to X$ such that for the fiber diagram
\[
\begin{tikzcd}
\bigsqcup_{\sigma'} \widehat P_{\sigma'} \arrow[r] \arrow[d] & \widehat P_{\sigma_2} \arrow[d, "\xi_2"]\\
\widehat P_{\sigma_1} \arrow[r, "\xi_1"] & \widehat X
\end{tikzcd}
\]
obtained by base-change of \eqref{eqn:fib_diag_before_blowup} we do have $f_i \in \sPP_\star(\widehat P_{\sigma_i})$. Then the left-hand side of \eqref{eqn:dec_logstrata_product_nonstrict} is defined as the intersection
\begin{equation}
(\xi_1)_\star\left(\gamma_1|_{\widehat P_{\sigma_1}} \cap \Psi(f_1) \right) \cdot (\xi_2)_\star\left(\gamma_2|_{\widehat P_{\sigma_2}} \cap \Psi(f_2) \right)\,.
\end{equation}
This is precisely the type of intersection covered by Proposition \ref{Pro:general_spp_intersection_formula}. Comparing with equation \eqref{eqn:fricking_intersection_formula} we see that the above intersection is precisely given by the right-hand side of \eqref{eqn:dec_logstrata_product_nonstrict} (using that $\widehat P_{\sigma'} \to P_{\sigma'}$ is a log blowup on which $(\iota_{\varphi_1}^\textup{trop})^\star f_1 \cdot (\iota_{\varphi_2}^\textup{trop})^\star f_2$ is a strict piecewise polynomial).

Finally, the fact that the image of \eqref{eqn:logS_to_logCH} equals $\mathsf{logR}^\star(X)$ simply follows as the tautological classes on a log blowup $\widehat X \to X$ are \emph{defined} as the span of decorated log strata classes as above.
\end{proof}

\begin{remark} \label{Rmk:strata_log_lift}
\begin{enumerate}
    \item[a)] For $\mathsf{S}^\star_{g,n}$ the strata algebra of $\R^\star(\Mbar_{g,n})$ as defined in Section \ref{straa}, there is a natural morphism $\mathsf{S}^\star_{g,n} \to \logS^\star(\Mbar_{g,n})$ of $\Q$-algebras. It sends a decorated strata class $[\Gamma, \gamma]$ (with $\gamma$ a product of $\kappa$- and $\psi$-classes on $\Mbar_\Gamma$) to the class $[\Gamma, F_\Gamma, \gamma]$ where $F_\Gamma = \prod_{e \in E(\Gamma)} \ell_e \in \sPP_\star(\Mbar_\Gamma)$ is the piecewise polynomial with $\Psi(F_\Gamma)=[\Mbar_\Gamma]$. From this equation, it is clear that the element in $\R^\star(\Mbar_{g,n})$ associated to $[\Gamma, \gamma]$ equals the image of the decorated log stratum $[\Gamma, F_\Gamma, \gamma]$. Moreover, the fact that $\mathsf{S}^\star_{g,n} \to \logS^\star(\Mbar_{g,n})$ is a morphism of $\Q$-algebras follows from the standard product formula for decorated strata classes \emph{combined} with the fact that in \eqref{eqn:logSstar} we take the tensor product over the strict piecewise polynomials. This allows us to convert higher powers of $\ell_e$ into decorations $-\psi_h - \psi_{h'}$ appearing in the excess intersection formula in $\mathsf{S}^\star_{g,n}$ for edges $e=(h,h') \in E(\Gamma)$.
    \item[b)] In the definition of the log strata algebra $\logS^\star(\Mbar_{g,n})$, the allowed decorations are Chow classes $\gamma_\Gamma \in \R^\star(\Mbar_\Gamma)$. A purely symbolic strata algebra $\logS_{g,n}^\star$ surjecting onto $\logS^\star(\Mbar_{g,n})$ was defined in \eqref{eqn:logSstar_introduction}, where the allowed symbols $\gamma_\Gamma$ are decorated strata classes (i.e. stable graphs with $\kappa, \psi$-decorations). One input that is needed for the tensor product appearing in \eqref{eqn:logSstar_introduction} is the existence of a natural factorization
    \[
    \begin{tikzcd}
        \sPP^\star(\Sigma_\Gamma) \arrow[r] \arrow[rr, bend left, "\Phi"] & \bigotimes_{v \in V(\Gamma)} \mathsf{S}_{g(v),n(v)}^\star \arrow[r] & \R^\star(\Mbar_\Gamma)
    \end{tikzcd}
    \]
    of the map $\Phi$ from strict piecewise polynomials on $\Sigma_\Gamma$ to tautological classes on $\Mbar_\Gamma$ via the strata algebra of $\Mbar_\Gamma$. Such a factorization exists due to the identification of images of $\Phi$ with normally decorated strata classes. Here polynomials on the factors $\Sigma_{g(v),n(v)}$ of $\Sigma_\Gamma$ map to decorated strata in $\mathcal{S}_{g(v),n(v)}$, whereas the additional coordinate functions $\ell_e$ for $e=(h,h') \in E(\Gamma)$ map to $-\psi_h - \psi_{h'}$ in the tensor product of strata algebras.

    \item[c)] Consider a generator $[\Gamma, f, \gamma] \in \logS^\star_{g,n}$ where $\gamma=\prod_{v \in V(\Gamma)} \gamma_v$ is a product of decorated strata classes such that at least one of the $\gamma_v$ is supported on some proper stratum of the space $\Mbar_{g(v),n(v)}$. Then the associated decoration $\gamma$ in $\R^\star(\Mbar_\Gamma)$ is a pushforward $(\iota_{\Gamma' \to \Gamma})_\star \gamma_0$ of a product $\gamma_0$ of $\kappa$ and $\psi$-polynomials on the vertices and edges of some specialization $\Gamma'$ of $\Gamma$. Using the projection formula and the pullback of homological piecewise polynomials for the partial gluing map $\iota_{\Gamma' \to \Gamma}$ (see the proof of Theorem \ref{Thm:taut_system_induction}), we can show that
    \[
    [\Gamma, f, \gamma] = [\Gamma', \left((\iota_{\Gamma' \to \Gamma}^\mathrm{trop})^\star f \right) \cdot \prod_{e \in E(\Gamma') \setminus \mathrm{im} E(\Gamma)} x_e, \gamma_0] \in \logCH^\star(\Mbar_{g,n})\,.
    \]
    Thus $\mathsf{logR}^\star(\Mbar_{g,n})$ is generated by decorated log strata classes $[\Gamma', F, \gamma_0]$ where $\gamma_0$ is a product of $\kappa$ and $\psi$-classes. 
\end{enumerate}
\end{remark}

Taking the maximal tautological system $\mathsf{CH}_X$ of all Chow classes from Example \ref{eqn:maximal_tautological_system}, the blowup formula also implies that the induced tautological system on an iterated blowup of smooth strata is still maximal:
\begin{proposition}
For an iterated blowup $\pi: \widehat X \to X$ of smooth strata, we have $\mathsf{CH}_{\widehat X} = \pi^\star \mathsf{CH}_X$.
\end{proposition}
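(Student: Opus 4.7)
The strategy is to reduce to a single blowup of a smooth stratum and then invoke Theorem~\ref{Thm:R_blow_up_determined}, noting that the projective bundle formula and the blowup exact sequence both preserve ``maximality'' of a tautological system. By Proposition~\ref{Prop:tautological_system_functoriality} the construction $\R \mapsto \pi^\star \R$ is transitive under composition of smooth log blowups, so it suffices to treat the case where $\pi : \widehat X \to X$ is the blowup of a single smooth strata closure $\overline S_\sigma \subseteq X$ for some $\sigma \in \Sigma_X$. The task is then to check that for every $\widehat \sigma \in \widehat \Sigma = \mathsf{ssd}_\sigma(\Sigma_X)$, the induced tautological ring $(\pi^\star \mathsf{CH}_X)(P_{\widehat \sigma})$ is equal to the full Chow ring $\mathsf{CH}^\star(P_{\widehat \sigma})$.

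Following the dichotomy in the proof of Theorem~\ref{Thm:R_blow_up_determined}, there are two situations to consider. In Case~1 the cone $\widehat \sigma$ has the form $(\tau \to \sigma \to \sigma')$ and $P_{\widehat \sigma}$ is a projective bundle over $P_{\sigma'}$; the identification \eqref{eqn:RX_projbundle} with $\R_X(P_{\sigma'})[\xi]/(\ldots)$ then coincides with the standard projective bundle decomposition \eqref{eqn:CHX_projbundle} of $\mathsf{CH}^\star(P_{\widehat \sigma})$ once we set $\R_X = \mathsf{CH}_X$, giving the equality. In Case~2 the cone $\widehat \sigma$ is some $\widetilde \sigma \in \Sigma_X$ not containing $\sigma$ as a face, so $P_{\widehat \sigma}$ is the blowup of $P_{\widetilde \sigma}$ along a disjoint union $Z_{\widetilde \sigma, \sigma}$ of smooth strata closures and $(\pi^\star \mathsf{CH}_X)(P_{\widehat \sigma})$ is described by the blowup sequence \eqref{eqn:R_Xhat_sequence}. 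With $\R_X = \mathsf{CH}_X$, the three entries $\R_X^\star(Z_{\widetilde \sigma, \sigma})$, $\R_X^\star(P_{\widetilde \sigma})$ and $\R_X^\star(E)$ of \eqref{eqn:R_Xhat_sequence} coincide tautologically with the corresponding entries of the full blowup exact sequence \eqref{eqn:CH_Xhat_sequence}, and a five-lemma style comparison forces $(\pi^\star \mathsf{CH}_X)(P_{\widehat \sigma}) = \mathsf{CH}^\star(P_{\widehat \sigma})$.

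The argument iterates: having verified that the Chow system is preserved by a single blowup of a smooth stratum, the functoriality in Proposition~\ref{Prop:tautological_system_functoriality} yields the result for any iterated blowup of smooth strata. There is no genuinely hard step here, since Theorem~\ref{Thm:R_blow_up_determined} has already done the substantial work of expressing the induced tautological system via the projective bundle and blowup formulas, and both formulas preserve the property of being maximal essentially by inspection. The only point requiring mild care is to check that in Case~2 the vertical inclusions $\R_X^\star(\cdot) \hookrightarrow \mathsf{CH}^\star(\cdot)$ are genuinely equalities after replacing $\R_X$ by $\mathsf{CH}_X$ not only on $P_{\widetilde \sigma}$ but also on each of the strata closures appearing in $Z_{\widetilde \sigma, \sigma}$ and on the projective bundle $E$ over them, which is automatic since the Chow system is by definition the full Chow ring on every monodromy torsor.
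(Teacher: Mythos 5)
Your proof is correct and follows essentially the same route as the paper's: reduce to a single smooth-stratum blowup via Proposition \ref{Prop:tautological_system_functoriality}, then for each cone of the star subdivision compare \eqref{eqn:RX_projbundle} with \eqref{eqn:CHX_projbundle} in the projective-bundle case and \eqref{eqn:R_Xhat_sequence} with \eqref{eqn:CH_Xhat_sequence} in the strict-transform case, noting that the Chow system makes all entries coincide. No issues.
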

\begin{proof}
Using Proposition \ref{Prop:tautological_system_functoriality} we can again reduce to the case of a single blowup of a smooth stratum. Then we have to show that for each $\widehat \sigma \in \Sigma_{\widehat X}$ we have an equality $(\pi^\star \mathsf{CH}_X)(P_{\widehat \sigma}) = \CH^\star(P_{\widehat \sigma})$. As in the proof of Theorem \ref{Thm:R_blow_up_determined} the cones $\widehat \sigma$ come in two variants (according to whether they describe strata in the exceptional divisor or not). The desired equality then comes from combining equations \eqref{eqn:CHX_projbundle} and \eqref{eqn:RX_projbundle} in one case, and equations \eqref{eqn:CH_Xhat_sequence} and \eqref{eqn:R_Xhat_sequence} in the other.
\end{proof}
Since the above blowups are cofinal in the system of all log blowups, the logarithmic tautological ring induced from $\R = \mathsf{CH}_X$ on $X$ is indeed the full log Chow ring $\mathsf{logCH}^\star(X)$. Combining this with Theorem \ref{Thm:R_blow_up_determined}, we  immediately obtain:
\begin{corollary} \label{Cor:logCH_generators}
The log Chow group $\mathsf{logCH}^\star(X)$ is generated as a $\mathbb{Q}$-vector space by decorated log strata $[\sigma, f, \deco]$ for $\sigma \in \Sigma_X$, $f \in \mathsf{PP}_\star(P_\sigma)$ and $\deco \in \mathsf{CH}^\star(P_\sigma)$. It is uniquely determined by the collection of maps
\[
\Psi: \mathsf{sPP}_\star(P_\sigma) \to  \mathsf{CH}^\star(P_\sigma)
\]
for $\sigma \in \Sigma$ (together with the $\Aut(\sigma)$-action on both sides) as well as the pushforwards and pullbacks
\[
 \iota_{\sigma' \to \sigma \star}:  \mathsf{CH}^\star(P_{\sigma'}) \to \mathsf{CH}^\star(P_{\sigma})  \text{ and } \iota_{\sigma' \to \sigma}^\star:  \mathsf{CH}^\star(P_\sigma) \to   \mathsf{CH}^\star(P_{\sigma'})
\]
for any morphism $\sigma' \to \sigma$ in $\Sigma_X$.

\end{corollary}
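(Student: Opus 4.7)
The plan is to deduce both assertions from the preceding proposition (which says $\mathsf{CH}_{\widehat X} = \pi^\star \mathsf{CH}_X$ for iterated smooth-stratum blowups) combined with Theorem \ref{Thm:R_blow_up_determined} and the cofinality of smooth-stratum blowups inside all log blowups. First I would take the maximal tautological system $\mathsf{CH}_X$ from Example \ref{eqn:maximal_tautological_system}, in which $\mathsf{R}^\star(P_\sigma) = \mathsf{CH}^\star(P_\sigma)$ for every $\sigma \in \Sigma_X$. The preceding proposition shows that for any iterated smooth-stratum blowup $\pi: \widehat X \to X$ and every $\widehat \sigma \in \Sigma_{\widehat X}$, the induced subring $(\pi^\star \mathsf{CH}_X)(P_{\widehat \sigma}) \subseteq \CH^\star(P_{\widehat \sigma})$ is still all of $\CH^\star(P_{\widehat \sigma})$.

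Because such blowups are cofinal in the system of all log modifications (every subdivision of $\Sigma_X$ can be refined by an iterated sequence of star subdivisions at smooth cones, cf.\ Construction \ref{Const:ssd} and Proposition \ref{Pro:blow_up_description}), taking the colimit in Definition \ref{def:logtautring} yields $\mathsf{logR}^\star(X) = \logCH^\star(X)$. The additive generation by decorated log strata $[\sigma, f, \gamma]$ then follows directly from Definition \ref{def:logS} and the surjectivity of the map $\logS^\star(X) \to \mathsf{logR}^\star(X)$ established in Theorem \ref{thm:logSproduct}. This gives the first sentence of the corollary.

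For the uniqueness statement I would invoke Theorem \ref{Thm:R_blow_up_determined} directly. That theorem expresses the induced system on each smooth-stratum blowup through the projective bundle formula \eqref{eqn:projective_bundle_formula} and the blowup exact sequence \eqref{eqn:blow_up_sequence}, and inspection of those formulas shows that the only data they require beyond the combinatorics of $\Sigma_X$ itself are: (i) the Chern classes of the normal bundles of the maps $\iota_\sigma: P_\sigma \to X$ (together with those of the bundles $\mathcal{E}_{\tau \to \sigma}$ appearing in \eqref{eqn:q_projective_bundle}), which by Proposition \ref{Prop:B_Faces} are precisely the images under $\Psi$ of piecewise linear functions on $\Sigma_\sigma$; and (ii) the pushforwards and pullbacks along the gluing maps $\iota_{\sigma' \to \sigma}$. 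Taking the colimit over all iterated smooth-stratum blowups (with compatibility guaranteed by Proposition \ref{Prop:tautological_system_functoriality}) then shows that $\logCH^\star(X)$ is determined from exactly the data listed in the corollary, together with the $\Aut(\sigma)$-equivariant structure that is already visible on the combinatorial side.

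The one genuinely non-formal ingredient is the cofinality assertion; the hard part, had the framework not already been developed, would be ensuring that the ``smooth center'' hypothesis of Theorem \ref{Thm:R_blow_up_determined} can be arranged at every step of a dominating tower. Here this is resolved by Lemma \ref{Lem:strata_closure_smooth} (which identifies smoothness of a stratum closure with a purely combinatorial condition on $\Sigma_X$) together with the star subdivision construction and standard toric resolution of fans, so the corollary ultimately follows by formal assembly of the results already in hand.
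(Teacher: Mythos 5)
Your proposal is correct and follows essentially the same route as the paper: the paper likewise deduces the corollary by combining the proposition that $\mathsf{CH}_{\widehat X} = \pi^\star \mathsf{CH}_X$ for iterated smooth-stratum blowups with the cofinality of such blowups among all log modifications (giving $\mathsf{logR}^\star(X) = \logCH^\star(X)$ for the Chow system), and then invokes Theorem \ref{Thm:R_blow_up_determined} for the determination statement. Your additional remarks on where the normal-bundle data enters (via Proposition \ref{Prop:B_Faces}) and on arranging smooth centers via Lemma \ref{Lem:strata_closure_smooth} and star subdivisions are consistent with, and slightly more explicit than, the paper's one-line derivation.
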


\begin{remark}
Using the techniques from Section \ref{Sect:log_tautological_rings_in_general}, we can also easily define the log tautological group of a smooth idealised log smooth DM stack, such as a stratum $\Mbar_{\Gamma}$ with its strict induced log structure by the gluing map $\iota_\Gamma$. We give a brief sketch of the construction: for $X$ smooth and idealised log smooth, let $(\Sigma,\Sigma^0,\Delta)$ be its cone stack with boundary. Then for  $(\tilde{\Sigma},\tilde{\Sigma}^0,\tilde{\Delta})$ a smooth subdivision let $\widehat \sigma \in \widehat \Sigma^0$ be a cone mapping to $\sigma = \varphi(\widehat \sigma) \in \Sigma^0$.

To define a \emph{decorated log-strata class} on $P_{\widehat \sigma}$ consider a triple $[\sigma' \to \sigma, f, \deco]$ of a morphism $\sigma' \to \sigma$ in $\Sigma_X$, 
a piecewise polynomial $f \in \sPP_\star(P_{\widehat \sigma \to \sigma'})$ and a decoration $\deco \in \R^\star(P_{\sigma'})$. Its associated class in $\logCH_\star(X)$ is given by the pushforward of
\begin{equation} \label{eqn:decoratedlogstratum_X_generalized}
[\sigma', f, \deco] = \iota_{\widehat \sigma \to \sigma' \star} \left(\pi_{\sigma'}^\star \deco \cdot \Psi(f) \right) \in \mathsf{CH}^\star(P_{\widehat \sigma})\,.
\end{equation}
under the map $P_{\widehat \sigma} \to \widehat X$. 
The collection of these form the \emph{log tautological group of $X$} 
\[
\mathsf{logR}_\star(X) \subset \logCH_\star(X).
\]
If $X$ is log smooth, then under the Poincar\'e isomorphism $\logCH_\star(X) \cong \logCH^\star(X)$, the log tautological group $\mathsf{logR}_\star(X)$ is identified with the log tautological ring $\mathsf{logR}^\star(X)$.

There is no direct analogue of Proposition~\ref{Prop:product_declogstrata_general}, as $\logCH_\star(X)$ does not have a ring structure.
The direct analogue of Corollary~\ref{Cor:logCH_generators} does hold, with the same proof. 
\end{remark}

\subsection{Functoriality of log tautological classes on moduli of curves}


\begin{proposition}
Let $p: X \to Y$ be a log lci map of smooth, log smooth DM-stacks, with a tropicalization $p: \Sigma_X \to \Sigma_Y$. Assume that for each map of cones $\sigma \to \sigma'$ with $\sigma \in \Sigma, \sigma' \in \Sigma'$ with induced pullback $p^\star: \CH^\star(P_{\sigma'}) \to \CH^\star(P_{\sigma})$ we have $p^\star(\mathsf{R}^\star(P_{\sigma'})) \subset \mathsf{R}^\star(P_{\sigma})$.

Then the pullback map
\[
p^\star: \LogCH^\star(Y) \to \LogCH^\star(X)
\]
restricts to a pullback back on log tautological classes
\[
p^\star: \mathsf{logR}^\star(Y) \to \mathsf{logR}^\star(X)
\]
\end{proposition}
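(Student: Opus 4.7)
The plan is to reduce the statement to checking that $p^\star$ sends the generators $[\sigma',f,\gamma]$ of $\mathsf{logR}^\star(Y)$ (as in Definition~\ref{def:logS} and Corollary~\ref{Cor:logCH_generators}) into $\mathsf{logR}^\star(X)$, and then to compute this pullback explicitly via a base-change diagram together with a combinatorial decomposition of the relevant fiber product into monodromy torsors on $X$. So fix a generator $[\sigma',f,\gamma] = (\iota_{\sigma'})_\star(\gamma \cdot \Psi(f))$ with $\sigma' \in \Sigma_Y$, $f \in \mathsf{PP}_\star(P_{\sigma'})$ and $\gamma \in \mathsf{R}^\star(P_{\sigma'})$. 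After passing to a common log blowup of $X$ and $Y$ (possible since log blowups of both are cofinal and $p$ is log lci), we may assume that $f$ is strict piecewise polynomial.

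Next, form the log fiber square
\[
\begin{tikzcd}
Q \arrow[r, "\widetilde{p}"] \arrow[d, "\iota"] & P_{\sigma'} \arrow[d, "\iota_{\sigma'}"]\\
X \arrow[r, "p"] & Y
\end{tikzcd}
\]
By the same type of analysis carried out in Lemma~\ref{Lem:gluing_fiber_product_stacks} and Proposition~\ref{Pro:P_sigmahat_sigmaprime_cover}, the interior of the associated cone-stack fiber product $\Sigma_X \times_{\Sigma_Y} \Sigma_{\sigma'}$ is naturally equivalent to the set of pairs $(\sigma,\psi)$ with $\sigma \in \Sigma_X$ and $\psi: \sigma' \to p^\textup{trop}(\sigma)$ a morphism of cone stacks such that the image of $\sigma'$ together with the image of $\sigma$ under $p^\textup{trop}$ generates $p^\textup{trop}(\sigma)$. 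Let $\mathfrak{H}_{p,\sigma'}$ denote a set of representatives of isomorphism classes of such generic pairs. Then the same construction as in Proposition~\ref{Pro:P_sigmahat_sigmaprime_cover} yields a proper, representable, surjective map
\[
J : \bigsqcup_{(\sigma,\psi)\in \mathfrak{H}_{p,\sigma'}} P_\sigma \longrightarrow Q
\]
whose components are the monodromy torsors of the minimal cones appearing.

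With this decomposition in hand, apply log lci pullback--pushforward compatibility (\cite[Theorem~2.19]{Barrott2019Logarithmic-Cho}) to the Cartesian square and then the excess intersection formula along $J$, exactly as in the proof of Proposition~\ref{Prop:iota_pushforward_intersection}. This yields
\[
p^\star [\sigma',f,\gamma] \;=\; \sum_{(\sigma,\psi)\in \mathfrak{H}_{p,\sigma'}} (\iota_\sigma)_\star \Bigl( p_\sigma^\star \gamma \cdot \Psi\bigl((p_\sigma^\textup{trop})^\star f \cdot h_{\sigma,\psi}\bigr) \Bigr),
\]
where $p_\sigma : P_\sigma \to P_{\sigma'}$ is the natural map induced by $\psi$, and $h_{\sigma,\psi} \in \mathsf{sPP}_\star(P_\sigma)$ is the strict piecewise polynomial encoding the excess Euler class; by Proposition~\ref{Prop:B_Faces} this excess class is a ratio of products of ray-coordinates on $\Sigma_\sigma$, entirely analogous to the factor $F_\sigma$ in the product formula~\eqref{eqn:iota_pushforward_products}. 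By the standing hypothesis $p_\sigma^\star \gamma \in \mathsf{R}^\star(P_\sigma)$, and by definition $\Psi\bigl((p_\sigma^\textup{trop})^\star f \cdot h_{\sigma,\psi}\bigr) \in \Psi(\mathsf{PP}_\star(P_\sigma)) \subseteq \mathsf{R}^\star(P_\sigma)$, so each summand is a decorated log stratum class on $X$ in the sense of Definition~\ref{def:logdecstratum} and therefore lies in $\mathsf{logR}^\star(X)$.

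The main obstacle will be the excess-intersection bookkeeping: one needs to verify that the log lci hypothesis on $p$ really guarantees that the base-change contributions can be packaged into piecewise polynomial decorations on the $P_\sigma$, using the splitting of normal bundles from Proposition~\ref{Prop:B_Faces} and the identification of log normal contributions with ray-coordinate functions on the tropicalisation. Once this combinatorial packaging is established, the assumed compatibility of $p^\star$ with tautological subrings of monodromy torsors closes the argument.
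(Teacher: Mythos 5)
Your proposal is correct and follows essentially the same route as the paper: the paper's proof also reduces to a generator $[\sigma',f,\gamma]$, forms the (tropical) fiber product $\tau=\Star_{\sigma'}(\Sigma_Y)^0\times_{\Sigma_Y}\Sigma_X$ with associated space $P_\tau = X\times_Y P_{\sigma'}$, pulls back $f$ and $\gamma$ there, and then expresses the result as a combination of pushforwards of log tautological classes from the $P_\sigma$ with $\sigma\in\tau$ (the paper handles the component decomposition "by induction on the number of cones in $\tau$" where you instead write the cover $J$ and the excess class explicitly). Your version is in fact more detailed than the paper's sketch, so no further comparison is needed.
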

\begin{proof}
Take a log tautological class $[\sigma', f, \gamma] \in \mathsf{logR}^\star(Y)$, with $\sigma' \in \Sigma'$. The pullback $\tau = \Star_{\sigma'}(\Sigma')^0 \times_{\Sigma'} \Sigma$ is a union of cones of $\Sigma$. Write $P_{\tau} = Y \times_{\Acal_{\Sigma'}} \Bcal_\tau$. Then $f$ pulls back to a homological piecewise polynomial $p^f \in \sPP_\star(P_\tau)$, and $\gamma$ pulls back to a tautological class on $P_{\tau}$. It suffices to show that the class $p^\star\gamma \cdot \Psi(p^\star f) \in \LogCH^\star(P_\tau)$ is a linear combination of pushforwards of log tautological classes on $P_{\sigma}$ for $\sigma \in \tau$. This follows by induction on the number of cones in $\tau$.
\end{proof}

\begin{proposition}
The forgetful map $\pi: \Mbar_{g,n+1} \to \Mbar_{g,n}$ induces a pushforward map
\[
\pi_\star: \mathsf{logR}^\star(\Mbar_{g,n+1}) \to \mathsf{logR}^\star(\Mbar_{g,n}).
\]
\end{proposition}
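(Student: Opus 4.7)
The plan is to check the claim on an additive generating set. By Remark \ref{Rmk:strata_log_lift}(c), $\mathsf{logR}^\star(\Mbar_{g,n+1})$ is spanned by log decorated strata classes $[\Gamma', f', \gamma']$ where $\Gamma'$ is a stable graph for $\Mbar_{g,n+1}$, the piecewise polynomial $f'$ lies in $\pPP_\star(\Sigma_{\Gamma'}, \Delta_{\Gamma'})$, and the decoration $\gamma' \in \mathsf{R}^\star(\Mbar_{\Gamma'})$ is a product of $\kappa$- and $\psi$-classes on the vertex factors.

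Given such $\Gamma'$, let $\Gamma$ denote the stable graph for $\Mbar_{g,n}$ obtained by forgetting the marking $n+1$ and stabilising, and let $\pi_{\Gamma'} : \Mbar_{\Gamma'} \to \Mbar_\Gamma$ be the induced forgetful map on the vertex factors (composed, if needed, with the canonical contraction arising when a vertex becomes unstable after forgetting). These fit into the commutative square
\[
\begin{tikzcd}
\Mbar_{\Gamma'} \arrow[r, "\iota_{\Gamma'}"] \arrow[d, "\pi_{\Gamma'}", swap] & \Mbar_{g,n+1} \arrow[d, "\pi"]\\
\Mbar_\Gamma \arrow[r, "\iota_\Gamma"] & \Mbar_{g,n}
\end{tikzcd}
\]
Compatibility of proper pushforwards with composition gives $\pi_\star [\Gamma', f', \gamma'] = (\iota_\Gamma)_\star \bigl(\pi_{\Gamma' \star}(\gamma' \cdot \Psi^{\mathrm{log}}_{\Gamma'}(f'))\bigr)$, so it suffices to show that $\pi_{\Gamma' \star}(\gamma' \cdot \Psi^{\mathrm{log}}_{\Gamma'}(f'))$ lies in $\mathsf{logR}^\star(\Mbar_\Gamma)$ with its induced idealised log structure.

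The next step is to apply the projection formula for $\pi_{\Gamma'}$ and decompose this class as a sum of decorated log strata on $\Mbar_\Gamma$. The decoration $\gamma'$ will be handled by the classical comparison formulas for forgetful pushforwards, expressing pushforwards of monomials in $\kappa$ and $\psi$ as analogous tautological classes on $\Mbar_\Gamma$. For the piecewise-polynomial factor, I would construct a tropical pushforward $(\pi_{\Gamma'})^{\trop}_\star f' \in \pPP^\star(\Sigma_\Gamma)$ along the tropical forgetful map $(\pi_{\Gamma'})^{\trop} : \Sigma_{\Gamma'} \to \Sigma_\Gamma$ and verify a compatibility of the form $\pi_{\Gamma' \star} \Psi^{\mathrm{log}}_{\Gamma'}(f') = \Psi^{\mathrm{log}}_{\Gamma}((\pi_{\Gamma'})^{\trop}_\star f')$ up to explicit $\psi$-class corrections arising from the log smooth fiber of $\pi_{\Gamma'}$.

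The main obstacle is that $\pi_{\Gamma'}$ has relative log dimension $1$, so Proposition \ref{Prop:sPP_pushforward} --- which provides pushforwards of piecewise polynomials only for maps of relative log dimension $0$ --- does not apply directly. The technical heart of the argument is therefore the construction of a pushforward of (homological) piecewise polynomials along a log smooth proper map of positive relative log dimension, together with the verification that it implements the geometric log Chow pushforward. One path forward is to choose an iterated boundary blowup over which $f'$ becomes strict and to factor the (blown-up) forgetful map through a sequence of log blowups composed with a single relative log smooth universal-curve-type morphism; the log blowups are controlled by Theorem \ref{Thm:R_blow_up_determined} and the blowup formula, while the remaining log smooth step reduces to a projective-bundle calculation amenable to a Brion-style formula in the spirit of Proposition \ref{prop:pushforwardppstack}. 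Assembling these with the classical forgetful comparison formulas for $\kappa$ and $\psi$ would then express $\pi_\star [\Gamma', f', \gamma']$ as a $\mathbb{Q}$-linear combination of log decorated strata classes on $\Mbar_\Gamma$.
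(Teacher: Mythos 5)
Your reduction to generators, the commutative square over the gluing maps, and the identification of the central obstacle (the forgetful map has relative log dimension $1$, so Proposition \ref{Prop:sPP_pushforward} does not apply) all match the paper's proof. The gap is in how you propose to get past that obstacle. First, the projection formula does not let you push forward $\gamma'$ and $\Psi^{\mathrm{log}}(f')$ separately: $\gamma'$ is not pulled back from $\Mbar_\Gamma$ in general (it may involve $\psi$- and $\kappa$-classes on the vertex carrying the forgotten marking), so ``handle the decoration by classical comparison formulas and the piecewise polynomial by a tropical pushforward'' is not a licensed decomposition. Second, the remaining step is not a projective-bundle calculation and admits no Brion-style formula: restricted to a stratum, the forgetful map is either an isomorphism of underlying stacks with a drop in log rank, or a universal curve, and a purely combinatorial pushforward of piecewise polynomials along a map of positive relative log dimension cannot exist, since geometric pushforwards such as $\pi_\star\psi_{n+1}^2=\kappa_1$ depend on more than the cone stacks.

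The mechanism the paper actually uses, and which your sketch is missing, is a case split on whether the stable graph upstairs has an edge $e$ contracted by stabilization. If it does, the underlying map of stacks is an isomorphism and only the log rank drops; the homological vanishing condition forces $f'$ to be divisible by $\ell_e$, so one writes $f'$ as a sum of terms $\overline f\cdot\ell_e^{b}$ with $b\geq 1$, converts the excess power $\ell_e^{b-1}$ into the operational decoration $(-\psi_h-\psi_{h'})^{b-1}$ via $\Phi$, and the single remaining factor of $\ell_e$ disappears trivially under the projection $\Sigma_{\overline\Gamma}\times\RR_{\geq 0}\to\Sigma_{\overline\Gamma}$. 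If no edge is contracted, the cone stacks agree, one reduces to $f'$ pulled back from the base (modulo terms supported on smaller strata, handled via Remark \ref{Rmk:strata_log_lift}), and the ordinary projection formula together with the classical forgetful pushforward formulas finishes the argument. Without this case analysis and the $\ell_e\mapsto -\psi_h-\psi_{h'}$ conversion, your plan does not close.
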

\begin{proof}
Let $[\Gamma,f, \gamma] \in \mathsf{logR}^\star(\Mbar_{g,n+1})$. Let $\overline{\Gamma}$ be the graph obtained from $\Gamma$ by forgetting the marking $n+1$ and stabilizing. Then we have a commutative diagram
\begin{equation}
\begin{tikzcd}
\Mbar_\Gamma \arrow[r, "\xi_\Gamma"] \arrow[d, "\pi_\Gamma"] & \Mbar_{g,n+1} \arrow[d, "\pi"]\\
\Mbar_{\overline \Gamma}  \arrow[r, "\xi_{\overline \Gamma}"] & \Mbar_{g,n}
\end{tikzcd}
\end{equation}
Note that $[\Gamma,f, \gamma] = (\xi_\Gamma)_\star \gamma \cdot \Psi^\log(f)$ is a log pushforward from $\Mbar_\Gamma$. Since log pushforwards are functorial, it suffices to show that 
\begin{equation} \label{eqn:Gamma_push_log_taut}
(\pi_\Gamma)_\star \gamma \cdot \Psi^\log(f) \in \logCH_\star(\Mbar_{\overline \Gamma})
\end{equation}
is log tautological. By this we mean that it is of the form $\overline \gamma \cdot \Psi^\log(\overline f)$. Indeed, then we would have
\[
\pi_\star [\Gamma,f, \gamma] = [\overline{\Gamma}, \overline{f}, \overline{\gamma}] \in \mathsf{logR}^\star(\Mbar_{g,n})\,,
\]
finishing the proof.

To prove the claim that \eqref{eqn:Gamma_push_log_taut} is log tautological, take a diagram
\begin{equation}
\label{eq:forgetfulpushforwardsquare}
\begin{tikzcd}
\Mtilde_\Gamma \arrow[r,] \arrow[d, "\tilde{\pi}_\Gamma"] & \Mbar_\Gamma \arrow[d, "\pi_\Gamma"]\\
\Mtilde_{\overline \Gamma}  \arrow[r, "\xi_{\overline \Gamma}"] & \Mbar_{\overline \Gamma} 
\end{tikzcd}
\end{equation}
where the horizontal maps are log blowups, $\tilde{\pi}_\Gamma$ is tropically transverse and $f \in \sPP_\star(\Mtilde_\Gamma)$. Then the pushforward $(\pi_\Gamma)_\star \gamma \cdot \Psi^\log(f) \in \LogCH_\star(\Mbar_{\ol{\Gamma}})$ is defined as $(\tilde{\pi}_\Gamma)_\star(\gamma \cdot \Psi(f))$. Let $\widehat{\Mcal_\Gamma}/\Mbar_\Gamma$ denote the pullback of $\Mtilde_{\ol{\Gamma}}/\Mbar_{\ol{\Gamma}}$. Then $\tilde{\pi}_{\Gamma}$ factors as
\[
\Mtilde_\Gamma \xrightarrow{p_1} \widehat{\Mcal}_\Gamma \xrightarrow{p_2} \Mtilde_{\ol{\Gamma}}.
\]
Here $p_1$ is a log blowup, and since $\gamma$ is a pullback from $\Mbar_\Gamma$, we have
\[
(p_1)_\star \left(\gamma \cdot \Psi(f)\right) = \gamma \cdot \Psi((p_1)^\textup{trop}_\star f)
\]
After replacing $f$ with its tropical pushforward under $p_1$, we can thus assume without loss of generality that $\Mtilde_\Gamma = \widehat{\Mcal_\Gamma}$, that is, \eqref{eq:forgetfulpushforwardsquare} is a fiber square.

The graph $\overline{\Gamma}$ either has a single contracted edge, or no contracted edge. We first treat the case where there is a single contracted edge $e$. In this case, the map \[\tilde{\pi}_{\Gamma}: \Mtilde_{\Gamma} \to \Mtilde_{\ol{\Gamma}}\] is an isomorphism of underlying algebraic stacks, and the map on the level of cone stacks with boundary is \[\Sigma_{\Gamma} = \Sigma_{\ol{\Gamma}} \times (\RR_{\geq 0}, 0)  \to \Sigma_{\ol{\Gamma}}.\] 
We write $\ell_e \in \sPP^\star((\RR_{\geq 0}, 0))$
for the piecewise linear function associated to the contracted edge. Then $f \in \sPP_\star(\Sigma_{\Gamma})$ is a sum of terms of the form $\ol{f} \cdot \ell_e^b$ with $b \geq 1$, and $\ol{f} \in \sPP_\star(\Sigma_{\ol{\Gamma}})$. 
The cohomological piecewise polynomial function $\ell_e \in \sPP^\star(\Mbar_{\Gamma})$ maps to the operational Chow class $-\psi_h - \psi_{h'} \in \mathsf{R}^\star(\Mbar_{\Gamma})$. We find
\begin{align*}
    \gamma \cdot \Psi(f) = (\gamma \cdot \Phi(\ell_e^{b-1})) \cdot \Psi(\ol{f} \cdot \ell_e) 
\end{align*}
As $\tilde{\pi}_{\Gamma}$ is an isomorphism, this pushes forward to
\[
\tilde{\pi}_{\Gamma,\star}(\gamma \cdot \Phi(\ell^{b-1})) \cdot \Psi(\ol{f}) \in \CH_\star(\Mtilde_{\ol{\Gamma}}).
\]
This is a tautological class multiplied by a homological piecewise polynomial, hence it is log tautological.

Now we treat the case where $\overline{\Gamma}$ has no contracted edges. Then $\sPP_\star(\Mbar_{\Gamma})$ is generated by homological piecewise polynomials on smaller strata and pullbacks of homological piecewise polynomial on $\Mtilde_{\Gamma}$. By Remark~\ref{Rmk:strata_log_lift}c) we can reduce to the second case. There the statement follows from the projection formula.

\end{proof}

%

\begin{proposition}
Let $\Gamma$ be a stable graph with associated gluing map $\xi_\Gamma : \Mbar_\Gamma^\str \to \Mbar_{g,n}$. Then pushforward by $\xi_\Gamma$ induces a map
\begin{equation} \label{eqn:xi_Gamma_log_pushforward}
(\xi_\Gamma)_\star : \bigotimes_{v \in V(\Gamma)} \mathsf{logR}^\star(\Mbar_{g(v),n(v)}) \to \mathsf{logR}^\star(\Mbar_{g,n})\,.
\end{equation}
\end{proposition}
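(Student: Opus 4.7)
By $\mathbb{Q}$-linearity of $(\xi_\Gamma)_\star$ and the definition of $\mathsf{logR}^\star$ as the span of decorated log strata classes, it suffices to treat pure tensors $\bigotimes_v [\Gamma_v, f_v, \gamma_v]$ with each factor a decorated log stratum class in $\mathsf{logR}^\star(\Mbar_{g(v), n(v)})$. The plan is to identify the image of such a pure tensor under $(\xi_\Gamma)_\star$ with an explicit decorated log stratum class on $\Mbar_{g,n}$ associated to the substituted graph.

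Let $\Gamma'$ denote the stable graph on $\Mbar_{g,n}$ obtained by substituting $\Gamma_v$ at each vertex $v \in V(\Gamma)$, so that $V(\Gamma') = \bigsqcup_v V(\Gamma_v)$, $E(\Gamma') = E(\Gamma) \sqcup \bigsqcup_v E(\Gamma_v)$, with a natural morphism of stable graphs $\Gamma' \to \Gamma$ contracting the edges of the $\Gamma_v$. One has canonical identifications $\Mbar_{\Gamma'} = \prod_v \Mbar_{\Gamma_v}$ and a factorization $\iota_{\Gamma'} = \xi_\Gamma \circ \bigl(\prod_v \iota_{\Gamma_v}\bigr)$. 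On the tropical side, there is a corresponding product identification of cone stacks with boundary
\[
\mathsf{Star}_{\sigma_{\Gamma'}}(\Sigma_{g,n}) \cong \Bigl(\prod_v \mathsf{Star}_{\sigma_{\Gamma_v}}(\Sigma_{g(v), n(v)})\Bigr) \times \mathsf{Faces}\bigl(\mathbb{R}_{\geq 0}^{E(\Gamma)}\bigr),
\]
with $\Delta_{\Gamma'}$ being the locus where some factor lies in its respective boundary. Via this identification, one assembles a homological piecewise polynomial
\[
F := \bigl(\boxtimes_v f_v\bigr) \cdot F_{E(\Gamma)}, \qquad F_{E(\Gamma)} := \prod_{e \in E(\Gamma)} \ell_e,
\]
in $\pPP_\star\bigl(\mathsf{Star}_{\sigma_{\Gamma'}}(\Sigma_{g,n}), \Delta_{\Gamma'}\bigr)$, together with the tautological decoration $\gamma' := \boxtimes_v \gamma_v \in \mathsf{R}^\star(\Mbar_{\Gamma'})$.

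I would then verify the pushforward identity
\[
(\xi_\Gamma)_\star \Bigl(\boxtimes_v [\Gamma_v, f_v, \gamma_v]\Bigr) = [\Gamma', F, \gamma'] \in \mathsf{logR}^\star(\Mbar_{g,n}).
\]
By definition, $[\Gamma_v, f_v, \gamma_v] = (\iota_{\Gamma_v})_\star(\gamma_v \cdot \Psi(f_v))$. Using the factorization of $\iota_{\Gamma'}$ and the projection formula, the left-hand side reduces to $(\iota_{\Gamma'})_\star\bigl(\gamma' \cdot \boxtimes_v \Psi(f_v)\bigr)$, while the right-hand side is $(\iota_{\Gamma'})_\star(\gamma' \cdot \Psi(F))$. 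The claim thus becomes the identity $\Psi(F) = \boxtimes_v \Psi(f_v)$ in the appropriate log Chow group. This reflects precisely the content of Proposition \ref{Prop:B_Faces} applied to the third factor in the product decomposition: the extra factor $F_{E(\Gamma)}$ represents the fundamental class in the new edge directions, encoding the change from the product log structure on $\prod_v \Mbar_{g(v), n(v)}$ to the richer pullback log structure $\Mbar_\Gamma^\str$ from $\Mbar_{g,n}$.

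The main obstacle is the product identification of cone stacks with boundary together with the verification that the class $F$ so defined vanishes on $\Delta_{\Gamma'}$ (so that it is a homological piecewise polynomial in the sense of Definition \ref{def:sppandpp}). Both points are combinatorial and reduce to tracking, for each morphism $\sigma_{\Gamma'} \to \sigma'$ in $\Sigma_{g,n}$, which of the three factors of $\sigma'$ picks up the contribution from a given edge or ray, and observing that the boundary-vanishing of the $f_v$ plus the linear factor $\ell_e$ on each $e \in E(\Gamma)$ combine to boundary-vanishing on the whole star. Once this is in place, the pushforward identity follows from the projection formula and Proposition \ref{Prop:sPP_pushforward} on compatibility of $\Psi$ with tropical pushforwards, and the resulting class $[\Gamma', F, \gamma']$ is by construction a decorated log stratum class on $\Mbar_{g,n}$, hence in $\mathsf{logR}^\star(\Mbar_{g,n})$.
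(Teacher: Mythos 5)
Your proposal is correct and follows essentially the same route as the paper: form the substituted graph $\Gamma'$, use the factorization $\iota_{\Gamma'} = \xi_\Gamma \circ \prod_v \iota_{\Gamma_v}$, and identify the pushforward of $\boxtimes_v [\Gamma_v, f_v, \gamma_v]$ with the decorated log stratum class $[\Gamma', \prod_v f_v \cdot \prod_{e \in E(\Gamma)} x_e, \prod_v \gamma_v]$, with the extra factor $\prod_e x_e$ accounting for the passage from the product log structure to the strict log structure pulled back from $\Mbar_{g,n}$ (the paper phrases this via the Cartesian square comparing $\Mbar_{\Gamma'}^\str$ with $\prod_v \Mbar_{\Gamma_v}^\str$, you phrase it via Proposition \ref{Prop:B_Faces}, but these are the same computation). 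The argument is sound as written.
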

\begin{proof}
To \emph{define} the map \eqref{eqn:xi_Gamma_log_pushforward} on the level of the full log Chow groups, note that we have log maps $\pi_v : \Mbar_\Gamma^\str \to \Mbar_{g(v),n(v)}$ and so we can first pull back from the factors $\logCH^\star(\Mbar_{g(v),n(v)})$ to $\logCH^\star(\Mbar_\Gamma^\str)$, take the product, and push forward under $\xi_\Gamma$. It remains to show that this map sends products of log tautological classes $[\Gamma_v, f_v, \gamma_v] \in \mathsf{logR}^\star(\Mbar_{g(v),n(v)})$ to tautological classes. To see this, let $\Gamma'$ be the stable graph obtained by gluing all graphs $\Gamma_v$ into the vertices of $\Gamma$. Then we have a diagram
\begin{equation}
\begin{tikzcd}
& & \Mbar_{\Gamma'}^\str \arrow[r] \arrow[d] & \Mbar_\Gamma^\str \arrow[r] \arrow[d, "\mathrm{id}"] & \Mbar_{g,n}\\
~& ~ & \prod_v \Mbar_{\Gamma_v}^\str \arrow[r, "\prod_v \xi_{\Gamma_v}"] \arrow[d, phantom, red, "\deco_v", midway] \arrow[ll, phantom, red, "\Psi^\log(f_v)"] & \prod_v \Mbar_{g(v),n(v)} & \\
& ~& ~& ~& ~
\end{tikzcd}
\end{equation}
where the left square is Cartesian. In the diagram above the terms $\Psi^\log(f_v)$ and $\gamma_v$ are placed next to the product of the $\Mbar_{g(v),n(v)}$ to indicate that this is where these classes live. The vertical maps in the diagram forget the extra log structure coming from the edges $E(\Gamma)$. Correspondingly, the pullback of log classes under these vertical maps corresponds to multiplication by $\prod_{e \in E(\Gamma)} \Phi(x_e)$, where $x_e \in \sPP^1(\Mbar_\Gamma^\str)$ is the piecewise linear function associated to the edge $e$. 
Then the diagram together with compatibility of log pushforwards and pullbacks shows that
\[
(\xi_\Gamma)_\star \prod_{v \in V(\Gamma)} \pi_v^\star [\Gamma_v, f_v, \gamma_v] = \left[\Gamma', \prod_{v \in V(\Gamma)} f_v \cdot \prod_{e \in E(\Gamma)} x_e, \prod_{v \in V(\Gamma)} \pi_v^\star \gamma_v \right] \in \mathsf{logR}^\star(\Mbar_{g,n})\,. \qedhere
\]
\end{proof}




\bibliographystyle{siam}
\bibliography{LogTaut}

\end{document}